\documentclass[12pt]{amsart}
\usepackage{amssymb}
\usepackage{verbatim}
\usepackage[toc,page]{appendix}
\usepackage{mathrsfs}
\usepackage{mathtools}

\usepackage{xcolor}

\newtheorem{thm}{Theorem}[section]

\newtheorem{lem}[thm]{Lemma}

\theoremstyle{definition}

\theoremstyle{remark}

\numberwithin{equation}{section}
\newcommand{\Mod}[1]{\ (\textup{mod}\ #1)}
\newcommand{\Zag}{\mathscr{L}}% Zagier L-series
\newcommand{\MT}{\mathbf{MT}} % main term
\newcommand{\SZE}{\Sigma_{<}}%sum over n<2l
\newcommand{\SIN}{\Sigma_{>}}%sum over n>2l
\newcommand{\MTn}{\Sigma_{0}}%main term n=2l
\newcommand{\Resid}{\mathcal{R}}

\newcommand{\ups}{\upsilon}% coef of Zagier/zeta
\newcommand{\G}{\mathcal{G}}
\newcommand{\Hf}{\mathcal{H}}
\newcommand{\Cc}{\mathcal{C}}% coef in main term
\newcommand{\MAP}{\mathcal{M}}% arith part in main term
\newcommand{\SG}{\mathbf{G}}%sum dm of G fun
\newcommand{\ES}{\mathcal{E}}%sum over e of SG
\newcommand{\TM}{\vartheta}

\newcommand{\R}{\mathbb{R}}

\providecommand{\sgn}{\operatorname{sgn}}

\providecommand{\sym}{\operatorname{sym}}

\providecommand{\Res}{\operatorname{Res}}

\DeclareMathOperator{\res}{res}
\DeclareMathOperator{\arcsinh}{arcsinh}

\newcommand{\HyG}{ {}_2F_1 }
\newcommand{\GenHyG}[5]{ {}_{#1}F_{#2} \left( \begin{matrix} #3 \\ #4 \end{matrix} ; #5 \right) }
\newcommand{\HyGI}{ {}_2\mathrm{I}_1 }
\newcommand{\GenHyGI}[5]{ {}_{#1}\mathrm{I}_{#2} \left( \begin{matrix} #3 \\ #4 \end{matrix} ; #5 \right) }

\mathtoolsset{showonlyrefs}

\begin{document}

\title[]{Non-vanishing of symmetric square $L$-functions in the weight aspect}

\begin{abstract}

We prove a new asymptotic formula for the second moment of symmetric square $L$-functions in the weight aspect on average. This result implies that the associated $L$-function is non-vanishing at the central point for at least $76.69\%$ of holomorphic Hecke cusp forms of bounded weight, improving the previous bound of $70.37\%$.
\end{abstract}

\author{Olga  Balkanova}
\address{Steklov Mathemtical Institute of Russian Academy of Sciences, 8 Gubkina st., Moscow, 119991, Russia}
\email{balkanova@mi-ras.ru}

\author{Dmitry Frolenkov}
\address{HSE University and Steklov Mathematical Institute of Russian Academy of Sciences, 8 Gubkina st., Moscow, 119991, Russia}
\thanks{The work was supported by the Theoretical Physics and Mathematics Advancement Foundation BASIS}
\email{frolenkov@mi-ras.ru}
\keywords{Voronoi summation formula; half-integral weight; L-functions; double Dirichlet series}
\subjclass[2010]{Primary:  11F12, 11L05, 11M06}

\maketitle

%\tableofcontents

%%%%%%%%%%%%%%%%%%%%%%%%%%%%%%%%%%%%%%%%%%%%%%%%%%%%%%%%%%%%%%%%%%%%%%%%%%%%%%%%%%%%%%%%%%%%%%%%%%%%%%%%%%%%%%%%%%%%%%%%%%%%%%%%%%
%%%%%%%%%%%%%%%%%%%%%%%%%%%%%%%%%%%%%%%%%%%%%%%%%%%%%%%%%%%%%%%%%%%%%%%%%%%%%%%%%%%%%%%%%%%%%%%%%%%%%%%%%%%%%%%%%%%%%%%%%%%%%%%%%%

\section{Introduction}
Let $H_{2k}$ be the normalized Hecke basis for the space of holomorphic cusp forms of weight $2k\geq 2$ and level $1$.  It is known that every $f \in H_{2k}$ has a Fourier expansion of the form
\begin{equation}
f(z)=\sum_{n\geq 1}\lambda_f(n)n^{k-1/2}e(nz),\quad e(x)=\exp(2\pi x), \quad \lambda_f(1)=1.
\end{equation}
The associated symmetric square $L$-function  is defined as
\begin{equation}
L(\sym^2f,s)=\zeta(2s)\sum_{n=1}^{\infty}\frac{\lambda_f(n^2)}{n^s},\quad  \Re{s}>1.
\end{equation}
Consider the twisted second moment
\begin{equation}\label{twisted mom def}
M_j(r,k):=\sum_{f \in H_{2k}}^{h}\lambda_f(r^2)L^j(\sym^2f,1/2),
\end{equation}
where the superscript $h$ means that the sum is taken with additional harmonic weight
$\Gamma(2k-1)/((4\pi)^{2k-1} \langle f,f\rangle_1)$ and $\langle f,f\rangle_1$ is the Petersson inner product on the space of level $1$ holomorphic modular forms. In this paper, we establish an asymptotic formula for the averaged second moment
\begin{equation}\label{2 mom averaged}
M^{a}_2(r,K):=\sum_{k}h\left(\frac{k}{K}\right)M_2(r,k)
\end{equation}
provided that $r\ll K^{5/4-\epsilon}.$ As usual,  $h \in C_{0}^{\infty}([a;b])$ is some  non-negative, infinitely differentiable function of compact support such that $0<a<b$.

Previously, an asymptotic formula for \eqref{2 mom averaged} was known only for $r\ll K^{1-\epsilon}$ by a result of Khan \cite[Theorem 3.1]{Khan2010}. Extending this valid range of twists allows us to improve the non-vanishing proportion for $L(\sym^2f,1/2)$, which constitutes the main result of this paper.

%%%%%%%%%%%%%%%%%%%%%%%%%%%%%%%%%%%%%%%%%%%%%%%
\begin{thm}\label{thm:nonvanishing}
For any $0<a<5/8$ we have
\begin{equation}\label{nonvan0}
\sum_{k}h\left(\frac{k}{K}\right)\sum_{\substack{f \in H_{2k}\\L(\sym^2f,1/2)\neq0}}^{h}1\ge\left(1-\frac{1}{(1+a)^3}\right)
\sum_{k}h\left(\frac{k}{K}\right)\sum_{f \in H_{2k}}^{h}1.
\end{equation}
\end{thm}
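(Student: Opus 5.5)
The plan is the mollification method in the form used by Khan: compare a mollified first moment with a mollified second moment and apply Cauchy--Schwarz. Take a mollifier
\begin{equation}
M(f)=\sum_{r\le K^{\theta}}\frac{x_r\lambda_f(r^2)}{\sqrt r},
\end{equation}
with $x_r$ supported on squarefree $r$ and $x_r=\mu(r)\,P\bigl(\log(K^{\theta}/r)/\log K^{\theta}\bigr)$ for a polynomial $P$ with $P(0)=0$, $P(1)=1$. Write $\langle\cdot\rangle$ for the average $\sum_k h(k/K)\sum^h_{f\in H_{2k}}$. Cauchy--Schwarz gives
\begin{equation}
\Bigl\langle \mathbf 1_{L(\sym^2f,1/2)\ne0}\Bigr\rangle\ge \frac{\langle L(\sym^2f,1/2)M(f)\rangle^2}{\langle L(\sym^2f,1/2)^2M(f)^2\rangle}.
\end{equation}

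Next I evaluate the mollified moments. For the first moment, expand $M(f)$ and use the twisted first moment $M_1(r,k)$. This asymptotic, valid for $r$ up to nearly $K^{2}$ on average over $k$, follows from the Petersson formula. Only the diagonal contributes to the main term, which becomes a functional of $P$ of the form $\int_0^1 P'(x)\,dx$ up to normalisation, together with log-derivative corrections.

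For the second moment, expand $M(f)^2$. Using the Hecke relation $\lambda_f(r_1^2)\lambda_f(r_2^2)=\sum_{d}\lambda_f(\dots)$, the products of coefficients reduce to twisted moments $M_2^a(r,K)$ with $r\le K^{2\theta}$. I invoke the paper's main asymptotic for $M_2^a(r,K)$, valid for $r\ll K^{5/4-\epsilon}$, which permits $\theta<5/8$. The main term of the twisted second moment is a polynomial in $\log K$ and $\log r$ with an arithmetic factor $r^{-1/2}$ times a divisor-type function. Inserting it and summing over $r_1,r_2$ with Möbius weights by the standard contour-shift/Selberg-sieve calculation gives a quadratic functional in $P$. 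Symmetric square $L$-functions here have orthogonal symmetry type with a second moment involving $(\log K)^3$, so the functional has the shape
\begin{equation}
\frac{\bigl(\int_0^1 P'\bigr)^2\ \text{etc.}}{\int_0^1 (\ldots P'' \ldots)^2+\ldots}
\end{equation}
depending on $a=\theta$ (here $a$ denotes the mollifier exponent, not the support endpoint of $h$).

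Then I optimise. Solving the Euler--Lagrange equation for $P$, as in Khan's analysis where $\theta<1/2$ gave $1-1/(1+1/2)^3=19/27\approx70.37\%$, I expect the optimal ratio to equal $1-(1+a)^{-3}$. Indeed the ratio is $1-1/(1+\theta)^3$, so substituting any $\theta=a<5/8$ yields \eqref{nonvan0}. One checks this by verifying that the optimal $P$ is a cubic-type polynomial for which the first-moment integral squared, divided by the second-moment quadratic form, equals $1-(1+a)^{-3}$. At $a=5/8$ this gives $1-(8/13)^3\approx 76.69\%$, matching the abstract.

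The main obstacle is the second step: justifying the twisted second moment asymptotic uniformly for $r$ up to $K^{5/4-\epsilon}$, with error terms summable against the mollifier coefficients. After that comes the bookkeeping that converts the main terms, including off-diagonal main-term contributions arising when $r$ exceeds $K$, into the clean quadratic form. I would first verify that the extra off-diagonal main terms either vanish after Möbius summation or combine with the diagonal into the same polynomial structure as in Khan's range. Once that is established, the optimisation is the same as in Khan's work, with $\theta$ extended.
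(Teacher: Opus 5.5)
This is correct and is essentially the paper's argument. The paper likewise does no new mollifier computation. It notes that Khan's coefficients $x_r$ vanish off square-free $r$, so only twists $r=r_1r_2^2\le K^{2a}$ arise. It checks that the main term of Theorem~\ref{thm:2mom average} coincides with Khan's Theorem~3.1 after $K\mapsto K/2$, and then invokes Khan's optimisation with $a<5/8$. Consequently the off-diagonal main terms you propose to track are already absorbed into Theorem~\ref{thm:2mom average} (via the odd-function residue argument) and need no separate M\"obius bookkeeping. One minor slip: the symmetric square family has symplectic, not orthogonal, symmetry type, which is what the $(\log K)^3$ growth and the shape $1-(1+a)^{-3}$ reflect.
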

%%%%%%%%%%%%%%%%%%%%%%%%%%%%%%%%%%%%%%%%%%%%%%%%%%%%%%%%%5
In \cite[Theorem 1.1]{Khan2010}, Khan established \eqref{nonvan0} for $a<1/2$,  thereby showing that the non-vanishing proportion is at least $19/27=0.7037\dots$ Evaluating this for $a$  close to $5/8$ yields the improved proportion  $1685/2197=0.7669\dots$
For comparison, the best conditional result -- obtained under the Generalized Riemann Hypothesis by Iwaniec, Luo, and Sarnak \cite{ILS} -- is $8/9=0.8888\dots$

The non-vanishing proportion is related to the investigation of \eqref{2 mom averaged} via a standard mollification technique \cite[Section 4]{Khan2010}.

%%%%%%%%%%%%%%%%%%%%%%%%%%%%%%%%%%%%%%%%%%%%%%%
\begin{thm}\label{thm:2mom average}
For $r\ll K^{5/4-\epsilon}$ we have
\begin{multline}\label{2momAF0}
M^{a}_2(r,K)=\sum_{e|r^2}
\frac{K}{\sqrt{re_1}}\int_{0}^{\infty}h(x)\Biggl(
\frac{1}{2}\log\frac{xKe_2}{r}\log^2\frac{xK}{e_1e_2}-\frac{1}{6}\log^3\frac{xK}{e_1e_2}+\\+
\log\frac{xKe_2}{r}P_1(\log\frac{xK}{e_1e_2})+P_2(\log\frac{xK}{e_1e_2})\Biggr)dx+O\left(\frac{r^{3/2}}{K^{3/2-\epsilon}}+\frac{(rK)^{\epsilon}}{\sqrt{r}}\right),
\end{multline}
where $P_j(x)$ are  polynomials of degree $j$, and $e=e_1e_2^2$ with $e_1$ square-free.
\end{thm}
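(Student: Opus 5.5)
We follow the standard route: an approximate functional equation, the Petersson trace formula with the average over $k$, and then a Voronoi-type summation for the resulting Kloosterman/Bessel sums. First we write $L^2(\sym^2 f,1/2)$ through an approximate functional equation, with length of order $K^{2}$ for the product (the conductor of $\sym^2 f$ is $\asymp k^2$). This expresses $M_2(r,k)$ as a double sum over $n,m$ of $\lambda_f(r^2)\lambda_f(n^2)\lambda_f(m^2)$ times smooth weights. By Hecke multiplicativity, $\lambda_f(r^2)\lambda_f(n^2)=\sum_{e\mid (r^2,n^2)}\lambda_f(r^2n^2/e^2)$. It is more convenient, however, to first combine $\lambda_f(n^2)\lambda_f(m^2)$ via a Dirichlet series, namely $\sum_n \lambda_f(n^2)\lambda_f(m^2)n^{-s}$, or to use the Shimura-type identity. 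Then we apply Petersson's formula to the pairing with $\lambda_f(r^2)$. The diagonal term arises when $n^2 m^2$-type products match $r^2$ up to divisors. After the divisor decomposition $e\mid r^2$ with $e=e_1e_2^2$, this diagonal gives, via contour shifts of the approximate functional equation, a triple pole at $s=0$. That pole produces the cubic polynomial in $\log(xK/(e_1e_2))$ and $\log(xKe_2/r)$ appearing in \eqref{2momAF0}, with the prefactor $K/\sqrt{re_1}$ coming from the summation over $k$ ($\sum_k h(k/K)\approx K\int h$).

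For the off-diagonal terms, Petersson gives $2\pi i^{2k}\sum_c S(r^2,l,c)c^{-1}J_{2k-1}(4\pi r\sqrt{l}/c)$, with $l$ running over the appropriate products. We first execute the average over $k$ with $h(k/K)$, using the standard identity $\sum_k i^{2k}h((2k-1)/K)J_{2k-1}(x)$. This makes the Bessel sum negligible unless $x\gg K^{2-\epsilon}$ and otherwise leaves an explicit oscillatory integral of size about $K/\sqrt{x}$ times $e^{\pm ix}$. Next, following the keywords, we reorganize the $n$-sum for fixed $c$ as a sum of $\lambda$-like coefficients of a half-integral weight object, via Zagier's $L$-series $\Zag$ and the double Dirichlet series built from $\sum_n r_{n^2-4\ell}$-type coefficients. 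In other words, summing over the squared variable produces Kloosterman sums twisted by quadratic characters. Applying a Voronoi summation formula for half-integral weight (theta multiplier) to the $n$-sum transforms it into a dual sum, which is short when $r\ll K^{5/4}$.

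The dual side contains a zero-frequency term, which gives a secondary main contribution. This is the $n=2l$ term $\MTn$, in the notation suggested by the macros $\SZE,\SIN,\MTn$: the dual sum splits into $n<2l$, $n=2l$, $n>2l$. This term has to be evaluated exactly and combined with the diagonal to produce the lower-order polynomials $P_1,P_2$. The non-zero frequencies $\SZE$ and $\SIN$ are then bounded by estimating the Zagier $L$-series on the critical line (subconvexity/moment bounds in the discriminant aspect) together with stationary phase in the integral transforms $\G$, $\Hf$. These bounds produce the error $r^{3/2}K^{-3/2+\epsilon}$, which is $o(r^{-1/2})$ exactly when $r\ll K^{5/4-\epsilon}$, and the term $(rK)^\epsilon r^{-1/2}$ comes from truncations.

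The main obstacle is the non-zero dual frequencies. There one needs sharp enough uniform asymptotics for the hypergeometric-type kernel produced by the $k$-average and the Voronoi transform in the transition range $n\approx 2l$, together with good bounds for $\Zag$, so that the error saves $K^{3/2}$ against $r^{3/2}$. I would first attempt to obtain these asymptotics via Mellin–Barnes representations of the kernel and shift contours, treating $n<2l$ and $n>2l$ separately by Liouville–Green approximations.
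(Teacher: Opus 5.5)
Your proposal has a genuine gap, and it sits exactly in the new range $K^{1-\epsilon}\ll r\ll K^{5/4-\epsilon}$.

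\textbf{The main gap.} You claim that the diagonal alone, after a contour shift to the pole at $0$, produces the whole cubic main term. After the divisor decomposition the diagonal is, up to small errors, $\frac{1}{\sqrt r}\sum_k h(k/K)\frac{1}{2\pi i}\int_{(a)}H(z)\frac{dz}{z}$. Here $H(z)$ is a sum over $e_1e_2\mid r$ of $e_1^{-1/2}(k/(e_1e_2))^{z}\zeta(1+z)\zeta(1+2z)$, times gamma factors and logarithms. Once $r\gg K$ two things go wrong.
\begin{itemize}
\item Some divisors have $e_1e_2\asymp k$. For these, shifting the contour in neither direction gives an asymptotic.
\item Some divisors have $e_1e_2\gg k^{1+\epsilon}$. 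Their diagonal contribution is negligible (shift right), yet the theorem's main term still contains their residue at $z=0$; take for example $e_1=1$, $e_2=r$.
\end{itemize}
This is one of the reasons Khan's argument is confined to $r\ll K^{1-\epsilon}$.

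The missing idea is that the off-diagonal contains a second main term of the same size. It is the polar term $\Resid$ of the Voronoi formula for the sums of $\Zag_n(1/2)$, whose trivial bound $O(r^{1/2}K^{\epsilon})$ is too large once $r\gg K$. The paper evaluates this term exactly, using:
\begin{itemize}
\item Mellin--Barnes evaluations of $I_{<,0}$ and $I_{>,0}$, together with their averages over $k$;
\item negligibility of the residue at $z=1$;
\item the Euler-product computation showing that the arithmetic factor equals $\MAP(-z,r)$;
\item the functional-equation identity for $\zeta(z)\zeta(2z)\Gamma^2(z)\Gamma(3/4+z/2)$.
\end{itemize}
With these it shows the off-diagonal main term equals the diagonal with $H(z)$ replaced by $H(-z)$. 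The total is then $\int_{(a)}(H(z)+H(-z))\frac{dz}{z}$, and oddness of the integrand reduces this to $\res_{z=0}H(z)/z$, which is the stated main term.

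Your plan has no substitute for this step. You also place the main obstacle in the nonzero dual frequencies. In the paper those are bounded directly, and the zero frequency is the heart of the proof.

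\textbf{Misidentified pieces.}
\begin{itemize}
\item The $n=2l$ term $\MTn$, involving $\Zag_{-4l^2}(1/2)$, is not a secondary main term. It carries $(-1)^k$ and is $O(K^{-A})$ after averaging over $k$. The polynomials $P_1,P_2$ come from the same residue at $z=0$.
\item $\SZE$ and $\SIN$ are not dual frequencies to be bounded. They are the $n<2l$ and $n>2l$ sums in the exact reciprocity formula for the twisted first moment $M_1(l,k)$. Both the off-diagonal main term and the error $r^{3/2}K^{-3/2+\epsilon}$ arise inside them after applying Voronoi. Only the $l\neq 0$ dual terms are bounded, using the Weil bound for half-integral weight Kloosterman sums and the large-sieve bound for $\sum|\Zag_n(1/2)|^2$.
\item You do not specify how a double approximate functional equation plus $J_{2k-1}$-sums leads to Zagier's series. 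The paper applies the approximate functional equation to one factor only. It then uses the first-moment formula, which is where $\Zag_{n^2-4l^2}(1/2)$ with the kernels $\Phi_k,\Psi_k$ enter.
\item $r^{3/2}K^{-3/2+\epsilon}$ is smaller than the main term $K/\sqrt r$ exactly when $r\ll K^{5/4-\epsilon}$. It is not smaller than $r^{-1/2}$ in that range.
\end{itemize}
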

%%%%%%%%%%%%%%%%%%%%%%%%%%%%%%%%%%%%%%%%%%%%%%%%%%%%%%%%%5

The main term in \eqref{2momAF0} coincides with that in \cite[Theorem 3.1]{Khan2010} if we replace $K$ by $K/2$ in \eqref{2momAF0}. This difference arises because we consider forms $f \in H_{2k}$, whereas Khan considered  $f \in H_{k}$ with $k\equiv0\Mod{2}.$

%%%%%%%%%%%%%%%%%%%%%%%%%%%%%%%%%%%%%%%%%%%%%%%%%%%%%%

The proof of Theorem \ref{thm:2mom average} differs substantially from that of \cite[Theorem 3.1]{Khan2010}. In \cite{Khan2010}, both \(L\)-functions in \eqref{2 mom averaged} were replaced by an approximate functional equation, followed by an application of the Petersson trace formula. The main term arises from the diagonal contribution in the Petersson formula, along with an additional summand that cancels with a corresponding term from the off-diagonal part. To handle the off-diagonal contribution, Khan first evaluates the sum over \(k\) of the Bessel functions \(J_{2k-1}\). He then applies the Poisson summation formula to evaluate the sum of Kloosterman sums, which, in our view, constitutes the core of his proof of \cite[Theorem 3.1]{Khan2010}.

Our strategy for proving Theorem \ref{thm:2mom average} is conceptually different.
%and aligns with the methodology from our previous works \cite{BFsym2Maass2026, Frolsym2} on the second moments of symmetric square \(L\)-functions for Maass forms.
We begin by applying an approximate functional equation to only one of the \(L\)-functions in \eqref{2 mom averaged}, thereby reducing the problem to the study of a first twisted moment. We then apply a reciprocity-type formula \cite{BF2018}—originally due to Zagier \cite{Zag}—which represents this twisted moment in terms of the first moments of Zagier's \(L\)-series, weighted by certain hypergeometric functions:
\begin{equation}
\sum_{1\leq n<2l}\Zag_{n^2-4l^2}(1/2)\Phi_k\left(\frac{n^2}{4l^2}\right),\quad
\sum_{n>2l}\Zag_{n^2-4l^2}(1/2)\sqrt{n}\Psi_k\left( \frac{4l^2}{n^2}\right).
\end{equation}
We remark that the proof of the reciprocity-type formula incorporates both the Petersson trace formula and a functional equation for the Lerch zeta-function, which plays a role analogous to the Poisson summation formula. Roughly speaking, the problem reduces to estimating
\begin{equation}\label{2mom SZE def0}
\sum_{k}h\left(\frac{k}{K}\right)\sum_{e|r^2}\sum_{m\ll k/(e_1e_2)}
\sum_{1\leq n<2mr/e_2}\Zag_{-n(4mr/e_2-n)}(1/2)f_1\left(\frac{ne_2}{2mr},k\right),
\end{equation}
\begin{equation}\label{2mom SIN def0}
\sum_{k}h\left(\frac{k}{K}\right)\sum_{e|r^2}\sum_{m\ll k/(e_1e_2)}
\sum_{n>0}\Zag_{n(4mr/e_2+n)}(1/2)f_2\left(\frac{ne_2}{2mr},k\right),
\end{equation}
where $f_j(x,k)$ are some hypergeometric functions.

Next, the averages of the hypergeometric functions over \(k\) are evaluated using the asymptotic formulas from \cite{BF2018}. This procedure is analogous to averaging the \(J\)-Bessel functions in Khan's approach. Stopping the analysis of the second moment at this point would only recover Khan's result, since the average values of the hypergeometric functions are very small for $r \ll K^{1-\epsilon}$.
However, when $r \gg K^{1-\epsilon}$, these averages are no longer small. Heuristically, this behavior arises because one of the two hypergeometric functions, $\Psi_k(x)$, can be approximated by the values of \(K\)-Bessel functions, and for $r \gg K^{1-\epsilon}$ we leave the regime where \(K\)-Bessel functions decay exponentially. Consequently, it becomes impossible to estimate the moments of Zagier's \(L\)-series \eqref{2mom SZE def0} and \eqref{2mom SIN def0} solely by analyzing the hypergeometric functions. Another difficulty arises when \(r\) exceeds \(K\), as we are no longer able to evaluate the diagonal contribution in this regime.

To overcome this difficulty, following \cite{BFsym2Maass2026, Frolsym2}, we first perform a series of transformations to convert Zagier's \(L\)-series into a form that is amenable to the Voronoi summation formula. The existence of such a Voronoi formula stems from the fact that Zagier's \(L\)-series appears in the Fourier–Whittaker coefficients of a certain linear combination of Eisenstein series of level $4$ and half-integral weight.

In contrast to the cases considered in \cite{BFsym2Maass2026, Frolsym2}, here the terms with $l \neq 0$ in the Voronoi summation formula can be immediately estimated by analyzing the corresponding integral transforms of the hypergeometric functions $\Phi _{k}$, $\Psi _{k}$ and employing the Weil bound for half-integral weight generalized Kloosterman sums. The core technical challenge in proving Theorem \ref{thm:2mom average} lies in controlling the main term coming from the Voronoi formula. Much like the diagonal main term, we are unable to evaluate its individual contribution directly. To simplify the derivation of these terms, we observe that the mollifier coefficients $x_r$ in \cite[Section 4.1]{Khan2010} are chosen to vanish if $r$ is not square-free. In view of \cite[(4.2)]{Khan2010}, it is therefore sufficient for non-vanishing applications to restrict $r$ in \eqref{twisted mom def} to the form $r=r_1r_2^2$, where $(r_1,r_2)=1$ and $r_1,r_2$ are square-free. After a careful analysis, we establish that the sum of these two main terms can be expressed as a contour integral of an odd function. By oddness, this integral reduces to the residue at $z = 0$, which completes the proof of Theorem \ref{thm:2mom average} for such specific $r$, thereby establishing Theorem \ref{thm:nonvanishing}. Notably, a similar device -- exploiting the symmetry of an odd function -- was previously utilized in various moment computations, including \cite{KMV} and \cite{Sound}. Naturally, Theorem \ref{thm:2mom average} should not be restricted only to these specific values of $r$. For completeness, we extend Theorem \ref{thm:2mom average} to an arbitrary $r$ by induction, using the aforementioned computations as the base case.

%%%%%%%%%%%%%%%%%%%%%%%%%%%%%%%%%%%%%%%%%%%%%%%%

%%%%%%%%%%%%%%%%%%%%%%%%%%%%%%%%%%%%%%%%%%%%%%%%%%

The paper is organized as follows. After establishing the preliminary notation in Section \ref{sec:Prelim}, we review properties of generalized Kloosterman sums of half-integral weight in Section \ref{sec:Kloosterman}. Section \ref{sec:Zagier Voronoi} presents background on Zagier's \(L\)-series, including their Voronoi summation formula, while the reciprocity-type formula for the first moment is given in Section \ref{sec:1st mom}. The analysis of the second moment begins in Section \ref{sec:The second moment} with an application of this reciprocity relation. The diagonal main term is evaluated in Section \ref{sec: MT2}. Next, Sections \ref{sec: SZE} and \ref{sec: SIN} are devoted to the study of multiple series of Zagier's \(L\)-series via the Voronoi formula; the resulting off-diagonal main term is handled in Section \ref{sec:Voronoi MT} (but only in the special case of $r=r_1r_2^2$). We conclude with Section \ref{sec:Proof of Theorem 2mom}, where the joint contribution of both main terms is computed, finalizing the proof of Theorem \ref{thm:2mom average} for this specific choice of $r$. The case of an arbitrary twist is then considered in Section \ref{sec:Proof of Theorem 2mom General}.

%%%%%%%%%%%%%%%%%%%%%%%%%%%%%%%%%%%%%%%%%%%%%%%%%%%%%%%%%%%%%%%%%%%%%%%%%%%%%%%%%%%%%%%%%%%%%%%%%%%%%%%%%%%%%%%%%%%%%%%%%%

%%%%%%%%%%%%%%%%%%%%%%%%%%%%%%%%%%%%%%%%%%%%%%%%%%%%%%%%%%%%%%%%%%%%%%%%
\section{Notation}\label{sec:Prelim}

For an integer $a$, let $\overline{a}_q$ denote its multiplicative inverse modulo $q$, so that $a\overline{a}_q \equiv 1 \pmod{q}$.
When the modulus is clear from the context, we drop the subscript and simply write $\bar{a}$.

Let $\delta_q(n)$ be the indicator function defined by the exponential sum
\begin{equation}\label{delta delta* def}
\delta_q(n) := \frac{1}{q}\sum_{c \pmod{q}} e\left(\frac{nc}{q}\right) = \frac{1}{q}\sum_{k|q} \sideset{}{^*}\sum_{c \pmod{k}} e\left(\frac{nc}{k}\right).
\end{equation}
In particular, $\delta_q(n) = 1$ if $q \mid n$, and $\delta_q(n) = 0$ otherwise. Note that we write an asterisk over a summation sign to denote a sum over a reduced residue system modulo $q$:
\begin{equation}
\sideset{}{^*}\sum_{c \pmod{q}} f(c) = \sum_{\substack{c \pmod{q} \\ (c,q)=1}} f(c).
\end{equation}

As usual, $\Gamma(z)$ denotes the Gamma function and $\psi(z) = \Gamma'(z)/\Gamma(z)$ is its logarithmic derivative (see \cite[Ch.~5]{HMF}).

We define the regularized generalized hypergeometric function ${}_p\mathrm{I}_q$ by scaling it with Gamma factors:
\begin{align}\label{pIq def}
\GenHyGI{p}{q}{a_1, \dots, a_p}{b_1, \dots, b_q}{z}
&:= \frac{\Gamma(a_1) \dots \Gamma(a_p)}{\Gamma(b_1) \dots \Gamma(b_q)} \, \GenHyG{p}{q}{a_1, \dots, a_p}{b_1, \dots, b_q}{z} \nonumber \\
&= \sum_{j=0}^{\infty} \frac{\Gamma(a_1+j) \dots \Gamma(a_p+j)}{\Gamma(b_1+j) \dots \Gamma(b_q+j)} \frac{z^j}{j!}.
\end{align}
In the special case $p=2$ and $q=1$, we use the shorter notation $\HyGI(a,b;c;z)$ and $\HyG(a,b;c;z)$ for these functions.

%%%%%%%%%%%%%%%%%%%%%%%%%%%%%%%%%%%%%%%%%%%%%%%%%%
\section{Kloosterman sums of half-integral weight}\label{sec:Kloosterman}

In this section, we briefly review the definitions and key properties of generalized Kloosterman sums, with a particular focus on the half-integral weight case (for further details, see \cite{Biro2000, IwTopics, Proskurin, Wachter}).

Let $\nu$ be the multiplier system of weight $1/2$ associated with the theta series (see \cite[p.~104]{Biro2000}), defined on $\Gamma_0(4)$ by
\begin{equation}\label{nu def}
\nu(\gamma) = \left(\frac{c}{d}\right) \epsilon_{d}^{-1} \quad \text{for} \quad \gamma = \begin{pmatrix} a & b \\ c & d \end{pmatrix} \in \Gamma_0(4).
\end{equation}
Here, $\left(\frac{c}{d}\right)$ denotes the extended Jacobi symbol (see \cite[Sec.~A1]{Biro2000}), and $\epsilon_d$ is given by
\begin{equation}\label{epsilon def}
\epsilon_{q} =
\begin{cases}
1, & \text{if } q \equiv 1 \pmod{4}, \\
i, & \text{if } q \equiv 3 \pmod{4}.
\end{cases}
\end{equation}

As usual, let $\Gamma_{\mathfrak{a}}$ denote the stabilizer of a cusp $\mathfrak{a}$ in $\Gamma$, and let $\sigma_{\mathfrak{a}}$ be its scaling matrix. For any
\begin{equation}
g = \begin{pmatrix} a & b \\ c & d \end{pmatrix} \in \mathbf{SL}_2(\mathbb{R}),
\end{equation}
we define $j(g,z) := cz+d$ and $j_g(z) := e^{i \arg(cz+d)}$.
Following \cite[(2.40), (2.48)]{IwTopics} (see also \cite[Def.~2.1.1]{Wachter}), we introduce the factor
\begin{equation*}
\omega(\gamma_1,\gamma_2) := \frac{1}{2\pi} \left( \arg j(\gamma_1,\gamma_2 z) + \arg j(\gamma_2,z) - \arg j(\gamma_1\gamma_2,z) \right),
\end{equation*}
and set $\omega_{k}(\gamma_1,\gamma_2) := e(k\omega(\gamma_1,\gamma_2))$.
For two singular cusps $\mathfrak{a}$ and $\mathfrak{b}$, we then define (cf. \cite[(3.4)]{IwTopics}, \cite[Def.~2.1.6]{Wachter})
\begin{equation*}\label{nu ab eq1}
\nu_{\mathfrak{a},\mathfrak{b}}(\gamma) := \nu(\sigma_{\mathfrak{a}}\gamma\sigma_{\mathfrak{b}}^{-1}) \omega_{k}(\sigma_{\mathfrak{a}}^{-1}, \sigma_{\mathfrak{a}}\gamma\sigma_{\mathfrak{b}}^{-1}) \omega_{k}(\gamma\sigma_{\mathfrak{b}}^{-1}, \sigma_{\mathfrak{b}}).
\end{equation*}

The Kloosterman sum associated with the singular cusps $\mathfrak{a}$ and $\mathfrak{b}$ is defined (see \cite[Sec.~A.3]{Biro2000}) by
\begin{equation}\label{Kloos def}
S^{\Gamma}_{\mathfrak{a},\mathfrak{b}}(m,n;c;\nu) :=
\sum_{\gamma = \left(\begin{smallmatrix} a & b \\ c & d \end{smallmatrix}\right) \in \Gamma_{\infty} \setminus \sigma^{-1}_{\mathfrak{a}}\Gamma\sigma_{\mathfrak{b}} / \Gamma_{\infty}}
\overline{\nu_{\mathfrak{a},\mathfrak{b}}(\gamma)} \, e\left(\frac{am+dn}{c}\right).
\end{equation}

By \cite[Lemma~3.1]{BBF2025}, for any odd modulus $(q,2)=1$, we have
\begin{equation}\label{Kl 4N 1/N infty}
S^{\Gamma_0(4)}_{1/1,\infty}(m,n;2q;\nu) =
i \overline{\epsilon_q} \, e\left(\frac{-m}{4}\right)
\sideset{}{^*}\sum_{d \pmod{q}} \left(\frac{d}{q}\right) e\left(\frac{m\overline{4d}+nd}{q}\right),
\end{equation}
while \cite[Lemma~3.2]{BBF2025} yields
\begin{equation}\label{Kl 4N 1/Ninfty 0n}
S^{\Gamma_0(4)}_{1/1,\infty}(0,n;2q;\nu) =
i\sqrt{q} \sum_{d|q} \mu(d) \sum_{t \pmod{q/d}} \delta_{q/d}(t^2-n).
\end{equation}
Furthermore, for $c \equiv 0 \pmod{4}$, it is shown in \cite[Lemma~A.6]{Biro2000} that
\begin{equation}\label{Kl 4N infty infty c=0(4N)}
S^{\Gamma_0(4)}_{\infty,\infty}(m,n;c;\nu) =
\sideset{}{^*}\sum_{d \pmod{c}} \epsilon_d \left(\frac{c}{d}\right) e\left(\frac{m\bar{d}+nd}{c}\right),
\end{equation}
whereas $S^{\Gamma_0(4)}_{\infty,\infty}(m,n;c;\nu) = 0$ if $4 \nmid c$.
If $n \equiv 0, 1 \pmod{4}$, then \cite[Lemma~3.7]{BBF2025} implies that
\begin{equation}\label{Kl 4N infty infty 0n4q n01 q even}
S^{\Gamma_0(4)}_{\infty,\infty}(0,n;4q;\nu) = 2(1+i)\sqrt{q} \sum_{d|q} \mu(d) \sum_{t \pmod{2q/d}} \delta_{4q/d}(t^2-n) \quad \text{if } (q,2)=2,
\end{equation}
\begin{equation}\label{Kl 4N infty infty 0n4q n01 q odd}
S^{\Gamma_0(4)}_{\infty,\infty}(0,n;4q;\nu) = (1+i)\sqrt{q} \sum_{d|q} \mu(d) \sum_{t \pmod{q/d}} \delta_{q/d}(t^2-n) \quad \text{if } (q,2)=1.
\end{equation}

Our subsequent analysis relies on Weil-type bounds for the Kloosterman sums \eqref{Kl 4N 1/N infty} and \eqref{Kl 4N infty infty c=0(4N)}.
It was established in \cite[(3.1), (3.2)]{DasSen} that for $(q,2)=1$,
\begin{equation}\label{DasSen Sest}
S(m,n,q) := \sideset{}{^*}\sum_{d \pmod{q}} \left(\frac{d}{q}\right) e\left(\frac{n\overline{d}+md}{q}\right) \ll 2^{\omega(q)}\sqrt{q(m,n,q)} \ll q^{1/2+\epsilon}\sqrt{(m,n,q)},
\end{equation}
and for $c \equiv 0 \pmod{4}$, \cite[(3.7)]{DasSen} gives
\begin{equation}\label{DasSen Kest}
K_{1}(m,n,c) := \sideset{}{^*}\sum_{d \pmod{c}} \epsilon_{d} \left(\frac{c}{d}\right) e\left(\frac{n\overline{d}+md}{c}\right) \ll 2^{\omega(c)}\sqrt{c(m,n,c)} \ll c^{1/2+\epsilon}\sqrt{(m,n,c)}.
\end{equation}
Combining \eqref{Kl 4N 1/N infty} with \eqref{DasSen Sest}, and \eqref{Kl 4N infty infty c=0(4N)} with \eqref{DasSen Kest}, we arrive at the bounds
\begin{equation}\label{S 1/1inf infinf est}
S^{\Gamma_0(4)}_{1/1,\infty}(m,n;2q;\nu) \ll q^{1/2+\epsilon}\sqrt{(m,n,q)}, \quad \text{and} \quad S^{\Gamma_0(4)}_{\infty,\infty}(m,n;c;\nu) \ll c^{1/2+\epsilon}\sqrt{(m,n,c)},
\end{equation}
which hold for $(q,2)=1$ and $c \equiv 0 \pmod{4}$, respectively.

%%%%%%%%%%%%%%%%%%%%%%%%%%%%%%%%%%%%%%%

%%%%%%%%%%%%%%%%%%%%%%%%%%%%%%%%%%%%%%%%%%%%%%%%%%%%%%%%%%%%%%%%%%%%%%%%%%%%%%%%%%%%%%%%%%%%%%%%%%%%%%%%%%%%%%%%%%%%%%%%%%
\section{Zagier $L$-series and the Voronoi Summation Formula}\label{sec:Zagier Voronoi}
The Zagier $L$-series is defined \cite[sec. 2]{SY}, \cite[Proposition 3]{Zag} for $\Re{s}>1$  as
\begin{equation}\label{Lbyk}
\Zag_{n}(s)=\frac{\zeta(2s)}{\zeta(s)}\sum_{q=1}^{\infty}\frac{\rho_q(n)}{q^{s}}=\zeta(2s)\sum_{q=1}^{\infty}\frac{\ups_q(n)}{q^{s}},
\end{equation}
where
\begin{equation}\label{rho upsilon def}
\rho_q(n):=\#\{x\Mod{2q}:x^2\equiv n\Mod{4q}\},\quad
\ups_q(n):=\sum_{q_2q_3=q}\mu(q_2)\rho_{q_3}(n).
%\lambda_q(n):=\sum_{q_{1}^{2}q_2q_3=q}\mu(q_2)\rho_{q_3}(n).
\end{equation}

It follows from \eqref{rho upsilon def} that $\Zag_n(s)=0$ if $n \equiv 2, 3 \pmod{4}.$ Furthermore, $\Zag_{0}(s)=\zeta(2s-1)$, and for $n=Dl^2$, where $D$ is a fundamental discriminant, we have
\begin{equation}\label{ldecomp}
\Zag_{n}(s)=l^{1/2-s}T_{l}^{(D)}(s)L(s,\chi_D),
\end{equation}
where $L(s,\chi_D)$ is the Dirichlet $L$-function associated with the primitive quadratic character $\chi_D$, and
\begin{equation}\label{eq:td}
T_{l}^{(D)}(s)=\sum_{l_1l_2=l}\chi_D(l_1)\frac{\mu(l_1)}{\sqrt{l_1}}\tau_{s-1/2}(l_2).
\end{equation}

The following estimate is a direct consequence of \eqref{ldecomp} and the Conrey--Iwaniec subconvexity bound
 \cite[Corollary~1.5]{CI} (see \cite[Lemma 4.2]{BF2018}):
\begin{equation}\label{eq:subconvexity}
\Zag_n(1/2)\ll |n|^{\theta},\quad \theta=1/6+\epsilon,\quad
\Zag_{-4l^2}(1/2)\ll |l|^{\epsilon}.
\end{equation}

Using Heath-Brown's large sieve inequality \cite{HB} (see also \cite[Eq.~(3.1)]{KhYoung}), one obtains \cite[Eq.~(3.2)]{KhYoung}
\begin{equation}\label{LZag 2mom estimate}
\sum_{n\le N}|\Zag_n(1/2+it)|^2\ll\left(N+\sqrt{N(1+|t|)}\right)\left(N(1+|t|)\right)^{\epsilon}.
\end{equation}
%%%%%%%%%%%%%%%%%%%%%%%%%%%%%%%%%%%

%%%%%%%%%%%%%%%%%%%%%%%%%%%%%%%%%%%%%%%
To investigate the mean values of $\Zag_{n}(s)$, we will use the Voronoi summation formula established in
\cite{BFVoron}. Let (see \cite[Corollary 1.5]{BFVoron})
\begin{equation}\label{an def}
a_{n}(\rho)=\frac{\pi^{\rho}\Gamma\left(\frac{1}{2}+\rho-\frac{\sgn{n}}{4}\right)}{2^{1+2\rho}|n|^{1/2-\rho}\Gamma(1/2+2\rho)}\Zag_{n}\left(\frac{1}{2}+2\rho\right),
\end{equation}
\begin{equation}\label{a0b0 def}
a_{0}(\rho)=\zeta(1+4\rho),\quad
b_{0}(\rho)=\frac{\sqrt{\pi}\Gamma(2\rho)\zeta(4\rho)}{2^{4\rho}\Gamma(1/2+2\rho)}=
\frac{\pi^{1/2+4\rho}\Gamma(1-4\rho)\zeta(1-4\rho)}{\Gamma(1-2\rho)\Gamma(1/2+2\rho)}.
\end{equation}

Let $\phi(y)$ be a smooth, compactly supported function with support contained either in $\R_+$ or in $\R_-$. We define
\begin{equation}\label{eq:phi+-}
\phi^{+}(s):=\int_{0}^{\infty}\phi(y)y^{s-1}dy,\quad \phi^{-}(s):=\int_{0}^{\infty}\phi(-y)y^{s-1}dy
\end{equation}
and
\begin{equation}\label{Rabpm def}
R^{\pm}(a,x,s):=a\phi^{\pm}(s)\frac{\pi^{s}x^{-2s}}{\Gamma(s\pm 1/4)},
% \quad R_b^{\pm}(x,s)=b_{0}\phi^{\pm}(s)\frac{\pi^{s}x^{-2s}}{\Gamma(s\pm k/2)}.
\end{equation}

We would like to precisely formulate the Voronoi formula for the values $\Zag_{n}(1/2)$. Unfortunately, this was not explicitly written down in \cite{BFVoron}. If the support of $\phi$ is contained in $\R_{\pm}$, the term $\Gamma\left(\frac{1}{2}+\rho\mp\frac{1}{4}\right)$ appears in \eqref{an def}. Taking this into account, the main term in the Voronoi formula will contain (see \cite[(1.7), (1.8)]{BFVoron})
\begin{equation}\label{Rab sum def}
\Resid(x):=
\lim_{\rho\to0}\sum_{\pm}\frac{2^{1+2\rho}\Gamma(1/2+2\rho)}{\pi^{\rho}\Gamma\left(\frac{1}{2}+\rho\mp\frac{1}{4}\right)}
\left(R^{\pm}(a_0(\rho),x,1/2+\rho)+R^{\pm}(b_0(\rho),x,1/2-\rho)\right).
\end{equation}
%%%%%%%%%%%%%%%%%%%%%%%%%%%%%%%%
\begin{lem}\label{lem:Voronoi MT}
The following formula holds:
\begin{equation}\label{Voronoi MTeq0}
\Resid(x)=\int_0^{\infty}\frac{\phi(y)}{x\sqrt{2y}}\left(\log\frac{2y}{\pi x^2}-\frac{\pi}{2}+3\gamma\right)dy+
\int_0^{\infty}\frac{\phi(-y)}{x\sqrt{2y}}\left(\log\frac{2y}{\pi x^2}+\frac{\pi}{2}+3\gamma\right)dy.
\end{equation}
\end{lem}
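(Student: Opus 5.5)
The plan is to compute the limit in \eqref{Rab sum def} directly by Laurent expansion in $\rho$ around $0$. We treat the two signs $\pm$ separately and, within each, the $a_0$-term and the $b_0$-term separately. The individual terms have simple poles at $\rho=0$; these poles cancel in the sum, and the finite part produces \eqref{Voronoi MTeq0}.

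\emph{Step 1: write out the four terms.} By \eqref{Rabpm def}, the $a_0$-term with sign $\pm$ is
\begin{equation*}
\frac{2^{1+2\rho}\Gamma(1/2+2\rho)\,\pi^{1/2}x^{-1-2\rho}\,\zeta(1+4\rho)\,\phi^{\pm}(1/2+\rho)}{\Gamma(1/2+\rho\mp1/4)\,\Gamma(1/2+\rho\pm 1/4)}.
\end{equation*}
Its denominator is $\Gamma(1/4+\rho)\Gamma(3/4+\rho)$ for both signs. The $b_0$-term with sign $\pm$ is
\begin{equation*}
\frac{2^{1+2\rho}\Gamma(1/2+2\rho)\,\pi^{1/2-2\rho}x^{-1+2\rho}\,b_0(\rho)\,\phi^{\pm}(1/2-\rho)}{\Gamma(1/2+\rho\mp1/4)\,\Gamma(1/2-\rho\pm1/4)}.
\end{equation*}
For this term the denominator is $\Gamma(1/4+\rho)\Gamma(3/4-\rho)$ for the sign $+$, and $\Gamma(3/4+\rho)\Gamma(1/4-\rho)$ for the sign $-$. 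At $\rho=0$ all four denominators equal $\Gamma(1/4)\Gamma(3/4)=\pi\sqrt2$.

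\emph{Step 2: Laurent expansions.} We use the following expansions:
\begin{itemize}
\item $\zeta(1+4\rho)=\frac{1}{4\rho}+\gamma+O(\rho)$;
\item $b_0(\rho)=\sqrt\pi\,\Gamma(2\rho)\zeta(4\rho)2^{-4\rho}/\Gamma(1/2+2\rho)$, with $\Gamma(2\rho)=\frac{1}{2\rho}-\gamma+O(\rho)$, $\zeta(4\rho)=-\frac12-2\rho\log(2\pi)+O(\rho^2)$ and $\Gamma(1/2+2\rho)=\sqrt\pi\,(1+2\rho\,\psi(1/2)+O(\rho^2))$.
\end{itemize}
This gives $b_0(\rho)=-\frac{1}{4\rho}+c_0+O(\rho)$ with an explicit constant $c_0$. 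Consequently the residues of the $a_0$- and $b_0$-terms are opposite, because $\phi^{\pm}(1/2\pm\rho)$ and the other smooth factors agree at $\rho=0$. The limit therefore exists and equals the constant term. This constant term is a sum of three kinds of contributions:
\begin{itemize}
\item the residue $\frac{1}{4}$ times the difference of the $\rho$-derivatives of the two smooth factors;
\item the constant terms $\gamma$ and $c_0$ multiplied by the common value of the smooth factors at $\rho=0$;
\item nothing further, since all remaining terms vanish as $\rho\to0$.
\end{itemize}

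\emph{Step 3: identify the constants.} The derivative of $\phi^{\pm}(1/2+\rho)$ minus that of $\phi^{\pm}(1/2-\rho)$ gives $2\int_0^\infty\phi(\pm y)y^{-1/2}\log y\,dy$. The factors $x^{\mp2\rho}$ give $-4\log x$, and $\pi^{\rho}$ versus $\pi^{-2\rho}$ gives a $\log\pi$ term; together with the $\log 2$'s these assemble into $\log\frac{2y}{\pi x^2}$. The common value at $\rho=0$ is $2\sqrt\pi\cdot\sqrt\pi/(\pi\sqrt2)=\sqrt2$, and $\sqrt{2}\cdot\frac{1}{2}\,y^{-1/2}=\frac{1}{\sqrt{2y}}$, which accounts for the normalization $\frac{1}{x\sqrt{2y}}$.

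The Gamma denominators contribute via digamma values. For the $a_0$-term they give $\psi(1/4)+\psi(3/4)$ for both signs. For the $b_0$-term they give $\pm(\psi(1/4)-\psi(3/4))$, depending on the sign. Using $\psi(1/4)-\psi(3/4)=-\pi$ (reflection) and $\psi(1/4)+\psi(3/4)=-2\gamma-6\log2$ (duplication), the sign-dependent part produces the $\mp\pi/2$ in \eqref{Voronoi MTeq0}. The symmetric part, combined with $\gamma$ from $\zeta(1+4\rho)$, $-\gamma$ from $\Gamma(2\rho)$, $\psi(1/2)=-\gamma-2\log2$, and $\log(2\pi)$ from $\zeta'(0)/\zeta(0)$, should collapse to $3\gamma$ with all $\log2$ and $\log\pi$ terms absorbed into $\log\frac{2y}{\pi x^2}$.

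The main obstacle is purely bookkeeping in Step 3: tracking the many $\log 2$, $\log\pi$ and $\gamma$ contributions with correct signs and factors of $4\rho$ versus $2\rho$. As a sanity check, I would first verify the pole cancellation exactly, and then check the final constant by confirming that the $\log\pi$ coefficient matches the $-\log\pi$ inside $\log\frac{2y}{\pi x^2}$.
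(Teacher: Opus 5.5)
Your proposal is correct and follows essentially the same route as the paper. Both let the poles of $\zeta(1+4\rho)$ and $b_0(\rho)$ cancel and read off the constant term. The paper's bookkeeping differs from yours: it uses the functional-equation form of $b_0$ together with the duplication and reflection formulas to bring the two terms of $l_{\pm}$ over $\Gamma(3/4+\rho)$, resp.\ $\Gamma(1/4+\rho)$, so that the derivative of $\sin(\pi/4\pm\pi\rho)$ supplies the $\mp\pi$. You instead use $c_0=-\log(2\pi)$ and $\psi(1/4)\pm\psi(3/4)$, and carrying out your expansion does give exactly $\log\frac{2y}{\pi x^2}\mp\frac{\pi}{2}+3\gamma$.

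There is one slip in Step 3. By your own Step 1 the $a_0$-term has no $\pi^{\rho}$, because the prefactor $\pi^{-\rho}$ cancels the $\pi^{\rho}$ coming from $\pi^{s}$. So the comparison is $\pi^{0}$ versus $\pi^{-2\rho}$, and inserting $\pi^{\rho}$ there would give the wrong $\log\pi$ coefficient.
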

\begin{proof}
Substituting \eqref{a0b0 def} and \eqref{Rabpm def} into \eqref{Rab sum def}, we obtain
\begin{equation}\label{Voronoi MTeq1}
\Resid(x)=
\frac{2\Gamma(1/2)}{x\Gamma\left(1/4\right)}
\int_0^{\infty}\frac{\phi(y)}{\sqrt{y}}l_+(x,y)dy+
\frac{2\Gamma(1/2)}{x\Gamma\left(3/4\right)}\int_0^{\infty}\frac{\phi(-y)}{\sqrt{y}}
l_-(x,y)dy,
\end{equation}
where
\begin{equation}\label{Voronoi MT l+}
l_+(x,y)=
\lim_{\rho\to0}
\left(
\frac{\zeta(1+4\rho)\sqrt{\pi}}{\Gamma(3/4+\rho)}\left(\frac{\pi y}{x^2}\right)^{\rho}+
\frac{\pi^{1+3\rho}\Gamma(1-4\rho)\zeta(1-4\rho)}{\Gamma(1-2\rho)\Gamma(1/2+2\rho)\Gamma(3/4-\rho)}
\left(\frac{y}{x^2}\right)^{-\rho}
\right),
\end{equation}
\begin{equation}\label{Voronoi MT l-}
l_-(x,y)=
\lim_{\rho\to0}
\left(
\frac{\zeta(1+4\rho)\sqrt{\pi}}{\Gamma(1/4+\rho)}\left(\frac{\pi y}{x^2}\right)^{\rho}+
\frac{\pi^{1+3\rho}\Gamma(1-4\rho)\zeta(1-4\rho)}{\Gamma(1-2\rho)\Gamma(1/2+2\rho)\Gamma(1/4-\rho)}
\left(\frac{y}{x^2}\right)^{-\rho}\right).
\end{equation}
Applying \cite[5.5.3,5.5.5]{HMF}, we have
\begin{equation*}
\Gamma(1/2+2\rho)\Gamma(3/4-\rho)=\frac{\sqrt{\pi}2^{-1/2+2\rho}}{\sin(\pi/4+\pi\rho)}\Gamma(3/4+\rho),
\end{equation*}
\begin{equation*}
\Gamma(1/2+2\rho)\Gamma(1/4-\rho)=\frac{\sqrt{\pi}2^{-1/2+2\rho}}{\sin(\pi/4-\pi\rho)}\Gamma(1/4+\rho),
\end{equation*}
and therefore
\begin{multline}\label{Voronoi MT l+2}
l_+(x,y)=
\lim_{\rho\to0}
\left(
\frac{\zeta(1+4\rho)\sqrt{\pi}}{\Gamma(3/4+\rho)}\left(\frac{\pi y}{x^2}\right)^{\rho}+
\frac{\sqrt{2\pi}\Gamma(1-4\rho)\sin(\pi/4+\pi\rho)\zeta(1-4\rho)}{\Gamma(1-2\rho)\Gamma(3/4+\rho)}
\left(\frac{4y}{\pi^3x^2}\right)^{-\rho}
\right)=\\=
\frac{2\gamma\sqrt{\pi}}{\Gamma(3/4)}+
\frac{\sqrt{\pi}}{4\Gamma(3/4)}\frac{d}{d\rho}\left(
\left(\frac{\pi y}{x^2}\right)^{\rho}-
\frac{\sqrt{2}\Gamma(1-4\rho)\sin(\pi/4+\pi\rho)}{\Gamma(1-2\rho)}
\left(\frac{4y}{\pi^3x^2}\right)^{-\rho}
\right)\Bigl|_{\rho=0}=\\=
\frac{\sqrt{\pi}}{4\Gamma(3/4)}
\left(
\log\left(\frac{4y^2}{\pi^2x^4}\right)-\pi+6\gamma
\right),
\end{multline}
where we have used the fact that \cite[5.4.12]{HMF} $\psi(1)=-\gamma$. Similarly,
\begin{multline}\label{Voronoi MT l-2}
l_-(x,y)=
\lim_{\rho\to0}
\left(
\frac{\zeta(1+4\rho)\sqrt{\pi}}{\Gamma(1/4+\rho)}\left(\frac{\pi y}{x^2}\right)^{\rho}+
\frac{\sqrt{2\pi}\Gamma(1-4\rho)\sin(\pi/4-\pi\rho)\zeta(1-4\rho)}{\Gamma(1-2\rho)\Gamma(1/4+\rho)}
\left(\frac{4y}{\pi^3x^2}\right)^{-\rho}\right)=\\=
\frac{\sqrt{\pi}}{4\Gamma(1/4)}
\left(
\log\left(\frac{4y^2}{\pi^2x^4}\right)+\pi+6\gamma
\right).
\end{multline}
Substituting \eqref{Voronoi MT l+2} and \eqref{Voronoi MT l-2} to \eqref{Voronoi MTeq1}, we obtain \eqref{Voronoi MTeq0}.
\end{proof}
%%%%%%%%%%%%%%%%%%%%%%%%%%%%%%%%%%%%%%%%%%%%%%%%%%%%%%%%%%%
Furthermore, to state the Voronoi summation formula, we need to define the following integral transforms (see \cite[Eqs.~(1.11), (1.12)]{BFVoron}):
\begin{equation}\label{phi hat+ to Phipm def0}
\widehat{\phi}(y):=\int_0^{\infty}\left(\frac{\phi(x)}{x}\Phi^{(+,+)}(xy)+\frac{\phi(-x)}{x}\Phi^{(+,-)}(xy)\right)dx,
\end{equation}
\begin{equation}\label{phi hat- to Phipm def0}
\widehat{\phi}(-y):=\int_0^{\infty}\left(\frac{\phi(x)}{x}\Phi^{(-,+)}(xy)+\frac{\phi(-x)}{x}\Phi^{(-,-)}(xy)\right)dx,
\end{equation}
where $y>0$, and the kernel functions $\Phi^{(+,+)}(x)$ and $\Phi^{(-,-)}(x)$ are defined(see \cite[(1.9)]{BFVoron}) by
\begin{multline}\label{Phi++--def}
\Phi^{(\pm,\pm)}(x):=\lim_{\rho\to0}\left(\frac{\cos\pi(1/4\mp\rho)}{\sin(2\pi\rho)}\sqrt{x}J_{-2\rho}(2\sqrt{x})-
\frac{\cos\pi(1/4\pm\rho)}{\sin(2\pi\rho)}\sqrt{x}J_{2\rho}(2\sqrt{x})\right)=\\=
\frac{\sqrt{x}}{\sqrt{2}}\lim_{\rho\to0}
\frac{J_{-2\rho}(2\sqrt{x})-J_{2\rho}(2\sqrt{x})}{2\sin(\pi\rho)}
\pm\frac{\sqrt{x}}{\sqrt{2}}J_{0}(2\sqrt{x})
=\\=
-\frac{\sqrt{x}}{\sqrt{2}}\left(Y_0(2\sqrt{x})\mp J_{0}(2\sqrt{x})\right),
\end{multline}
where we have used \cite[10.2.4]{HMF}. The functions $\Phi^{(+,-)}(x)$ and $\Phi^{(-,+)}(x)$ are given (see \cite[(1.10)]{BFVoron}) by
\begin{equation}\label{Phi+--+def}
\Phi^{(\pm,\mp)}(x):=\frac{2\sqrt{x}K_{0}(2\sqrt{x})}{\Gamma^2(1/2\mp 1/4)}.
\end{equation}
Let $\sgn(\phi)=\pm1$ depending on whether the support of $\phi$ belongs to $\R_{\pm}$.
According to \cite[Theorems 1.1, 1.2, 1.4]{BFVoron} we have the following formulas.
%%%%%%%%%%%%%%%%%%%%%%%%%%%%%%%%%%%%%%%%%%%%%%%%%%%%%5
\begin{lem}\label{Thm Voronoi an c0mod4}
For $c\equiv0\Mod{4}$ and $ad\equiv1\Mod{c}$ we have
\begin{multline}\label{Thm.eq Voronoi an c0mod4}
\sum_{n=-\infty}^{\infty}\frac{\Zag_{n}\left(1/2\right)}{\sqrt{|n|}}\phi(n)e\left( \frac{an}{c}\right)=
\TM(M_0)e(1/8)\Resid(c)+\\+
\TM(M_0)e(1/8)\sum_{n\neq0}\frac{\Gamma\left(\frac{1}{2}-\frac{\sgn{n}}{4}\right)}{\Gamma\left(\frac{1}{2}-\frac{\sgn{\phi}}{4}\right)}
\frac{\Zag_{n}\left(1/2\right)}{\sqrt{|n|}}\widehat{\phi}\left(\frac{4\pi^2n}{c^2}\right)e\left( -\frac{dn}{c}\right),
\end{multline}
where $\TM(M_0)=\bar{\epsilon}_{d}\left(\frac{c}{d}\right)$.
\end{lem}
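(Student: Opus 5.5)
The plan is to obtain Lemma \ref{Thm Voronoi an c0mod4} as the specialization at $\rho\to0$ of the general Voronoi formula of \cite[Theorems 1.1, 1.2, 1.4]{BFVoron}. That formula is stated for the sums $\sum_n a_n(\rho)\phi(n)e(an/c)$ with $a_n(\rho)$ as in \eqref{an def}, for $\rho$ in a punctured neighbourhood of $0$. Setting $\rho=0$ in \eqref{an def} gives
\begin{equation*}
a_n(0)=\frac{\Gamma\left(\frac12-\frac{\sgn n}{4}\right)}{2\sqrt{|n|}}\Zag_n(1/2),
\end{equation*}
and the Gamma factor depends only on the sign of $n$. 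Since $\phi$ is supported in $\R_{\sgn\phi}$, on the left-hand side only $n$ with $\sgn n=\sgn\phi$ occur. We therefore divide the whole identity by $\frac{1}{2}\Gamma\left(\frac12-\frac{\sgn\phi}{4}\right)$. After this division the left side becomes exactly $\sum_n \Zag_n(1/2)|n|^{-1/2}\phi(n)e(an/c)$. On the dual side, the coefficients $a_n(0)$ with arbitrary sign produce the ratio $\Gamma\left(\frac12-\frac{\sgn n}{4}\right)/\Gamma\left(\frac12-\frac{\sgn\phi}{4}\right)$ appearing in \eqref{Thm.eq Voronoi an c0mod4}.

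The main steps, in order, are as follows.

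First, we write the formula of \cite{BFVoron} for the cusp pair $(\infty,\infty)$ and $c\equiv0\Mod 4$. Its root-of-unity factor is the multiplier $\TM(M_0)e(1/8)$ with $\TM(M_0)=\bar\epsilon_d\left(\frac{c}{d}\right)$, coming from $\nu$ in \eqref{nu def}. The dual side has the additive twist $e(-dn/c)$ and the kernels $\Phi^{(\pm,\pm)}$ evaluated at $4\pi^2 n x/c^2$.

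Second, we pass to the limit $\rho\to0$ in the kernels. For the same-sign kernels, the combination of $J_{\pm2\rho}$ divided by $\sin 2\pi\rho$ has a finite limit, which is exactly the computation \eqref{Phi++--def} giving $-\sqrt{x/2}\,(Y_0\mp J_0)$. The opposite-sign kernels are already regular at $\rho=0$, giving \eqref{Phi+--+def}. This identifies the transform with $\widehat\phi$ from \eqref{phi hat+ to Phipm def0}--\eqref{phi hat- to Phipm def0}. The interchange of the limit with the $n$-sum is justified by locally uniform absolute convergence: $\widehat\phi(y)$ decays rapidly in $y$ (integrate by parts using smoothness of $\phi$), while $\Zag_n(1/2+2\rho)$ has polynomial growth by \eqref{eq:subconvexity}.

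Third, we handle the polar terms. For $\rho\neq0$ the main terms of \cite[(1.7),(1.8)]{BFVoron} are $R^{\pm}(a_0(\rho),c,1/2+\rho)$ and $R^{\pm}(b_0(\rho),c,1/2-\rho)$, with $a_0$, $b_0$ as in \eqref{a0b0 def}. Each of these is singular at $\rho=0$, since $\zeta(1+4\rho)$ and $\zeta(1-4\rho)$ both have poles there. After multiplying by the normalizing factor $2^{1+2\rho}\Gamma(1/2+2\rho)/(\pi^\rho\Gamma(\frac12+\rho\mp\frac14))$, which is the same one used to divide the left side, their sum is by definition \eqref{Rab sum def}. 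So the main term is $\TM(M_0)e(1/8)\Resid(c)$. Lemma \ref{lem:Voronoi MT} already shows this limit is finite, the poles cancelling.

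The hardest step is the limit bookkeeping in this third step, together with keeping track of the normalizing factors. The factor $\Gamma\left(\frac12+\rho-\frac{\sgn\phi}{4}\right)$ divided out on the left must be the one paired with $\Resid$, and the poles of the $a_0$ and $b_0$ contributions must cancel exactly rather than leave a residual $1/\rho$ term. I would verify this directly by expanding both to first order in $\rho$, as in the proof of Lemma \ref{lem:Voronoi MT}. Everything else is the substitution described above.
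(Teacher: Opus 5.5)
Your proposal matches the paper: the lemma is obtained by quoting the Voronoi formula of \cite{BFVoron} for $a_n(\rho)$, dividing by the Gamma factor attached to $\sgn\phi$, and letting $\rho\to0$, so that the kernels become \eqref{Phi++--def}--\eqref{Phi+--+def} and the polar terms combine into $\Resid(c)$ as in \eqref{Rab sum def}. One small slip: $a_n(0)$ also carries the factor $1/\Gamma(1/2)=\pi^{-1/2}$, so the quantity you divide by at $\rho=0$ is $\Gamma(\frac12-\frac{\sgn\phi}{4})/(2\sqrt{\pi})$, not $\frac12\Gamma(\frac12-\frac{\sgn\phi}{4})$; this is consistent with the $\rho$-dependent normalizer from \eqref{Rab sum def} that you use afterwards, so the final formula is unaffected.
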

%%%%%%%%%%%%%%%%%%%%%%%%%%%%%%%%%%%%%%%%%%%%%5
\begin{lem}\label{Thm Voronoi an codd}
For $(c,2)=1$ and $4ad\equiv -1\Mod{c}$ we have
\begin{multline}\label{Thm.eq Voronoi an codd}
\sum_{n=-\infty}^{\infty}\frac{\Zag_{n}\left(1/2\right)}{\sqrt{|n|}}\phi(n)e\left( \frac{an}{c}\right)=
\TM(M_1)\sqrt{2}\Resid(4c)+\\+
\TM(M_1)\sqrt{2}\sum_{n\neq0}
\frac{\Gamma\left(\frac{1}{2}-\frac{\sgn{n}}{4}\right)}{\Gamma\left(\frac{1}{2}-\frac{\sgn{\phi}}{4}\right)}
\frac{\Zag_{4n}\left(1/2\right)}{\sqrt{|4n|}}
\widehat{\phi}\left(\frac{\pi^2n}{c^2}\right)e\left(\frac{dn}{c}\right),
\end{multline}
where $\TM(M_1)=\bar{\epsilon}_{c}\left(\frac{4d}{c}\right)$.
\end{lem}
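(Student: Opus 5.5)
The statement is a specialization of the general Voronoi formula in \cite[Theorems 1.1, 1.2, 1.4]{BFVoron} to the point $s=1/2$. My plan is to take the formula of \cite{BFVoron} for the shifted values $\Zag_n(1/2+2\rho)$, where everything is holomorphic for small $\rho\neq0$, and let $\rho\to0$. For $\rho\neq0$ the formula has the same shape, with three ingredients:
\begin{itemize}
\item $a_n(\rho)$ from \eqref{an def} on the left;
\item the main terms $R^{\pm}(a_0(\rho),c,1/2+\rho)$ and $R^{\pm}(b_0(\rho),c,1/2-\rho)$, coming from the constant term of the half-integral weight Eisenstein series at the relevant cusp ($\infty$ for $4\mid c$, the cusp $1/1$ for odd $c$);
\item a dual sum involving $a_n(\rho)$ and kernels that are the $\rho$-versions of \eqref{Phi++--def} and \eqref{Phi+--+def}.
\end{itemize}
The first step is to multiply both sides by the normalizing factor
\[
2^{1+2\rho}\Gamma(1/2+2\rho)/\bigl(\pi^{\rho}\Gamma(1/2+\rho-\sgn(\phi)/4)\bigr).
\]
On the left this turns $a_n(\rho)$ into $\Zag_n(1/2+2\rho)|n|^{-1/2+\rho}$, since $\phi$ is supported on one sign, so $\sgn n=\sgn\phi$ throughout. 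On the right each dual term $a_n(\rho)$ acquires the ratio $\Gamma(1/2+\rho-\sgn(n)/4)/\Gamma(1/2+\rho-\sgn(\phi)/4)$. As $\rho\to0$ this ratio becomes exactly the Gamma quotient appearing in Lemmas \ref{Thm Voronoi an c0mod4} and \ref{Thm Voronoi an codd}.

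Next I would identify the multipliers. For $4\mid c$ the twist $e(an/c)$ arises from the matrix $M_0=\begin{pmatrix} a&b\\ c&d\end{pmatrix}\in\Gamma_0(4)$. The automorphy factor is $\nu(M_0)^{-1}$ up to the eighth root of unity $e(1/8)$, which comes from the Whittaker normalization. This gives $\TM(M_0)=\bar\epsilon_d\left(\frac{c}{d}\right)$ by \eqref{nu def}, and the dual twist $e(-dn/c)$.

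For odd $c$ the relevant matrix maps to the cusp $1/1$. The congruence $4ad\equiv-1 \pmod{c}$ and the width-$4$ scaling produce the factor $\sqrt2$, the coefficients $\Zag_{4n}$, the argument $\pi^2n/c^2$ rather than $4\pi^2n/c^2$, and the main term $\Resid(4c)$. The multiplier $\bar\epsilon_c\left(\frac{4d}{c}\right)$ should then follow from the same computation that yields \eqref{Kl 4N 1/N infty}.

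The passage to the limit is the main obstacle. The main term is by definition \eqref{Rab sum def}, and Lemma \ref{lem:Voronoi MT} shows this limit exists even though $a_0(\rho)$ and $b_0(\rho)$ each have a pole at $\rho=0$. So the remaining issue is the dual side, where two things must be checked:
\begin{itemize}
\item The $\rho$-kernels, for example the combination of $J_{\pm2\rho}$ over $\sin 2\pi\rho$, must have the finite limits \eqref{Phi++--def} and \eqref{Phi+--+def}. This follows by continuity once one shows that the would-be pole $\frac{\cos\pi(1/4\mp\rho)-\cos\pi(1/4\pm\rho)}{\sin 2\pi\rho}$ is regular.
\item The dual series must converge uniformly in $\rho$ near $0$, so that the limit can be interchanged with the sum. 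For this I would integrate by parts repeatedly in $\widehat\phi$, using the smoothness and compact support of $\phi$ together with Bessel asymptotics, to get rapid decay of $\widehat\phi(y)$ in $y$ uniformly for small $\rho$. Combined with the polynomial bound \eqref{eq:subconvexity} on $\Zag_n$ near $1/2$, this gives dominated convergence.
\end{itemize}
Finally, $\Zag_n(1/2+2\rho)\to\Zag_n(1/2)$ termwise, and the $n=0$ term $\phi(0)=0$ causes no trouble.
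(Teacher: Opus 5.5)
Your plan follows the paper's route. The lemma is just \cite[Theorems 1.1, 1.2, 1.4]{BFVoron} specialized to $\rho\to0$ after multiplying by $2^{1+2\rho}\Gamma(1/2+2\rho)/(\pi^{\rho}\Gamma(1/2+\rho-\sgn(\phi)/4))$, with the main term packaged as $\Resid(4c)$ via \eqref{Rab sum def} and the kernels given by the limits \eqref{Phi++--def} and \eqref{Phi+--+def}. The multipliers $\TM(M_1)$ (and $\TM(M_0)$) are read off from \cite{BFVoron} rather than re-derived, so your heuristic paragraph is unnecessary; as written it also contains a conjugation slip, since by \eqref{nu def} one has $\TM(M_0)=\nu(M_0)$, not $\nu(M_0)^{-1}$.
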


%%%%%%%%%%%%%%%%%%%%%%%%%%%%%%%%%%%%%%%%%%%%%%%%%%%%%%%%%%%%%%%%%%%%%%%%%%%%%%%%%

\begin{lem}\label{Thm Voronoi an c2mod4}
For  $c\equiv2\Mod{4}$ let $c_1=c/2$. For $2ad_5\equiv-1\Mod{c_1}$,  $8ad_4\equiv-1\Mod{c_1}$ we have
\begin{multline}\label{Thm.eq Voronoi an c2mod4}
\sum_{n=-\infty}^{\infty}\frac{\Zag_{n}\left(1/2\right)}{\sqrt{|n|}}\phi(n)e\left( \frac{an}{c}\right)=
\frac{\sqrt{2}}{\TM(M_4)}\Resid(4c_1)-
\TM(M_5)\sqrt{2}\Resid(4c_1)\\+
\frac{\sqrt{2}}{\TM(M_4)}\sum_{n\neq0}\frac{\Gamma\left(\frac{1}{2}-\frac{\sgn{n}}{4}\right)}{\Gamma\left(\frac{1}{2}-\frac{\sgn{\phi}}{4}\right)}
\frac{\Zag_{n}\left(1/2\right)}{\sqrt{|n|}}
\widehat{\phi}\left(\frac{\pi^2n}{4c_1^2}\right)e\left(\frac{d_4n}{c_1}\right)-\\-
\TM(M_5)\sqrt{2}\sum_{n\neq0}\frac{\Gamma\left(\frac{1}{2}-\frac{\sgn{n}}{4}\right)}{\Gamma\left(\frac{1}{2}-\frac{\sgn{\phi}}{4}\right)}
\frac{\Zag_{4n}\left(1/2\right)}{\sqrt{|4n|}}
\widehat{\phi}\left(\frac{\pi^2n}{c_1^2}\right)e\left(\frac{d_5n}{c_1}\right),
\end{multline}
where $\TM^{-1}(M_4)=\epsilon_{c_1}\left(\frac{8a}{c_1}\right)$ and $\TM(M_5)=\bar{\epsilon}_{c_1}\left(\frac{4d_5}{c_1}\right)$.
\end{lem}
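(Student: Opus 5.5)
This lemma is the specialization of the general Voronoi formula of \cite[Theorem 1.4]{BFVoron} (the case $c\equiv 2\pmod 4$) to the central point, exactly as Lemmas \ref{Thm Voronoi an c0mod4} and \ref{Thm Voronoi an codd} specialize \cite[Theorems 1.1, 1.2]{BFVoron}. The plan is to take that theorem for the shifted series $\sum_n \Zag_n(1/2+2\rho)|n|^{-1/2+\rho}\ldots$, written with the coefficients $a_n(\rho)$ of \eqref{an def}, and then let $\rho\to0$.

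Structurally, the case $c=2c_1$ with $c_1$ odd reduces to the two previous cases. Write $e(an/c)=e(an/(2c_1))$. For $\Gamma_0(4)$, a cusp equivalent to $1/2$ arises, and the relevant Bruhat decomposition splits into two contributions: one through the cusp pair $(\infty,\infty)$ with modulus $4c_1$, and one through the pair $(1/1,\infty)$ with modulus $c_1$. These correspond to the matrices $M_4$ and $M_5$ in \cite{BFVoron}. They produce the two dual sums, one with $\Zag_n$ at frequency $\pi^2n/(4c_1^2)$ and one with $\Zag_{4n}$ at frequency $\pi^2 n/c_1^2$, carrying the stated multipliers $\TM(M_4)^{-1}$, $\TM(M_5)$ and the congruences $8ad_4\equiv-1$, $2ad_5\equiv-1\pmod{c_1}$. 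I would take these arithmetic data verbatim from \cite[Theorem 1.4]{BFVoron}.

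The analytic step is the limit $\rho\to0$. First, multiply through by the normalizing factor $2^{1+2\rho}\Gamma(1/2+2\rho)\pi^{-\rho}/\Gamma(1/2+\rho-\sgn(\phi)/4)$, so that the left side becomes $\sum_n \Zag_n(1/2)|n|^{-1/2}\phi(n)e(an/c)$ at $\rho=0$. On the dual side, the ratio of the gamma factors attached to $n$ and to $\phi$ gives exactly $\Gamma(1/2-\sgn n/4)/\Gamma(1/2-\sgn\phi/4)$. The kernels of the integral transforms converge to $\Phi^{(\pm,\pm)}$ and $\Phi^{(\pm,\mp)}$ as in \eqref{Phi++--def} and \eqref{Phi+--+def}, which yields $\widehat\phi$ at the stated arguments. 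The polar terms of the two pieces are $R^{\pm}(a_0(\rho),x,1/2+\rho)+R^{\pm}(b_0(\rho),x,1/2-\rho)$ with $x=4c_1$ in both pieces. With the same normalization these assemble into $\Resid(4c_1)$, as computed in Lemma \ref{lem:Voronoi MT}, giving the terms $\sqrt2\,\TM(M_4)^{-1}\Resid(4c_1)-\TM(M_5)\sqrt2\,\Resid(4c_1)$.

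The main obstacle is justifying the limit. Individually, $a_0(\rho)$ and $b_0(\rho)$ have simple poles at $\rho=0$, so one must check that the pole parts cancel within each cusp contribution and not just in the total. This needs the identity between the two expressions for $b_0$ in \eqref{a0b0 def}, and it needs the multipliers in front of the $a_0$ and $b_0$ terms to coincide for each matrix. One must also show that the dual sums converge uniformly near $\rho=0$ so that the limit can be taken termwise. For the latter I would use the rapid decay of $\widehat\phi$ for smooth compactly supported $\phi$, together with the polynomial bound \eqref{eq:subconvexity}, applied uniformly for $\Zag_n(1/2+2\rho)$ with $\rho$ small.
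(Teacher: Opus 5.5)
Your proposal is correct and follows the paper's route. The paper gives no further argument: it simply invokes \cite[Theorem 1.4]{BFVoron} and passes to $\rho\to0$. The normalization by $2^{1+2\rho}\Gamma(1/2+2\rho)\pi^{-\rho}/\Gamma(1/2+\rho-\sgn(\phi)/4)$ yields the gamma ratio on the dual side, and the polar terms combine into $\Resid(4c_1)$ as in \eqref{Rab sum def} and Lemma \ref{lem:Voronoi MT}.

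Your cusp-pair heuristic for the $M_4$ piece is not needed, since you take the arithmetic data verbatim from the citation. It also looks inaccurate: when the paper later sums over $a$, both the $M_4$ and the $M_5$ contributions become $(1/1,\infty)$ Kloosterman sums of modulus $2c_1$ (see \eqref{SZE j=2 eq2}--\eqref{SZE j=2 eq3}), not an $(\infty,\infty)$ sum of modulus $4c_1$.
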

%%%%%%%%%%%%%%%%%%%%%%%%%%%%%%%%%%%%%%%%%%%%%%%%%%%%%%%%%%%%%%%%%%%%%%%%%%%%%%%%%
%%%%%%%%%%%%%%%%%%%%%%%%%%%%%%%%%%%%%%%%%%%%%%%%%%%%%%%%%%%%%%%%%%%%%%%%%%%%%%%%%

%%%%%%%%%%%%%%%%%%%%%%%%%%%%%%%%%%%%%%%%%%%%%%%%%%%%%%%%%%%%%%%%%%%%%%%%%%%%%%%%%%%%%%%%%%%%%%%%%%%%%%%%%%%%%%%%%%%%%%%%%%

\section{The first moment}\label{sec:1st mom}
Let ${}_2F_{1}(a,b,c;x)$ be the Gauss hypergeometric function and
\begin{equation}\label{defphi}
\Phi_k(x):=\frac{\Gamma(k-1/4)\Gamma(3/4-k)}{\Gamma(1/2)}{}_2F_{1}\left(k-\frac{1}{4},\frac{3}{4}-k,1/2;x \right),
\end{equation}
\begin{equation}
\Psi_k(x):=x^k\frac{\Gamma(k-1/4)\Gamma(k+1/4)}{\Gamma(2k)}{}_2F_{1}\left(k-\frac{1}{4},k+\frac{1}{4},2k;x \right).
\end{equation}
In \cite[Theorem 2.1]{BF2018}, the following formula for the first twisted moment $M_1(l,k)$ \eqref{twisted mom def} was obtained.
%%%%%%%%%%%%%%%%%%%%%%%%%%%%%%%%%%%%%%%%55
\begin{thm}\label{thm:M1explicitformula}
For any $l \geq 1$ we have
\begin{multline}\label{M1mainformula}
M_1(l,k)=\MT_1(l,k)+
\frac{\sqrt{2\pi}(-1)^k}{2\sqrt{l}}\frac{\Gamma(k-1/4)}{\Gamma(k+1/4)}\mathscr{L}_{-4l^2}(1/2)+\\
\frac{1}{\sqrt{l}} \sum_{1\leq n<2l}
\Zag_{n^2-4l^2}(1/2)\Phi_k\left(\frac{n^2}{4l^2}\right)+
\frac{1}{l\sqrt{2}} \sum_{n>2l}\Zag_{n^2-4l^2}(1/2)\sqrt{n}\Psi_k\left( \frac{4l^2}{n^2}\right),
\end{multline}
\begin{equation}\label{MT1 def1}
\MT_1(l,k)=
\frac{1}{2\sqrt{l}}\biggl(-2\log{l}-3\log{2\pi}+\frac{\pi}{2}+3\gamma+\psi(k-1/4)+ \psi(k+1/4) \biggr).
\end{equation}
\end{thm}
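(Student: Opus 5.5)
The plan is to derive the formula for $M_1(l,k)$ in Theorem \ref{thm:M1explicitformula} directly from the Petersson trace formula, following the Zagier-type reciprocity argument. The statement is quoted from \cite{BF2018}, so these are the steps I would reproduce.

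\emph{Step 1: Petersson.} In the region $\Re s>1$ write
\[
L(\sym^2 f,s)=\zeta(2s)\sum_{n}\lambda_f(n^2)n^{-s}.
\]
Applying the Petersson formula to $\sum^h_f\lambda_f(l^2)\lambda_f(n^2)$ gives a diagonal term $\delta_{n=l}$ plus
\[
2\pi i^{-2k}\sum_{c\ge1}\frac{S(l^2,n^2;c)}{c}\,J_{2k-1}\!\left(\frac{4\pi ln}{c}\right).
\]
The diagonal term contributes $\zeta(2s)l^{-s}$. In the off-diagonal term I open the Kloosterman sum and sum over $n$ first. Grouping $n$ by residue classes turns the $n$-sum into Lerch/Hurwitz zeta values. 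The character sums $\sum_{n\bmod c}e(\bar a n^2/c)$ that appear recombine, after summing over $c$, into the counts $\rho_q(N)$ of solutions to $x^2\equiv N \pmod{4q}$. This is where the Zagier series $\Zag_{n^2-4l^2}$ enters: the relevant discriminant is $n^2-4l^2$, coming from the matrix trace $n$ and determinant $l^2$.

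\emph{Step 2: analytic continuation to $s=1/2$.} Apply the functional equation of the Lerch zeta-function, which plays the role of Poisson summation. The off-diagonal term then becomes a sum over the trace $n\ge0$ of
\[
\Zag_{n^2-4l^2}(s)\times(\text{an integral of }J_{2k-1}\text{ against a power}).
\]
Such Bessel integrals are Weber--Schafheitlin integrals. They evaluate to ${}_2F_1$ functions with argument $n^2/4l^2$ when $n<2l$, and argument $4l^2/n^2$ when $n>2l$. At $s=1/2$ these become $\Phi_k$ and $\Psi_k$ respectively, with the stated prefactors $1/\sqrt l$ and $\sqrt n/(l\sqrt2)$.

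\emph{Step 3: degenerate traces.} The term $n=0$ gives $\Zag_{-4l^2}$. Its Weber--Schafheitlin integral reduces to $\Gamma$-ratios with the sign $(-1)^k$, which produces the second term in \eqref{M1mainformula}. The term $n=2l$ (discriminant $0$, where $\Zag_0(s)=\zeta(2s-1)$) combines with the diagonal $\zeta(2s)l^{-s}$, and both have poles or degenerate at $s=1/2$. Their sum must be computed as a limit $s\to1/2$. That limit yields the logarithmic main term $\MT_1(l,k)$, with $\psi(k\pm1/4)$ arising from differentiating $\Gamma(k-1/4\pm(s-1/2))$ and the constants $\gamma$, $\log 2\pi$, $\pi/2$ arising exactly as in Lemma \ref{lem:Voronoi MT}.

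\emph{Main obstacle.} Steps 1--2 are done in the region of absolute convergence. The hardest part is justifying the analytic continuation to $s=1/2$: the resulting $n$-series must converge there, using the decay of $\Phi_k$ and $\Psi_k$ together with \eqref{eq:subconvexity}. The second delicate point is tracking the cancellation of poles at $n=2l$ against the diagonal term, since the constants in $\MT_1$ depend on it exactly. I would handle it by writing both contributions as Laurent expansions in $s-1/2$, verifying that the polar parts cancel, and reading off the constant term.
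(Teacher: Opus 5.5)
Your outline (Petersson's formula; the Lerch functional equation playing the role of Poisson summation, so that the dual variable $m$ produces $\Zag_{m^2-4l^2}(s)$ against Weber--Schafheitlin-type weights that become $\Phi_k$ and $\Psi_k$ at $s=1/2$; the $m=0$ term giving the $\Zag_{-4l^2}(1/2)$ term; and the diagonal $\zeta(2s)l^{-s}$ cancelling against the $m=2l$ term $\Zag_0(s)=\zeta(2s-1)$, whose weight carries the pole, to produce $\MT_1(l,k)$, cf.\ \eqref{MT1 def2}) is essentially the argument of \cite{BF2018}, which the paper quotes rather than reproves and describes as the Petersson formula combined with the Lerch functional equation. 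The only slip is expository: the counts $\rho_q$ of discriminant $m^2-4l^2$ come from the twisted sums $\sum_{x \bmod c} e((\bar a x^2+mx)/c)$ that the functional equation produces, summed against $e(al^2/c)$, and not from $\sum_{x\bmod c}e(\bar a x^2/c)$ before that step, as your Step 1 suggests.
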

%%%%%%%%%%%%%%%%%%%%%%%%%%%%%%%%%5
For future applications, it is convenient to rewrite the sums in \eqref{M1mainformula} as
\begin{equation}\label{M1sum n<2l eq2}
\sum_{1\leq n<2l}\Zag_{n^2-4l^2}(1/2)\Phi_k\left(\frac{n^2}{4l^2}\right)=
\sum_{1\leq n<2l}\Zag_{-n(4l-n)}(1/2)\Phi_k\left(\left(1-\frac{n}{2l}\right)^2\right),
\end{equation}
\begin{equation}\label{M1sum n>2l eq2}
 \sum_{n>2l}\Zag_{n^2-4l^2}(1/2)\sqrt{n}\Psi_k\left( \frac{4l^2}{n^2}\right)=\sum_{n>0}\Zag_{n(4l+n)}(1/2)\sqrt{n+2l}\Psi_k\left( \frac{1}{(1+n/(2l))^2}\right).
\end{equation}
%%%%%%%%%%%%%%%%%%%%%%%%%%%%%%%%%5
We also need the following asymptotic formulas for $\Phi_k(x)$ (see \cite[Theorem 6.10]{BF2018}) and  $\Psi_k(x)$ (see \cite[Theorem 6.17]{BF2018}).
\begin{thm}\label{thm:apprphi}
For $\xi \in (0,\pi^2/4)$ we have
\begin{equation}\label{Phik LG}
\Phi_k(\cos^2\sqrt{\xi})=\frac{-\pi}{\xi^{1/4}(\sin\sqrt{\xi})^{1/2}}\left(Z_J(\xi)+C_YZ_Y(\xi)+C_JZ_J(\xi)\right),
\end{equation}
where $Z_Y$ and $Z_J$ are given by
\begin{multline}\label{ZYxi}
Z_Y(\xi)=\sqrt{\xi}Y_0((2k-1)\sqrt{\xi})\sum_{n=0}^{N}\frac{A_Y(n;\xi)}{(2k-1)^{2n}}-\\
\frac{\xi}{2k-1}Y_1((2k-1)\sqrt{\xi})\sum_{n=0}^{N-1}\frac{B_Y(n;\xi)}{(2k-1)^{2n}}+O\left( \frac{\sqrt{\xi}|Y_0((2k-1)\sqrt{\xi})|}{k^{2N+1}}\right),
\end{multline}
\begin{multline}\label{ZJxi}
Z_J(\xi)=\sqrt{\xi}J_0((2k-1)\sqrt{\xi})\sum_{n=0}^{N}\frac{A_J(n;\xi)}{(2k-1)^{2n}}-\\
\frac{\xi}{2k-1}J_1((2k-1)\sqrt{\xi})\sum_{n=0}^{N-1}\frac{B_J(n;\xi)}{(2k-1)^{2n}}+
O\left( \frac{\sqrt{\xi}|J_0((2k-1)\sqrt{\xi})|}{k^{2N+1}}\right)
\end{multline}
with $A_Y,B_Y,A_J,B_J\ll 1$ and
\begin{equation}\label{CY CJ}
C_Y=1+O\left(\frac{1}{k}\right), \quad C_J=O\left(\frac{1}{k^2} \right).
\end{equation}
\end{thm}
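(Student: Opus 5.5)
This is a Liouville–Green (WKB) uniform asymptotic for the Gauss hypergeometric function with large parameter, quoted from \cite[Theorem 6.10]{BF2018}. The plan is to reprove it by turning $\Phi_k$ into the solution of a Bessel-type ODE with a large parameter and then applying Olver's theory of turning points.

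\textbf{Step 1: the ODE.} With $x=\cos^2\sqrt{\xi}$, the function $w(x)={}_2F_1(k-1/4,3/4-k;1/2;x)$ satisfies the hypergeometric equation with $a+b=1/2=c$ and $ab=-(k-1/4)(k-3/4)$. I will substitute $x=\cos^2 t$ with $t=\sqrt{\xi}$, which turns it into a Legendre-type equation in $t$. After removing the first-derivative term, this should take the form
\[
W''(t)=\Bigl(-\tfrac{(2k-1)^2}{4}\cdot 4+g(t)\Bigr)W(t),
\]
with large parameter $u=2k-1$ and $g$ analytic on $(0,\pi/2)$. The factor $(\sin t)^{1/2}$ in \eqref{Phik LG} is exactly the one that removes the first-order term.

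\textbf{Step 2: Olver's expansion.} Passing to $\xi=t^2$ puts the equation in Olver's canonical form with a simple pole at $\xi=0$ (Olver, \emph{Asymptotics and Special Functions}, Ch.~12). Olver's theorem then supplies two solutions of the shape $\sqrt{\xi}\,\mathcal{C}_0(u\sqrt{\xi})\sum A(n;\xi)u^{-2n}-\frac{\xi}{u}\mathcal{C}_1(u\sqrt{\xi})\sum B(n;\xi)u^{-2n}$, with $\mathcal{C}=J$ or $Y$. The coefficients $A(n;\xi)$ and $B(n;\xi)$ are bounded on compact subsets of $[0,\pi^2/4)$, and the error terms are controlled by Olver's bounds. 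These two solutions give $Z_J$ and $Z_Y$, together with the error terms in \eqref{ZYxi} and \eqref{ZJxi}.

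\textbf{Step 3: connection constants.} $\Phi_k$ is some linear combination of $Z_J$ and $Z_Y$ (times the prefactor), so I need the coefficients. I will compare behaviour at $\xi\to0$, i.e.\ $x\to1$. The hypergeometric function there has a logarithmic singularity, since $c-a-b=0$. Using the connection formula with $\psi$-values and the normalization $\Gamma(k-1/4)\Gamma(3/4-k)/\Gamma(1/2)$, I will match:
\begin{itemize}
\item the coefficient of $\log\xi$ against $Y_0(u\sqrt{\xi})\sim\frac{2}{\pi}\log(u\sqrt{\xi}/2)$, which fixes the $Y$-coefficient;
\item the constant term against $J_0(0)=1$, which fixes the $J$-coefficient.
\end{itemize}
Expanding $\psi(k\pm1/4)$ and Gamma ratios via Stirling should give $C_Y=1+O(1/k)$ for the $Y$-coefficient. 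For the $J$-coefficient, the leading terms should cancel up to the displayed $1+C_J$ with $C_J=O(k^{-2})$.

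\textbf{Main obstacle.} I expect Step 3 to be the hardest part. The cancellation showing that the $J$-coefficient is $1+O(k^{-2})$ (and not merely $O(1)$ with an $O(1/k)$ error) requires carrying the Stirling expansions of $\psi(k\pm1/4)$ and of $\log u$ to second order. It also requires checking that the constant $\gamma$ and the $\log 2$ terms from $Y_0$ match exactly. The uniformity of Olver's error bounds up to $\xi\to\pi^2/4$ (where $\sin\sqrt\xi\to1$ but $x\to0$) is routine by comparison.
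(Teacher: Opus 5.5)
Your plan is correct and follows the same route as the source. The paper gives no proof of this statement; it is imported from \cite[Theorem 6.10]{BF2018}, and your proposal is the Liouville--Green argument used there. Concretely, $w(t)=\Phi_k(\cos^2 t)$ satisfies Legendre's equation $w''+\cot t\,w'+(2k-\tfrac12)(2k-\tfrac32)w=0$ of degree $2k-\tfrac32$; in fact $\Phi_k(\cos^2 t)=-\pi P_{2k-3/2}(\cos t)+2Q_{2k-3/2}(\cos t)$ with Ferrers functions. Its normal form $W''+\bigl((2k-1)^2+\frac{1}{4\sin^2 t}\bigr)W=0$ becomes Olver's simple-pole equation under $\xi=t^2$, and the connection constants come from the logarithmic expansion of ${}_2F_1(a,b;a+b;x)$ at $x=1$.

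Two points in Step 3 need more care than you indicate. First, $\psi(3/4-k)$ must be reflected rather than expanded by Stirling: $\psi(3/4-k)=\psi(k+\tfrac14)+\pi$. This $\pi$ is what matches $-\pi J_0(0)$, after which $C_J=O(k^{-2})$ reduces to $\psi(2k-\tfrac12)-\log(2k-1)=O(k^{-2})$. Second, the error term for $Z_Y$ is only relative to $\sqrt{\xi}\,|Y_0((2k-1)\sqrt{\xi})|$, which grows like $\sqrt{\xi}\log(1/\xi)$. So the $J$-component of the $Y$-type solution cannot be read off literally as $\xi\to0$; it must be read off at some $\xi$ with $(2k-1)\sqrt{\xi}$ small but positive.
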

Note that one can write down  more terms in the asymptotic expansions \eqref{CY CJ}.
%%%%%%%%%%%%%%%%%%%%%%%%%%%%%%%%%%%%%
\begin{thm}\label{thm:approxPsi}
For  $\xi \in (0, \infty)$ the following equality holds:
\begin{equation}\label{Psik LG}
\Psi_k\left(\frac{1}{\cosh^2{\sqrt{\xi}/2}} \right)\left( \xi\sinh^2{\sqrt{\xi}}\right)^{1/4}=
C_KZ_K(\xi),
\end{equation}
where $Z_K(\xi)$ is given by
\begin{multline}\label{ZKxi}
Z_K(\xi)=\sqrt{\xi}K_0((k-1/2)\sqrt{\xi})\sum_{n=0}^{N}\frac{A_K(n;\xi)}{(k-1/2)^{2n}}-\\\frac{\xi}{k-1/2}K_1((k-1/2)\sqrt{\xi})
\sum_{n=0}^{N-1}\frac{B_K(n;\xi)}{(k-1/2)^{2n}}+O\left(\frac{\sqrt{\xi}K_0((k-1/2)\sqrt{\xi})}{k^{2N+1}}
%\min\left(\sqrt{\xi}, \frac{1}{\xi}\right)
\right),
\end{multline}
with $A_K,B_K\ll 1$ and  $C_K=2+O(k^{-1})$.
\end{thm}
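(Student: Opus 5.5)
Theorem \ref{thm:approxPsi} is quoted from \cite[Theorem 6.17]{BF2018}. I would reprove it by the Liouville--Green (WKB) method for differential equations with a turning point, done uniformly in the variable $\sqrt{\xi}$; this is Olver's method with Bessel-function approximants.

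\textbf{Differential equation.} The function $w(x)=x^{k}{}_2F_1(k-1/4,k+1/4;2k;x)$ satisfies the hypergeometric equation, rewritten for $x^k\cdot{}_2F_1$. I would substitute $x=1/\cosh^2(\sqrt{\xi}/2)$ and set $t=\sqrt{\xi}$. Removing the first-derivative term by a Liouville transformation should give an equation of the form
\[
W''(t)=\Bigl((k-1/2)^2+\frac{-1/4}{t^2}\cdot(\text{correction})+g(t)\Bigr)W(t).
\]
Here $g$ is analytic near $t=0$ and bounded by $O(\min(1,t^{-2}))$ at infinity. The prefactor $(\xi\sinh^2\sqrt{\xi})^{1/4}$ in \eqref{Psik LG} is exactly the Liouville normalisation that makes this work. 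The order-zero Bessel structure comes from the coalescing exponents of the ${}_2F_1$ at $x=1$: the parameters satisfy $c-a-b=1/2$, and after the change $x\to 1-x$ and taking square roots one gets a double exponent at $t=0$, hence $\nu=0$.

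\textbf{Olver's theorem.} I would then apply Olver's theorem for equations with a regular singularity at $t=0$ of exponent $\nu=0$, with large parameter $u=k-1/2$ and no turning point for $t>0$. This yields solutions of the form
\[
\sqrt{t}\,K_0(ut)\sum_{n\le N} A_K(n;\xi)\,u^{-2n}-\frac{t^2}{u}K_1(ut)\sum_{n\le N-1} B_K(n;\xi)\,u^{-2n}+\text{error},
\]
together with the analogous $I_0$-solution. The coefficients $A_K,B_K$ are defined recursively by integrals of $g$ and are bounded uniformly on $(0,\infty)$, because $g$ is integrable against the relevant weights at both ends. The error bound is of relative size $u^{-2N-1}K_0(ut)$.

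\textbf{Identifying the solution and $C_K$.} The solution we want decays as $t\to\infty$, which corresponds to $x\to0$ with $x^k$ decay. This excludes the $I_0$-solution, so $\Psi_k$ equals $C_K$ times the $K_0$-solution. To compute $C_K$, I would compare the two sides as $t\to\infty$, i.e. $x\to0$. There $\Psi_k\sim x^k\,\Gamma(k-1/4)\Gamma(k+1/4)/\Gamma(2k)$ with $x\approx4e^{-t}$, while $K_0(ut)\sim\sqrt{\pi/(2ut)}\,e^{-ut}$. Matching the exponentials and powers of $4$, and applying Stirling's formula to $\Gamma(k-1/4)\Gamma(k+1/4)/\Gamma(2k)$, should give $C_K=2+O(k^{-1})$. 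Alternatively, one can match near $t\to0$ using the logarithmic singularity of ${}_2F_1$ at $x=1$ against $K_0(ut)\sim-\log(ut)$.

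\textbf{Main obstacle.} The delicate step is uniformity as $t\to\infty$. The error-control function must be shown to have finite total variation on all of $(0,\infty)$, so that the error term stays relative to $K_0$, which is exponentially small there. This amounts to verifying that $g(t)=O(t^{-2})$ at infinity and is analytic at $0$, which is the main computation.
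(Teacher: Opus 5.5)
Your outline is correct and is the same Liouville--Green argument (Olver's Bessel-type expansions with $\nu=0$, identification of the solution by recessiveness at $t=\infty$, then normalisation of $C_K$) that lies behind \cite[Theorem~6.17]{BF2018}, which the paper only quotes. Your ``main obstacle'' is in fact immediate: with $t=\sqrt{\xi}$ and $u=k-1/2$, the function $(\sinh t)^{1/2}\Psi_k(\cosh^{-2}(t/2))$ satisfies exactly $W''=\bigl(u^2-\tfrac{1}{16}\sinh^{-2}(t/2)\bigr)W$. This is a Legendre equation with $\mu=0$; indeed $\Psi_k(\cosh^{-2}\tau)=2Q_{2k-3/2}(\cosh\tau)/\sqrt{\cosh\tau}$. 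Hence $g(t)=\tfrac{1}{4t^2}-\tfrac{1}{16\sinh^{2}(t/2)}$ is analytic at $0$ and equals $\tfrac{1}{4t^2}+O(e^{-t})$ at infinity.

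There are two slips to fix. First, $c-a-b=2k-(k-\tfrac14)-(k+\tfrac14)=0$, not $1/2$. It is the coincident exponents $0,0$ at $x=1$ (the logarithmic singularity you later match against) that give $\nu=0$; $c-a-b=1/2$ would give $\nu=1$. Second, in Olver's variable $\xi$, where $(\xi\sinh^2\sqrt{\xi})^{1/4}\Psi_k$ is the Liouville-normalised function, the leading approximant is $\xi^{1/2}K_0(u\xi^{1/2})=tK_0(ut)$, not $\sqrt{t}\,K_0(ut)$.
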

Again, one can write down more terms in the asymptotic formula for $C_K$.
%%%%%%%%%%%%%%%%%%%%%%%%%%%%%%%%%%%%%%%%%5
%%%%%%%%%%%%%%%%%%%%%%%%%%%%%%%%%%%%%%%%%%%%%%%%%%%%%%%%%%%%%%%%%%%%%%%%%%%%%%%%%%%%%%%%%%%%%%%%%%%%%%%%%%%%%%%%%%%%%%%%%%%%%%%%%%%%%%%%%%%%
\section{The second moment}\label{sec:The second moment}
%%%%%%%%%%%%%%%%%%%%%%%%%%%%%%%%%%%%%%%%%%%%%%%%%%%%%%%%%%%%%%%%%%%%%%%%%%%%%%%%%%%%%%%%%%%%%%%%%%%%%%%%%%%%%%%%%%%%%%%%%%%%%%%%%%%%%%%%%%%%
We start by writing the approximate functional equation for the symmetric square $L$-function, see \cite[(1.5) and Lemma 2.2]{Khan2010}.

%%%%%%%%%%%%%%%%%%%%%%%%%%%%%%%%%%%%%%%%%%%%%%%%%%%%%%%%%%%%%%%%%%%%%%%%%%%%%%
\begin{lem}
The following approximate functional equation holds:
\begin{equation}\label{approx.func.eq.}
L(\sym^2f,1/2)=2\sum_{m=1}^{\infty}\frac{\lambda_f(m^2)}{m^{1/2}}V_k\left(m\right),
\end{equation}
where for any $y>0$ and $a>0$
\begin{equation}\label{approx.fun.eq.Vdef}
V_k(y)=\frac{1}{2\pi i}\int_{(a)}\frac{L_{\infty}(\sym^2f,1/2+z)}{L_{\infty}(\sym^2f,1/2)}\zeta(1+2z)y^{-z}\frac{dz}{z},
\end{equation}
\begin{multline}\label{L.infinity}
L_{\infty}(\sym^2f,s)=\pi^{-3s/2}\Gamma\left(\frac{s+1}{2}\right)\Gamma\left(\frac{s+2k-1}{2}\right)\Gamma\left(\frac{s+2k}{2}\right)=\\=
\pi^{1/2-3s/2}2^{2-s-2k}\Gamma\left(\frac{s+1}{2}\right)\Gamma(s+2k-1),
\end{multline}
\begin{equation}\label{Linf(z)/Linfe q0}
\frac{L_{\infty}(\sym^2f,1/2+z)}{L_{\infty}(\sym^2f,1/2)}=
\pi^{-3z/2}2^{-z}\frac{\Gamma\left(3/4+z/2\right)(2k)^{z}}{\Gamma\left(3/4\right)}\left(1+O((1+|z|)/k)\right).
\end{equation}
For $j\ge0$ and any $A>0$ we have
\begin{equation}\label{approx.fun.eq.Vest}
y^jV_k^{(j)}(y)\ll \left(\frac{k}{y}\right)^{A}.
\end{equation}
\end{lem}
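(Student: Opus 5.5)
The approximate functional equation will come from the standard contour-shift argument applied to the completed $L$-function. Set $\Lambda(s)=L_\infty(\sym^2f,s)L(\sym^2f,s)$. For level $1$ this function is entire (Shimura, Gelbart--Jacquet), of finite order and bounded in vertical strips, and it satisfies $\Lambda(s)=\Lambda(1-s)$ with root number $+1$. Consider
\[
I=\frac{1}{2\pi i}\int_{(a)}\frac{\Lambda(1/2+z)}{L_\infty(\sym^2f,1/2)}\zeta(1+2z)\,\frac{dz}{z}.
\]
Since $\zeta(2s)\sum_n\lambda_f(n^2)n^{-s}=\sum_m\lambda_f(m^2)m^{-s}\sum_{d}d^{-2s}$, the factor $\zeta(1+2z)$ is the natural smoothing weight, and one can check it is even in the required sense together with the functional equation.

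The steps are as follows.

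\textbf{Step 1.} Move the line of integration from $\Re z=a$ to $\Re z=-a$. The only pole crossed is at $z=0$. It is simple in the combined integrand after we symmetrize: we use $\zeta(1+2z)$ paired with the evenness of $\Lambda(1/2+z)$ under $z\mapsto -z$, and the pole of $\zeta(1+2z)$ at $z=0$ combines with $1/z$. To avoid a double pole, I would follow Khan exactly. Expanding $L(\sym^2 f,1/2+z)=\zeta(1+2z)\sum_m\lambda_f(m^2)m^{-1/2-z}$, I would take the integrand $\frac{L_\infty(1/2+z)}{L_\infty(1/2)}L(\sym^2f,1/2+z)\frac{dz}{z}$, whose residue at $0$ is $L(\sym^2f,1/2)$. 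Substituting $z\to -z$ in the shifted integral and using the functional equation shows it equals $-I$. Hence $L(\sym^2f,1/2)=2I$. Expanding the Dirichlet series absolutely on $\Re z=a>1/2$ gives \eqref{approx.func.eq.} with $V_k$ as in \eqref{approx.fun.eq.Vdef}. Here the factor $\zeta(1+2z)$ is kept inside $V_k$, and the sum is over $\lambda_f(m^2)$ alone.

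\textbf{Step 2.} Verify the second expression in \eqref{L.infinity}. This follows from the duplication formula $\Gamma(\tfrac{s+2k-1}{2})\Gamma(\tfrac{s+2k}{2})=2^{2-s-2k}\sqrt{\pi}\,\Gamma(s+2k-1)$.

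\textbf{Step 3.} Derive \eqref{Linf(z)/Linfe q0} from Stirling's formula applied to $\Gamma(2k-1/2+z)/\Gamma(2k-1/2)=(2k)^z(1+O((1+|z|^2)/k))$. This holds uniformly for $|z|\ll k^{1/2}$, and that range suffices after truncating the contour using the decay of $\Gamma(3/4+z/2)$. The powers of $\pi$ and $2$ come directly from \eqref{L.infinity}.

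\textbf{Step 4.} For \eqref{approx.fun.eq.Vest}, differentiate under the integral. This gives $y^jV_k^{(j)}(y)=\frac{1}{2\pi i}\int_{(A)}(\cdots)(-z)(-z-1)\cdots(-z-j+1)y^{-z}\frac{dz}{z}$. Shift to $\Re z=A$ large. There, $\Gamma(1/2+z+2k-1)/\Gamma(2k-1/2)\ll (k+|z|)^{A}e^{-\pi|\Im z|/2}$-type bounds hold, together with the exponential decay of $\Gamma((3/2+z)/2)$ in vertical strips. This gives $\ll (k/y)^A$.

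The main obstacle is the bookkeeping in Step 1: the sign and root number, and showing that the shifted integral reproduces the same series, which is where the factor $2$ comes from. This rests on the entirety and functional equation of $\Lambda$, which I would cite from Gelbart--Jacquet as in \cite[Lemma 2.2]{Khan2010}.
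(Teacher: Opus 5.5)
Your route is the standard contour-shift argument, and that is what the paper relies on: it gives no proof of this lemma and simply cites Khan's (1.5) and Lemma 2.2. With the integrand you finally settle on in Step 1, $\Lambda(1/2+z)/(L_\infty(\sym^2f,1/2)\,z)$, the argument is correct. The only pole crossed is the simple one at $z=0$, with residue $L(\sym^2f,1/2)$. The reflected integral equals $-I$ by $\Lambda(s)=\Lambda(1-s)$. Opening $L(\sym^2f,1/2+z)=\zeta(1+2z)\sum_m\lambda_f(m^2)m^{-1/2-z}$ on $\Re z=a>1/2$ then produces $V_k$, and its line can be moved to any $a>0$ because the integrand is holomorphic in $\Re z>0$.

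Three points in the write-up should be fixed, though none breaks the proof.

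\textbf{The opening definition of $I$.} Discard it. $\Lambda(1/2+z)$ already contains $\zeta(1+2z)$, so the extra factor double-counts it, creates a double pole at $z=0$, and destroys the $z\mapsto -z$ symmetry. $\zeta(1+2z)$ is not even in any sense; it enters $V_k$ only when the Dirichlet series is opened.

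\textbf{The error term in Step 3.} Your Stirling computation gives $(2k)^z\bigl(1+O((1+|z|)^2/k)\bigr)$, not the stated $O((1+|z|)/k)$. This is not a weakness of your argument: the second-order Stirling term $z^2/(4k)$ is genuinely present. For $z=it$ the ratio has modulus about $e^{-t^2/(4k)}$ times that of the main term, so the linear error can hold uniformly only for bounded $z$. In the range $|z|\ll k^{\epsilon}$, the only one the paper uses, the two error terms differ by at most a factor $k^{\epsilon}$.

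\textbf{The bound in Step 4.} The bound $(k+|z|)^Ae^{-\pi|\Im z|/2}$ for $\Gamma(2k-1/2+z)/\Gamma(2k-1/2)$ is false: at $z=A+i\sqrt{k}$ the ratio is still of size $k^A$. You do not need it. Use instead $|\Gamma(2k-1/2+A+it)|\le\Gamma(2k-1/2+A)\ll_A k^A\,\Gamma(2k-1/2)$, together with the decay of $\Gamma(3/4+z/2)$ in $\Im z$ and $|\zeta(1+2z)|\le\zeta(1+2A)$ on the line. This already gives $y^jV_k^{(j)}(y)\ll_{A,j}(k/y)^A$.
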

It follows from \eqref{approx.fun.eq.Vdef} and \eqref{L.infinity} that due to the exponential decay of the integrand, we can restrict the integration in \eqref{approx.fun.eq.Vdef} to $|z| \ll k^{\epsilon}$ up to a negligible error term.

%%%%%%%%%%%%%%%%%%%%%%%%%%%%%%%%%%%%%%%%%%%%%%%%%%%%%%%%%%%%%%%%%%%%%%%%%%%%%%
\begin{lem}\label{lem:V j-deriv}
For $y \ll k^{1+\epsilon}$ and $u \asymp 1$, the following estimate holds:
\begin{equation}\label{approx.fun.eq.V uK est}
\frac{d^j}{du^j}V_{uK}(y)\ll \left(\frac{K}{y}\right)^{A}.
\end{equation}
\end{lem}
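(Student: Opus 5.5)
We work directly from the integral representation \eqref{approx.fun.eq.Vdef} with $k=uK$ and differentiate under the integral sign in $u$. The only $u$-dependence sits in the gamma ratio
\begin{equation*}
G(z,u):=\frac{L_{\infty}(\sym^2f,1/2+z)}{L_{\infty}(\sym^2f,1/2)}
=\pi^{-3z/2}2^{-z}\frac{\Gamma(3/4+z/2)}{\Gamma(3/4)}\,
\frac{\Gamma(2uK-1/2+z)}{\Gamma(2uK-1/2)},
\end{equation*}
which follows from the second line of \eqref{L.infinity}. Therefore
\begin{equation*}
\frac{d^j}{du^j}V_{uK}(y)=\frac{1}{2\pi i}\int_{(a)}\pi^{-3z/2}2^{-z}\frac{\Gamma(3/4+z/2)}{\Gamma(3/4)}\zeta(1+2z)\,y^{-z}\,
\frac{\partial^j}{\partial u^j}\!\left(\frac{\Gamma(2uK-1/2+z)}{\Gamma(2uK-1/2)}\right)\frac{dz}{z}.
\end{equation*}
Differentiation under the integral is justified by the exponential decay of $\Gamma(3/4+z/2)$ in vertical strips. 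We need this decay to beat the polynomial growth of the remaining factor, and we check that in the next step.

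The key estimate is a bound for $R(u,z):=\Gamma(2uK-1/2+z)/\Gamma(2uK-1/2)$ and its $u$-derivatives on the line $\Re z=a$. Write $R=\exp(g(u,z))$ with $g=\log\Gamma(2uK-1/2+z)-\log\Gamma(2uK-1/2)$. Then
\begin{equation*}
\partial_u g=2K\bigl(\psi(2uK-1/2+z)-\psi(2uK-1/2)\bigr),
\end{equation*}
and higher derivatives bring in $(2K)^i\bigl(\psi^{(i-1)}(2uK-1/2+z)-\psi^{(i-1)}(2uK-1/2)\bigr)$. By the mean value theorem and Stirling's bound $\psi^{(i)}(w)\ll |w|^{-i}$, valid for $\Re w\gg K$ and $u\asymp 1$, each such difference is $\ll K^i\,|z|\,K^{-i}=|z|$ for $|z|\ll K^{1/2}$. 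Faà di Bruno then gives
\begin{equation*}
\partial_u^jR\ll |R|\,(1+|z|)^j .
\end{equation*}
Stirling also gives $|R(u,z)|\ll (2uK)^{a}(1+|z|/K)^{O(a)}\ll K^{a}$ in the range $|z|\ll K^{\epsilon}$, which is the only range that matters: as the paper notes after \eqref{approx.fun.eq.Vest}, the tail $|z|\gg K^{\epsilon}$ is negligible by exponential decay. In that tail we use the cruder polynomial bound $|R|\ll (K+|z|)^{a}$ together with the decay of $\Gamma(3/4+z/2)$.

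With the derivative bound in hand, the conclusion is immediate. We have $|y^{-z}|=y^{-a}$, the factor $\zeta(1+2z)/z$ is bounded on $\Re z=a>0$, and the $u$-derivative costs only a factor $(1+|z|)^j$, which is absorbed by $\Gamma(3/4+z/2)$. This gives
\begin{equation*}
\frac{d^j}{du^j}V_{uK}(y)\ll_{j,a} \frac{K^{a}}{y^{a}}.
\end{equation*}
Taking $a=A$ yields \eqref{approx.fun.eq.V uK est}.

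The hypothesis $y\ll k^{1+\epsilon}$ is not needed for this large-$y$ decay. It matters only if one also wants the bound to be meaningful, or to shift the contour to $\Re z=-\epsilon$ past the pole at $z=0$ when $y$ is small. In that case the residue at $z=0$ is a polynomial in $\log y$ and $\psi(2uK-1/2)$, whose $u$-derivatives are $O(1)$, and the bound $(K/y)^A\gg K^{-\epsilon A}$ comfortably absorbs it.

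The main obstacle is the uniform control of the $u$-derivatives of the gamma ratio. Each $\partial_u$ produces a factor of size $K$, so one must see the cancellation in the difference of digamma values, which brings each factor down to $O(|z|)$ rather than $O(K)$. A naive bound would lose $K^j$.
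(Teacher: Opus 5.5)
Your proposal is correct and is essentially the paper's argument. You isolate the ratio $\Gamma(2uK-1/2+z)/\Gamma(2uK-1/2)$, apply Fa\`a di Bruno with the bounds $(2K)^i\bigl(\psi^{(i-1)}(2uK-1/2+z)-\psi^{(i-1)}(2uK-1/2)\bigr)\ll 1+|z|$ so that each derivative costs only a factor $1+|z|$, and then integrate on $\Re z=A$ against the decay of $\Gamma(3/4+z/2)$, exactly as the paper does (your restriction $|z|\ll K^{1/2}$ in the mean value step is unnecessary, since $\Re(2uK-1/2+tz)\ge 2uK-1/2$ for $\Re z>0$, so the same derivative bound also covers the tail).
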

\begin{proof}
According to \eqref{approx.fun.eq.Vdef} and \eqref{L.infinity}, for $|z| \ll k^{\epsilon}$ it is necessary to estimate
\begin{equation}
\frac{\mathrm{d}^j}{\mathrm{d}u^j} \frac{\Gamma(2uk + z - 1/2)}{\Gamma(2uk - 1/2)}.
\end{equation}
Let $\psi^{(n)}(z)$ be the classical polygamma function of order $n$, and define
\begin{equation}
D_n(uk,z) := \psi^{(n)}(2uk + z - 1/2) - \psi^{(n)}(2uk - 1/2).
\end{equation}
Applying the asymptotic expansion of the polygamma function, we obtain
\begin{equation}
D_0(uK,z)\ll \log(2uK+z-1/2)-\log(2uK-1/2)\ll\frac{1+|z|}{K},
\end{equation}
\begin{equation}
D_n(uK,z)\ll (2uK+z-1/2)^{-n}-(2uK-1/2)^{-n}\ll\frac{1+|z|}{K^{n+1}}.
\end{equation}
Then a direct computation leads to
\begin{multline}
\frac{d^j}{du^j}\frac{\Gamma(2uK+z-1/2)}{\Gamma(2uK-1/2)}\ll (2K)^j\frac{\Gamma(2uK+z-1/2)}{\Gamma(2uK-1/2)}\sum_{k_1+2k_2+\ldots+jk_j=j}D_0^{k_1}D_1^{k_2}\cdot\ldots\cdot D_{j-1}^{k_j}\ll\\\ll
(1+|z|)^j\frac{\Gamma(2uK+z-1/2)}{\Gamma(2uK-1/2)}.
\end{multline}
Therefore,
\begin{equation}\label{approx.fun.eq.V uK est2}
\frac{d^j}{du^j}V_{uK}(y)\ll \int_{(A)}\Gamma\left(3/4+z/2\right)\zeta(1+2z)
(1+|z|)^j\frac{\Gamma(2uK+z-1/2)}{\Gamma(2uK-1/2)}
\frac{dz}{y^z}\ll \left(\frac{K}{y}\right)^{A}.
\end{equation}
\end{proof}
%%%%%%%%%%%%%%%%%%%%%%%%%%%%%%%%%%%%%%%%%%%%%%%%%%%%%%%%%%%%%%%%%%%%%%%%%%%%%%
Consider $M_2(r,k)$ given by \eqref{twisted mom def}. Expressing one of the $L$-functions via \eqref{approx.func.eq.} and using the multiplicity of the Fourier coefficients,
\begin{equation}\label{eq:mult}
\lambda_f(m)\lambda_f(n) = \sum_{e \mid (m,n)} \lambda_f\left( \frac{mn}{e^2} \right),
\end{equation}
we obtain
\begin{equation*}
M_2(r,k)=2\sum_{m=1}^{\infty}\frac{V_k(m)}{m^{1/2}}\sum_{e|(m^2,r^2)}M_1\left(\frac{mr}{e},k\right).
\end{equation*}
Let $e = e_1 e_2^2$, where $e_1$ is square-free. Then $e$ divides $m^2$ if and only if $e_1 e_2 \mid m$. Therefore,
\begin{equation}\label{2mom to 1mom}
M_2(r,k)=2\sum_{e|r^2}\sum_{m=1}^{\infty}\frac{V_k(me_1e_2)}{\sqrt{me_1e_2}}M_1\left(\frac{mr}{e_2},k\right).
\end{equation}
Using \eqref{2mom to 1mom} and \eqref{M1mainformula}, we obtain the following expression for the second averaged moment \eqref{2 mom averaged}:
\begin{equation}\label{2 mom averaged to Zag}
M^{a}_2(r,K)=\MT_2(r,K)+\MTn(r,K)+\SZE(r,K)+\SIN(r,K),
\end{equation}
where
\begin{equation}\label{2mom MT2def}
\MT_2(r,K)=2\sum_{k}h\left(\frac{k}{K}\right)\sum_{e|r^2}\sum_{m=1}^{\infty}\frac{V_k(me_1e_2)}{\sqrt{me_1e_2}}\MT_1\left(\frac{mr}{e_2},k\right),
\end{equation}
\begin{equation}\label{2mom MTn def}
\MTn(r,K)=\sum_{k}h\left(\frac{k}{K}\right)\sum_{e|r^2}\sum_{m=1}^{\infty}\frac{V_k(me_1e_2)}{\sqrt{me_1e_2}}
\frac{\sqrt{2\pi}(-1)^k}{\sqrt{mr/e_2}}\frac{\Gamma(k-1/4)}{\Gamma(k+1/4)}\Zag_{-4m^2r^2/e_2^2}(1/2),
\end{equation}
by \eqref{M1sum n<2l eq2}, we get
\begin{multline}\label{2mom SZE def}
\SZE(r,K)=2\sum_{k}h\left(\frac{k}{K}\right)\sum_{e|r^2}\sum_{m=1}^{\infty}\frac{V_k(me_1e_2)}{m\sqrt{re_1}}
\sum_{1\leq n<2mr/e_2}\Zag_{-n(4mr/e_2-n)}(1/2)\\\times\Phi_k\left(\left(1-\frac{ne_2}{2mr}\right)^2\right),
\end{multline}
and by \eqref{M1sum n>2l eq2}, we get
\begin{multline}\label{2mom SIN def}
\SIN(r,K)=2\sum_{k}h\left(\frac{k}{K}\right)\sum_{e|r^2}\sum_{m=1}^{\infty}\frac{V_k(me_1e_2)e_2}{mr\sqrt{2me_1e_2}}
\sum_{n>0}\Zag_{n(4mr/e_2+n)}(1/2)\times\\\times\sqrt{n+2mr/e_2}\Psi_k\left(\left(1+\frac{ne_2}{2mr}\right)^{-2}\right).
\end{multline}
Now we can immediately estimate \eqref{2mom MTn def}. Using \eqref{eq:subconvexity} and \eqref{approx.fun.eq.Vest}, we show that
\begin{multline}\label{2mom MTn est}
\MTn(r,K) \ll \sum_{k} h\left(\frac{k}{K}\right) \sum_{e \mid r^2} \sum_{m \ll K^{1+\epsilon}/e_1e_2} \frac{V_k(me_1e_2)}{\sqrt{me_1e_2}} \frac{(mr/e_2)^{\epsilon}}{\sqrt{kmr/e_2}} \\
\ll K^{1/2+\epsilon} \sum_{e \mid r^2} \frac{r^{\epsilon}}{\sqrt{re_1}} \ll \frac{K^{1/2+\epsilon}}{r^{1/2-\epsilon}},
\end{multline}
which is already smaller than the main term, which is roughly $K/\sqrt{r}$.
Nevertheless, one can prove a much stronger bound by invoking the presence of the oscillating factor $(-1)^k$ in \eqref{2mom MTn def}. Indeed, applying the Poisson summation formula, we obtain
\begin{multline}\label{2mom MTn est1}
\sum_{k}h\left(\frac{k}{K}\right)V_k(y)(-1)^k\frac{\Gamma(k-1/4)}{\Gamma(k+1/4)}=\\=
K\sum_{m}\int_{-\infty}^{\infty}h(z)V_{zK}(x)
\frac{\Gamma(Kz-1/4)}{\Gamma(Kz+1/4)}
e^{\pi iKz(2m+1)}dz.
\end{multline}
Arguing as in the proof of Lemma \ref{lem:V j-deriv}, we find
\begin{equation*}
\frac{d^j}{dz^j}\left(h(z)V_{zK}(x)
\frac{\Gamma(Kz-1/4)}{\Gamma(Kz+1/4)}\right)\ll K^{\epsilon}.
\end{equation*}
Therefore, integrating by parts $j$ times in \eqref{2mom MTn est1}, we get
\begin{equation}\label{2mom MTn est2}
\sum_{k}h\left(\frac{k}{K}\right)V_k(y)(-1)^k\frac{\Gamma(k-1/4)}{\Gamma(k+1/4)}\ll K^{-A},
\end{equation}
and thus
\begin{equation}\label{2mom MTn est3}
\MTn(r,K)\ll K^{-A}.
\end{equation}

%%%%%%%%%%%%%%%%%%%%%%%%%%%%%%%%%%%%%%%%%%%%%%%%%%%%%%%%%%%%%%%%%%%%%%%%%%%%%%%%%%%%%%%%%%%%%%%%%%%%%%%%%%%%%%%%%%%%%%%%%%
\section{Analysis of $\MT_2(r,K)$}\label{sec: MT2}
To deal with \eqref{2mom MT2def}, it is convenient to rewrite the main term of the first moment \eqref{M1mainformula}. Applying \cite[Theorem 5.5]{BF2018} with $s = 1/2+u$, we find that the main term equals
\begin{multline}\label{MT1 def2}
\MT_1(l,k)=\lim_{u\to0}\Bigl(
\frac{\zeta(1+2u)}{l^{1/2+u}}+\\+
\frac{(2\pi)^{1/2+u}}{\sqrt{\pi}}\frac{\zeta(2u)}{l^{1/2-u}}
\cos\left(\frac{\pi(1/2+u)}{2}\right)\frac{\Gamma(k-1/4-u/2)\Gamma(k+1/4-u/2)}{\Gamma(k+1/4+u/2)\Gamma(k-1/4+u/2)}\Gamma(u)
\Bigr).
\end{multline}
Applying the functional equation of the Riemann zeta-function \cite{HMF}, followed by \cite{HMF}, we obtain
\begin{multline}\label{MT1 zetaFE}
\zeta(2u)\Gamma(u)=
2^{2u}\pi^{2u-1}\zeta(1-2u)\sin(\pi u)\Gamma(1-2u)\Gamma(u)=\\=
\pi^{2u-3/2}\zeta(1-2u)\Gamma(1/2-u)\sin(\pi u)\Gamma(1-u)\Gamma(u)=
\pi^{2u-1/2}\zeta(1-2u)\Gamma(1/2-u).
\end{multline}
Using \cite[5.5.5]{HMF} twice yields
\begin{equation}\label{MT1 Gamma/Gamma}
\frac{\Gamma(k-1/4-u/2)\Gamma(k+1/4-u/2)}{\Gamma(k+1/4+u/2)\Gamma(k-1/4+u/2)}=2^{2u}\frac{\Gamma(2k-1/2-u)}{\Gamma(2k-1/2+u)}.
\end{equation}
Substituting \eqref{MT1 zetaFE} and \eqref{MT1 Gamma/Gamma} into \eqref{MT1 def2}, we obtain
\begin{equation}\label{MT1 def3}
\MT_1(l,k)=\lim_{u\to0}\left(
\frac{\zeta(1+2u)}{l^{1/2+u}}+
\frac{\zeta(1-2u)}{l^{1/2-u}}\MT_1(u,k)\right),
\end{equation}
where
\begin{equation}\label{MT1(u) def1}
\MT_1(u,k)=
\pi^{3u-1/2}2^{1/2+3u}\Gamma(1/2-u)
\frac{\Gamma(2k-1/2-u)}{\Gamma(2k-1/2+u)}
\cos(\pi(1/4+u/2)).
\end{equation}
Using \eqref{MT1 def3}, we rewrite \eqref{2mom MT2def} as
\begin{multline}\label{2mom MT2 eq1}
\MT_2(r,K)=2\sum_{k}h\left(\frac{k}{K}\right)\sum_{e|r^2}\lim_{u\to0}\sum_{m=1}^{\infty}\frac{V_k(me_1e_2)}{\sqrt{me_1e_2}}\\\times
\biggl(\frac{\zeta(1+2u)e_2^{1/2+u}}{(mr)^{1/2+u}}+
\frac{\zeta(1-2u)e_2^{1/2-u}}{(mr)^{1/2-u}}\MT_1(u,k) \biggr).
\end{multline}
Applying  \eqref{approx.fun.eq.Vdef}, we find
\begin{multline}\label{2mom MT2 eq2}
\MT_2(r,K)=2\sum_{k}h\left(\frac{k}{K}\right)\sum_{e|r^2}
\frac{1}{\sqrt{re_1}}
\lim_{u\to0}\frac{1}{2\pi i}\int_{(a)}
\frac{L_{\infty}(\sym^2f,1/2+z)}{L_{\infty}(\sym^2f,1/2)}\frac{\zeta(1+2z)}{(e_1e_2)^{z}}\\\times
\biggl(\frac{\zeta(1+2u)\zeta(1+z+u)}{(r/e_2)^{u}}+
\frac{\zeta(1-2u)\zeta(1+z-u)}{(r/e_2)^{-u}}\MT_1(u,k) \biggr)\frac{dz}{z}.
\end{multline}
Evaluating the limit by L'Hôpital's rule, we show that
\begin{multline}\label{2mom MT2 lim}
\lim_{u\to0}\biggl(\frac{\zeta(1+2u)\zeta(1+z+u)}{(r/e_2)^{u}}+
\frac{\zeta(1-2u)\zeta(1+z-u)}{(r/e_2)^{-u}}\MT_1(u,k) \biggr)=
\zeta'(1+z)+\\+
\zeta(1+z)\biggl(2\gamma-\log\frac{r}{e_2}-\frac{1}{2}\frac{d}{du}\MT_1(u,k)\bigl|_{u=0}\biggr)=\\=
\zeta'(1+z)+\zeta(1+z)\biggl(2\gamma-\log\frac{r}{e_2}-\frac{3}{2}\log(2\pi)+\frac{\pi}{4}+\frac{\psi(1/2)}{2}+\psi(2k-1/2)\biggr).
\end{multline}
Therefore, substituting \eqref{2mom MT2 lim} into \eqref{2mom MT2 eq2} and using \cite{HMF}, we find
\begin{multline}\label{2mom MT2 eq3}
\MT_2(r,K)=2\sum_{k}h\left(\frac{k}{K}\right)\sum_{e|r^2}
\frac{1}{\sqrt{re_1}}
\frac{1}{2\pi i}\int_{(a)}
\frac{L_{\infty}(\sym^2f,1/2+z)}{L_{\infty}(\sym^2f,1/2)}\frac{\zeta(1+2z)}{(e_1e_2)^{z}}\\\times
\Biggl(\zeta'(1+z)+\zeta(1+z)\biggl(\frac{3\gamma}{2}-\log\frac{r}{e_2}-\frac{3}{2}\log(2\pi)+\log(k)+\frac{\pi}{4}\biggr)\Biggr)
\frac{dz}{z}+O\left(\frac{(rK)^{\epsilon}}{\sqrt{r}}\right).
\end{multline}
Unfortunately, we are unable to obtain an asymptotic formula for this expression. The primary reason is that $\frac{L_{\infty}(\operatorname{sym}^2f,1/2+z)}{L_{\infty}(\operatorname{sym}^2f,1/2)}\sim k^z$ (see \eqref{Linf(z)/Linfe q0}), which yields the factor $(k/(e_1e_2))^{z}$ in the integrand. If $e_1e_2 \gg k^{1+\epsilon}$ or $e_1e_2 \ll k^{1-\epsilon}$, we can shift the line of integration to the left or to the right to obtain an asymptotic formula; however, this approach fails when $e_1e_2 \sim k$. This transitional case may occur since $e_1e_2 \mid r$ and $r \gg k$.
Note that the condition $e \mid r^2$ is equivalent to $e_1e_2 \mid r$, where $e = e_1 e_2^2$ and $e_1$ is square-free. We proceed to evaluate the sum over $e$ in \eqref{2mom MT2 eq3}. This will allow us to compute, in Section \ref{sec:Proof of Theorem 2mom}, the total sum of \eqref{2mom MT2 eq3} combined with the main terms arising from the analysis of $\SZE(r,K)$ and $\SIN(r,K)$ (see \eqref{2mom SZE def} and \eqref{2mom SIN def}).

%%%%%%%%%%%%%%%%%%%%%%%%%%%%%%%%%%%%%%%%%%%%%%%%%%%%%%%%%%%%%%%%%%%%%%%%%%%%%%%%%%%%%%%%%%%%
Let
\begin{equation}\label{C1 def}
\Cc_1(z):=\log\frac{k^2}{8\pi^3}+2\frac{\zeta'(1+z)}{\zeta(1+z)}+3\gamma+\frac{\pi}{2}.
\end{equation}
Using this notation, \eqref{2mom MT2 eq3} can be written as
\begin{multline}\label{2mom MT2 eq4}
\MT_2(r,K)=\frac{1}{\sqrt{r}}\sum_{k}h\left(\frac{k}{K}\right)
\frac{1}{2\pi i}\int_{(a)}\frac{L_{\infty}(\sym^2f,1/2+z)}{L_{\infty}(\sym^2f,1/2)}\zeta(1+2z)\zeta(1+z)\\\times
\sum_{e_1e_2|r}\frac{|\mu(e_1)|}{(e_1e_2)^{z}\sqrt{e_1}}\left(\Cc_1(z)+2\log\frac{e_2}{r}\right)
\frac{dz}{z}+O\left(\frac{(rK)^{\epsilon}}{\sqrt{r}}\right).
\end{multline}
We now evaluate the inner sum:
\begin{multline}\label{MT sum e def}
\sum_{e_1e_2|r}\frac{|\mu(e_1)|}{(e_1e_2)^{z}\sqrt{e_1}}\left(\Cc_1(z)+2\log\frac{e_2}{r}\right)=\\=
\sum_{e_2|r}\sum_{e_1|r/e_2}\frac{|\mu(e_1)|}{(e_1e_2)^{z}\sqrt{e_1}}\left(\Cc_1(z)+2\log\frac{e_2}{r}\right)=\\=
\sum_{e_3|r}\sum_{e_1|e_3}\frac{|\mu(e_1)|}{\sqrt{e_1}}\left(\frac{e_3}{e_1r}\right)^{z}\left(\Cc_1(z)-2\log e_3\right)=\\=
r^{-z}
\sum_{e_3|r}
\left(\Cc_1(z)-2\frac{d}{du}\right)e_3^{z+u}\Bigl|_{u=0}
\sum_{e_1|e_3}\frac{|\mu(e_1)|}{e_1^{1/2+z}}:=r^{-z}\MAP(z,r).
\end{multline}
Using multiplicativity and writing $r = r_1 r_2^2$, where $r_1$ and $r_2$ are coprime, square-free numbers, we get
\begin{multline}\label{MT sum e eq2}
\sum_{e_3|r}e_3^{z+u}\sum_{e_1|e_3}\frac{|\mu(e_1)|}{e_1^{1/2+z}}=
\prod_{p|r_1}\left(1+p^{z+u}(1+p^{-1/2-z})\right)\\\times
\prod_{p|r_2}\left(1+p^{z+u}(1+p^{-1/2-z})+p^{2z+2u}(1+p^{-1/2-z})\right).
\end{multline}
Substituting \eqref{MT sum e eq2} into \eqref{MT sum e def}, we obtain
\begin{multline}\label{MT sum e eq3}
r^{-z}\MAP(z,r)=r^{-z}
\prod_{p|r_1}\left(1+p^{z}+p^{-1/2}\right)
\prod_{p|r_2}\left(1+p^{z}+p^{-1/2}+p^{2z}+p^{-1/2+z})\right)\\\times
\Biggl(
\Cc_1(z)-2\sum_{p|r_1}\frac{p^{z}+p^{-1/2}}{1+p^{z}+p^{-1/2}}\log p
-2\sum_{p|r_2}\frac{p^{z}+p^{-1/2}+2p^{2z}+2p^{-1/2+z}}{1+p^{z}+p^{-1/2}+p^{2z}+p^{-1/2+z}}\log p
\Biggr).
\end{multline}
Finally, substitution of \eqref{MT sum e eq3} and \eqref{Linf(z)/Linfe q0} into \eqref{2mom MT2 eq4} yields
\begin{multline}\label{2mom MT2 eq5}
\MT_2(r,K)=\frac{1}{\sqrt{r}}\sum_{k}h\left(\frac{k}{K}\right)
\frac{1}{2\pi i}\int_{(a)}
\pi^{-3z/2}\frac{\Gamma\left(3/4+z/2\right)}{\Gamma\left(3/4\right)}\\\times
\zeta(1+2z)\zeta(1+z)\MAP(z,r)\left(\frac{k}{r}\right)^{z}\frac{dz}{z}
+O\left(\frac{(rK)^{\epsilon}}{\sqrt{r}}\right).
\end{multline}

%%%%%%%%%%%%%%%%%%%%%%%%%%%%%%%%%%%%%%%%%%%%%%%%%%%%%%%%%%%%%%%%%%%%%%%%%%%%%%%%%%%%%%%%%%%%%%%%%%%%%%%%%%%%%%%%%%%%%%%%%%
\section{Analysis of $\SZE(r,K)$}\label{sec: SZE}
It follows from  \eqref{approx.fun.eq.Vest} that one can truncate the sum over $m$ in \eqref{2mom SZE def} to $m\ll K^{1+\epsilon}/(e_1e_2)$ up to a negligible error term. Let us consider the contribution of $n\gg K^{\epsilon-2}mr/e_2$ to \eqref{2mom SZE def}. We are going to show that after averaging over $k$, such summands also become negligible. To simplify the exposition, let $l:=mr/e_2.$ We replace $\Phi_k(x)$ in \eqref{2mom SZE def} by \eqref{Phik LG}. To this end, let
\begin{equation}\label{Phik xi}
\cos^2\sqrt{\xi}=\left(1-\frac{n}{2l}\right)^2\Rightarrow \sin\frac{\sqrt{\xi}}{2}=\sqrt{\frac{n}{4l}} \Rightarrow \xi=4\arcsin^2\sqrt{\frac{n}{4l}}.
\end{equation}
Therefore, for $n \gg lK^{\epsilon-2}$, we have $K\sqrt{\xi} \gg K^{\epsilon}$ and we can replace the Bessel functions in \eqref{ZYxi} and \eqref{ZJxi} by their asymptotic expansions \cite[formulas 8.451.1 and 8.451.2]{GR}. Thus, taking sufficiently many terms in the asymptotic expansions \eqref{ZYxi}, \eqref{ZJxi}, and \eqref{CY CJ}, we see that it is enough to show that the following sum is negligible for $n \gg lK^{\epsilon-2}$:
\begin{equation}\label{SZE trunc1}
\sum_{k}h\left(\frac{k}{K}\right)V_k(x)e^{\pm2ki\sqrt{\xi}},
\end{equation}
where $x=me_1e_2$ and $\xi$ is given by \eqref{Phik xi}. Applying the Poisson summation formula, we have
\begin{equation*}
\sum_{k}h\left(\frac{k}{K}\right)V_k(x)e^{\pm2ki\sqrt{\xi}}=
K\sum_{m}\int_{-\infty}^{\infty}h(z)V_{zK}(x)e^{2iKz(\pm\sqrt{\xi}+\pi m)}dz.
\end{equation*}
Since $n < 2mr / e_2 = 2l$ in \eqref{2mom SZE def}, we infer $0 < \xi < \pi^2 / 4$, and thus, for $m \neq 0$, we have $K z(\pm\sqrt{\xi} + \pi m) \gg K$. Integrating by parts $j$ times with the use of \eqref{approx.fun.eq.V uK est}, we get that
\begin{equation*}\label{SZE trunc2}
\sum_{k}h\left(\frac{k}{K}\right)V_k(x)e^{\pm(2k-1)i\sqrt{\xi}}=
K\int_{-\infty}^{\infty}h(z)V_{zK}(x)e^{\pm2iKz\sqrt{\xi}}dz+O(K^{-A}).
\end{equation*}
For $K\sqrt{\xi} \gg K^{\epsilon}$, integrating by parts $j$ times once again, we finally obtain
\begin{equation*}
\sum_{k}h\left(\frac{k}{K}\right)V_k(x)e^{\pm2ki\sqrt{\xi}}\ll K^{-A}.
\end{equation*}
%%%%%%%%%%%%%%%%%%%5
Thus, up to a negligible error term, it holds that
\begin{multline}\label{SZE eq2}
\SZE(r,K)=2\sum_{k}h\left(\frac{k}{K}\right)\sum_{e|r^2}\sum_{m\ll K^{1+\epsilon}(e_1e_2)^{-1}}\frac{V_k(me_1e_2)}{m\sqrt{re_1}}\times\\
\sum_{1\leq n\ll K^{-2+\epsilon}mr/e_2}\Zag_{-n(4mr/e_2-n)}(1/2)\Phi_k\left(\left(1-\frac{ne_2}{2mr}\right)^2\right).
\end{multline}
Now we change the order of summation over $m$ and $n$, obtaining that the sums are over
\begin{equation}
1\le n\ll \frac{r}{K^{1-\epsilon}e},\quad
\frac{K^{2-\epsilon}ne_2}{r}\ll m\ll \frac{K^{1+\epsilon}}{e_1e_2},
\end{equation}
where $e=e_1e_2^2$. The next step is to perform the change of variables $4mr / e_2 - n = q$, that is,
\begin{equation}\label{SZE m to q}
m=\frac{q+n}{4re_2^{-1}},\quad q\equiv -n\Mod{\frac{4r}{e_2}}.
\end{equation}
This results in
\begin{multline}\label{SZE eq3}
\SZE(r,K)=8\sum_{k}h\left(\frac{k}{K}\right)\sum_{e|r^2}\frac{\sqrt{r}}{\sqrt{e}}\sum_{n\ll rK^{-1+\epsilon}e^{-1}}\sum_{\substack{nK^{2-\epsilon}\ll q\ll rK^{1+\epsilon}/e\\q\equiv -n\Mod{4r/e_2}}}
V_k\left(\frac{(q+n)e}{4r}\right)\times\\
\frac{\Zag_{-qn}(1/2)}{q+n}\Phi_k\left(\left(1-\frac{2n}{q+n}\right)^2\right).
\end{multline}
Now we perform one more change of variables, $q = l / n$, obtaining the congruence condition $l + n^2 \equiv 0 \pmod{4rn / e_2}$. To detect this congruence, we apply the decomposition \eqref{delta delta* def} of the delta function:
\begin{equation}\label{deltaq(l)}
\delta_{4rn/e_2}(l+n^2)=\frac{e_2}{4rn}\sum_{c|4rn/e_2}
\mathop{{\sum}^*}_{a \Mod{c}}e\left(\frac{a(l+n^2)}{c}\right).
\end{equation}
Using  \eqref{deltaq(l)}, we rewrite \eqref{SZE eq3} as follows:
\begin{multline}\label{SZE eq4}
\SZE(r,K)=2\sum_{k}h\left(\frac{k}{K}\right)\sum_{e|r^2}\frac{e_2}{\sqrt{re}}\sum_{n\ll rK^{-1+\epsilon}e^{-1}}\sum_{c|4rn/e_2}
\mathop{{\sum}^*}_{a \Mod{c}}
e\left(\frac{an^2}{c}\right)\times\\
\sum_{n^2K^{2-\epsilon}\ll l\ll nrK^{1+\epsilon}/e}e\left(\frac{al}{c}\right)V_k\left(\frac{(l+n^2)e}{4rn}\right)
\frac{\Zag_{-l}(1/2)}{l+n^2}\Phi_k\left(\frac{(l-n^2)^2}{(l+n^2)^2}\right).
\end{multline}
To simplify the analysis of the integral transforms in the Voronoi summation formula, we decompose the sum over $l$ into dyadic intervals. Furthermore, we need to split the sum over $c$ in \eqref{SZE eq4} into three cases:
\begin{equation}\label{c cases}
c\equiv0\pmod{4},\quad (c,2)=1,\quad c\equiv2\pmod{4}.
\end{equation}
This yields
\begin{equation}\label{SZE jL def}
\SZE(r,K)=\sum_{j=0}^2\sum_{n^2K^{2-\epsilon}\ll L\ll nrK^{1+\epsilon}/e}\SZE^{(j)}(r,K,L),
\end{equation}
\begin{multline}\label{SZE jL eq1}
\SZE^{(j)}(r,K,L)=\\=2\sum_{k}h\left(\frac{k}{K}\right)\sum_{e|r^2}\frac{e_2}{\sqrt{re}}\sum_{n\ll rK^{-1+\epsilon}e^{-1}}
\sum_{\substack{c|4rn/e_2\\c\equiv \pm j\Mod{4}}}
\mathop{{\sum}^*}_{a \Mod{c}}
e\left(\frac{an^2}{c}\right)\SZE^{(j)}(a,c,r,k,L),
\end{multline}
\begin{equation}\label{SZE jcL eq1}
\SZE^{(j)}(a,c,r,k,L)=
\sum_{l}e\left(\frac{al}{c}\right)V_k\left(\frac{(l+n^2)e}{4rn}\right)U\left(\frac{l}{L}\right)
\frac{\Zag_{-l}(1/2)}{l+n^2}\Phi_k\left(\frac{(l-n^2)^2}{(l+n^2)^2}\right).
\end{equation}
where $U(x)$ is a smooth compactly supported function.
It is convenient to rewrite the sum over $l$ in \eqref{SZE jcL eq1} in the form of \eqref{Thm.eq Voronoi an c0mod4}. To do this, we replace $l$ by $-l$ in \eqref{SZE jcL eq1} and $a$ by $-a$ in \eqref{SZE jL eq1}, obtaining
\begin{multline}\label{SZE jL eq2}
\SZE^{(j)}(r,K,L)=\\=
2\sum_{k}h\left(\frac{k}{K}\right)\sum_{e|r^2}\frac{e_2}{\sqrt{re}}\sum_{n\ll rK^{-1+\epsilon}e^{-1}}
\sum_{\substack{c|4rn/e_2\\c\equiv \pm j\Mod{4}}}
\mathop{{\sum}^*}_{a \Mod{c}}
e\left(\frac{-an^2}{c}\right)\SZE^{(j)}(a,c,r,k,L),
\end{multline}
\begin{equation}\label{SZE jcL eq2}
\SZE^{(j)}(a,c,r,k,L)=
\sum_{l}\frac{\Zag_{l}\left(1/2\right)}{\sqrt{|l|}}g_1(l)e\left(\frac{al}{c}\right),
\end{equation}
\begin{equation}\label{SZE g1 def}
g_1(l)=
V_k\left(\frac{(-l+n^2)e}{4rn}\right)U\left(\frac{-l}{L}\right)
\frac{\sqrt{-l}}{-l+n^2}\Phi_k\left(\frac{(-l-n^2)^2}{(-l+n^2)^2}\right).
\end{equation}
Now we apply the Voronoi formula to \eqref{SZE jcL eq2} (specifically, we apply \eqref{Thm.eq Voronoi an c0mod4} if $j=0$, \eqref{Thm.eq Voronoi an codd} if $j=1$, and \eqref{Thm.eq Voronoi an c2mod4} if $j=2$). First, we estimate the integral transforms \eqref{phi hat+ to Phipm def0} and \eqref{phi hat- to Phipm def0} of $g_1(x)$ arising on the right-hand side of the Voronoi formula.

%%%%%%%%%%%%%%%%%%%%%%%%%%%%%%%%%%%%%%%%%%%%%%%%%%%%%5
\begin{lem}\label{lem:g1 est}
For $m \in \mathbb{Z}$, $L \gg n^2 K^{2-\epsilon}$, $\alpha \in \{\pi^2, 4\pi^2\}$, and any $A > 1$,it holds that
\begin{equation}\label{g1 est1}
\widehat{g_1}\left(\frac{\alpha m}{c^2}\right)\ll\frac{1}{(L|m|/c^2)^{A}} \quad\hbox{if}\quad \frac{L|m|}{c^2}\gg k^{\epsilon},
\end{equation}
\begin{equation}\label{g1 est2}
\widehat{g_1}\left(\frac{\alpha m}{c^2}\right)\ll k^{\epsilon}\sqrt{\frac{|m|}{c^2}}.
\end{equation}
\end{lem}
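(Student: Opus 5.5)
Since $g_1$ is supported on $-l\asymp L$, i.e. in $\R_-$, only the terms with $\phi(-x)$ survive in \eqref{phi hat+ to Phipm def0} and \eqref{phi hat- to Phipm def0}. For $y>0$ we therefore have
\begin{equation*}
\widehat{g_1}(\pm y)=\int_0^\infty \frac{g_1(-x)}{x}\,\Phi^{(\pm,-)}(xy)\,dx .
\end{equation*}
Two kernels appear. For $+$ it is $\Phi^{(+,-)}(x)\propto\sqrt{x}K_0(2\sqrt{x})$, and for $-$ it is $\Phi^{(-,-)}(x)=-\sqrt{x/2}\,(Y_0+J_0)(2\sqrt{x})$. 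In both cases I would substitute $x=Lt$ with $t\asymp 1$ and write $y=\alpha|m|/c^2$, so that the argument of the kernel is $2\sqrt{\alpha L|m|t}/c$.

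The first step is to control the weight $G(t):=g_1(-Lt)$, which has size and smoothness properties of the form
\begin{equation*}
G(t)=V_k\Bigl(\frac{(Lt+n^2)e}{4rn}\Bigr)U(t)\frac{\sqrt{Lt}}{Lt+n^2}\,\Phi_k\Bigl(\Bigl(\frac{Lt-n^2}{Lt+n^2}\Bigr)^2\Bigr).
\end{equation*}
I would check that $t^jG^{(j)}(t)\ll_j k^{\epsilon}L^{-1/2}$ for every $j$.

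\begin{itemize}
\item The factor $V_k$ has bounded log-derivatives by \eqref{approx.fun.eq.Vest}, and $U$ is smooth.
\item The factor $\sqrt{Lt}/(Lt+n^2)\asymp L^{-1/2}$, because $L\gg n^2K^{2-\epsilon}$ forces $n^2\ll L$.
\item For $\Phi_k$, the relation \eqref{Phik xi} gives $\sqrt{\xi}\asymp 2n/\sqrt{Lt}$, hence $k\sqrt{\xi}\ll K n/\sqrt{L}\ll K^{\epsilon}$. In this regime the Bessel functions in Theorem~\ref{thm:apprphi} sit at small argument: $J_0$ is smooth and bounded, and $Y_0$ grows only logarithmically. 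So $\Phi_k\ll k^{\epsilon}$, and each $t$-derivative costs at most a factor $k^{\epsilon}$, since the function depends on $t$ only through $k\sqrt{\xi(t)}\ll K^{\epsilon}$.
\end{itemize}

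Bound \eqref{g1 est2} then follows from the trivial estimate. The kernel satisfies $\Phi^{(\pm,-)}(u)\ll \sqrt{u}\,(1+|\log u|)$ uniformly, because $\sqrt{u}K_0(2\sqrt{u})$ and $\sqrt{u}\,(Y_0+J_0)(2\sqrt{u})$ are both bounded by $\sqrt{u}$ times a logarithm, or by $u^{1/4}\le\sqrt{u}$ when $u\ge1$. Hence
\begin{equation*}
\widehat{g_1}\Bigl(\frac{\alpha m}{c^2}\Bigr)\ll \int_{t\asymp1}k^{\epsilon}L^{-1/2}\sqrt{\frac{L|m|}{c^2}}\,\frac{dt}{t}\ll k^{\epsilon}\sqrt{\frac{|m|}{c^2}},
\end{equation*}
and the logarithm is absorbed into $k^{\epsilon}$ because everything is polynomially bounded in $k$.

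For \eqref{g1 est1}, assume $X:=L|m|/c^2\gg k^{\epsilon}$, so the kernel argument $\asymp\sqrt{X}$ is large.

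\emph{Case $\Phi^{(+,-)}$.} Here $K_0(2\sqrt{\alpha Xt})\ll e^{-c\sqrt{X}}$, which is far smaller than $X^{-A}$. No cancellation is needed.

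\emph{Case $\Phi^{(-,-)}$.} I would use the asymptotic expansion of $Y_0\pm J_0$ at large argument, as in \cite[8.451]{GR}. This writes the kernel as a finite sum of terms $\sqrt{Xt}\,(Xt)^{-1/4-j/2}e^{\pm 4i\sqrt{\alpha Xt}/2}$ plus a remainder of order $X^{-A}$. Each term is an oscillatory integral with phase $\pm2\sqrt{\alpha Xt}$, whose derivative is $\asymp\sqrt{X}$ on $t\asymp1$. Integrating by parts $j$ times, each step gains a factor $X^{-1/2}$ and loses at most $k^{\epsilon}$ from differentiating $G$, so the result is $\ll (k^{\epsilon}/\sqrt{X})^{j}X^{1/4}$. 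Since $X\gg k^{\epsilon}$, taking $j$ large in terms of $A$ and the $\epsilon$'s yields $\ll X^{-A}$.

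The main obstacle is the uniform derivative bound for the $\Phi_k$ factor in $G$. One needs that the $t$-derivatives of $\Phi_k(\cos^2\sqrt{\xi(t)})$ lose only $k^{\epsilon}$, including the $Y_0$ logarithm near $k\sqrt{\xi}$ small and the error terms in \eqref{ZYxi}–\eqref{ZJxi}. I would handle this by using the representation of Theorem~\ref{thm:apprphi} with the Bessel functions expanded as power series in $k\sqrt{\xi}\ll K^{\epsilon}$. Alternatively, one can work directly from the hypergeometric definition \eqref{defphi} near $x=1$ via the connection formula, where $\Phi_k$ is a combination of ${}_2F_1(\cdot;1-x)$ and a logarithm, and then differentiate term by term.
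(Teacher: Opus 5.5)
Your overall architecture is sound, and in places more careful than the paper's. You correctly note that only $\Phi^{(+,-)}$ (a $K_0$ kernel, settled by exponential decay) and $\Phi^{(-,-)}$ (a $Y_0+J_0$ kernel) occur. Your bound \eqref{g1 est2} is the trivial estimate, and integrating by parts against the phases $e^{\pm 2i\sqrt{\alpha Xt}}$ is an acceptable substitute for the exact identity \eqref{Harcos est}.

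The gap is exactly the step you flag. All of \eqref{g1 est1} rests on $t^jG^{(j)}(t)\ll k^{\epsilon}L^{-1/2}$, i.e.\ on controlling $t$-derivatives of $\Phi_k\bigl((Lt-n^2)^2/(Lt+n^2)^2\bigr)$, and neither route you suggest delivers this.
\begin{itemize}
\item Theorem~\ref{thm:apprphi} is purely pointwise. The coefficient functions $A_Y,B_Y,A_J,B_J$ are only known to be $\ll 1$ and the remainders are $O$-terms, so nothing in it may be differentiated. Your remark that the function depends on $t$ only through $k\sqrt{\xi}$ is true of the leading Bessel terms only.
\item The logarithmic connection formula at $x=1$ does give an exact convergent series. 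However, its coefficients have size about $(k^2(1-x))^{\ell}/(\ell!)^2$ with essentially alternating signs, so differentiating term by term and bounding absolutely throws away the cancellation. This costs a factor of order $e^{2k\sqrt{1-x}}$. At the edge $L\asymp n^2K^{2-\epsilon}$ of the admissible range that factor is $e^{cK^{\epsilon/2}}$, which is superpolynomial and not $\ll K^{\epsilon}$.
\end{itemize}

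The missing idea is the one the paper uses. The function $Y_k(x)=x^{1/4}(1-x)^{1/2}\Phi_k(x)$ solves \eqref{Y dif eq}, whose potential is $\ll K^{\epsilon}/(1-x)^2$ in the relevant range $1-x\ll n^2/L\ll K^{\epsilon-2}$.
\begin{itemize}
\item Iterate the chain rule and eliminate $Y_k''$ through the equation. This expresses every $x$-derivative of $Y_k\bigl((xL-n^2)^2/(xL+n^2)^2\bigr)$ as $h_{1,j}Y_k'+h_{2,j}Y_k$, with $h_{1,j}\ll K^{\epsilon}n^2/L$ and $h_{2,j}\ll K^{\epsilon}$.
\item The $Y_k$ part needs only the size bound $\Phi_k\ll K^{\epsilon}$.
\item By \eqref{Y 1der}, the $Y_k'$ part is a bounded multiple of $\frac{d}{dx}Y_k(\cdots)$. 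One further integration by parts therefore moves that derivative onto the Bessel factor.
\end{itemize}
In this way only the size of $\Phi_k$ is ever used, never derivative information from an asymptotic expansion.

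If you prefer to keep your own formulation, you can use the same equation together with a Landau-type inequality $\|f'\|\ll \|f\|/\delta+\delta\|f''\|$ on intervals of length $\delta\asymp 1-x$. This gives $Y_k'\ll K^{\epsilon}(1-x)^{-1/2}$. Differentiating the equation repeatedly then gives the higher derivatives, which is precisely what your bound on $G$ requires. With that input, your integration-by-parts argument goes through.
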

\begin{proof}
Let $y: = \alpha m / c^2$. It follows from \eqref{SZE g1 def}, \eqref{phi hat+ to Phipm def0}, \eqref{phi hat- to Phipm def0}, \eqref{Phi++--def}, and \eqref{Phi+--+def} that
\begin{equation}\label{g1 hat eq1}
\widehat{g_1}(y) \ll \int_0^{\infty} \frac{g_1(-x)}{x} \sqrt{x |y|} B_{0}(2\sqrt{x |y|}) \, dx,
\end{equation}
where $B_0$ is either the $Y_0$, $J_0$, or $K_0$ Bessel function.
Substituting  \eqref{SZE g1 def} into \eqref{g1 hat eq1}, we obtain
\begin{equation}\label{g1 hat eq2}
\widehat{g_1}(y)\ll\sqrt{|y|}\int_0^{\infty}
V_k\left(\frac{(x+n^2)e}{4rn}\right)U\left(\frac{x}{L}\right)
\Phi_k\left(\frac{(x-n^2)^2}{(x+n^2)^2}\right)
\frac{B_{0}(2\sqrt{x|y|})dx}{x+n^2},
\end{equation}
and therefore,
\begin{equation}\label{g1 hat eq3}
\widehat{g_1}(y)\ll \sqrt{|y|}\int_0^{\infty}
V_k\left(\frac{(xL+n^2)e}{4rn}\right)U\left(x\right)
\Phi_k\left(\frac{(xL-n^2)^2}{(xL+n^2)^2}\right)
\frac{B_{0}(2\sqrt{xL|y|})dx}{x+n^2/L}.
\end{equation}
Estimating the integral trivially by taking the absolute value (and using the estimates $B_0(x) \ll x^{\epsilon}$, $V_k(x) \ll k^{\epsilon}$, and $\Phi_k(x) \ll k^{\epsilon}$), we immediately obtain the estimate \eqref{g1 est2}. To establish the first estimate \eqref{g1 est1}, we shall apply the following result (see \cite[Lemma~6.1]{Har}):
\begin{equation}\label{Harcos est}
\int_0^{\infty}F(x)B_0(a\sqrt{x})dx=\pm\left(\frac{2}{a}\right)^j\int_0^{\infty}F^{(j)}(x)
x^{j/2}B_j(a\sqrt{x})dx,
\end{equation}
where $F$ is a smooth compactly supported function.
Thus, to estimate \eqref{g1 hat eq3} by applying \eqref{Harcos est}, we need to investigate the higher derivatives of the functions in the integrand of \eqref{g1 hat eq3}. The only nontrivial case involves the derivatives of the function $\Phi_k$. There are various approaches to estimating the higher derivatives of a hypergeometric function; one of the simplest methods is to use the fact that it satisfies a second-order linear differential equation. It follows from \cite[Section~2.7.2, formulas~(7)–(9)]{BE} that the function
\begin{equation}\label{Y def}
Y_k(x)=x^{1/4}(1-x)^{1/2}\Phi_k(x)
\end{equation}
satisfies the differential equation
\begin{equation}\label{Y dif eq}
Y_k''+\left(\frac{3}{16x^2}+\frac{1}{4(1-x)^2}+\frac{(2k-1)^2+3/4}{4x(1-x)}\right)Y_k=0.
\end{equation}
Note that we have the following estimates for the argument of $\Phi_k$ in \eqref{g1 hat eq3}:
\begin{equation}\label{Y arg}
0<1-\frac{(xL-n^2)^2}{(xL+n^2)^2}=\frac{4xLn^2}{(xL+n^2)^2}\ll\frac{n^2}{L}\ll k^{\epsilon-2}
.\end{equation}
Thus, the term in the brackets in \eqref{Y dif eq} is smaller than $K^{\epsilon} / (1-x)^2$.

Roughly speaking, for $1-x \ll K^{\epsilon-2}$, we have $Y_k''(x) \ll K^{\epsilon} Y_k(x) / (1-x)^2$. Using \eqref{Y def}, we rewrite \eqref{g1 hat eq3} as
\begin{multline}\label{g1 hat eq4}
\widehat{g_1}(y) \ll \sqrt{\frac{L|y|}{n^2}} \int_0^{\infty} V_k\left(\frac{(xL+n^2)e}{4rn}\right) U(x) Y_k\left(\frac{(xL-n^2)^2}{(xL+n^2)^2}\right) \\
\times \left(\frac{x + n^2/L}{x - n^2/L}\right)^{1/2} B_{0}\bigl(2\sqrt{xL|y|}\,\bigr) \, \frac{dx}{\sqrt{x}}.
\end{multline}
Let
\begin{equation}\label{g1 F def}
F(x):=V_k\left(\frac{(xL+n^2)e}{4rn}\right)\left(\frac{x+n^2/L}{x-n^2/L}\right)^{1/2}\frac{U\left(x\right)}{\sqrt{x}}.
\end{equation}
Then, applying \eqref{Harcos est}, we obtain
\begin{equation}\label{g1 hat eq5}
\widehat{g_1}(y)\ll \frac{\sqrt{L|y|/n^2}}{(L|y|)^{j/2}}\int_0^{\infty}
\frac{d^j}{dx^j}\left(F(x)Y_k\left(\frac{(xL-n^2)^2}{(xL+n^2)^2}\right)\right)
x^{j/2}B_j(2\sqrt{xL|y|})dx.
\end{equation}
Using \eqref{approx.fun.eq.Vest}, we infer that $F^{(n)}(x)\ll k^{\epsilon}.$
To estimate $\frac{d^n}{dx^n} Y_k \left( \frac{(xL-n^2)^2}{(xL+n^2)^2} \right)$, we apply \eqref{Y dif eq}. Straightforward calculations then lead to
\begin{equation}\label{Y 1der}
\frac{d}{dx}Y_k\left(\left(1-\frac{2n^2/L}{x+n^2/L}\right)^2\right)=
\left(\frac{4n^2/L}{(x+n^2/L)^2}-\frac{8(n^2/L)^2}{(x+n^2/L)^3}\right)
Y'_k\left(\frac{(xL-n^2)^2,}{(xL+n^2)^2}\right),
\end{equation}
\begin{multline}\label{Y 2der}
\frac{d^2}{dx^2}Y_k\left(\left(1-\frac{2n^2/L}{x+n^2/L}\right)^2\right)=
\left(\frac{24(n^2/L)^2}{(x+n^2/L)^4}-\frac{8n^2/L}{(x+n^2/L)^3}\right)
Y'_k\left(\frac{(xL-n^2)^2}{(xL+n^2)^2}\right)+\\+
\left(\frac{4n^2/L}{(x+n^2/L)^2}-\frac{8(n^2/L)^2}{(x+n^2/L)^3}\right)^2
Y''_k\left(\frac{(xL-n^2)^2}{(xL+n^2)^2}\right).
\end{multline}
Expressing $Y_k''$ by means of \eqref{Y dif eq}, we obtain
\begin{equation}\label{Y 2der2}
\frac{d^2}{dx^2}Y_k\left(\left(1-\frac{2n^2/L}{x+n^2/L}\right)^2\right)=
h_{1,1}(x)Y'_k\left(\frac{(xL-n^2)^2}{(xL+n^2)^2}\right)+h_{2,1}(x)
Y_k\left(\frac{(xL-n^2)^2}{(xL+n^2)^2}\right),
\end{equation}
where $h_{1,1}\ll n^2/L$ and $h_{2,1}\ll 1+\frac{k^2n^2}{L}\ll k^{\epsilon}$. Repeating this process, we infer
\begin{equation}\label{Y 2der3}
\frac{d^n}{dx^n}Y_k\left(\left(1-\frac{2n^2/L}{x+n^2/L}\right)^2\right)=
h_{1,n}(x)Y'_k\left(\frac{(xL-n^2)^2}{(xL+n^2)^2}\right)+h_{2,n}(x)
Y_k\left(\frac{(xL-n^2)^2}{(xL+n^2)^2}\right),
\end{equation}
where $h_{1,n}\ll k^{\epsilon}n^2/L$ and $h_{2,n}\ll k^{\epsilon}$. Therefore, \eqref{g1 hat eq5} can be rewritten as
\begin{multline}\label{g1 hat eq6}
\widehat{g_1}(y)\ll \frac{\sqrt{L|y|/n^2}}{(L|y|)^{j/2}}\int_0^{\infty}
\left(
h_{1,j}(x)Y'_k\left(\frac{(xL-n^2)^2}{(xL+n^2)^2}\right)+h_{2,j}(x)
Y_k\left(\frac{(xL-n^2)^2}{(xL+n^2)^2}\right)
\right)\\\times
B_j(2\sqrt{xL|y|})dx,
\end{multline}
where $h_{1,j}\ll k^{\epsilon}n^2/L$ and $h_{2,j}\ll k^{\epsilon}$.

By virtue of the estimates
\begin{equation}\label{Y est1}
Y_k\left(\frac{(xL-n^2)^2}{(xL+n^2)^2}\right) \ll \sqrt{\frac{n^2}{L}} \Phi_k\left(\frac{(xL-n^2)^2}{(xL+n^2)^2}\right) \ll K^{\epsilon} \sqrt{\frac{n^2}{L}},
\end{equation}
one can immediately evaluate the term involving $Y_k$ in \eqref{g1 hat eq6}, which yields
\begin{equation}\label{g1 hat eq7}
\widehat{g_1}(y) \ll \frac{K^{\epsilon} \sqrt{|y|}}{(L|y|)^{j/2+1/4}} + \frac{\sqrt{L|y|/n^2}}{(L|y|)^{j/2}} \int_0^{\infty} h_{1,j}(x) Y'_k\left(\frac{(xL-n^2)^2}{(xL+n^2)^2}\right) B_j\bigl(2\sqrt{xL|y|}\,\bigr) \, dx.
\end{equation}
Using \eqref{Y 1der} along with the estimate $h_{1,j} \ll K^{\epsilon} n^2 / L$, we integrate the last integral by parts to obtain
\begin{equation}
\widehat{g_1}(y)\ll
\frac{k^{\epsilon}\sqrt{|y|}}{(L|y|)^{j/2+1/4}}+
\frac{\sqrt{L|y|/n^2}}{(L|y|)^{j/2}}\int_0^{\infty}
Y_k\left(\frac{(xL-n^2)^2}{(xL+n^2)^2}\right)\frac{d}{dx}\left(h_{3,j}(x)
B_j(2\sqrt{xL|y|})\right)dx,
\end{equation}
where $h_{3,j} \ll K^{\epsilon}$. Applying \cite[~10.6.2 or 10.29.2]{HMF} to evaluate the derivative of the Bessel function and using \eqref{Y est1}, we find that
\begin{equation}\label{g1 hat eq8}
\widehat{g_1}(y) \ll \frac{K^{\epsilon} \sqrt{|y|}}{(L|y|)^{j/2+1/4}} + \frac{K^{\epsilon} \sqrt{|y|}}{(L|y|)^{j/2-1/4}} \ll \frac{1}{(L|y|)^{A}},
\end{equation}
provided that $L|y| \gg K^{\epsilon}$.
\end{proof}
%%%%%%%%%%%%%%%%%%%%%%%%%%%%%%%%%%%%%%%%%%%%%%%%%%%%%5
Using lemma \ref{lem:g1 est}, we deduce the next result.
\begin{lem}\label{lem:SZE sumj MT+ET}
The following asymptotic formula holds:
\begin{multline}\label{SZE sumj MT+ET}
\sum_{j=0}^2\mathop{{\sum}^*}_{a \Mod{c}}
\sum_{\substack{c|4rn/e_2\\c\equiv \pm j\Mod{4}}}
e\left(\frac{-an^2}{c}\right)
\SZE^{(j)}(a,c,r,k,L)=\\=\int_0^{\infty}g_1(-y)
\Biggl(\sum_{\substack{q|rn/e_2\\(q,2)=1}}\left(
\frac{(1-i)}{8q}S^{\Gamma_0(4)}_{\infty,\infty}(0,n^2;4q;\nu)-
\frac{i}{4q}S^{\Gamma_0(4)}_{1/1,\infty}(0,n^2;2q;\nu)\right)+\\+
\sum_{\substack{q|rn/e_2\\(q,2)=2}}\frac{(1-i)}{8q}S^{\Gamma_0(4)}_{\infty,\infty}(0,n^2;4q;\nu)
\Biggr)\left(\log\frac{2y}{\pi (4q)^2}+\frac{\pi}{2}+3\gamma\right)\frac{dy}{\sqrt{y}}
+O\left(\frac{k^{\epsilon}(rn/e_2)^{3/2+\epsilon}}{L}\right).
\end{multline}
\end{lem}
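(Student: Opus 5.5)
We apply the appropriate Voronoi formula (Lemma \ref{Thm Voronoi an c0mod4}, \ref{Thm Voronoi an codd} or \ref{Thm Voronoi an c2mod4}, according to $j$) to each inner sum $\SZE^{(j)}(a,c,r,k,L)$ in \eqref{SZE jcL eq2}, with $\phi=g_1$. Since $U(-l/L)$ vanishes unless $l<0$, the support of $g_1$ lies in $\R_-$. Each application produces two pieces: a residue piece involving $\Resid(c)$, $\Resid(4c)$ or $\Resid(4c_1)$, and a dual sum over $m\neq0$ weighted by $\widehat{g_1}(\alpha m/c^2)$.

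\textbf{Residue pieces.} Because $g_1$ is supported on $\R_-$, Lemma \ref{lem:Voronoi MT} leaves only its second integral. In every case the modulus inside $\Resid$ has the form $4q$, where:
\begin{itemize}
\item $q=c/4$ if $c\equiv0\Mod 4$;
\item $q=c$ if $(c,2)=1$;
\item $q=c_1=c/2$ if $c\equiv2\Mod 4$.
\end{itemize}
This produces the factor $\frac{1}{4q\sqrt2}\bigl(\log\frac{2y}{\pi(4q)^2}+\frac\pi2+3\gamma\bigr)$ multiplying $\int_0^\infty g_1(-y)y^{-1/2}\,dy$.

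It remains to sum the multiplier factors against $e(-an^2/c)$ over $a\Mod c$:
\begin{itemize}
\item For $c=4q$, the factor $\TM(M_0)e(1/8)=\bar\epsilon_d\bigl(\frac cd\bigr)e(1/8)$. Summed over $d\equiv\bar a$, this becomes $e(1/8)\overline{S^{\Gamma_0(4)}_{\infty,\infty}(0,n^2;4q;\nu)}$ by \eqref{Kl 4N infty infty c=0(4N)}. Formulas \eqref{Kl 4N infty infty 0n4q n01 q even} and \eqref{Kl 4N infty infty 0n4q n01 q odd} show this sum is $(1+i)$ times a real number. Hence conjugation followed by multiplication by $e(1/8)/(4q\sqrt2)$ gives the coefficient $(1-i)/(8q)$, up to the constant bookkeeping I must verify.
\item For odd $c=q$, the factors $\TM(M_1)\sqrt2$ combine through \eqref{Kl 4N 1/N infty} into $S^{\Gamma_0(4)}_{1/1,\infty}(0,n^2;2q;\nu)$, giving the coefficient $-i/(4q)$.
\item For $c=2q$ with $q$ odd, the two pieces with $\TM(M_4)^{-1}$ and $\TM(M_5)$ should reduce, after the substitution $a\mapsto$ its inverse, to sums of the same two types. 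Together with the odd case, they must assemble into the displayed combination over odd $q$.
\end{itemize}
The divisibility $c\mid 4rn/e_2$ becomes $q\mid rn/e_2$ in all cases. The split into $(q,2)=1$ and $(q,2)=2$ reflects that the $c\equiv0\Mod 4$ terms contribute for every $q$, while the odd and $2\Mod 4$ terms require $q$ odd. I expect this matching of multiplier systems and Gauss-sum normalizations to be the main obstacle. I would handle it by evaluating everything explicitly through \eqref{Kl 4N 1/N infty}, \eqref{Kl 4N 1/Ninfty 0n}, \eqref{Kl 4N infty infty 0n4q n01 q even} and \eqref{Kl 4N infty infty 0n4q n01 q odd}, and checking that the $c\equiv2\Mod4$ contributions partially cancel or recombine as claimed.

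\textbf{Dual terms.} Here the sum over $a$ produces Kloosterman sums $S(\cdot,n^2\text{ or }\cdot;c)$ of the two half-integral types. By \eqref{S 1/1inf infinf est} these are $\ll c^{1/2+\epsilon}\sqrt{(m,n^2,c)}$. By Lemma \ref{lem:g1 est}, $\widehat{g_1}(\alpha m/c^2)$ is negligible once $|m|\gg c^2k^\epsilon/L$, and is $\ll k^\epsilon\sqrt{|m|}/c$ otherwise. We use $\Zag_m(1/2)/\sqrt{|m|}\ll |m|^{\theta-1/2}$ from \eqref{eq:subconvexity}, or the second-moment bound \eqref{LZag 2mom estimate} via Cauchy--Schwarz. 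The dual sum for fixed $c$ is then
\[
\ll k^\epsilon c^{1/2}\sum_{|m|\ll c^2k^\epsilon/L}\frac{\sqrt{(m,c)}}{c}\ll k^\epsilon\frac{c^{3/2+\epsilon}}{L}.
\]
Summing over $c\mid 4rn/e_2$, with at most $(rn)^\epsilon$ divisors, gives the error $O(k^\epsilon(rn/e_2)^{3/2+\epsilon}/L)$.
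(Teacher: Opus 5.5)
Your treatment of $c\equiv0\pmod 4$ and $(c,2)=1$ agrees with the paper. The coefficients $\frac{1-i}{8q}$ (for $c=4q$, any $q\mid rn/e_2$) and $-\frac{i}{4q}$ (for odd $c=q$) are correct, and the dual terms are bounded as in the paper. Only the Cauchy--Schwarz route with \eqref{LZag 2mom estimate} gives the stated error; the pointwise bound \eqref{eq:subconvexity} loses a factor $(c^2k^{\epsilon}/L)^{1/6}$, which is large when $r\gg K$.

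The genuine gap is the residue term for $c\equiv 2\pmod 4$. You leave it uncomputed and predict that it will ``assemble with the odd case into the displayed combination over odd $q$''. By your own bookkeeping this cannot happen. The $c=4q$ terms with $q$ odd, together with the odd-$c$ terms, already produce the whole odd-$q$ expression in the statement. So the lemma is true only if the $c\equiv2\pmod4$ residue contribution is \emph{exactly zero}, and any nonzero ``recombination'' would give a wrong coefficient.

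The missing idea is an exact cancellation between the two residue terms in Lemma \ref{Thm Voronoi an c2mod4}, which carry the same $\Resid(4c_1)$. Write $c=2c_1$ and each reduced $a\pmod c$ as $a=2a_1+c_1$ with $(a_1,c_1)=1$, so that $e(-an^2/c)=e(-n^2/2)\,e(-a_1n^2/c_1)$. Then $\bigl(\frac{8a}{c_1}\bigr)=\bigl(\frac{a_1}{c_1}\bigr)$. On the other side, $2ad_5\equiv-1\pmod{c_1}$ gives $4d_5\equiv-\overline{a_1}$, hence $\bar\epsilon_{c_1}\bigl(\frac{4d_5}{c_1}\bigr)=\bar\epsilon_{c_1}(-1)^{(c_1-1)/2}\bigl(\frac{a_1}{c_1}\bigr)=\epsilon_{c_1}\bigl(\frac{a_1}{c_1}\bigr)$. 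Therefore
\[
\sideset{}{^*}\sum_{a \Mod{c}}e\Bigl(-\frac{an^2}{c}\Bigr)\TM(M_4)^{-1}=\sideset{}{^*}\sum_{a \Mod{c}}e\Bigl(-\frac{an^2}{c}\Bigr)\TM(M_5)=\epsilon_{c_1}e\Bigl(-\frac{n^2}{2}\Bigr)\sideset{}{^*}\sum_{a_1 \Mod{c_1}}\Bigl(\frac{a_1}{c_1}\Bigr)e\Bigl(-\frac{a_1n^2}{c_1}\Bigr).
\]
In the paper's notation this common value is $-ie(-n^2/4)S^{\Gamma_0(4)}_{1/1,\infty}(n^2,0;2c_1;\nu)$, so the main term $\TM(M_4)^{-1}-\TM(M_5)$ vanishes. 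The $c\equiv2\pmod4$ moduli then contribute only dual sums, handled by the same Weil-bound argument. With this cancellation in place, the rest of your outline goes through.
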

\begin{proof}
In the case $j = 0$ (that is, $c \equiv 0 \pmod{4}$), we apply \eqref{Thm.eq Voronoi an c0mod4} and Lemma~\ref{lem:Voronoi MT} to obtain
\begin{multline}\label{SZE j=0 eq1}
\mathop{{\sum}^*}_{a \Mod{c}}e\left(\frac{-an^2}{c}\right)
\SZE^{(0)}(a,c,r,k,L)=\\=
\frac{ e(1/8)}{c}
\mathop{{\sum}^*}_{a \Mod{c}}e\left(\frac{-an^2}{c}\right)\TM(M_0)\int_0^{\infty}g_1(-y)
\left(\log\frac{2y}{\pi c^2}+\frac{\pi}{2}+3\gamma\right)\frac{dy}{\sqrt{2y}}+\\+
e(1/8)\sum_{l\neq0}\widehat{g_1}\left(\frac{4\pi^2l}{c^2}\right)
\frac{\Gamma\left(\frac{1}{2}-\frac{\sgn{l}}{4}\right)\Zag_{l}\left(\frac{1}{2}\right)}{\Gamma(3/4)\sqrt{|l|}}
\sum_{\substack{a\Mod{c}\\ad\equiv1\Mod{c}}}e\left(\frac{-an^2-dl}{c}\right)\TM(M_0),
\end{multline}
where $\TM(M_0)=\bar{\epsilon}_{d}\left(\frac{c}{d}\right)$.
Since $c > 0$, it holds that $\left(\frac{c}{-d} \right) = \left(\frac{c}{d} \right)$, and applying \eqref{Kl 4N infty infty c=0(4N)}, we obtain
\begin{multline}\label{SZE j=0 eq2}
\sum_{\substack{a\Mod{c}\\ad\equiv1\Mod{c}}}e\left(\frac{-an^2-dl}{c}\right)\TM(M_0)=\sum_{\substack{d\Mod{c}\\ad\equiv1\Mod{c}}}e\left(\frac{an^2+dl}{c}\right)
\bar{\epsilon}_{-d}\left(\frac{c}{-d}\right)=\\=-i\mathop{{\sum}^*}_{d \Mod{c}}e\left(\frac{n^2\bar{d}+ld}{c}\right)
\epsilon_{d}\left(\frac{c}{d}\right)=-iS^{\Gamma_0(4)}_{\infty,\infty}(l,n^2;c;\nu).
\end{multline}
To establish the last equality, observe that since $c \equiv 0 \pmod{4}$, it holds that $ad \equiv 1 \pmod{4}$, which implies $\epsilon_d = \epsilon_a$.
Moreover, applying the identity $\left(\frac{c}{d+cA}\right) = \left(\frac{c}{d}\right)$ from \cite[Section~A1]{Biro2000}, which holds for $c \equiv 0 \pmod{4}$, we obtain $\left(\frac{c}{d}\right) = \left(\frac{c}{a}\right)$.

Substituting \eqref{SZE j=0 eq2} to \eqref{SZE j=0 eq1}, we find
\begin{multline}\label{SZE j=0 eq3}
\mathop{{\sum}^*}_{a \Mod{c}}e\left(\frac{-an^2}{c}\right)
\SZE^{(0)}(a,c,r,k,L)=\\=
\frac{(1-i)}{2c}S^{\Gamma_0(4)}_{\infty,\infty}(0,n^2;c;\nu)\int_0^{\infty}g_1(-y)
\left(\log\frac{2y}{\pi c^2}+\frac{\pi}{2}+3\gamma\right)\frac{dy}{\sqrt{y}}+\\+
e(-1/8)\sum_{l\neq0}\widehat{g_1}\left(\frac{4\pi^2l}{c^2}\right)
\frac{\Gamma\left(\frac{1}{2}-\frac{\sgn{l}}{4}\right)\Zag_{l}\left(\frac{1}{2}\right)}{\Gamma(3/4)\sqrt{|l|}}
S^{\Gamma_0(4)}_{\infty,\infty}(l,n^2;c;\nu).
\end{multline}
To estimate the series over $l$ in \eqref{SZE j=0 eq3}, we employ \eqref{S 1/1inf infinf est}, \eqref{g1 est1}, and \eqref{g1 est2}. It follows from \eqref{g1 est1} that the terms with $|l| \gg c^2 K^{\epsilon} / L$ are negligibly small. Applying \eqref{S 1/1inf infinf est}, \eqref{g1 est2}, and the Cauchy–Schwarz inequality, followed by \eqref{LZag 2mom estimate}, we obtain
\begin{equation}\label{SZE j=0 eq4}
\sum_{l\neq0}\ldots\ll \frac{k^{\epsilon}}{c^{1/2-\epsilon}}\sum_{|l|\ll c^2k^{\epsilon}/L}
|\Zag_{l}\left(1/2\right)|\sqrt{(|l|,n^2,c)}\ll \frac{k^{\epsilon}c^{3/2+\epsilon}}{L}.
\end{equation}
Therefore, using \eqref{SZE j=0 eq4}, we rewrite \eqref{SZE j=0 eq3} as
\begin{multline}\label{SZE j=0 eq5}
\mathop{{\sum}^*}_{a \Mod{c}}e\left(\frac{-an^2}{c}\right)
\SZE^{(0)}(a,c,r,k,L)=\\=
\frac{(1-i)}{2c}S^{\Gamma_0(4)}_{\infty,\infty}(0,n^2;c;\nu)\int_0^{\infty}g_1(-y)
\left(\log\frac{2y}{\pi c^2}+\frac{\pi}{2}+3\gamma\right)\frac{dy}{\sqrt{y}}+
O\left(\frac{k^{\epsilon}c^{3/2+\epsilon}}{L}\right).
\end{multline}
It remains to perform the summation over $c \mid (4rn / e_2)$ with $c \equiv 0 \pmod{4}$. Setting $c = 4q$, we obtain
\begin{multline}\label{SZE j=0 eq6}
\sum_{\substack{c|4rn/e_2\\c\equiv0\Mod{4}}}
\mathop{{\sum}^*}_{a \Mod{c}}e\left(\frac{-an^2}{c}\right)
\SZE^{(0)}(a,c,r,k,L)=O\left(\frac{k^{\epsilon}(rn/e_2)^{3/2+\epsilon}}{L}\right)+\\+
\sum_{q|rn/e_2}\frac{(1-i)}{8q}S^{\Gamma_0(4)}_{\infty,\infty}(0,n^2;4q;\nu)
\int_0^{\infty}g_1(-y)\left(\log\frac{2y}{\pi (4q)^2}+\frac{\pi}{2}+3\gamma\right)\frac{dy}{\sqrt{y}}.
\end{multline}
%%%%%%%%%%%%5
Consider the case $j = 1$ (that is, $(c, 2) = 1$). Applying \eqref{Thm.eq Voronoi an codd} and Lemma~\ref{lem:Voronoi MT} to \eqref{SZE jcL eq2}, we obtain
\begin{multline}\label{SZE j=1 eq1}
\mathop{{\sum}^*}_{a \Mod{c}}e\left(\frac{-an^2}{c}\right)
\SZE^{(1)}(a,c,r,k,L)=\\=
\frac{\sqrt{2}}{4c}
\mathop{{\sum}^*}_{a \Mod{c}}e\left(\frac{-an^2}{c}\right)\TM(M_1)
\int_0^{\infty}g_1(-y)
\left(\log\frac{2y}{\pi (4c)^2}+\frac{\pi}{2}+3\gamma\right)\frac{dy}{\sqrt{2y}}
+\\+
2^{-1/2}\sum_{l\neq0}\widehat{g_1}\left(\frac{\pi^2l}{c^2}\right)
\frac{\Gamma\left(\frac{1}{2}-\frac{\sgn{l}}{4}\right)\Zag_{l}\left(\frac{1}{2}\right)}{\Gamma(3/4)\sqrt{|l|}}
\sum_{\substack{a\Mod{c}\\4ad\equiv-1\Mod{c}}}e\left(\frac{-an^2+dl}{c}\right)\TM(M_1),
\end{multline}
where $\TM(M_1)=\bar{\epsilon}_{c}\left(\frac{4d}{c}\right)$. Let us evaluate the sum over $a$. Since $ad \equiv -1 \pmod{c}$, it holds that $\left(\frac{4d}{c}\right) = \left(\frac{-a}{c}\right) = (-1)^{\frac{c-1}{2}} \left(\frac{a}{c}\right)$, and thus we obtain
\begin{multline}\label{SZE j=1 eq2}
\sum_{\substack{a\Mod{c}\\4ad\equiv-1\Mod{c}}}e\left(\frac{-an^2+dl}{c}\right)\TM(M_1)=
\mathop{{\sum}^*}_{a \Mod{c}}e\left(\frac{-an^2-\overline{4a}l}{c}\right)\epsilon_{c}\left(\frac{a}{c}\right)=\\=
\bar{\epsilon}_{c}\mathop{{\sum}^*}_{d \Mod{c}}e\left(\frac{dn^2+\overline{4d}l}{c}\right)\left(\frac{d}{c}\right)
=-iS^{\Gamma_0(4)}_{1/1,\infty}(l,n^2;2c;\nu)e\left(\frac{l}{4}\right),
\end{multline}
where the last assertion follows from \eqref{Kl 4N 1/N infty}.
Applying \eqref{SZE j=1 eq2} and estimating the sum over $l$ in \eqref{SZE j=1 eq1} similarly to the previous case $j = 0$ (again by applying \eqref{S 1/1inf infinf est}), we obtain
\begin{multline}\label{SZE j=1 eq3}
\mathop{{\sum}^*}_{a \Mod{c}}e\left(\frac{-an^2}{c}\right)
\SZE^{(1)}(a,c,r,k,L)=\\=
\frac{1}{4ic}S^{\Gamma_0(4)}_{1/1,\infty}(0,n^2;2c;\nu)
\int_0^{\infty}g_1(-y)
\left(\log\frac{2y}{\pi (4c)^2}+\frac{\pi}{2}+3\gamma\right)\frac{dy}{\sqrt{y}}
+O\left(\frac{k^{\epsilon}c^{3/2+\epsilon}}{L}\right).
\end{multline}
It remains to perform the summation over $c|4rn/e_2$, $c\equiv\pm1\Mod{4}$:
\begin{multline}\label{SZE j=1 eq4}
\sum_{\substack{c|4rn/e_2\\c\equiv\pm1\Mod{4}}}
\mathop{{\sum}^*}_{a \Mod{c}}e\left(\frac{-an^2}{c}\right)
\SZE^{(1)}(a,c,r,k,L)=O\left(\frac{k^{\epsilon}(rn/e_2)^{3/2+\epsilon}}{L}\right)+\\+
\sum_{\substack{q|rn/e_2\\(q,2)=1}}\frac{1}{4iq}S^{\Gamma_0(4)}_{1/1,\infty}(0,n^2;2q;\nu)
\int_0^{\infty}g_1(-y)
\left(\log\frac{2y}{\pi (4q)^2}+\frac{\pi}{2}+3\gamma\right)\frac{dy}{\sqrt{y}}.
\end{multline}
%%%%%%%%%%%%%%%%%%%%%
Finally, consider the case $j = 2$ (that is, $c \equiv 2 \pmod{4}$). Applying \eqref{Thm.eq Voronoi an c2mod4} and Lemma~\ref{lem:Voronoi MT} to \eqref{SZE jcL eq2}, we obtain
\begin{multline}\label{SZE j=2 eq1}
\mathop{{\sum}^*}_{a \Mod{c}}e\left(\frac{-an^2}{c}\right)
\SZE^{(2)}(a,c,r,k,L)=\\=
\frac{\sqrt{2}}{2c}
\mathop{{\sum}^*}_{a \Mod{c}}e\left(\frac{-an^2}{c}\right)\left(\TM(M_4)^{-1}-\TM(M_5)\right)
\int_0^{\infty}g_1(-y)
\left(\log\frac{2y}{\pi (2c)^2}+\frac{\pi}{2}+3\gamma\right)\frac{dy}{\sqrt{2y}}+\\+
2^{1/2}\sum_{l\neq0}\widehat{g_1}\left(\frac{\pi^2l}{c^2}\right)
\frac{\Gamma\left(\frac{1}{2}-\frac{\sgn{l}}{4}\right)\Zag_{l}\left(\frac{1}{2}\right)}{\Gamma(3/4)\sqrt{|l|}}
\sum_{\substack{a\Mod{c}\\8ad_4\equiv-1\Mod{c_1}}}e\left(\frac{d_4l}{c_1}-\frac{an^2}{c}\right)\TM^{-1}(M_4)-\\-
2^{-1/2}\sum_{l\neq0}\widehat{g_1}\left(\frac{4\pi^2l}{c^2}\right)
\frac{\Gamma\left(\frac{1}{2}-\frac{\sgn{l}}{4}\right)\Zag_{4l}\left(\frac{1}{2}\right)}{\Gamma(3/4)\sqrt{|l|}}
\sum_{\substack{a\Mod{c}\\2ad_5\equiv-1\Mod{c_1}}}e\left(\frac{d_5l}{c_1}-\frac{an^2}{c}\right)\TM(M_5),
\end{multline}
where $c_1=c/2$, $\TM^{-1}(M_4)=\epsilon_{c_1}\left(\frac{8a}{c_1}\right)$ and $\TM(M_5)=\bar{\epsilon}_{c_1}\left(\frac{4d_5}{c_1}\right)$.
In the sum over $a \pmod{2c_1}$, we perform the change of variables $a = 2a_1 + a_2 c_1$, obtaining
\begin{multline}\label{SZE j=2 eq2}
\sum_{\substack{a\Mod{c}\\8ad_4\equiv-1\Mod{c_1}}}e\left(\frac{d_4l}{c_1}-\frac{an^2}{c}\right)\TM^{-1}(M_4)=
\epsilon_{c_1}\sum_{\substack{a\Mod{2c_1}\\8ad\equiv-1\Mod{c_1}}}e\left(\frac{dl}{c_1}-\frac{an^2}{c}\right)\left(\frac{8a}{c_1}\right)=\\=
\epsilon_{c_1}\sum_{\substack{a_1\Mod{c_1}\\16a_1d\equiv-1\Mod{c_1}}}\mathop{{\sum}^*}_{a_2 \Mod{2}}
e\left(\frac{dl}{c_1}-\frac{a_1n^2}{c_1}-\frac{a_2n^2}{2}\right)\left(\frac{16a_1}{c_1}\right)=\\=
\epsilon_{c_1}e\left(-\frac{n^2}{2}\right)\mathop{{\sum}^*}_{d \Mod{c_1}}
e\left(\frac{dl+\overline{16d}n^2}{c_1}\right)\left(\frac{-d}{c_1}\right)=\\=
\bar{\epsilon}_{c_1}e\left(-\frac{n^2}{2}\right)\mathop{{\sum}^*}_{d \Mod{c_1}}
e\left(\frac{dl\overline{4}_{c_1}+\overline{4d}n^2}{c_1}\right)\left(\frac{d}{c_1}\right)=
-ie\left(-\frac{n^2}{4}\right)S^{\Gamma_0(4)}_{1/1,\infty}(n^2,\overline{4}_{c_1}l;2c_1;\nu),
\end{multline}
where the last assertion follows from \eqref{Kl 4N 1/N infty}.
For the sums in \eqref{SZE j=2 eq1} involving $\TM(M_5)$, we have
\begin{multline}\label{SZE j=2 eq3}
\sum_{\substack{a\Mod{c}\\2ad_5\equiv-1\Mod{c_1}}}e\left(\frac{d_5l}{c_1}-\frac{an^2}{c}\right)\TM(M_5)=
\bar{\epsilon}_{c_1}\sum_{\substack{a\Mod{2c_1}\\2ad\equiv-1\Mod{c_1}}}e\left(\frac{dl}{c_1}-\frac{an^2}{2c_1}\right)\left(\frac{4d}{c_1}\right)=\\=
\bar{\epsilon}_{c_1}\sum_{\substack{a_1\Mod{c_1}\\4a_1d\equiv-1\Mod{c_1}}}\mathop{{\sum}^*}_{a_2 \Mod{2}}
e\left(\frac{dl-a_1n^2}{c_1}-\frac{a_2n^2}{2}\right)\left(\frac{4d}{c_1}\right)=\\=
\bar{\epsilon}_{c_1}e\left(-\frac{n^2}{2}\right)\mathop{{\sum}^*}_{d \Mod{c_1}}\left(\frac{d}{c_1}\right)
e\left(\frac{dl+\overline{4d}n^2}{c_1}\right)=
-ie\left(-\frac{n^2}{4}\right)S^{\Gamma_0(4)}_{1/1,\infty}(n^2,l;2c_1;\nu).
\end{multline}
It follows from \eqref{SZE j=2 eq2} and \eqref{SZE j=2 eq3} that the main term in \eqref{SZE j=2 eq1} vanishes, i.e.,
\begin{multline*}
\mathop{{\sum}^*}_{a \Mod{c}}e\left(\frac{-an^2}{c}\right)\left(\TM(M_4)^{-1}-\TM(M_5)\right)=
-ie\left(-\frac{n^2}{4}\right)S^{\Gamma_0(4)}_{1/1,\infty}(n^2,0;2c_1;\nu)+\\
+ie\left(-\frac{n^2}{4}\right)S^{\Gamma_0(4)}_{1/1,\infty}(n^2,0;2c_1;\nu)=0.
\end{multline*}
Therefore, applying \eqref{SZE j=2 eq2} and \eqref{SZE j=2 eq3}, and estimating the sums over $l$ in \eqref{SZE j=2 eq1} similarly to the previous case $j = 0$ (again by applying \eqref{S 1/1inf infinf est}), we obtain
\begin{equation*}
\mathop{{\sum}^*}_{a \Mod{c}}e\left(\frac{-an^2}{c}\right)
\SZE^{(2)}(a,c,r,k,L)\ll\frac{k^{\epsilon}c^{3/2+\epsilon}}{L}.
\end{equation*}
Performing the summation over $c \mid (4rn / e_2)$ with $c \equiv 2 \pmod{4}$ yields
\begin{equation}\label{SZE j=2 eq5}
\sum_{\substack{c|4rn/e_2\\c\equiv2\Mod{4}}}
\mathop{{\sum}^*}_{a \Mod{c}}e\left(\frac{-an^2}{c}\right)
\SZE^{(2)}(a,c,r,k,L)\ll\frac{k^{\epsilon}(rn/e_2)^{3/2+\epsilon}}{L}.
\end{equation}
Combining \eqref{SZE j=0 eq6}, \eqref{SZE j=1 eq4}, and \eqref{SZE j=2 eq5}, we establish \eqref{SZE sumj MT+ET}.

\end{proof}
%%%%%%%%%%%%%%%%%%%%%%%%%%%%%%%%%%%%%%%%%%%%%%%%%%%%%5
The main term in \eqref{SZE sumj MT+ET} can be rewritten in terms of the number of solutions to the quadratic congruences \eqref{rho upsilon def}, as we now show.

\begin{lem}\label{lem:SZE sumc MT+ET}
The following equality holds:
\begin{multline}\label{SZE sumc MT+ET}
\sum_{j=0}^2
\sum_{\substack{c|4rn/e_2\\c\equiv \pm j\Mod{4}}}\mathop{{\sum}^*}_{a \Mod{c}}
e\left(\frac{-an^2}{c}\right)
\SZE^{(j)}(a,c,r,k,L)=\\=
\int_0^{\infty}g_1(-y)
\sum_{q|rn/e_2}\frac{\ups_q(n^2)}{\sqrt{q}}
\left(\log\frac{2y}{\pi (4q)^2}+\frac{\pi}{2}+3\gamma\right)\frac{dy}{2\sqrt{y}}
+O\left(\frac{k^{\epsilon}(rn/e_2)^{3/2+\epsilon}}{L}\right),
\end{multline}
where $\ups_q(n^2)$ is defined in \eqref{rho upsilon def}.
\end{lem}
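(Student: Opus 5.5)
The plan is to compare the two sides of \eqref{SZE sumc MT+ET} with Lemma~\ref{lem:SZE sumj MT+ET}. Both have the same integral, the same logarithmic factor $\log\frac{2y}{\pi(4q)^2}+\frac{\pi}{2}+3\gamma$ and the same error term. So it suffices to show, for each fixed $q\mid rn/e_2$, that the arithmetic coefficient of $\int_0^\infty g_1(-y)(\ldots)\frac{dy}{\sqrt y}$ in \eqref{SZE sumj MT+ET} equals $\frac{\ups_q(n^2)}{2\sqrt q}$. The change $\frac{dy}{\sqrt y}\to\frac{dy}{2\sqrt y}$ absorbs the factor $1/2$. I would prove this identity coefficient by coefficient, using the closed forms \eqref{Kl 4N 1/Ninfty 0n}, \eqref{Kl 4N infty infty 0n4q n01 q even} and \eqref{Kl 4N infty infty 0n4q n01 q odd} for the Kloosterman sums with first argument $0$. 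These apply because $n^2\equiv 0,1\Mod 4$.

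\emph{Odd $q$.} By \eqref{Kl 4N infty infty 0n4q n01 q odd},
\[
\frac{1-i}{8q}S^{\Gamma_0(4)}_{\infty,\infty}(0,n^2;4q;\nu)=\frac{(1-i)(1+i)\sqrt q}{8q}\,\Sigma_q=\frac{1}{4\sqrt q}\,\Sigma_q,
\qquad
\Sigma_q:=\sum_{d\mid q}\mu(d)\sum_{t\Mod{q/d}}\delta_{q/d}(t^2-n^2).
\]
By \eqref{Kl 4N 1/Ninfty 0n},
\[
-\frac{i}{4q}S^{\Gamma_0(4)}_{1/1,\infty}(0,n^2;2q;\nu)=-\frac{i\cdot i\sqrt q}{4q}\,\Sigma_q=\frac{1}{4\sqrt q}\,\Sigma_q.
\]
The total is therefore $\frac{1}{2\sqrt q}\Sigma_q$. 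It remains to check that $\#\{t\Mod m: t^2\equiv n^2\Mod m\}=\rho_m(n^2)$ for odd $m$. By the Chinese remainder theorem, counting $x\Mod{2m}$ with $x^2\equiv n^2\Mod{4m}$ splits into the factor mod $m$ and a factor mod $2$ (for $x$) against mod $4$ (for the congruence). In the latter, $x^2\equiv n^2\Mod 4$ forces exactly one class $x\equiv n\Mod 2$, since odd squares are $1$ and even squares are $0$ mod $4$. Hence $\Sigma_q=\sum_{q_2q_3=q}\mu(q_2)\rho_{q_3}(n^2)=\ups_q(n^2)$.

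\emph{Even $q$.} Only the $\infty,\infty$ term is present. By \eqref{Kl 4N infty infty 0n4q n01 q even}, its coefficient is
\[
\frac{(1-i)\,2(1+i)\sqrt q}{8q}\sum_{d\mid q}\mu(d)\sum_{t\Mod{2q/d}}\delta_{4q/d}(t^2-n^2)=\frac{1}{2\sqrt q}\sum_{d\mid q}\mu(d)\,\rho_{q/d}(n^2).
\]
The inner count is literally the definition of $\rho_{q/d}(n^2)$ in \eqref{rho upsilon def}, so this again equals $\frac{\ups_q(n^2)}{2\sqrt q}$.

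Summing over $q\mid rn/e_2$ then gives \eqref{SZE sumc MT+ET}. The only potentially delicate point is the odd-modulus identification $\#\{t\Mod m\}=\rho_m(n^2)$ via the $2$-adic factor. That step is elementary, so I expect no real obstacle beyond carefully tracking the constants $(1\pm i)$ and $i$.
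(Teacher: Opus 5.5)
Your proposal is correct and follows the paper's proof. Like the paper, you evaluate the zero-frequency Kloosterman sums by \eqref{Kl 4N 1/Ninfty 0n}, \eqref{Kl 4N infty infty 0n4q n01 q even} and \eqref{Kl 4N infty infty 0n4q n01 q odd}, check that the constants combine to $\ups_q(n^2)/(2\sqrt q)$, and match the result with the definition \eqref{rho upsilon def}. The one difference is that, for odd $q$, you prove the identity $\sum_{t \Mod{b}}\delta_b(t^2-n^2)=\rho_b(n^2)$ directly by the Chinese remainder theorem, whereas the paper quotes it as \eqref{sum b delta b=sum 2b delta 4b}.
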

\begin{proof}
Using \eqref{Kl 4N infty infty 0n4q n01 q even} and \eqref{rho upsilon def}, we obtain
\begin{multline}\label{SZE Kl+KL q even eq1}
\sum_{\substack{q|rn/e_2\\(q,2)=2}}\frac{(1-i)}{4q}S^{\Gamma_0(4)}_{\infty,\infty}(0,n^2;4q;\nu)=
\sum_{\substack{q|rn/e_2\\(q,2)=2}}\frac{1}{\sqrt{q}}
\sum_{d|q}\mu(d)\sum_{t\Mod{ 2q/d}}\delta_{4q/d}(t^2-n^2)=\\=
\sum_{\substack{q|rn/e_2\\(q,2)=2}}\frac{\ups_q(n^2)}{\sqrt{q}}.
\end{multline}
Applying  \eqref{Kl 4N 1/Ninfty 0n} and \eqref{Kl 4N infty infty 0n4q n01 q odd} yields:
\begin{multline*}
\sum_{\substack{q|rn/e_2\\(q,2)=1}}\left(
\frac{(1-i)}{4q}S^{\Gamma_0(4)}_{\infty,\infty}(0,n^2;4q;\nu)-
\frac{i}{2q}S^{\Gamma_0(4)}_{1/1,\infty}(0,n^2;2q;\nu)\right)=\\=
\sum_{\substack{q|rn/e_2\\(q,2)=1}}
\frac{1}{\sqrt{q}}\sum_{d|q}\mu(d)\sum_{t\Mod{ q/d}}\delta_{q/d}(t^2-n^2).
\end{multline*}
To rewrite the last expression in terms of $\ups_q(n^2)$, we apply the identity

\begin{equation}\label{sum b delta b=sum 2b delta 4b}
\sum_{t\Mod{ b}}\delta_{b}(t^2-n)=\sum_{t\Mod{ 2b}}\delta_{4b}(t^2-n),
\end{equation}
which holds (see \cite[(4.7)]{BBF2025}) for  odd $b$ and $n\equiv0,1\Mod{4}$. Therefore,
\begin{equation}\label{SZE Kl+KL q odd eq2}
\sum_{\substack{q|rn/e_2\\(q,2)=1}}\left(
\frac{(1-i)}{4q}S^{\Gamma_0(4)}_{\infty,\infty}(0,n^2;4q;\nu)-
\frac{i}{2q}S^{\Gamma_0(4)}_{1/1,\infty}(0,n^2;2q;\nu)\right)=
\sum_{\substack{q|rn/e_2\\(q,2)=1}}\frac{\ups_q(n^2)}{\sqrt{q}}.
\end{equation}
Finally, \eqref{SZE sumc MT+ET} follows from \eqref{SZE sumj MT+ET}, \eqref{SZE Kl+KL q even eq1}, and \eqref{SZE Kl+KL q odd eq2}.
\end{proof}
%%%%%%%%%%%%%%%%%%%%%%%%%%%%%%%%%%%%%%%%%%%%%%%%%%%%%5
Using Lemma~\ref{lem:SZE sumc MT+ET}, we get the following expression for $\SZE(r,K)$ from \eqref{SZE jL def}.

\begin{lem}\label{lem:SZE rK MT+ET}
We have
\begin{multline}\label{SZE rK MT+ET}
\SZE(r,K)=
\sum_{k}h\left(\frac{k}{K}\right)\sum_{e|r^2}\frac{e_2}{\sqrt{re}}\sum_{n\ll rK^{-1+\epsilon}e^{-1}}
\sum_{q|rn/e_2}\frac{\ups_q(n^2)}{\sqrt{q}}\\\times
\sum_{n^2K^{2-\epsilon}\ll L\ll nrK^{1+\epsilon}/e}
\int_0^{\infty}g_1(-y)\left(\log\frac{2y}{\pi (4q)^2}+\frac{\pi}{2}+3\gamma\right)\frac{dy}{\sqrt{y}}
+O\left(\frac{r^{3/2}}{K^{3/2-\epsilon}}\right),
\end{multline}
where $e=e_1e_2^2$, with $e_1$ being square-free and $e_1e_2 \mid r$.
\end{lem}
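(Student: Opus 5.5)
The plan is to substitute Lemma~\ref{lem:SZE sumc MT+ET} directly into the decomposition \eqref{SZE jL def}--\eqref{SZE jL eq2}. Then it remains only to check that the accumulated error terms sum to $O(r^{3/2}K^{-3/2+\epsilon})$.

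\textbf{Main term.} Inserting \eqref{SZE sumc MT+ET} into \eqref{SZE jL eq2}, the overall factor $2$ cancels the $1/2$ in $\frac{dy}{2\sqrt{y}}$. What remains is exactly the main term in \eqref{SZE rK MT+ET}, with the sums over $k$, $e\mid r^2$, $n$ and the dyadic $L$ untouched. The only point to record is that $g_1$ depends on $k,n,e,L$ through \eqref{SZE g1 def}. The sums are therefore kept in this order, with the integral innermost, and no interchange beyond finite sums is needed.

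\textbf{Error term.} The error in Lemma~\ref{lem:SZE sumc MT+ET} is $k^{\epsilon}(rn/e_2)^{3/2+\epsilon}/L$. I will sum it in the following order.
\begin{enumerate}
\item Over dyadic $L\gg n^2K^{2-\epsilon}$: this is dominated by the smallest $L$, giving $K^{\epsilon}(rn/e_2)^{3/2}/(n^2K^{2})$ (up to $\log$).
\item Over $n\ll rK^{-1+\epsilon}/e$: the summand is $\asymp n^{-1/2}$, so the sum is $\ll (r/(Ke))^{1/2}K^{\epsilon}$. This produces $K^{\epsilon}(r/e_2)^{3/2}K^{-2}(r/(Ke))^{1/2}$.
\item Multiply by the prefactor $e_2/\sqrt{re}$ and sum over $k\asymp K$ (a factor $K$).
\end{enumerate}
This gives
\[
K^{\epsilon}\,\frac{e_2}{\sqrt{re}}\cdot\frac{r^{3/2}}{e_2^{3/2}}\cdot\frac{r^{1/2}}{K^{1/2}e^{1/2}}\cdot\frac{K}{K^{2}}
=\frac{K^{\epsilon}r^{3/2}}{K^{3/2}\,e\,e_2^{1/2}}.
\]
Finally, summing over $e\mid r^2$ costs at most $r^{\epsilon}$, yielding $O(r^{3/2}K^{-3/2+\epsilon})$.

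\textbf{Other contributions.} The truncations made earlier (to $m\ll K^{1+\epsilon}/(e_1e_2)$ and $n\ll K^{-2+\epsilon}mr/e_2$) were already shown to be negligible, so they are absorbed into this error.

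\textbf{Main obstacle.} The delicate point is the bookkeeping: one must verify that the $n$-sum does not lose more than $(r/(Ke))^{1/2}$. This works because the power of $n$ in the error is $n^{3/2}/n^{2}=n^{-1/2}$. One must also ensure the factor $(rn/e_2)^{\epsilon}$, together with the divisor-type bounds coming from the Weil estimate \eqref{S 1/1inf infinf est}, contributes only $(rK)^{\epsilon}$. Both follow since $n\ll r$ and $r\ll K^{5/4}$.
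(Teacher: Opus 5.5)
Your proposal is correct and is essentially the paper's proof. The paper likewise substitutes Lemma~\ref{lem:SZE sumc MT+ET} into \eqref{SZE jL def}--\eqref{SZE jL eq2}, where the factor $2$ cancels the $\frac{1}{2}$. It then bounds the error by taking the smallest dyadic $L\asymp n^2K^{2-\epsilon}$, gains a factor $K$ from the $k$-sum, and sums $n^{-1/2}$ over $n\ll rK^{-1+\epsilon}/e$ to obtain $r^{3/2}/(K^{3/2-\epsilon}e\sqrt{e_2})$ before summing over $e\mid r^2$, exactly as in \eqref{SZE rK eq2}.
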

\begin{proof}
It follows from \eqref{SZE jL def}, \eqref{SZE jL eq2} and \eqref{SZE sumc MT+ET} that
\begin{multline}\label{SZE rK eq1}
\SZE(r,K)=\sum_{k}h\left(\frac{k}{K}\right)\sum_{e|r^2}\frac{e_2}{\sqrt{re}}\sum_{n\ll rK^{-1+\epsilon}e^{-1}}
\sum_{q|rn/e_2}\sum_{n^2K^{2-\epsilon}\ll L\ll nrK^{1+\epsilon}/e}
\frac{\ups_q(n^2)}{\sqrt{q}}\\\times
\int_0^{\infty}\left(\log\frac{2y}{\pi (4q)^2}+\frac{\pi}{2}+3\gamma\right)\frac{dy}{\sqrt{y}}
+O\left(\sum_{e|r^2}\frac{e_2}{\sqrt{re}}\sum_{n\ll rK^{-1+\epsilon}e^{-1}}\frac{(rn/e_2)^{3/2+\epsilon}}{n^2K^{1-\epsilon}}\right).
\end{multline}
The error term can be estimated as
\begin{equation}\label{SZE rK eq2}
\sum_{e|r^2}\frac{e_2}{\sqrt{re}}\sum_{n\ll rK^{-1+\epsilon}e^{-1}}\frac{(rn/e_2)^{3/2+\epsilon}}{n^2K^{1-\epsilon}}\ll
\sum_{e|r^2}\frac{r^{3/2}}{K^{3/2-\epsilon}e \sqrt{e_2}}\ll\frac{r^{3/2}}{K^{3/2-\epsilon}},
\end{equation}
which yields the desired estimate for the lemma.
\end{proof}
%%%%%%%%%%%%%%%%%%%%%%%%%%%%%%%%%%%%%%%%%%%%%%%%%%%%%5

%%%%%%%%%%%%%%%%%%%%%%%%%%%%%%%%%%%%%%%%%%%%%%%%%%%%%%%%%%%%%%%%%%%%%%%%%%%%%%%%%%%%%%%%%%%%%%%%%%%%%%%%%%%%%%%%%%%%%%%%%%
\section{Analysis of $\SIN(r,K)$}\label{sec: SIN}
It follows from \eqref{approx.fun.eq.Vest} that, up to a negligible error, the sum over $m$ in \eqref{2mom SIN def} can be truncated to $m \ll K^{1+\varepsilon} / (e_1 e_2)$. To simplify the subsequent discussion, let $l = mr / e_2$. We next show that the contribution of $n \gg K^{\varepsilon-2} l$ to \eqref{2mom SIN def} is  small. To this end, we apply \eqref{Psik LG} to $\Psi_k(x)$ in \eqref{2mom SIN def} by setting
\begin{equation}\label{Psik xi}
\cosh\frac{\sqrt{\xi}}{2}=1+\frac{n}{2l}\Rightarrow\sinh\frac{\sqrt{\xi}}{4}=\sqrt{\frac{n}{4l}}\Rightarrow \xi=16\arcsinh^2\sqrt{\frac{n}{4l}}.
\end{equation}
Therefore, for $n \gg l K^{\varepsilon-2}$, we have $K \sqrt{\xi} \gg K^{\varepsilon}$, which implies that the $K$-Bessel function decays exponentially in \eqref{ZKxi}. Consequently, up to a negligible error, \eqref{2mom SIN def} can be rewritten as
\begin{multline}\label{SIN eq2}
\SIN(r,K)=2\sum_{k}h\left(\frac{k}{K}\right)\sum_{e|r^2}\sum_{m\ll K^{1+\epsilon}(e_1e_2)^{-1}}\frac{V_k(me_1e_2)e_2}{mr\sqrt{2me_1e_2}}\times\\
\sum_{1\leq n\ll K^{-2+\epsilon}mr/e_2}\Zag_{n(4mr/e_2+n)}(1/2)\sqrt{n+2mr/e_2}\Psi_k\left(\left(1+\frac{ne_2}{2mr}\right)^{-2}\right).
\end{multline}
Proceeding as in Section~\ref{sec: SZE}, we first obtain
\begin{multline}\label{SIN eq3}
\SIN(r,K)=8\sum_{k}h\left(\frac{k}{K}\right)\sum_{e|r^2}\frac{\sqrt{r}}{\sqrt{e}}\sum_{n\ll rK^{-1+\epsilon}e^{-1}}\sum_{\substack{nK^{2-\epsilon\ll q\ll rK^{1+\epsilon}/e}\\q\equiv n\Mod{4r/e_2}}}V_k\left(\frac{(q-n)e}{4r}\right)\times\\
\frac{\Zag_{qn}(1/2)\sqrt{q+n}}{(q-n)^{3/2}}\Psi_k\left(\left(1+\frac{2n}{q-n}\right)^{-2}\right),
\end{multline}
and then
\begin{multline}\label{SIN eq4}
\SIN(r,K)=2\sum_{k}h\left(\frac{k}{K}\right)\sum_{e|r^2}\frac{e_2}{\sqrt{re}}\sum_{n\ll rK^{-1+\epsilon}e^{-1}}\sum_{c|4rn/e_2}\mathop{{\sum}^*}_{a \Mod{c}}
e\left(\frac{-an^2}{c}\right)\times\\
\sum_{n^2K^{2-\epsilon\ll l\ll nrK^{1+\epsilon}/e}}e\left(\frac{al}{c}\right)V_k\left(\frac{(l-n^2)e}{4rn}\right)
\frac{\Zag_{l}(1/2)\sqrt{l+n^2}}{(l-n^2)^{3/2}}\Psi_k\left(\frac{(l-n^2)^2}{(l+n^2)^2}\right).
\end{multline}
Splitting the analysis into the three cases given in \eqref{c cases}, we find that
\begin{equation}\label{SIN jL def}
\SIN(r,K)=\sum_{j=0}^2\sum_{n^2K^{2-\epsilon}\ll L\ll nrK^{1+\epsilon}/e}\SIN^{(j)}(r,k,L),
\end{equation}
\begin{multline}\label{SIN jL eq1}
\SIN^{(j)}(r,k,L)=2\sum_{k}h\left(\frac{k}{K}\right)\sum_{e|r^2}\frac{e_2}{\sqrt{re}}\sum_{n\ll rK^{-1+\epsilon}e^{-1}}
\sum_{\substack{c|4rn/e_2\\c\equiv \pm j\Mod{4}}}\mathop{{\sum}^*}_{a \Mod{c}}
e\left(\frac{-an^2}{c}\right)\\\times
\SIN^{(j)}(a,c,r,k,L),
\end{multline}
\begin{equation}\label{SIN jcL eq1}
\SIN^{(j)}(a,c,r,k,L)=
\sum_{l}\frac{\Zag_{l}(1/2)}{\sqrt{l}}g_2(l)e\left(\frac{al}{c}\right),
\end{equation}
\begin{equation}\label{SIN g2 def}
g_2(l)=
V_k\left(\frac{(l-n^2)e}{4rn}\right)U\left(\frac{l}{L}\right)
\frac{\sqrt{l(l+n^2)}}{(l-n^2)^{3/2}}\Psi_k\left(\frac{(l-n^2)^2}{(l+n^2)^2}\right).
\end{equation}

%%%%%%%%%%%%%%%%%%%%%%%%%%%%%%%%%%%%%%%%%%%%%%%%%%%%%5
%%%%%%%%%%%%%%%%%%%%%%%%%%%%%%%%%%%%%%%%%%%%%%%%%%%%%5
%%%%%%%%%%%%%%%%%%%%%%%%%%%%%%%%%%%%%%%%%%%%%%%%%%%%%%%%
The following lemma is a direct analogue of Lemma~\ref{lem:g1 est}.
\begin{lem}\label{lem:g2 est}
For $m \in \mathbb{Z}$, $L \gg n^2 K^{2-\varepsilon}$,  $\alpha \in \{\pi^2, 4\pi^2\}$, and any $A > 1$, we have
\begin{equation}\label{g2 est1}
\widehat{g_2}\left(\frac{\alpha m}{c^2}\right)\ll\frac{1}{(L|m|/c^2)^{A}} \quad\hbox{if}\quad \frac{L|m|}{c^2}\gg k^{\epsilon},
\end{equation}
\begin{equation}\label{g2 est2}
\widehat{g_2}\left(\frac{\alpha m}{c^2}\right)\ll k^{\epsilon}\sqrt{\frac{|m|}{c^2}}.
\end{equation}
\end{lem}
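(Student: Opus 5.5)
We follow the proof of Lemma \ref{lem:g1 est} step by step, replacing $\Phi_k$ by $\Psi_k$. Put $y:=\alpha m/c^2$. By \eqref{phi hat+ to Phipm def0}, \eqref{phi hat- to Phipm def0}, \eqref{Phi++--def} and \eqref{Phi+--+def}, and since $g_2$ is supported on $\R_+$, we get
\begin{equation*}
\widehat{g_2}(y)\ll \sqrt{|y|}\int_0^{\infty}V_k\left(\frac{(xL-n^2)e}{4rn}\right)U(x)\frac{\sqrt{x+n^2/L}}{(x-n^2/L)^{3/2}}\Psi_k\left(\frac{(xL-n^2)^2}{(xL+n^2)^2}\right)B_0(2\sqrt{xL|y|})\,dx,
\end{equation*}
where $B_0\in\{Y_0,J_0,K_0\}$. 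Since $n^2/L\ll K^{\epsilon-2}$, the algebraic factor is $\asymp 1$ on the support of $U$. Theorem \ref{thm:approxPsi} gives $\Psi_k\ll k^{\epsilon}$ in this range. Indeed, the argument equals $1/\cosh^2(\sqrt{\xi}/2)$ with $\xi\asymp n^2/L$, so $(\xi\sinh^2\sqrt\xi)^{1/4}\asymp\sqrt{\xi}$, and also $\sqrt{\xi}K_0(k\sqrt{\xi})\ll \sqrt{\xi}\,k^{\epsilon}$. Bounding trivially with $B_0(x)\ll x^{\epsilon}$ and $V_k\ll k^{\epsilon}$ then yields \eqref{g2 est2}.

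For \eqref{g2 est1} we apply \eqref{Harcos est} $j$ times. We need derivative bounds for $\Psi_k$ near $x=1$. Both $\Phi_k$ and $\Psi_k$ solve the same hypergeometric equation with parameters $(k-1/4,3/4-k;1/2)$: $\Psi_k$ is the Kummer solution at $x=0$ with exponent $k$, possibly combined with the $x=1$ solutions. Hence $Y_k(x):=x^{1/4}(1-x)^{1/2}\Psi_k(x)$ again satisfies \eqref{Y dif eq}. I will check this directly from \cite[Section~2.7.2]{BE}: the exponents of ${}_2F_1(k-1/4,k+1/4;2k;x)$ at $0$, $1$, $\infty$, after multiplying by $x^k$, coincide with those of \eqref{defphi}. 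So, exactly as in \eqref{Y 1der}--\eqref{Y 2der3}, the $j$-th derivative of $Y_k\bigl((xL-n^2)^2/(xL+n^2)^2\bigr)$ equals $h_{1,j}Y_k'+h_{2,j}Y_k$ with $h_{1,j}\ll k^{\epsilon}n^2/L$ and $h_{2,j}\ll k^{\epsilon}$. This uses that the bracket in \eqref{Y dif eq} is $\ll K^{\epsilon}/(1-x)^2$ when $1-x\asymp n^2/L$.

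The remaining steps repeat those of Lemma \ref{lem:g1 est}. Rewrite the integrand as $F(x)Y_k(\cdot)$, where $F$ now contains $(x+n^2/L)^{1/2}(x-n^2/L)^{-3/2}$ times the extra factors coming from $Y_k$; its derivatives are $\ll k^{\epsilon}$ by \eqref{approx.fun.eq.Vest}. Use the bound $Y_k\ll\sqrt{n^2/L}\,\Psi_k\ll K^{\epsilon}\sqrt{n^2/L}$, the analogue of \eqref{Y est1}. Handle the $Y_k'$ term by one integration by parts using \eqref{Y 1der}, and differentiate the Bessel function via \cite[10.6.2, 10.29.2]{HMF}. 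The result is the analogue of \eqref{g1 hat eq8}:
\begin{equation*}
\widehat{g_2}(y)\ll \frac{K^{\epsilon}\sqrt{|y|}}{(L|y|)^{j/2-1/4}}\ll (L|y|)^{-A}
\end{equation*}
for $L|y|\gg k^{\epsilon}$ and $j$ large.

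The main obstacle is justifying that $Y_k$ built from $\Psi_k$ satisfies \eqref{Y dif eq}, together with the uniform bound $\Psi_k\ll k^{\epsilon}$ near $x=1$. The differential equation is a routine identification of Riemann $P$-symbols. The size bound should follow from Theorem \ref{thm:approxPsi} with $\xi\asymp n^2/L$, using $K_0(t)\ll 1+|\log t|$ in the small-argument regime $k\sqrt{\xi}\ll k^{\epsilon}$.
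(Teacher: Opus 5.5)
Your overall strategy is the paper's. You bound $\widehat{g_2}$ trivially for \eqref{g2 est2}. For \eqref{g2 est1} you combine \eqref{Harcos est} with a second-order ODE that controls the derivatives of $\Psi_k$ near $x=1$. However, the step you identify as the main obstacle rests on a false claim. $\Psi_k$ does not solve the hypergeometric equation with parameters $(k-1/4,3/4-k;1/2)$, and $x^{1/4}(1-x)^{1/2}\Psi_k(x)$ does not satisfy \eqref{Y dif eq}.

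The local exponents differ:
\begin{itemize}
\item For $x^k\,{}_2F_1(k-1/4,k+1/4;2k;x)$ they are $\{k,1-k\}$ at $0$, $\{0,0\}$ at $1$, and $\{-1/4,1/4\}$ at $\infty$.
\item For \eqref{defphi} they are $\{0,1/2\}$, $\{0,0\}$ and $\{k-1/4,3/4-k\}$.
\end{itemize}
In fact $x^{-1/4}\Psi_k(x)$ solves the $\Phi_k$-equation in the variable $1/x$. This is why $\Phi_k$ and $\Psi_k$ appear in the ranges $n<2l$ and $n>2l$ respectively. The "routine identification of Riemann $P$-symbols" you propose would therefore fail if carried out.

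The correct statement is the one the paper uses: $(1-x)^{1/2}\Psi_k(x)$, without the factor $x^{1/4}$, satisfies \eqref{Y2 dif eq}. This is the normal form with exponent differences $2k-1$, $0$, $1/2$ at $0,1,\infty$.

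Your argument survives because the recursion leading to \eqref{Y 2der3} uses only the size of the potential near $x=1$. For $1-x\asymp n^2/L\ll K^{\epsilon-2}$, both $\frac{1-(2k-1)^2}{4x^2}$ and $\frac{5/4-(2k-1)^2}{4x(1-x)}$ are $\ll k^2/(1-x)\ll K^{\epsilon}/(1-x)^2$, exactly as in the $\Phi_k$ case. Hence the bounds $h_{1,j}\ll k^{\epsilon}n^2/L$ and $h_{2,j}\ll k^{\epsilon}$ still hold. The rest of your argument then goes through unchanged: the analogue of \eqref{Y est1}, the single integration by parts of the $Y_k'$ term, and the Bessel derivative formulas.

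If you keep the factor $x^{1/4}$, the resulting equation picks up a first-order term $-\frac{1}{2x}Y_k'$. This is harmless near $x=1$ but unnecessary. With this correction, your proof is the paper's proof.
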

\begin{proof}
Let $y=\alpha m/c^2.$ It follows from  \eqref{SIN g2 def}, \eqref{phi hat+ to Phipm def0}, \eqref{phi hat- to Phipm def0}, \eqref{Phi++--def} and \eqref{Phi+--+def} that
\begin{equation}\label{g2 hat eq1}
\widehat{g_2}(y)\ll\int_0^{\infty}\frac{g_2(x)}{x}\sqrt{x|y|}B_{0}(2\sqrt{x|y|})dx,
\end{equation}
where $B_0$ represents one of the Bessel functions $Y_0$, $J_0$, or $K_0$. The substitution of \eqref{SIN g2 def} into \eqref{g2 hat eq1} yields
\begin{equation}\label{g2 hat eq2}
\widehat{g_2}(y)\ll\sqrt{|y|}\int_0^{\infty}
V_k\left(\frac{(x-n^2)e}{4rn}\right)U\left(\frac{x}{L}\right)
\Psi_k\left(\frac{(x-n^2)^2}{(x+n^2)^2}\right)
B_{0}(2\sqrt{x|y|})\frac{\sqrt{x+n^2}}{(x-n^2)^{3/2}}dx,
\end{equation}
which implies that
\begin{equation}\label{g2 hat eq3}
\widehat{g_2}(y)\ll \sqrt{|y|}\int_0^{\infty}
V_k\left(\frac{(xL-n^2)e}{4rn}\right)U\left(x\right)
\Psi_k\left(\frac{(x-n^2/L)^2}{(x+n^2/L)^2}\right)
B_{0}(2\sqrt{xL|y|})\frac{\sqrt{x+n^2/L}}{(x-n^2/L)^{3/2}}dx.
\end{equation}
Estimating the integral trivially by taking the absolute value of the integrand immediately yields the estimate \eqref{g2 est2}. To prove \eqref{g2 est1}, we proceed as in the proof of Lemma~\ref{lem:g1 est}. The only difference is that, by \cite[Section~2.7.2, Equations~(7)–(9)]{BE}, the function
\begin{equation}
\label{Y2_def}
Y_k(x) = (1-x)^{1/2} \Psi_k(x)
\end{equation}
satisfies the differential equation
\begin{equation}\label{Y2 dif eq}
Y_k''+\left(\frac{1-(2k-1)^2}{4x^2}+\frac{1}{4(1-x)^2}+\frac{5/4-(2k-1)^2}{4x(1-x)}\right)Y_k=0.
\end{equation}

However, this equation differs only slightly from \eqref{Y dif eq}. For example, we still have $Y_k''(x)\ll k^{\epsilon}Y_k(x)/(1-x)^2$ for $1-x \ll k^{\varepsilon-2}$. Using \eqref{Y2_def}, we rewrite \eqref{g2 hat eq3} as
\begin{multline}\label{g2 hat eq4}
\widehat{g_2}(y)\ll \sqrt{\frac{L|y|}{n^2}}\int_0^{\infty}
V_k\left(\frac{(xL-n^2)e}{4rn}\right)U\left(x\right)
Y_k\left(\frac{(x-n^2/L)^2}
{(x+n^2/L)^2}\right)\\\times\left(\frac{x+n^2/L}{x-n^2/L}\right)^{3/2}
B_{0}(2\sqrt{xL|y|})\frac{dx}{\sqrt{x}}.
\end{multline}
This expression is a direct analogue of \eqref{g1 hat eq4}, and it suffices to repeat the subsequent calculations from the proof of Lemma~\ref{lem:g1 est}.
\end{proof}
%%%%%%%%%%%%%%%%%%%%%%%%%%%%%%%%%%%%%%%%%%%%%%%%%%%%%5
Next, proceeding as in Section~\ref{sec: SZE}, with the sole difference that the second integral in \eqref{Voronoi MTeq0} now vanishes (requiring $+\pi/2$ under the integral to be replaced by $-\pi/2$), we obtain the following analogue of Lemma~\ref{lem:SZE rK MT+ET}.
\begin{lem}\label{lem:SIN rK MT+ET}
We have
\begin{multline}\label{SIN rK MT+ET}
\SIN(r,K)=
\sum_{k}h\left(\frac{k}{K}\right)\sum_{e|r^2}\frac{e_2}{\sqrt{re}}\sum_{n\ll rK^{-1+\epsilon}e^{-1}}
\sum_{q|rn/e_2}\frac{\ups_q(n^2)}{\sqrt{q}}\\\times
\sum_{n^2K^{2-\epsilon}\ll L\ll nrK^{1+\epsilon}/e}
\int_0^{\infty}g_2(y)\left(\log\frac{2y}{\pi (4q)^2}-\frac{\pi}{2}+3\gamma\right)\frac{dy}{\sqrt{y}}
+O\left(\frac{r^{3/2}}{K^{3/2-\epsilon}}\right),
\end{multline}
where $e=e_1e_2^2$ with $e_1$ being square-free and  $e_1e_2|r$.
\end{lem}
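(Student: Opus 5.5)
The plan is to repeat the argument of Section \ref{sec: SZE} step by step, starting from the decomposition \eqref{SIN jL def}--\eqref{SIN g2 def}, and to track the two places where the sign of the support changes things. The first difference is that $g_2$ is supported on $\R_+$ (since $l>0$), whereas $g_1(l)$ was supported on $\R_-$. In Lemma \ref{lem:Voronoi MT} only the first integral survives, and it carries $-\pi/2$ instead of $+\pi/2$. Also, in the ratio $\Gamma(\frac12-\frac{\sgn n}{4})/\Gamma(\frac12-\frac{\sgn\phi}{4})$ the denominator is now $\Gamma(1/4)$ rather than $\Gamma(3/4)$. This only changes absolute constants in the dual sums, which we only bound.

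Fix $j\in\{0,1,2\}$, $n$, $e$, $c\mid 4rn/e_2$, and a dyadic $L\gg n^2K^{2-\epsilon}$. Apply Lemma \ref{Thm Voronoi an c0mod4}, \ref{Thm Voronoi an codd} or \ref{Thm Voronoi an c2mod4} to \eqref{SIN jcL eq1}, according to $j$.

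The character sums over $a$ are identical to those in the proof of Lemma \ref{lem:SZE sumj MT+ET}: they depend only on $e(-an^2/c)$, the theta multipliers and $e(\pm dl/c)$, not on the test function. Therefore \eqref{SZE j=0 eq2}, \eqref{SZE j=1 eq2}, \eqref{SZE j=2 eq2} and \eqref{SZE j=2 eq3} apply verbatim. The dual terms become sums of $\widehat{g_2}(\alpha l/c^2)\Zag_l(1/2)|l|^{-1/2}$ (or with $\Zag_{4l}$) against half-integral weight Kloosterman sums. For these I use Lemma \ref{lem:g2 est}: \eqref{g2 est1} truncates to $|l|\ll c^2K^\epsilon/L$, and on the remaining range I combine \eqref{g2 est2}, the Weil-type bound \eqref{S 1/1inf infinf est}, Cauchy--Schwarz and \eqref{LZag 2mom estimate}. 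This bounds each $c$-contribution by $k^\epsilon c^{3/2+\epsilon}/L$, exactly as in \eqref{SZE j=0 eq4}.

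For the main terms, the $c\equiv2\pmod 4$ term cancels as before, because $\TM(M_4)^{-1}-\TM(M_5)$ sums to zero independently of $\phi$. The $c\equiv0\pmod4$ and odd $c$ terms combine through \eqref{Kl 4N infty infty 0n4q n01 q even}, \eqref{Kl 4N 1/Ninfty 0n}, \eqref{Kl 4N infty infty 0n4q n01 q odd} and \eqref{sum b delta b=sum 2b delta 4b} into $\sum_{q\mid rn/e_2}\ups_q(n^2)q^{-1/2}$, as in Lemma \ref{lem:SZE sumc MT+ET}. They multiply $\int_0^\infty g_2(y)(\log\frac{2y}{\pi(4q)^2}-\frac{\pi}{2}+3\gamma)\frac{dy}{2\sqrt y}$.

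I will check that the constant in front agrees with the $\SZE$ case. The Voronoi main term has the prefactor $2\Gamma(1/2)/\Gamma(1/4)$ times $l_+$, and the $\Gamma(1/4)$ cancels inside $l_+$ just as $\Gamma(3/4)$ cancels in $l_-$. This yields the same normalisation $1/(x\sqrt{2y})$ as in \eqref{Voronoi MTeq0}. Reinserting into \eqref{SIN jL eq1} with the prefactor $2e_2/\sqrt{re}$ gives the displayed main term of \eqref{SIN rK MT+ET}.

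The error is summed exactly as in \eqref{SZE rK eq2}. Summing $k^\epsilon(rn/e_2)^{3/2+\epsilon}/L$ over $c\mid 4rn/e_2$ and dyadic $L\gg n^2K^{2-\epsilon}$, then over $k\asymp K$ and $n\ll rK^{-1+\epsilon}/e$, and finally over $e$, gives $O(r^{3/2}K^{-3/2+\epsilon})$.

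The main obstacle is Lemma \ref{lem:g2 est}, whose proof was only sketched. I expect the remaining work to lie in verifying two things there. First, the derivative bounds for $Y_k=(1-x)^{1/2}\Psi_k$ obtained from \eqref{Y2 dif eq}: the coefficient of $Y_k$ is $\ll K^\epsilon(1-x)^{-2}$ on $1-x\ll K^{\epsilon-2}$, since the $x^{-2}$ and $x^{-1}(1-x)^{-1}$ terms are $O(k^2)$ with $x\approx1$. Second, that $\Psi_k\ll k^\epsilon$ in this range, which follows from Theorem \ref{thm:approxPsi} with $K\sqrt\xi\ll K^\epsilon$. With these in hand, the integration-by-parts scheme of Lemma \ref{lem:g1 est} carries over unchanged.
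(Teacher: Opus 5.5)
Your proposal is correct and follows the paper's own route: rerun the Section \ref{sec: SZE} argument verbatim, with the only change being that $g_2$ is supported on $\R_+$, so only the first integral in Lemma \ref{lem:Voronoi MT} survives and $+\pi/2$ becomes $-\pi/2$. One small slip does not affect the argument: the $x^{-1}(1-x)^{-1}$ term in \eqref{Y2 dif eq} is $O(k^2/(1-x))$, not $O(k^2)$, but since $k^2(1-x)\ll K^{\epsilon}$ in this range, your bound $\ll K^{\epsilon}(1-x)^{-2}$ still holds.
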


%%%%%%%%%%%%%%%%%%%%%%%%%%%%%%%%%%%%%%%%%%%%%%%%%%%%%%%%%%%%%%%%%%%%%%%%%%%%%%%%%%%%%%%%%%%%%%%%%%%%%%%%%%%%%%%%%%%%%%%%%%
\section{The off-diagonal terms}\label{sec:Voronoi MT}
Consider the integrals (see \eqref{SZE g1 def}, \eqref{SZE rK MT+ET})
\begin{multline}\label{I<Ldef}
I_{<}(k,L,r,e,n)=\int_0^{\infty}g_1(-y)\left(\log\frac{y}{8\pi q^2}+\frac{\pi}{2}+3\gamma\right)\frac{dy}{\sqrt{y}}=\\=
\int_0^{\infty}\left(\log\frac{y}{8\pi q^2}+\frac{\pi}{2}+3\gamma\right)
V_k\left(\frac{(y+n^2)e}{4rn}\right)
\frac{U\left(\frac{y}{L}\right)}{y+n^2}\Phi_k\left(\frac{(y-n^2)^2}{(y+n^2)^2}\right)dy
\end{multline}
and (see \eqref{SIN g2 def}, \eqref{SIN rK MT+ET})
\begin{multline}\label{I>Ldef}
I_{>}(k,L,r,e,n)=\int_0^{\infty}g_2(y)\left(\log\frac{y}{8\pi q^2}-\frac{\pi}{2}+3\gamma\right)\frac{dy}{\sqrt{y}}=\\=
\int_0^{\infty}\left(\log\frac{y}{8\pi q^2}-\frac{\pi}{2}+3\gamma\right)
V_k\left(\frac{(y-n^2)e}{4rn}\right)U\left(\frac{y}{L}\right)
\frac{\sqrt{y+n^2}}{(y-n^2)^{3/2}}\Psi_k\left(\frac{(y-n^2)^2}{(y+n^2)^2}\right)dy.
\end{multline}
It follows from \eqref{SZE rK MT+ET} and \eqref{SIN rK MT+ET} that
\begin{multline}\label{SZE+SIN rK MT+ET}
\SZE(r,K)+\SIN(r,K)=
\sum_{k}h\left(\frac{k}{K}\right)\sum_{e|r^2}\frac{e_2}{\sqrt{re}}\sum_{n\ll rK^{-1+\epsilon}e^{-1}}
\sum_{q|rn/e_2}\frac{\ups_q(n^2)}{\sqrt{q}}\\\times
\sum_{n^2K^{2-\epsilon}\ll L\ll nrK^{1+\epsilon}/e}\left(I_{<}(k,L,r,e,n)+I_{>}(k,L,r,e,n)\right)
+O\left(\frac{r^{3/2}}{K^{3/2-\epsilon}}\right).
\end{multline}
Note that, estimating $I_{<}(k,L,r,e,n) + I_{>}(k,L,r,e,n)$ trivially by $K^{\varepsilon}$, we obtain the estimate
\begin{multline}\label{SZE+SIN rK MT+ET triv}
\SZE(r,K)+\SIN(r,K)\ll\frac{r^{3/2}}{K^{3/2-\epsilon}}+
K^{1+\epsilon}\sum_{e|r^2}\frac{e_2}{\sqrt{re}}\sum_{n\ll rK^{-1+\epsilon}e^{-1}}
\sum_{q|rn/e_2}\frac{|\ups_q(n^2)|}{\sqrt{q}}\ll\\\ll
\frac{r^{3/2}}{K^{3/2-\epsilon}}+K^{\epsilon}\sum_{e|r^2}\frac{e_2\sqrt{r}}{e^{3/2}}\ll\frac{r^{3/2}}{K^{3/2-\epsilon}}+
r^{1/2}K^{\epsilon},
\end{multline}
which is insufficient since it is smaller than the main term only when $r \ll K^{1-\varepsilon}$. For the further evaluation of these integrals, it is convenient to replace $\log y$ by $\log(y+n^2)$ in \eqref{I<Ldef} and by $\log(y-n^2)$ in \eqref{I>Ldef}.
Since
\begin{equation*}
\log(1\pm n^2/y)\ll n^2/y\ll n^2/L\ll K^{-2+\epsilon}
\end{equation*}
and in view of \eqref{SZE+SIN rK MT+ET triv}, we find that the error introduced by this approximation is $O(\frac{r^{1/2}}{K^{2-\epsilon}})$, which is smaller than the error term in \eqref{SZE+SIN rK MT+ET}.
Consider
\begin{equation}\label{sum I<+I>eq1}
\sum_{k}h\left(\frac{k}{K}\right)\sum_{n^2K^{2-\epsilon}\ll L\ll nrK^{1+\epsilon}/e}\left(I_{<}(k,L,r,e,n)+I_{>}(k,L,r,e,n)\right).
\end{equation}
We now note that the summation over $n$ can be extended to all $n > 0$, since the sum over $L$ is empty for $n \gg r K^{-1+\epsilon} e^{-1}$. Furthermore, up to a negligible error, the summation over $L$ can be extended to all $L \gg n^2 K^{2-\epsilon}$. Indeed, for $L \gg n r K^{1+\varepsilon} / e$, relation \eqref{approx.fun.eq.Vest} implies that $V_k\left(\frac{(y \pm n^2)e}{4rn}\right) \ll K^{-A}$. Similarly, for $L \ll n^2 K^{2-\epsilon}$, we have $\Psi_k\left(\frac{(y-n^2)^2}{(y+n^2)^2}\right) \ll K^{-A}$, which follows from \eqref{Psik LG} by setting

\begin{equation*}
\cosh\frac{\sqrt{\xi}}{2}=1+\frac{2n^2}{y-n^2}\Rightarrow\sinh\frac{\sqrt{\xi}}{4}=\sqrt{\frac{n^2}{y-n^2}}\Rightarrow \xi=16\arcsinh^2\sqrt{\frac{n^2}{y-n^2}}.
\end{equation*}

Therefore, for $y \ll n^2 K^{2-\epsilon}$, we have $K \sqrt{\xi} \gg Kn / \sqrt{y} \gg Kn / \sqrt{L} \gg K^{\epsilon}$, which implies that the $K$-Bessel function decays exponentially in \eqref{ZKxi}. Proceeding as in the beginning of Section~\ref{sec: SZE}, we can show that
\begin{equation*}
\sum_{k}h\left(\frac{k}{K}\right)I_{<}(k,L)
\end{equation*}
is negligible for $L \ll n^2 K^{2-\epsilon}$.
 Consequently, \eqref{SZE+SIN rK MT+ET} can be rewritten as
\begin{multline}\label{SZE+SIN rK MT+ET2}
\SZE(r,K)+\SIN(r,K)=
\sum_{k}h\left(\frac{k}{K}\right)\sum_{e|r^2}\frac{e_2}{\sqrt{re}}\sum_{n=1}^{\infty}
\sum_{q|rn/e_2}\frac{\ups_q(n^2)}{\sqrt{q}}\\\times
\left(I_{<}(k,n,e)+I_{>}(k,n,e)\right)
+O\left(\frac{r^{3/2}}{K^{3/2-\epsilon}}\right),
\end{multline}
where
\begin{equation}\label{I<def}
I_{<}(k,n,e)=
\int_{n^2}^{\infty}
\left(\log\frac{y+n^2}{8\pi q^2}+\frac{\pi}{2}+3\gamma\right)
V_k\left(\frac{(y+n^2)e}{4rn}\right)
\Phi_k\left(\frac{(y-n^2)^2}{(y+n^2)^2}\right)\frac{dy}{(y+n^2)},
\end{equation}
\begin{equation}\label{I>def}
I_{>}(k,n,e)=
\int_{n^2}^{\infty}\left(\log\frac{y-n^2}{8\pi q^2}-\frac{\pi}{2}+3\gamma\right)
V_k\left(\frac{(y-n^2)e}{4rn}\right)
\frac{\sqrt{y+n^2}}{(y-n^2)^{3/2}}\Psi_k\left(\frac{(y-n^2)^2}{(y+n^2)^2}\right)dy.
\end{equation}
Consider (see \cite[Lemma 5.1]{BF2018})
\begin{equation}\label{Ik def}
I_k(x):=\frac{1}{2\pi i}\int_{(\Delta)}\frac{\Gamma(k-1/2+w/2)}{\Gamma(k+1/2-w/2)}\Gamma(1/2-w)\sin\left( \pi \frac{1/2+w}{2}\right)x^wdw
\end{equation}
with $1-2k<\Delta<1/2.$ It follows from \cite[Lemma 5.2, 5.4]{BF2018} that
\begin{equation}\label{PhikPsik toIk}
\Psi_k(y)=(-1)^k\sqrt{\pi y}I_k\left(\frac{2}{\sqrt{y}}\right),\quad
\Phi_k(y)=(-1)^k\sqrt{\pi}\frac{I_k(2\sqrt{y})}{y^{1/4}}.
\end{equation}
According to \eqref{I<def}, \eqref{I>def}, and \eqref{PhikPsik toIk}, we have
\begin{equation}\label{I< eq1}
I_{<}(k,n,e)=(-1)^k\sqrt{\pi}
\int_{n^2}^{\infty}
V_k\left(\frac{(y+n^2)e}{4rn}\right)I_k\left(2\frac{y-n^2}{y+n^2}\right)
\frac{\left(\log\frac{y+n^2}{8\pi q^2}+\frac{\pi}{2}+3\gamma\right)dy}{(y-n^2)^{1/2}(y+n^2)^{1/2}},
\end{equation}
\begin{equation}\label{I> eq1}
I_{>}(k,n,e)=(-1)^k\sqrt{\pi}
\int_{n^2}^{\infty}
V_k\left(\frac{(y-n^2)e}{4rn}\right)
I_k\left(2\frac{y+n^2}{y-n^2}\right)\frac{\left(\log\frac{y-n^2}{8\pi q^2}-\frac{\pi}{2}+3\gamma\right)dy}{(y-n^2)^{1/2}(y+n^2)^{1/2}}.
\end{equation}
Introducing the change of variable $y = n^2 x$ and utilizing the integral representation \eqref{approx.fun.eq.Vdef} for $V_k(x)$, we find that
\begin{equation}\label{I<k to I<z}
I_{<}(k,n,e)=
\frac{1}{2\pi i}\int_{(a)}\frac{L_{\infty}(\sym^2f,1/2+z)}{L_{\infty}(\sym^2f,1/2)}\zeta(1+2z)
\left(\frac{ne}{4r}\right)^{-z}I_{<}(k,z)\frac{dz}{z},
\end{equation}
\begin{equation}\label{I<zdef}
I_{<}(k,z)=(-1)^k\sqrt{\pi}
\int_{1}^{\infty}I_k\left(2\frac{x-1}{x+1}\right)\frac{\left(\log\frac{n^2(x+1)}{8\pi q^2}+\frac{\pi}{2}+3\gamma\right)dx}{(x-1)^{1/2}(x+1)^{1/2+z}},
\end{equation}
\begin{equation}\label{I>k to I>z}
I_{>}(k,n,e)=
\frac{1}{2\pi i}\int_{(a)}\frac{L_{\infty}(\sym^2f,1/2+z)}{L_{\infty}(\sym^2f,1/2)}\zeta(1+2z)
\left(\frac{ne}{4r}\right)^{-z}I_{>}(k,z)\frac{dz}{z},
\end{equation}
\begin{equation}\label{I>zdef}
I_{>}(k,z)=(-1)^k\sqrt{\pi}
\int_{1}^{\infty}I_k\left(2\frac{x+1}{x-1}\right)\frac{\left(\log\frac{n^2(x-1)}{8\pi q^2}-\frac{\pi}{2}+3\gamma\right)dx}{(x-1)^{1/2+z}(x+1)^{1/2}}.
\end{equation}
For convenience, we introduce the following additional notation:
\begin{equation}\label{I<0zdef}
I_{<,0}(k,z):=(-1)^k\sqrt{\pi}
\int_{1}^{\infty}I_k\left(2\frac{x-1}{x+1}\right)\frac{dx}{(x-1)^{1/2}(x+1)^{1/2+z}},
\end{equation}
\begin{equation}\label{I>z0def}
I_{>,0}(k,z):=(-1)^k\sqrt{\pi}
\int_{1}^{\infty}I_k\left(2\frac{x+1}{x-1}\right)\frac{dx}{(x-1)^{1/2+z}(x+1)^{1/2}}.
\end{equation}
Then
\begin{equation}\label{I<z to I<z0}
I_{<}(k,z)=\left(\log\frac{n^2}{8\pi q^2}+\frac{\pi}{2}+3\gamma-\frac{\partial}{\partial z}\right)I_{<,0}(k,z),
\end{equation}
\begin{equation}\label{I>z to I>z0}
I_{>}(k,z)=\left(\log\frac{n^2}{8\pi q^2}-\frac{\pi}{2}+3\gamma-\frac{\partial}{\partial z}\right)I_{>,0}(k,z).
\end{equation}

%%%%%%%%%%%%%%%%%%%%%%%
To evaluate $I_{<,0}(k,z)$ and $I_{>,0}(k,z)$, we apply the following formula from \cite[Equation~5.12.3]{HMF}:
\begin{equation}\label{beta +0infinity}
\int_0^{\infty}x^c(1+x)^ddx=\frac{\Gamma(c+1)\Gamma(-1-c-d)}{\Gamma(-d)},
\end{equation}
which holds  for $\Re{d}<0$ and $-1<\Re{c}<-1-\Re{d}$. Relation \eqref{beta +0infinity} directly implies that
\begin{equation}\label{beta +0infinity 2}
\int_1^{\infty}(x+1)^a(x-1)^bdx=2^{1+a+b}\frac{\Gamma(1+b)\Gamma(-1-a-b)}{\Gamma(-a)},
\end{equation}
which holds  for $\Re{a}<0$ and $-1<\Re{b}<-1-\Re{a}$.
%%%%%%%%%%%%%%%%%%%%%%%%%%%%%%%%%%%%%%%%%%%%%%%%%%%%%5
\begin{lem}\label{lem:I>z}
For $0<\Re{z}<2k-1/2$ we have
\begin{equation}\label{I>z eq0}
I_{>,0}(k,z)=\frac{\Gamma^2(z)\Gamma(2k-1/2-z)}{\Gamma(2k-1/2+z)}.
\end{equation}
\end{lem}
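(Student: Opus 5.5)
Substitute the Mellin--Barnes representation \eqref{Ik def} of $I_k$ into \eqref{I>z0def}, interchange the order of integration, and compute the resulting $x$-integral with \eqref{beta +0infinity 2}. That is, for $1-2k<\Delta<1/2$ we write
\begin{equation*}
I_{>,0}(k,z)=\frac{(-1)^k\sqrt{\pi}}{2\pi i}\int_{(\Delta)}\frac{\Gamma(k-1/2+w/2)}{\Gamma(k+1/2-w/2)}\Gamma(1/2-w)\sin\left(\pi\frac{1/2+w}{2}\right)2^{w}\int_1^{\infty}(x+1)^{w-1/2}(x-1)^{-w-1/2-z}dx\,dw.
\end{equation*}
The inner integral falls under \eqref{beta +0infinity 2} with $a=w-1/2$ and $b=-w-1/2-z$. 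The hypotheses $\Re a<0$ and $-1<\Re b<-1-\Re a$ become $\Re w<1/2$, $\Re(w+z)<1/2$ and $\Re z>0$. Since $1+a+b=-z$ and $-1-a-b=z$, the integral equals
\begin{equation*}
2^{-z}\frac{\Gamma(1/2-w-z)\Gamma(z)}{\Gamma(1/2-w)}.
\end{equation*}
We therefore choose $\Delta$ with $1-2k<\Delta<1/2-\Re z$; this is possible exactly because $\Re z<2k-1/2$. Absolute convergence, which justifies Fubini, follows from Stirling's formula on the vertical line together with the fact that the $x$-integrand is integrable at both endpoints in this range. The factor $\Gamma(1/2-w)$ cancels, leaving
\begin{equation*}
I_{>,0}(k,z)=(-1)^k\sqrt{\pi}\,2^{-z}\Gamma(z)\frac{1}{2\pi i}\int_{(\Delta)}\frac{\Gamma(k-1/2+w/2)}{\Gamma(k+1/2-w/2)}\Gamma(1/2-w-z)\sin\left(\pi\frac{1/2+w}{2}\right)2^{w}dw.
\end{equation*}

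The remaining task, and the main obstacle, is to evaluate this one-variable Barnes-type integral in closed form. First I would apply the duplication formula \cite[5.5.5]{HMF} to $\Gamma(1/2-w-z)$, and write the sine as $\pi/(\Gamma(\cdot)\Gamma(1-\cdot))$, so that the integrand becomes a ratio of Gamma functions in $w/2$. The powers of $2$ should then cancel against $2^{w}$. Setting $s=w/2$, the integral becomes a Mellin--Barnes integral of Barnes' first-lemma type (or a degenerate Gauss summation) at argument $1$. It should evaluate to
\begin{equation*}
\frac{\Gamma(z)\Gamma(2k-1/2-z)}{\Gamma(2k-1/2+z)}
\end{equation*}
times constants which cancel $(-1)^k\sqrt{\pi}\,2^{-z}$. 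The sign $(-1)^k$ should come out of the sine combined with the reflection formula.

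If matching the parameters to Barnes' lemma directly is messy, I would instead close the contour to the left. The residues at the poles of $\Gamma(k-1/2+w/2)$, namely $w=1-2k-2j$, give a ${}_2F_1$ at $\pm1$ (or a ${}_3F_2$ collapsing to one) that can be summed by Gauss's theorem \eqref{pIq def}. A consistency check is available at the end: as $z\to0^+$ the answer should behave like $\Gamma(z)^2$, matching the logarithmic divergence of \eqref{I>z0def} at $x=1$ when $z=0$. Finally, the identity extends to the full strip $0<\Re z<2k-1/2$ by analytic continuation, since both sides are analytic there.
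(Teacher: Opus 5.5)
Your fallback route is exactly the paper's proof. After \eqref{beta +0infinity 2} cancels $\Gamma(1/2-w)$, the paper moves the contour to the left over the poles $w=1-2k-2j$ of $\Gamma(k-1/2+w/2)$, where $\sin(3\pi/4-\pi(k+j))=(-1)^{k+j}/\sqrt{2}$ absorbs all the signs; duplication, Gauss's summation of $2^{-2z}\Gamma(z)\,{}_2\mathrm{I}_1(k-z/2-1/4,\,k-z/2+1/4;\,2k;\,1)$ (argument $+1$, convergent because $\Re z>0$), and duplication again then give the result.

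Your justification of the interchange is wrong, though. On $\Re w=\Delta$ the integrand of \eqref{Ik def} has size only $\asymp|t|^{-1}$: the exponentials of $\Gamma(1/2-w)$ and of the sine cancel, and the power of $|t|$ does not depend on $\Delta$. Since the inner $x$-integral of the absolute value is a constant independent of $t$, your double integral is \emph{not} absolutely convergent. The Fubini step therefore needs another argument, which the paper also omits. One clean fix is to use \eqref{PhikPsik toIk} to write $I_{>,0}(k,z)=\int_1^{\infty}\Psi_k\bigl((x-1)^2/(x+1)^2\bigr)(x+1)^{1/2}(x-1)^{-3/2-z}dx$. Then integrate the positive-coefficient hypergeometric series of $\Psi_k$ termwise, justified by Tonelli; this lands directly on the same ${}_2\mathrm{I}_1$.

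Your $z\to0$ consistency check is also mislocated. The $\Gamma^2(z)\sim z^{-2}$ comes from $x\to\infty$, where the argument of $I_k$ tends to $2$ and $\Psi_k$ grows like $\log x$. Near $x=1$ the integrand is only $O\bigl((x-1)^{2k-3/2-\Re z}\bigr)$, which is harmless at $z=0$.
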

\begin{proof}
Substituting \eqref{Ik def} into \eqref{I>z0def}, we obtain
\begin{multline}\label{I>z eq1}
I_{>,0}(k,z)=
\frac{(-1)^k\sqrt{\pi}}{2\pi i}\int_{(\Delta)}\frac{\Gamma(k-1/2+w/2)}{\Gamma(k+1/2-w/2)}\Gamma(1/2-w)\sin\left( \pi \frac{1/2+w}{2}\right)2^w\\\times
\int_{1}^{\infty}(x-1)^{-w-z-1/2}(x+1)^{w-1/2}dxdw.
\end{multline}
By virtue of \eqref{beta +0infinity 2}, we find that
\begin{equation}\label{I>z eq2}
\int_{1}^{\infty}(x+1)^{w-1/2}(x-1)^{-w-z-1/2}dx=
2^{-z}\frac{\Gamma(1/2-w-z)\Gamma(z)}{\Gamma(1/2-w)}
\end{equation}
for  $0<\Re{z}<1/2-\Re{w}.$ Substituting \eqref{I>z eq2} into \eqref{I>z eq1} yields
\begin{multline}\label{I>z eq3}
I_{>,0}(k,z)=
\frac{2^{-z}\Gamma(z)(-1)^k\sqrt{\pi}}{2\pi i}\int_{(\Delta)}\frac{\Gamma(k-1/2+w/2)}{\Gamma(k+1/2-w/2)}\Gamma(1/2-w-z)\sin\left( \pi \frac{1/2+w}{2}\right)2^wdw.
\end{multline}
Moving the line of integration in \eqref{I>z eq3} to the left, we pass the poles at $w = 1 - 2k - 2j$ for $j = 0, 1, 2, \dots$. Evaluating the residues, we find that
\begin{equation}\label{I>z eq4}
I_{>,0}(k,z)=
2^{2-z-2k}\Gamma(z)(-1)^k\sqrt{\pi}\sum_{j=0}^{\infty}\frac{(-1)^j}{j!2^{2j}}
\frac{\Gamma(2k+2j-z-1/2)}{\Gamma(2k+j)}\sin\left( \frac{3\pi}{4}-\pi(k+j)\right).
\end{equation}
Using \cite[5.5.5]{HMF}, \eqref{pIq def}, and \cite[ 15.4.20]{HMF}, we obtain
\begin{multline}\label{I>z eq5}
I_{>,0}(k,z)=
2^{-2z}\Gamma(z)\sum_{j=0}^{\infty}\frac{1}{j!}
\frac{\Gamma(k-z/2-1/4+j)\Gamma(k-z/2+1/4+j)}{\Gamma(2k+j)}=\\=
2^{-2z}\Gamma(z)\GenHyGI{2}{1}{k-z/2-1/4,k-z/2+1/4}{2k}{1}=\\=
2^{-2z}\frac{\Gamma^2(z)\Gamma(k-1/4-z/2)\Gamma(k+1/4-z/2)}{\Gamma(k-1/4+z/2)\Gamma(k+1/4+z/2)}.
%\frac{\Gamma(k-z/2-1/4)\Gamma(k-z/2+1/4)}{\Gamma(2k)}.
\end{multline}
By applying \cite[Equation~5.5.5]{HMF} twice to the numerator and denominator of the last fraction in \eqref{I>z eq5}, we find that
\begin{equation*}
I_{>,0}(k,z)=
\frac{\Gamma^2(z)\Gamma(2k-1/2-z)}{\Gamma(2k-1/2+z)}.
\end{equation*}
\end{proof}
%%%%%%%%%%%%%%%%%%%%%%%%%%%%%%%%%%%%%%%%%%%%%%%%%%%%%5
\begin{lem}\label{lem:I<z}
For $0<\Re{z}<2k-1/2$ we have
\begin{equation}\label{I<z eq01}
I_{<,0}(k,z)=
2^{-2z}\Gamma(z)
\GenHyGI{3}{2}{k-1/4,3/4-k,1}{1/2+z/2,1+z/2}{1},
\end{equation}
\begin{multline}\label{I<z eq02}
I_{<,0}(k,z)=
\frac{\Gamma^2(z)\Gamma(2k-1/2-z)\cos(\pi z)}{\Gamma(2k-1/2+z)}-\\-
2^{-2z}\Gamma(z)\sin(\pi z)
\GenHyGI{3}{2}{1-z/2,3/2-z/2,1}{k+5/4,9/4-k}{1},
\end{multline}
\end{lem}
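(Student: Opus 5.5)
The plan mirrors the proof of Lemma~\ref{lem:I>z}. Substituting the Mellin--Barnes representation \eqref{Ik def} into \eqref{I<0zdef} with $x_0=2\frac{x-1}{x+1}$, the $x$-integral becomes
\begin{equation*}
\int_1^\infty (x-1)^{w-1/2}(x+1)^{-w-z-1/2}\,dx
=2^{-z}\frac{\Gamma(1/2+w)\Gamma(z)}{\Gamma(1/2+w+z)},
\end{equation*}
by \eqref{beta +0infinity 2} with $a=-w-z-1/2$ and $b=w-1/2$. This requires $-1/2<\Re w$ and $\Re z>0$, so we take $\Delta\in(-1/2,1/2)$, which is compatible with $1-2k<\Delta$. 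Interchanging the integrals (justified by absolute convergence, since the Gamma ratio decays for $\Delta$ small), we get
\begin{equation*}
I_{<,0}(k,z)=\frac{2^{-z}\Gamma(z)(-1)^k\sqrt{\pi}}{2\pi i}\int_{(\Delta)}\frac{\Gamma(k-1/2+w/2)}{\Gamma(k+1/2-w/2)}\frac{\Gamma(1/2-w)\Gamma(1/2+w)}{\Gamma(1/2+w+z)}\sin\Bigl(\pi\frac{1/2+w}{2}\Bigr)2^{w}\,dw .
\end{equation*}
Next, $\Gamma(1/2-w)\Gamma(1/2+w)=\pi/\cos(\pi w)$. Combined with the sine, this gives $\sin(\pi(1/2+w)/2)/\cos(\pi w)=1/(2\cos(\pi(1/2+w)/2))$ after using $\cos\pi w=\sin(\pi(1/2+w))$. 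The integrand is therefore a ratio of Gamma functions times $1/\cos(\pi(1/4+w/2))$.

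To reach \eqref{I<z eq01}, I will shift the contour to the left, picking up the poles of $\Gamma(k-1/2+w/2)$ at $w=1-2k-2j$; the factor $1/\cos$ has poles at $w=1/2+2m$ with $m$ an integer, which lie to the left as well. Alternatively, and preferably, I will shift to the right and collect the poles of $1/\cos(\pi(1/4+w/2))$ at $w=1/2+2m$ with $m\ge0$. At these points the remaining Gamma factors become $\Gamma(k-1/4+m)/\Gamma(k+1/4-m)\cdot 1/\Gamma(1+2m+z)$. Writing $1/\Gamma(k+1/4-m)=\Gamma(3/4-k+m)(-1)^m\sin(\cdot)/\pi$ via the reflection formula, and splitting $\Gamma(1+2m+z)=2^{2m+z}\pi^{-1/2}\Gamma(1/2+z/2+m)\Gamma(1+z/2+m)$ by duplication (\cite[5.5.5]{HMF}), the residue series should collapse to $2^{-2z}\Gamma(z)\sum_m \Gamma(k-1/4+m)\Gamma(3/4-k+m)\Gamma(1+m)/(m!\,\Gamma(1/2+z/2+m)\Gamma(1+z/2+m))$. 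This is exactly ${}_3\mathrm{I}_2$ at $1$ by \eqref{pIq def}. The powers $2^{w}$ cancel against the duplication powers, and the signs $(-1)^k(-1)^m$ cancel against the trigonometric values. The series converges since the parameter excess is $(1/2+z/2+1+z/2)-(k-1/4+3/4-k+1)=\Re z>0$. The main obstacle is bookkeeping: checking that the arc contributions vanish (Stirling applied to the Gamma ratio, which decays like $|w|^{-1-\Re z}$ times a bounded trigonometric ratio off the poles), and matching the constants exactly.

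For \eqref{I<z eq02}, I will apply a three-term transformation of ${}_3F_2(1)$ (the nonterminating Thomae/Bailey relation, expressing a Saalschützian-type ${}_3F_2(1)$ through another ${}_3F_2(1)$ plus a Gamma product). The choice is guided by the target: the new lower parameters $k+5/4=2-(3/4-k)+\ldots$ and $9/4-k$ arise from $1+a_i-a_j$-type shifts, and the upper parameters $1-z/2$, $3/2-z/2$, $1$ arise from $1+1-b_j$ shifts. The Gamma product term should reduce, after duplication and \cite[5.5.5]{HMF}, to $\Gamma^2(z)\Gamma(2k-1/2-z)/\Gamma(2k-1/2+z)$, which agrees with Lemma~\ref{lem:I>z}. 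The trigonometric coefficients $\cos\pi z$ and $-\sin\pi z$ come from reflection-formula combinations such as $\sin\pi(k-1/4)\sin\pi(3/4-k)$ against $\sin\pi(1/2+z/2)\sin\pi(z/2)$.

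As an independent route, if no clean transformation formula applies, I will instead close the original contour to the left. The residues at $w=1-2k-2j$ give the Gamma-product term, via a Gauss ${}_2F_1(1)$ summation exactly as in \eqref{I>z eq4}--\eqref{I>z eq5}, with an extra factor $\cos\pi z$ coming from $\Gamma(1/2-w)\Gamma(1/2+w)/\Gamma(1/2+w+z)$ compared with $\Gamma(1/2-w-z)$. The residues at the poles $w=-1/2-2m-\ldots$ of the cosine on the left produce the second ${}_3\mathrm{I}_2$ with coefficient $\sin\pi z$. I expect this second route to be the cleaner one to verify.
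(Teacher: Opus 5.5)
Your plan is correct and is essentially the paper's proof: \eqref{I<z eq01} comes from shifting the contour to the right and collecting the poles at $w=1/2+2m$. These are the paper's poles $w=1/2+j$ of $\Gamma(1/2-w)$, with odd $j$ killed by the sine. \eqref{I<z eq02} comes from your second route of shifting left. There the poles at $w=1-2k-2j$ contribute exactly $\cos(\pi z)$ times the residues of Lemma~\ref{lem:I>z}, and the poles at $w=-3/2-2m$ give the $\sin(\pi z)$ term, so the Thomae/Bailey detour is not needed.

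One correction: the interchange of the $x$- and $w$-integrals is not justified by absolute convergence. The integrand in \eqref{Ik def} decays only like $|\Im w|^{-1}$ for every admissible $\Delta$, matching the logarithmic singularity of $\Phi_k$ at $1$. So you need, e.g., a truncation of the $w$-integral plus dominated convergence at that step, a point the paper also passes over.
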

\begin{proof}
Substituting \eqref{Ik def} to \eqref{I<zdef}, we obtain
\begin{multline}\label{I<z eq1}
I_{<,0}(k,z)=
\frac{(-1)^k\sqrt{\pi}}{2\pi i}\int_{(\Delta)}\frac{\Gamma(k-1/2+w/2)}{\Gamma(k+1/2-w/2)}\Gamma(1/2-w)\sin\left( \pi \frac{1/2+w}{2}\right)2^w\\\times
\int_{1}^{\infty}(x-1)^{w-1/2}(x+1)^{-1/2-w-z}dxdw.
\end{multline}
Applying \eqref{beta +0infinity 2}, we show that
\begin{equation}\label{I<z eq2}
\int_{1}^{\infty}(x+1)^{-1/2-w-z}(x-1)^{w-1/2}dx=
2^{-z}\frac{\Gamma(1/2+w)\Gamma(z)}{\Gamma(1/2+w+z)}
\end{equation}
for  $\Re{w}>-1/2,$ $\Re{z}>0$. Substitution of \eqref{I<z eq2} into \eqref{I<z eq1} yields
\begin{multline}\label{I<z eq3}
I_{<,0}(k,z)=\\=
\frac{2^{-z}\Gamma(z)(-1)^k\sqrt{\pi}}{2\pi i}\int_{(\Delta)}
\frac{\Gamma(k-1/2+w/2)}{\Gamma(k+1/2-w/2)}\frac{\Gamma(1/2+w)\Gamma(1/2-w)}{\Gamma(1/2+w+z)}\sin\left( \pi \frac{1/2+w}{2}\right)2^wdw.
\end{multline}
Moving the line of integration in \eqref{I<z eq3} to the right, we pass the poles at $w = 1/2 + j$ for $j = 0, 1, 2, \dots$. Evaluating the residues, we find that
\begin{multline}\label{I<z eq4}
I_{<,0}(k,z)=
2^{-z}\Gamma(z)(-1)^k\sqrt{\pi}\sum_{j=0}^{\infty}\frac{(-1)^j2^{1/2+j}}{j!}
\frac{\Gamma(k-1/4+j/2)\Gamma(1+j)}{\Gamma(k+1/4-j/2)\Gamma(1+z+j)}\cos\frac{\pi j}{2}=\\=
2^{-z}\Gamma(z)(-1)^k\sqrt{\pi}\sum_{m=0}^{\infty}\frac{(-1)^m2^{1/2+2m}}{\Gamma(1+z+2m)}\frac{\Gamma(k-1/4+m)}{\Gamma(k+1/4-m)}.
\end{multline}
The application of \cite[Equations~5.5.3 and 5.5.5]{HMF} together with \eqref{pIq def} yields
\begin{multline}\label{I<z eq5}
I_{<,0}(k,z)=
2^{-2z}\Gamma(z)\sum_{m=0}^{\infty}\frac{\Gamma(k-1/4+m)\Gamma(3/4-k+m)}{\Gamma(1/2+z/2+m)\Gamma(1+z/2+m)}=\\=
2^{-2z}\Gamma(z)\GenHyGI{3}{2}{k-1/4,3/4-k,1}{1/2+z/2,1+z/2}{1},
\end{multline}
which completes the proof of \eqref{I<z eq01}.

Next, moving the line of integration in \eqref{I<z eq3} to the left, we pass the poles at $w = 1 - 2k - 2j$ and $w = -1/2 - j$ for $j = 0, 1, 2, \dots$. Evaluating the residues, we find that
\begin{multline}\label{I<z eq6}
\frac{I_{<,0}(k,z)}{2^{-z}\Gamma(z)}=
2\sqrt{\pi}\sum_{j=0}^{\infty}\frac{1}{j!}\frac{\Gamma(-1/2+2k+2j)\Gamma(3/2-2k-2j)}{\Gamma(2k+j)\Gamma(3/2+z-2k-2j)}2^{1/2-2j-2k}+\\+
(-1)^k\sqrt{\pi}\sum_{j=0}^{\infty}\frac{(-1)^j2^{-1/2-j}}{j!}
\frac{\Gamma(k-3/4-j/2)\Gamma(1+j)}{\Gamma(k+3/4+j/2)\Gamma(z-j)}\sin\frac{-\pi j}{2}.
\end{multline}
Applying \cite[Equation~5.5.3]{HMF} twice to the first sum in \eqref{I<z eq6}, we obtain
\begin{multline}\label{I<z eq7}
\frac{I_{<,0}(k,z)}{2^{-z}\Gamma(z)}=
2\sqrt{\pi}\sum_{j=0}^{\infty}\frac{1}{j!}\frac{\Gamma(-1/2-z+2k+2j)\cos(\pi z)}{\Gamma(2k+j)}2^{1/2-2j-2k}+\\+
(-1)^k\sqrt{\pi}\sum_{m=0}^{\infty}
\frac{(-1)^m2^{-3/2-2m}\Gamma(k-5/4-m)}{\Gamma(k+5/4+m)\Gamma(z-1-2m)}.
\end{multline}
Application of \cite[Equation~5.5.5]{HMF} to the first sum and a twofold application of \cite[Equation~5.5.3]{HMF} to the second sum in \eqref{I<z eq7} yield
\begin{multline}\label{I<z eq8}
\frac{I_{<,0}(k,z)}{2^{-z}\Gamma(z)}=
2^{-z}\cos(\pi z)\sum_{j=0}^{\infty}\frac{\Gamma(k-1/4-z/2+j)\Gamma(k+1/4-z/2+j)}{\Gamma(2k+j)j!}-\\-
\sqrt{\pi}\sum_{m=0}^{\infty}
\frac{2^{-1-2m}\Gamma(2-z+2m)\sin(\pi z)}{\Gamma(k+5/4+m)\Gamma(9/4-k+m)}.
\end{multline}
By virtue of \eqref{pIq def} and applying \cite[Equation~5.5.5]{HMF} to the second sum, we obtain
\begin{multline}\label{I<z eq9}
\frac{I_{<,0}(k,z)}{2^{-z}\Gamma(z)}=
2^{-z}\cos(\pi z)\GenHyGI{2}{1}{k-1/4-z/2,k+1/4-z/2}{2k}{1}-\\-
2^{-z}\sin(\pi z)\sum_{m=0}^{\infty}
\frac{\Gamma(1-z/2+m)\Gamma(3/2-z/2+m)}{\Gamma(k+5/4+m)\Gamma(9/4-k+m)}.
\end{multline}
Combining \eqref{pIq def} with \cite[Equation~15.4.20]{HMF}, we find that
\begin{multline}\label{I<z eq10}
\frac{I_{<,0}(k,z)}{2^{-z}\Gamma(z)}=
2^{-z}\cos(\pi z)
\frac{\Gamma(z)\Gamma(k-1/4-z/2)\Gamma(k+1/4-z/2)}{\Gamma(k-1/4+z/2)\Gamma(k+1/4+z/2)}-\\-
2^{-z}\sin(\pi z)
\GenHyGI{3}{2}{1-z/2,3/2-z/2,1}{k+5/4,9/4-k}{1},
\end{multline}
Finally, applying \cite[Equation~5.5.5]{HMF}, we prove \eqref{I<z eq02}.
\end{proof}
%%%%%%%%%%%%%%%%%%%%%%%%%%%%%%%%%%%%%%%%%%%%%%%%%%%%%5
According to \cite[Equation~5.5.5]{HMF}, we have
\begin{equation*}
\Gamma\left(1-\frac{z}{2}\right) \Gamma\left(\frac{3}{2}-\frac{z}{2}\right) = \pi^{1/2} 2^{z-1} (1-z) \Gamma(1-z),
\end{equation*}
and thus \eqref{I<z eq02} can be rewritten in the form
\begin{multline}\label{I<z eq03}
I_{<,0}(k,z)=
\frac{\Gamma^2(z)\Gamma(2k-1/2-z)\cos(\pi z)}{\Gamma(2k-1/2+z)}-\\-
\frac{\pi^{3/2} 2^{-1-z}(1-z)}{\Gamma(k+5/4)\Gamma(9/4-k)}
\GenHyG{3}{2}{1-z/2,3/2-z/2,1}{k+5/4,9/4-k}{1}.
\end{multline}
It follows from \eqref{I>z eq0} and \eqref{I<z eq03} that
\begin{multline}\label{I<z+I>z eq1}
I_{<,0}(k,z)+I_{>,0}(k,z)=
\frac{\Gamma^2(z)\Gamma(2k-1/2-z)(1+\cos(\pi z))}{\Gamma(2k-1/2+z)}-\\-
\frac{\pi^{3/2} 2^{-1-z}(1-z)}{\Gamma(k+5/4)\Gamma(9/4-k)}
\GenHyG{3}{2}{1-z/2,3/2-z/2,1}{k+5/4,9/4-k}{1}.
\end{multline}
Finally, applying \eqref{SZE+SIN rK MT+ET2}, \eqref{I<k to I<z}, \eqref{I>k to I>z}, \eqref{I<z to I<z0}, and \eqref{I>z to I>z0}, we obtain
\begin{multline}\label{SZE+SIN rK MT+ET3}
\SZE(r,K)+\SIN(r,K)=
\sum_{k}h\left(\frac{k}{K}\right)\sum_{e|r^2}\frac{e_2}{\sqrt{re}}\sum_{n=1}^{\infty}
\sum_{q|rn/e_2}\frac{\ups_q(n^2)}{\sqrt{q}}\frac{1}{2\pi i}\int_{(a)}\left(\frac{ne}{4r}\right)^{-z}\\\times
\frac{L_{\infty}(\sym^2f,1/2+z)}{L_{\infty}(\sym^2f,1/2)}\zeta(1+2z)
\Biggl(
\left(\log\frac{n^2}{8\pi q^2}+3\gamma-\frac{\partial}{\partial z}\right)\left(I_{<,0}(k,z)+I_{>,0}(k,z)\right)+\\+
\frac{\pi}{2}\left(I_{<,0}(k,z)-I_{>,0}(k,z)\right)\Biggr)
\frac{dz}{z}+O\left(\frac{r^{3/2}}{K^{3/2-\epsilon}}\right).
\end{multline}

To compute this expression, we first evaluate the sums over $n$ and $q$. Let $R := r / e_2$ and
\begin{equation}
\label{Vseries_def}
V(R,z,u): = \sum_{n=1}^{\infty} \sum_{q \mid Rn} \frac{\ups_q(n^2)}{n^z \sqrt{q}} \left(\frac{n}{q}\right)^u.
\end{equation}
Then, for any constants $C$ and $D$, we have
\begin{equation}\label{nq series to Vseries}
\sum_{n=1}^{\infty}\sum_{q|Rn}\frac{\ups_q(n^2)}{n^z\sqrt{q}}\left(C\log\frac{n}{q}+D\right)=
V(R,z,0)D+C\frac{d}{du}V(R,z,u)\Biggl|_{u=0}.
\end{equation}
Consider
\begin{multline}\label{series n,q transform1}
V(R,z,u)=
\sum_{q=1}^{\infty}\sum_{Rn\equiv0\Mod{q}}\frac{\ups_q(n^2)}{n^{z-u}q^{1/2+u}}=
\sum_{d|R}\sum_{\substack{q=1\\(q,R)=d}}^{\infty}\sum_{n\equiv0\Mod{q/d}}\frac{\ups_q(n^2)}{n^{z-u}q^{1/2+u}}=\\=
\sum_{d|R}\sum_{\substack{q=1\\(q,R/d)=1}}^{\infty}\sum_{n\equiv0\Mod{q}}\frac{\ups_{qd}(n^2)}{n^{z-u}(qd)^{1/2+u}}=
\sum_{d|R}\sum_{q=1}^{\infty}\sum_{m|(q,R/d)}\mu(m)\sum_{n=1}^{\infty}\frac{\ups_{qd}(n^2q^2)}{(nq)^{z-u}(qd)^{1/2+u}}=\\=
\sum_{d|R}\sum_{m|R/d}\frac{\mu(m)}{m^{1/2+z}d^{1/2+u}}\sum_{n,q=1}^{\infty}\frac{\ups_{mdq}(m^2n^2q^2)}{n^{z-u}q^{1/2+z}}.
\end{multline}

We next evaluate the inner sum over $n$ and $q$. To simplify the analysis, we first restrict our attention to the special case where $r = r_1 r_2^2$ with $(r_1, r_2) = 1$, where both $r_1$ and $r_2$ are square-free. (This case suffices to establish Theorem~\ref{thm:nonvanishing}; the general case will be treated in Section~\ref{sec:Proof of Theorem 2mom General}). Consequently, the parameter $R$ satisfies the same conditions. We may also assume that $m$ is square-free. Furthermore, note that if $\nu_p(d) = 2$, then $\nu_p(R/d) = 0$, which implies that $\nu_p(m) = 0$. For convenience, we introduce the notation
\begin{equation}
\label{G_ups_def}
G(n,q) = \sqrt{q} \ups_q(n),
\end{equation}
which slightly simplifies \eqref{series n,q transform1}:
\begin{equation}\label{series n,q transform2}
V(R,z,u)=
\sum_{d|R}\sum_{m|R/d}\frac{\mu(m)}{m^{1+z}d^{1+u}}\sum_{n,q=1}^{\infty}\frac{G(m^2n^2q^2,mdq)}{n^{z-u}q^{1+z}}.
\end{equation}

Since $\ups_q(n)$ is multiplicative in $q$, the function $G(n,q)$ shares the same property. Let $p^{\alpha} \| n$. In the notation of \cite[(8.3)]{IwMi2001}, we have $\ups_q(n) = G(n,q)$; hence, upon multiplication by $\sqrt{q}$, the relations \cite[(8.5), (8.6)]{IwMi2001} for $\gamma \ge 1$ become
\begin{equation}\label{G(n, p gamma)}
G(n, p^{\gamma})=\left\{
  \begin{array}{ll}
    \phi(p^{\gamma}),                                  & \hbox{if}\quad \gamma\le\alpha, \,\gamma-\hbox{even}, \\
    0,                                                 & \hbox{if}\quad \gamma\le\alpha, \,\gamma-\hbox{odd}, \\
    -p^{\gamma-1},                                     & \hbox{if}\quad \gamma=\alpha+1, \,\gamma-\hbox{even}, \\
    \left(\frac{np^{-\alpha}}{p}\right)p^{\alpha+1/2}, & \hbox{if}\quad \gamma=\alpha+1, \,\gamma-\hbox{odd}, \\
    0,                                                 & \hbox{if}\quad \gamma\ge\alpha+2,
  \end{array}
\right.
\end{equation}
and for  $2^{\alpha}\|n$
\begin{equation}\label{G(n, 2 gamma)}
G(n, 2^{\gamma})=\left\{
  \begin{array}{ll}
    2^{\gamma-1},                                                 & \hbox{if}\quad \gamma\le\alpha-2, \,\gamma-\hbox{even}, \\
    0,                                                            & \hbox{if}\quad \gamma\le\alpha-2, \,\gamma-\hbox{odd}, \\
    -2^{\gamma-1},                                                & \hbox{if}\quad \gamma=\alpha-1,   \,\gamma-\hbox{even}, \\
    0,                                                            & \hbox{if}\quad \gamma=\alpha-1,   \,\gamma-\hbox{odd}, \\
    2^{\gamma-1}\chi_4(n2^{-\alpha}),                             & \hbox{if}\quad \gamma=\alpha,     \,\gamma-\hbox{even}, \\
    0,                                                            & \hbox{if}\quad \gamma=\alpha,     \,\gamma-\hbox{odd}, \\
    0,                                                            & \hbox{if}\quad \gamma=\alpha+1,   \,\gamma-\hbox{even}, \\
    2^{\gamma-3/2}(1+\chi_4(n2^{-\alpha}))\chi_8(n2^{-\alpha}),   & \hbox{if}\quad \gamma=\alpha+1,   \,\gamma-\hbox{odd}, \\
    0,                                                            & \hbox{if}\quad \gamma\ge\alpha+2.
  \end{array}
\right.
\end{equation}

%%%%%%%%%%%%%%%%%%%%%%%%%%%%%%%%%%%%%%%%%%%%%%%%%%%%%5
\begin{lem}\label{lem:Sound series}
Let $d = d_1d_2^2$ with $(d_1,d_2) = 1$, where $d_1$ and $d_2$ are square-free.
Let $m$ be a square-free integer such that $(m,d_2) = 1$. Then
\begin{equation}\label{Sound series 0}
\sum_{n,q=1}^{\infty}\frac{G(m^2n^2q^2,mdq)}{n^{z-u}q^{1+z}}
=\frac{\zeta(z-u)\zeta(2z)}{\zeta(1+2z)}\G(m,d,z,u),
\end{equation}
where
\begin{equation}\label{G(m,d,z,u)def}
\G(m,d,z,u)=\G_{1,0}(z)\G_{0,1}(z,u)\G_{1,1}(z)\G_{0,2}(z,u),
\end{equation}
with
\begin{equation}\label{G10 def}
\G_{1,0}(z)=\prod_{\substack{p|m\\p\nmid d}}p^{1-z}\left(1-\frac{1}{p}\right)\left(1-\frac{1}{p^{1+2z}}\right)^{-1},
\end{equation}
\begin{equation}\label{G01 def}
\G_{0,1}(z,u)=\prod_{\substack{p|d_1\\p\nmid m}}\left(1-\frac{1}{p^{1+2z}}\right)^{-1}\left(
p^{1-z}\left(1-\frac{1}{p}\right)+\sqrt{p}\left(1-\frac{1}{p^{z-u}}\right)\left(1-\frac{1}{p^{2z}}\right)
\right),
\end{equation}
\begin{equation}\label{G11 def}
\G_{1,1}(z)=\prod_{\substack{p|m\\p|d_1}}p^{2}\left(1-\frac{1}{p}\right)\left(1-\frac{1}{p^{1+2z}}\right)^{-1},
\end{equation}
\begin{multline}\label{G02 def}
\G_{0,2}(z,u)=\prod_{p|d_2}\left(1-\frac{1}{p^{1+2z}}\right)^{-1}\\\times\left(
p^{2}\left(1-\frac{1}{p}\right)\left(\frac{1}{p^{z-u}}+\frac{1}{p^{2z}}-\frac{1}{p^{3z-u}}\right)
+p^{3/2-z}\left(1-\frac{1}{p^{z-u}}\right)\left(1-\frac{1}{p^{2z}}\right)
\right).
\end{multline}
\end{lem}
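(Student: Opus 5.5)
We will prove the identity by computing both sides as Euler products over primes. Since $G(\cdot,\cdot)$ is multiplicative in its second argument and the first argument $m^2n^2q^2$ factors prime by prime, the double series factors as
\[
\sum_{n,q}\frac{G(m^2n^2q^2,mdq)}{n^{z-u}q^{1+z}}=\prod_{p}\sum_{a,b\ge0}\frac{G\bigl(p^{2(\nu_p(m)+a+b)}N_p,\;p^{\nu_p(m)+\nu_p(d)+b}\bigr)}{p^{a(z-u)}p^{b(1+z)}},
\]
where $N_p$ is the $p$-free part of $m^2n^2q^2$. By \eqref{G(n, p gamma)}, $G(n,p^\gamma)$ depends only on $\alpha=\nu_p(n)$ and, when $\gamma=\alpha+1$ is odd, on the Legendre symbol of $np^{-\alpha}$. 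Here $\alpha=2(\nu_p(m)+a+b)$ is even, and $\gamma\le \nu_p(m)+2+b<\alpha+1$ unless $a=b=0$ and $\nu_p(m)=0$. So the character case arises only in a limited way, and the $N_p$ dependence essentially disappears; in the remaining case $\gamma=\alpha+1$ is odd with $\alpha=0$, and the Legendre symbol of a square is $1$. Thus each local factor is a pure function of $p$, $\nu_p(m)\in\{0,1\}$ and $\nu_p(d)\in\{0,1,2\}$, with the constraint $\nu_p(m)=0$ when $\nu_p(d)=2$. For $p=2$ we use \eqref{G(n, 2 gamma)} instead. Since $\alpha$ is even and at least $\gamma$ in the relevant range, only the rows with $\gamma\le\alpha-2$ even, $\gamma=\alpha$ even, and $\gamma=\alpha\pm1$ contribute, and the resulting factor should agree with the odd-prime formula. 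This needs to be checked. The $\chi_4$ and $\chi_8$ factors equal $1$ on odd squares.

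We then compute the local factor in five cases. In the generic case $p\nmid md$ we need $\gamma=b\le 2(a+b)$. The value $\phi(p^b)$ for even $b$ and $0$ for odd $b$ applies when $b\le 2a+2b$, which always holds except at $a=b=0$, where $G=1$. Summing gives
\[
\sum_a p^{-a(z-u)}\sum_{b\ \text{even}}\phi(p^b)p^{-b(1+z)}=(1-p^{-(z-u)})^{-1}\,\frac{1-p^{-1-2z}}{1-p^{-2z}}\cdot(\text{correction}).
\]
We will verify that this equals the local factor of $\zeta(z-u)\zeta(2z)/\zeta(1+2z)$ exactly, so that the remaining cases produce the ratios $\G_{1,0},\G_{0,1},\G_{1,1},\G_{0,2}$. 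The four exceptional cases are:
\begin{itemize}
\item $p\mid m$, $p\nmid d$, with $\gamma=1+b$ and $\alpha=2+2a+2b$;
\item $p\mid d_1$, $p\nmid m$, with $\gamma=1+b$ and $\alpha=2a+2b$, where the boundary term $a=b=0$, $\gamma=1=\alpha+1$ odd gives $p^{1/2}$;
\item $p\mid m$, $p\mid d_1$, with $\gamma=2+b$;
\item $p\mid d_2$, with $\gamma=2+b$ and $\alpha=2a+2b$, where boundary terms occur at $a+b=0$ ($\gamma=2=\alpha+2$, giving $0$) and at small $a$ with $\gamma=\alpha+1$ or $\gamma=\alpha$.
\end{itemize}
Each case is a geometric series in $p^{-(z-u)}$ and $p^{-2z}$. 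We divide each by the generic factor $(1-p^{u-z})^{-1}(1-p^{-2z})^{-1}(1-p^{-1-2z})$ to obtain the stated products.

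The main obstacle is the bookkeeping in the $p\mid d_2$ and $p\mid d_1$ cases. There the pairs $(a,b)$ with $\gamma$ close to $\alpha$ give the $-p^{\gamma-1}$ and $p^{\alpha+1/2}$ terms, which produce the $\sqrt p\,(1-p^{u-z})(1-p^{-2z})$ and $p^{3/2-z}(\dots)$ pieces. We will handle this by first separating the terms with $\gamma\le\alpha$, which give $\phi(p^\gamma)$ in closed form as full geometric sums, and then adding the finitely many boundary terms explicitly before factoring.
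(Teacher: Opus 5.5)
Your plan is the paper's proof. Because the first argument is a square, the Legendre symbol and $\chi_4,\chi_8$ are trivial, which gives joint multiplicativity and an Euler product; the five local sums $S_p(\nu_p(m),\nu_p(d))$ are then evaluated as geometric series and divided by the generic factor $\frac{1-p^{-1-2z}}{(1-p^{u-z})(1-p^{-2z})}$, which is exactly the $p\nmid md$ sum, so no ``correction'' is needed.

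When you carry it out, three points. First, the $p=2$ check you defer is immediate: for even $\alpha$ and an odd-square cofactor, the 2-adic table agrees row by row with the odd-prime one. (The paper instead redoes the five 2-adic sums by hand.) Second, the row $-p^{\gamma-1}$ never occurs, because $\alpha$ is always even. Third, the $p\mid d_2$ factor comes from replacing $\phi(p^2)$ at $(a,b)=(0,0)$ by $G(1,p^2)=0$, and adding $G(p^2,p^3)p^{-1-z}=p^{3/2-z}$ at $(a,b)=(0,1)$; this second term is a $\gamma=\alpha+1$ case that your ``unless $a=b=0$'' remark misses.
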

\begin{proof}
First, we prove that the function $G(an^2, q)$ is jointly multiplicative with respect to $n$ and $q$.
That is, for $(n_1q_1, n_2q_2) = 1$, we have
\begin{equation}\label{G multiplicity1}
G(an_1^2n_2^2, q_1q_2) = G(an_1^2, q_1)G(an_2^2, q_2).
\end{equation}
Since $G(n, q)$ is multiplicative in $q$, it follows that
\begin{equation}
G(an_1^2n_2^2, q_1q_2) = G(an_1^2n_2^2, q_1)G(an_1^2n_2^2, q_2).
\end{equation}
We now observe that \eqref{G(n, p gamma)} and \eqref{G(n, 2 gamma)} remain unchanged if $n$ is replaced by $nb^2$ with $(b, p) = 1$.
Since $(q_1, n_2) = 1$, we have $G(an_1^2n_2^2, q_1) = G(an_1^2, q_1)$.
Similarly, since $(q_2, n_1) = 1$, we obtain $G(an_1^2n_2^2, q_2) = G(an_2^2, q_2)$.
This establishes \eqref{G multiplicity1}.

More generally, since \eqref{G(n, p gamma)} and \eqref{G(n, 2 gamma)} are invariant under replacing $n$ by $nb^2$ with $(b, p) = 1$, it follows from \eqref{G multiplicity1} that
\begin{equation}\label{G multiplicity2}
G(m^2n^2q^2, mdq) = \prod_{p} G(p^{\nu_p(m^2n^2q^2)}, p^{\nu_p(mdq)}).
\end{equation}
Applying \eqref{G multiplicity2} then yields the Euler product representation
\begin{equation}\label{Sound series 1}
\sum_{n,q=1}^{\infty} \frac{G(m^2n^2q^2, mdq)}{n^{z-u}q^{1+z}} = \prod_{p} \sum_{n,q=0}^{\infty} \frac{G(p^{2n+2q+2\nu_p(m)}, p^{q+\nu_p(md)})}{p^{n(z-u)+q(1+z)}}.
\end{equation}
The conditions of the lemma imply that we must consider five distinct cases, since $0 \le \nu_p(m) \le 1$ and $0 \le \nu_p(d) \le 2$, with the combination $\nu_p(m) = 1$ and $\nu_p(d) = 2$ being impossible.
Let
\begin{equation}\label{Sound series 2}
S_p(\nu_p(m), \nu_p(d)): = \sum_{n,q=0}^{\infty} \frac{G(p^{2n+2q+2\nu_p(m)}, p^{q+\nu_p(md)})}{p^{n(z-u)+q(1+z)}}.
\end{equation}

For $p > 2$, since $q \le 2n+2q$, it follows from \eqref{G(n, p gamma)} that
\begin{multline}\label{Sp(0,0)}
S_p(0,0)=\sum_{n,q=0}^{\infty}\frac{G(p^{2n+2q},p^{q})}{p^{n(z-u)+q(1+z)}}=\sum_{n=0}^{\infty}\frac{1}{p^{n(z-u)}}\left(1+
\sum_{\substack{q=1\\q\equiv0\Mod{2}}}^{\infty}\frac{\phi(p^q)}{p^{q(1+z)}}\right)=\\=
\frac{1}{1-p^{-z+u}}\left(1+\frac{(1-p^{-1})p^{-2z}}{1-p^{-2z}}\right)=
\frac{1-p^{-1-2z}}{(1-p^{-z+u})(1-p^{-2z})}.
\end{multline}

For $p = 2$, since $q \le 2n+2q-2$ holds for all $q$ when $n \ge 1$, and for $q \ge 2$ when $n = 0$, evaluating \eqref{G(n, 2 gamma)} yields
\begin{multline}\label{S2(0,0)}
S_2(0,0)=\sum_{n,q=0}^{\infty}\frac{G(2^{2n+2q},2^{q})}{2^{n(z-u)+q(1+z)}}=
1+\frac{G(2^{2},2)}{2^{1+z}}+\sum_{\substack{q=2\\q\equiv0\Mod{2}}}^{\infty}\frac{2^{q-1}}{2^{q(1+z)}}+\\+
\sum_{n=1}^{\infty}\frac{1}{2^{n(z-u)}}\left(1+
\sum_{\substack{q=1\\q\equiv0\Mod{2}}}^{\infty}\frac{2^{q-1}}{2^{q(1+z)}}\right)=
1+\frac{2^{-1-2z}}{1-2^{-2z}}+\frac{2^{-z+u}}{1-2^{-z+u}}
\left(1+\frac{2^{-1-2z}}{1-2^{-2z}}\right)=\\=
\left(1+\frac{2^{-1-2z}}{1-2^{-2z}}\right)\left(1+\frac{2^{-z+u}}{1-2^{-z+u}}\right)=
\frac{1-2^{-1-2z}}{(1-2^{-z+u})(1-2^{-2z})}.
\end{multline}
Thus, the evaluation in \eqref{S2(0,0)} matches the formula in \eqref{Sp(0,0)} for the case $p = 2$.

If $p > 2$ and $\nu_p(m) = 1, \nu_p(d) = 0$, since $q+1 \le 2n+2q+2$, it follows from \eqref{G(n, p gamma)} that
\begin{align}\label{Sp(1,0)}
S_p(1,0) &= \sum_{n,q=0}^{\infty} \frac{G(p^{2n+2q+2}, p^{q+1})}{p^{n(z-u)+q(1+z)}} = \sum_{n=0}^{\infty} \frac{1}{p^{n(z-u)}} \sum_{\substack{q=0 \\ q \equiv 1 \pmod{2}}}^{\infty} \frac{\phi(p^{q+1})}{p^{q(1+z)}} \notag \\
&= \frac{p(1-p^{-1})}{1-p^{-z+u}} \frac{p^{-z}}{1-p^{-2z}} = \frac{p^{1-z}(1-p^{-1})}{(1-p^{-z+u})(1-p^{-2z})}.
\end{align}

For the case $p = 2$, since $q+1 \le 2n+2q$ holds for all $q$ when $n \ge 1$, and for $q \ge 1$ when $n = 0$, we obtain from \eqref{G(n, 2 gamma)} that
\begin{multline}\label{S2(1,0)}
S_2(1,0)=\sum_{n,q=0}^{\infty}\frac{G(2^{2n+2q+2},2^{q+1})}{2^{n(z-u)+q(1+z)}}=G(2^{2},2)+
\sum_{n=0}^{\infty}\frac{1}{2^{n(z-u)}}
\sum_{\substack{q=1\\q\equiv1\Mod{2}}}^{\infty}\frac{2^{q}}{2^{q(1+z)}}=\\=
\frac{2^{-z}}{(1-2^{-z+u})(1-2^{-2z})}
\end{multline}
Once again, the evaluation in \eqref{S2(1,0)} is consistent with the formula in \eqref{Sp(1,0)}.
If $p > 2$ and $\nu_p(m) = 0, \nu_p(d) = 1$, since $q+1 \le 2n+2q$ holds for all $q, n \ge 0$ except for $n = q = 0$, applying \eqref{G(n, p gamma)} yields
\begin{multline}\label{Sp(0,1)}
S_p(0,1)=\sum_{n,q=0}^{\infty}\frac{G(p^{2n+2q},p^{q+1})}{p^{n(z-u)+q(1+z)}}=G(1,p)+
\sum_{n=0}^{\infty}\frac{1}{p^{n(z-u)}}
\sum_{\substack{q=0\\q\equiv1\Mod{2}}}^{\infty}\frac{\phi(p^{q+1})}{p^{q(1+z)}}=\\=
\sqrt{p}+\frac{p^{1-z}(1-p^{-1})}{(1-p^{-z+u})(1-p^{-2z})}.
\end{multline}

For the case $p = 2$, the inequality $q+1 \le 2n+2q-2$ holds for all $q$ when $n \ge 2$, for $q \ge 1$ when $n = 1$, and for $q \ge 3$ when $n = 0$.
Thus, from \eqref{G(n, 2 gamma)}, we obtain
\begin{align*}
S_2(0,1) &= \sum_{n,q=0}^{\infty} \frac{G(2^{2n+2q}, 2^{q+1})}{2^{n(z-u)+q(1+z)}} = G(1,2) + \frac{G(2^2, 2^2)}{2^{1+z}} + \frac{G(2^4, 2^3)}{2^{2(1+z)}} + \sum_{\substack{q=3 \\ q \equiv 1 \pmod{2}}}^{\infty} \frac{2^{q}}{2^{q(1+z)}} \\
&\quad + \frac{G(2^2, 2)}{2^{z-u}} + \frac{1}{2^{z-u}} \sum_{\substack{q=1 \\ q \equiv 1 \pmod{2}}}^{\infty} \frac{2^{q}}{2^{q(1+z)}} + \sum_{n=2}^{\infty} \frac{1}{2^{n(z-u)}} \sum_{\substack{q=0 \\ q \equiv 1 \pmod{2}}}^{\infty} \frac{2^{q}}{2^{q(1+z)}}.
\end{align*}
Since
\begin{equation}\label{G(2a,2b)}
G(1,2) = \sqrt{2}, \quad G(2^2, 2^2) = 2, \quad G(2^4, 2^3) = G(2^2, 2) = 0,
\end{equation}
it follows that
\begin{multline}\label{S2(0,1)}
S_2(0,1)=\sqrt{2}+\frac{1}{2^z}+\frac{2^{-3z}+2^{-2z+u}}{1-2^{-2z}}+
\frac{2^{-3z+2u}}{(1-2^{-z+u})(1-2^{-2z})}=\\=
\sqrt{2}+\frac{2^{-z}}{(1-2^{-z+u})(1-2^{-2z})}.
\end{multline}
As observed in the previous cases, the evaluation in \eqref{S2(0,1)} matches the formula in \eqref{Sp(0,1)} for $p = 2$.
If $p > 2$ and $\nu_p(m) = 1, \nu_p(d) = 1$, since $q+2 \le 2n+2q+2$ holds for all $q, n \ge 0$, applying \eqref{G(n, p gamma)} yields
\begin{multline}\label{Sp(1,1)}
S_p(1,1)=\sum_{n,q=0}^{\infty}\frac{G(p^{2n+2q+2},p^{q+2})}{p^{n(z-u)+q(1+z)}}=
\sum_{n=0}^{\infty}\frac{1}{p^{n(z-u)}}
\sum_{\substack{q=0\\q\equiv0\Mod{2}}}^{\infty}\frac{\phi(p^{q+2})}{p^{q(1+z)}}=\\=
\frac{p^{2}(1-p^{-1})}{(1-p^{-z+u})(1-p^{-2z})}.
\end{multline}
For the case $p = 2$, since $q+2 \le 2n+2q$ holds for all $q$ when $n \ge 1$, and for $q \ge 2$ when $n = 0$, it follows from \eqref{G(n, 2 gamma)} that
\begin{align}\label{S2(1,1)}
S_2(1,1) &= \sum_{n,q=0}^{\infty} \frac{G(2^{2n+2q+2}, 2^{q+2})}{2^{n(z-u)+q(1+z)}} \notag \\
&= G(2^2, 2^2) + \frac{G(2^{4}, 2^{3})}{2^{1+z}} + \sum_{\substack{q=2 \\ q \equiv 0 \pmod{2}}}^{\infty} \frac{2^{q+1}}{2^{q(1+z)}} + \sum_{n=1}^{\infty} \frac{1}{2^{n(z-u)}} \sum_{\substack{q=0 \\ q \equiv 0 \pmod{2}}}^{\infty} \frac{2^{q+1}}{2^{q(1+z)}} \notag \\
&= 2 + \frac{2^{1-2z}}{1-2^{-2z}} + \frac{2^{1-z+u}}{(1-2^{-z+u})(1-2^{-2z})} = \frac{2}{(1-2^{-z+u})(1-2^{-2z})},
\end{align}
which matches the evaluation in \eqref{Sp(1,1)}.
If $p > 2$ and $\nu_p(m) = 0, \nu_p(d) = 2$, since $q+2 \le 2n+2q$ holds for all $q$ when $n \ge 1$, and for $q \ge 2$ when $n = 0$, we find from \eqref{G(n, p gamma)} that
\begin{multline}\label{Sp(0,2)}
S_p(0,2)=\sum_{n,q=0}^{\infty}\frac{G(p^{2n+2q},p^{q+2})}{p^{n(z-u)+q(1+z)}}=
\sum_{n=0}^{\infty}\frac{1}{p^{n(z-u)}}
\sum_{\substack{q=0\\q\equiv0\Mod{2}}}^{\infty}\frac{\phi(p^{q+2})}{p^{q(1+z)}}-\phi(p^2)+
G(1,p^2)+\\+\frac{G(p^{2},p^{3})}{p^{1+z}}=
\frac{p^{2}(1-p^{-1})}{(1-p^{-z+u})(1-p^{-2z})}-\phi(p^2)+\frac{p^{5/2}}{p^{1+z}}=\\=
\frac{p^{2}(1-p^{-1})}{(1-p^{-z+u})(1-p^{-2z})}\left(\frac{1}{p^{z-u}}+\frac{1}{p^{2z}}-\frac{1}{p^{3z-u}}\right)
+\frac{p^{3/2}}{p^{z}}.
\end{multline}
For the case $p = 2$, since $q+2 \le 2n+2q-2$ holds for all $q$ when $n \ge 2$, for $q \ge 2$ when $n = 1$, and for $q \ge 4$ when $n = 0$, we obtain from \eqref{G(n, 2 gamma)} and \eqref{G(2a,2b)} that
\begin{align}\label{S2(0,2)}
S_2(0,2) &= \sum_{n,q=0}^{\infty} \frac{G(2^{2n+2q}, 2^{q+2})}{2^{n(z-u)+q(1+z)}} \notag \\
&= \sum_{n=2}^{\infty} \frac{1}{2^{n(z-u)}} \sum_{\substack{q=0 \\ q \equiv 0 \pmod{2}}}^{\infty} \frac{2^{q+1}}{2^{q(1+z)}} + \sum_{\substack{q=2 \\ q \equiv 0 \pmod{2}}}^{\infty} \frac{2^{q+1}}{2^{z-u+q(1+z)}} + \sum_{\substack{q=4 \\ q \equiv 0 \pmod{2}}}^{\infty} \frac{2^{q+1}}{2^{q(1+z)}} \notag \\
&\quad + \sum_{q=0}^{1} \frac{G(2^{2+2q}, 2^{q+2})}{2^{z-u+q(1+z)}} + \sum_{q=0}^{3} \frac{G(2^{2q}, 2^{q+2})}{2^{q(1+z)}} \notag \\
&= \frac{2^{1-2z+2u}}{(1-2^{-z+u})(1-2^{-2z})} + \frac{2^{1-3z+u}}{1-2^{-2z}} + \frac{2^{1-4z}}{1-2^{-2z}} + \frac{2}{2^{z-u}} + \frac{2^{5/2}}{2^{1+z}} + \frac{2^3}{2^{2(1+z)}} \notag \\
&= \frac{2}{(1-2^{-z+u})(1-2^{-2z})} \Biggl( \frac{1}{2^{2z-2u}} + \frac{1}{2^{3z-u}} \left( 1 - \frac{1}{2^{z-u}} \right) + \frac{1}{2^{4z}} \left( 1 - \frac{1}{2^{z-u}} \right) \notag \\
&\quad + \left( \frac{1}{2^{z-u}} + \frac{1}{2^{2z}} \right) \left( 1 - \frac{1}{2^{z-u}} \right) \left( 1 - \frac{1}{2^{2z}} \right) \Biggr) + \frac{2^{3/2}}{2^{z}} \notag \\
&= \frac{2}{(1-2^{-z+u})(1-2^{-2z})} \left( \frac{1}{2^{z-u}} + \frac{1}{2^{2z}} - \frac{1}{2^{3z-u}} \right) + \frac{2^{3/2}}{2^{z}},
\end{align}
which is consistent with \eqref{Sp(0,2)}.

Substituting \eqref{Sound series 2} into \eqref{Sound series 1} and applying the explicit evaluations \eqref{Sp(0,0)}, \eqref{Sp(1,0)}, \eqref{Sp(0,1)}, \eqref{Sp(1,1)}, and \eqref{Sp(0,2)}, we obtain \eqref{Sound series 0} after factoring out the common local component
\[
\frac{1-p^{-1-2z}}{(1-p^{-z+u})(1-p^{-2z})}.
\]
This Euler factor yields the global product ratio
\[
\frac{\zeta(z-u)\zeta(2z)}{\zeta(1+2z)}
\]
appearing in \eqref{Sound series 0}.
\end{proof}

%%%%%%%%%%%%%%%%%%%%%%%%%%%%%%%%%%%%%%%%%%%%%%%%%%%%%%%%%5

It now follows from \eqref{nq series to Vseries}, \eqref{series n,q transform2}, and \eqref{Sound series 0} that
\begin{multline}\label{nq series to zeta}
\sum_{n=1}^{\infty}\sum_{q|Rn}\frac{\ups_q(n^2)}{n^z\sqrt{q}}\left(C\log\frac{n}{q}+D\right)=
\frac{\zeta(z)\zeta(2z)}{\zeta(1+2z)}
\sum_{d|R}\sum_{m|R/d}\frac{\mu(m)}{m^{1+z}d}\Biggl(
C\frac{d}{du}\G(m,d,z,u)\Biggl|_{u=0}\\+
\left(D-C\log d-C\frac{\zeta'(z)}{\zeta(z)}\right)\G(m,d,z,0)
\Biggr).
\end{multline}

%%%%%%%%%%%%%%%%%

Using \eqref{nq series to zeta}, we can rewrite \eqref{SZE+SIN rK MT+ET3} in the form
\begin{multline}\label{SZE+SIN rK MT+ET4a>1}
\SZE(r,K)+\SIN(r,K)=
\sum_{k}h\left(\frac{k}{K}\right)\sum_{e|r^2}\frac{e_2}{\sqrt{re}}
\sum_{d|\frac{r}{e_2}}\sum_{m|\frac{r}{de_2}}\frac{\mu(m)}{dm}\\\times
\frac{1}{2\pi i}\int_{(a)}\frac{L_{\infty}(\sym^2f,1/2+z)}{L_{\infty}(\sym^2f,1/2)}
\zeta(z)\zeta(2z)\G(m,d,z,0)\left(\frac{me}{4r}\right)^{-z}
\Hf(k,z)
\frac{dz}{z}+O\left(\frac{r^{3/2}}{K^{3/2-\epsilon}}\right),
\end{multline}
where $a > 1$ and
\begin{multline}\label{Hf def}
\Hf(k,z)=
\frac{\pi}{2}\left(I_{<,0}(k,z)-I_{>,0}(k,z)\right)+\\+
\Bigl(2\frac{d}{du}\log\G(m,d,z,u)\Biggl|_{u=0}-\log(8\pi)+3\gamma-2\log d-2\frac{\zeta'(z)}{\zeta(z)}-
\frac{\partial}{\partial z}\Bigr)\left(I_{<,0}(k,z)+I_{>,0}(k,z)\right).
\end{multline}

We now shift the line of integration to $\operatorname{Re}(z) = a$ with $1/2 < a < 3/4$.
According to \eqref{I<z+I>z eq1}, we have $I_{<}(k,1) + I_{>}(k,1) = 0$.
This vanishing property, combined with several other unexpected cancellations, ensures that the contribution of the residue at $z = 1$ is negligibly small.

%%%%%%%%%%%%%%%%%%%%%%%%%%%%%%%%%%%%%%%%%%%%%%%%%%%%%%%%%%%%%%%%5

\begin{lem}\label{lem:residue z=1}
For $1/2 < a < 3/4$, we have
\begin{multline}\label{SZE+SIN rK MT+ET4}
\SZE(r,K)+\SIN(r,K)=
\sum_{k}h\left(\frac{k}{K}\right)\sum_{e|r^2}\frac{e_2}{\sqrt{re}}
\sum_{d|\frac{r}{e_2}}\sum_{m|\frac{r}{de_2}}\frac{\mu(m)}{dm}\\\times
\frac{1}{2\pi i}\int_{(a)}\frac{L_{\infty}(\sym^2f,1/2+z)}{L_{\infty}(\sym^2f,1/2)}
\zeta(z)\zeta(2z)\G(m,d,z,0)\left(\frac{me}{4r}\right)^{-z}
\Hf(k,z)
\frac{dz}{z}+O\left(\frac{r^{3/2}}{K^{3/2-\epsilon}}\right),
\end{multline}
\end{lem}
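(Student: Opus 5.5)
The plan is to start from the representation with $a>1$ and move the contour from $\Re z=a>1$ to $\Re z=a'$ with $1/2<a'<3/4$. The integrand is
\[
\frac{L_{\infty}(\sym^2f,1/2+z)}{L_{\infty}(\sym^2f,1/2)}\zeta(z)\zeta(2z)\G(m,d,z,0)\Bigl(\frac{me}{4r}\Bigr)^{-z}\Hf(k,z)\frac{1}{z}.
\]

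\emph{Step 1: locate the poles in the strip.} For $1/2<\Re z\le a$, the factors $\zeta(2z)$, $L_\infty$, $\G(m,d,z,0)$ and $1/z$ are holomorphic; the $\G$'s are finite Euler products with no poles for $\Re z>0$. The only possible pole comes from $\zeta(z)$ and from $\zeta'(z)/\zeta(z)$ inside $\Hf$, both at $z=1$. Since $\Hf$ contains $-2\frac{\zeta'}{\zeta}(z)(I_{<,0}+I_{>,0})$, the product $\zeta(z)\Hf(k,z)$ has at worst a double pole at $z=1$. By Stirling and the exponential decay of $\Gamma(3/4+z/2)$ in the $L_\infty$ ratio, the horizontal segments contribute negligibly, and the explicit formulas \eqref{I>z eq0}, \eqref{I<z eq03} show that $I_{<,0}$, $I_{>,0}$ grow at most polynomially in $|\Im z|$. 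The identity therefore reduces to showing that the residue at $z=1$, after summation over $k$, is $O(r^{3/2}K^{-3/2+\epsilon})$ or smaller.

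\emph{Step 2: expand at $z=1$.} Write $S(z)=I_{<,0}+I_{>,0}$ and $D(z)=I_{<,0}-I_{>,0}$. From \eqref{I<z+I>z eq1}, $1+\cos(\pi z)$ vanishes to second order at $z=1$, and the ${}_3F_2$ term carries the factor $(1-z)$, so $S(1)=0$ and
\[
S'(1)=\frac{\pi^{3/2}}{2^{2}\Gamma(k+5/4)\Gamma(9/4-k)}\,{}_3F_2(1/2,1,1;k+5/4,9/4-k;1).
\]
Note that $1/(\Gamma(k+5/4)\Gamma(9/4-k))=\pm\sin(\pi(k+5/4))\Gamma(k-5/4)/(\pi\Gamma(k+5/4))\asymp k^{-5/2}$, so this coefficient is small. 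With $\zeta(z)=\frac{1}{z-1}+\gamma+\dots$ and $-2\zeta'/\zeta=\frac{2}{z-1}+O(1)$, the pieces of $\zeta(z)\Hf$ behave as follows.
\begin{itemize}
\item $\zeta(z)\cdot\frac{\pi}{2}D(z)$ gives a simple pole with residue $\frac{\pi}{2}D(1)$. Here $D(1)=-2I_{>,0}(k,1)+O(k^{-5/2})$, using $I_{<,0}(1)=-I_{>,0}(1)$ from \eqref{I<z eq03}, up to the ${}_3F_2$ term.
\item $\zeta(z)\cdot\frac{2}{z-1}S(z)$ gives a simple pole with residue $2S'(1)$.
\item $\zeta(z)\cdot(-S'(z))$ gives a simple pole with residue $-S'(1)$.
\item $\zeta(z)\cdot(\text{const})S(z)$ is regular.
\end{itemize}

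\emph{Step 3: show the residue is small.} This is the main obstacle. I expect $D(1)$ not to vanish identically: by \eqref{I>z eq0}, $I_{>,0}(k,1)=\Gamma(2k-3/2)/\Gamma(2k+1/2)\asymp k^{-2}$. So I would not look for exact cancellation but would bound the residue. The residue is a linear combination of $D(1)\ll k^{-2}$ and $S'(1)\ll k^{-5/2}$, the latter controlled by checking convergence of the ${}_3F_2$ at $1$ (parameter excess $3/2>0$, terms bounded uniformly in $k$). It is multiplied by $L_\infty$-ratio$\,\asymp k$, by $\zeta(2)\G(m,d,1,0)$, and by $(me/4r)^{-1}$.

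The factor $\G(m,d,1,0)$ is at most $\ll (md)^{1+\epsilon}\sqrt{d_1}$-type size. Combined with $\mu(m)/(dm)$ and $(4r/(me))$, the arithmetic sum is roughly
\[
\sum_{e}\frac{e_2}{\sqrt{re}}\cdot\frac{r}{e}\,r^{\epsilon}\ll r^{1/2+\epsilon}.
\]
Summing over $k\asymp K$ with $k\cdot k^{-2}$ gives a total contribution $\ll r^{1/2+\epsilon}K^{0}$. This falls short of the claimed error, so an extra saving is needed. For that I would use the oscillation in $k$. In \eqref{I>z eq0} the sign $(-1)^k$ has disappeared, but the ${}_3F_2$ prefactor $1/\Gamma(9/4-k)$ carries $\sin\pi(k+5/4)$, and $D(1)$ should be re-examined via \eqref{I<z eq01}. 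If oscillation fails, I would instead exploit the sums over $d,m$: the finite Euler products at $z=1$ have the factor $(1-p^{-z+u})$-type pieces, which might combine via Möbius to cancel the main part. Failing both, the fallback is to absorb $r^{1/2+\epsilon}$ into $r^{3/2}K^{-3/2+\epsilon}$, which is valid for $r\gg K^{1+\epsilon}$ — precisely the new range of interest.
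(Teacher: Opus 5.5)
\textbf{There is a genuine gap in Step 3: the residue at $z=1$ is small only because of an exact cancellation, which you chose not to look for.}

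Your Steps 1 and 2 are correct and agree with the paper. Since $S(1)=0$, the residue of $\zeta(z)\Hf(k,z)$ at $z=1$ does not depend on $m,d$ and equals $\frac{\pi}{2}D(1)+S'(1)$. Here $D(1)=-2\Gamma(2k-3/2)/\Gamma(2k+1/2)$ exactly: the ${}_3F_2$ piece of $I_{<,0}$ carries the factor $1-z$, so there is no $O(k^{-5/2})$ correction. Also $S'(1)=\frac{\pi^{3/2}}{4\Gamma(k+5/4)\Gamma(9/4-k)}\,{}_3F_2(1/2,1,1;k+5/4,9/4-k;1)$.

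Your bound $S'(1)\ll k^{-5/2}$ is false. The excess of that ${}_3F_2$ is $1$, not $3/2$. For $n\gg k^2$ its $n$-th term is about $\Gamma(k+5/4)\Gamma(9/4-k)\,n^{-2}/\sqrt{\pi}$, and $|\Gamma(k+5/4)\Gamma(9/4-k)|\asymp k^{5/2}$. So although each term is bounded, the series has size $k^{1/2}$. Hence $S'(1)$ is non-oscillating of size $k^{-2}$, exactly like $\frac{\pi}{2}D(1)$.

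The missing idea is to evaluate this ${}_3F_2$ in closed form. A Thomae transformation turns it into $\frac{\Gamma(k+5/4)\Gamma(9/4-k)}{\Gamma(1/2)}\,{}_3F_2(k+3/4,7/4-k,1;2,2;1)$. That series telescopes to a Gauss ${}_2F_1$ at $1$, giving
\[
S'(1)=\frac{\pi}{(2k-1/2)(2k-3/2)}\Bigl(1-\frac{\sqrt{\pi}}{\Gamma(5/4-k)\Gamma(k+1/4)}\Bigr).
\]
The first term cancels $\frac{\pi}{2}D(1)=-\pi/((2k-1/2)(2k-3/2))$ exactly. What remains is $\Res_{z=1}\zeta(z)\Hf(k,z)=\frac{(-1)^k\sqrt{\pi}\,\Gamma(k-1/4)}{\sqrt{2}\,(2k-1/2)(2k-3/2)\Gamma(k+1/4)}$. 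Only this pure $(-1)^k$ structure lets Poisson summation in $k$ give $O(K^{-A})$, and hence a residue contribution $\ll r^{1/2+\epsilon}K^{-A}$.

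None of your fallbacks closes the gap.
\begin{itemize}
\item Oscillation: $D(1)$ has no oscillation in $k$. The oscillating part of $S'(1)$ is harmless; the obstruction is the non-oscillating part you missed.
\item The sums over $e,d,m$ cannot cancel anything. The residue of the full integrand is $\Res_{z=1}\zeta(z)\Hf(k,z)$ times the value at $z=1$ of the other factors. Their arithmetic part there is $\frac{4\zeta(2)}{\sqrt r}\,k\,\ES_0(r,k,0,0,1)=4\zeta(2)\sqrt{r}\sum_{e_3\mid r}e_3^{-1}\sum_{e_1\mid e_3}|\mu(e_1)|\sqrt{e_1}$, which is at least $4\zeta(2)\sqrt{r}$.
\item Absorption into the error term: $r^{1/2+\epsilon}\le r^{3/2}K^{-3/2+\epsilon}$ needs $r\gg K^{3/2}$, not $r\gg K^{1+\epsilon}$. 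So it never applies in the range $r\ll K^{5/4-\epsilon}$ of Theorem~\ref{thm:2mom average}.
\end{itemize}
Worse, once $r$ exceeds $K$ by a power, an uncancelled term of size $\sqrt{r}$ would dominate the main term $K/\sqrt{r}$.
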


\begin{proof}
It follows from \eqref{Hf def} that
\begin{align}\label{Hf to Hf1}
\zeta(z)\Hf(k,z) &= \Hf_1(k,z) \notag \\
&\quad + \zeta(z) \left( 2\frac{d}{du}\log\G(m,d,z,u)\Biggl|_{u=0} - \log(8\pi) + 3\gamma - 2\log d \right) \left(I_{<,0}(k,z) + I_{>,0}(k,z)\right),
\end{align}
where
\begin{align}\label{Hf1def}
\Hf_1(k,z) &= \frac{\pi\zeta(z)}{2}\left(I_{<,0}(k,z) - I_{>,0}(k,z)\right) - 2\zeta'(z)\left(I_{<,0}(k,z) + I_{>,0}(k,z)\right) \notag \\
&\quad - \zeta(z)\frac{\partial}{\partial z}\left(I_{<,0}(k,z) + I_{>,0}(k,z)\right).
\end{align}
Since $I_{<}(k,1) + I_{>}(k,1) = 0$ (see \eqref{I<z+I>z eq1}), we obtain
\begin{equation}\label{resHf to resHf1}
\Res_{z=1} \zeta(z)\Hf(k,z) = \Res_{z=1} \Hf_1(k,z).
\end{equation}
By virtue of the elementary trigonometric relation
\begin{equation}\label{cos to sin}
1 + \cos(\pi z) = 2\sin^2\frac{\pi(1-z)}{2},
\end{equation}
the function $(1 + \cos(\pi z))\zeta'(z)$ is holomorphic at $z=1$. Consequently, \eqref{I>z eq0}, \eqref{I<z eq03}, and \eqref{I<z+I>z eq1} imply that
\begin{multline}\label{res Hf1 eq1}
\Res_{z=1}\Hf_1(k,z)=\Res_{z=1}\Biggl(
\frac{\pi\zeta(z)\Gamma^2(z)\Gamma(2k-1/2-z)(\cos(\pi z)-1)}{2\Gamma(2k-1/2+z)}+\\
+\frac{\pi^{3/2} \zeta'(z)2^{-z}(1-z)}{\Gamma(k+5/4)\Gamma(9/4-k)}
\GenHyG{3}{2}{1-z/2,3/2-z/2,1}{k+5/4,9/4-k}{1}-
\zeta(z)\frac{\partial}{\partial z}\left(I_{<,0}(k,z)+I_{>,0}(k,z)\right)\Biggr).
\end{multline}
Using \eqref{I<z+I>z eq1} and \eqref{cos to sin}, we find
\begin{multline}\label{res dz(I<z+I>z)}
\Res_{z=1}\zeta(z)\frac{\partial}{\partial z}\left(I_{<,0}(k,z)+I_{>,0}(k,z)\right)=\\=\Res_{z=1}\frac{\pi^{3/2}\zeta(z) 2^{-1-z}}{\Gamma(k+5/4)\Gamma(9/4-k)}
\GenHyG{3}{2}{1-z/2,3/2-z/2,1}{k+5/4,9/4-k}{1}.
\end{multline}
Substituting \eqref{res dz(I<z+I>z)} into \eqref{res Hf1 eq1} then yields
\begin{multline}\label{res Hf1 eq2}
\Res_{z=1}\Hf_1(k,z)=\Res_{z=1}\frac{1}{z-1}\Biggl(
\frac{\pi\Gamma^2(z)\Gamma(2k-1/2-z)(\cos(\pi z)-1)}{2\Gamma(2k-1/2+z)}+\\
+\frac{\pi^{3/2} (2^{-z}-2^{-1-z})}{\Gamma(k+5/4)\Gamma(9/4-k)}
\GenHyG{3}{2}{1-z/2,3/2-z/2,1}{k+5/4,9/4-k}{1}\Biggr)=
\frac{-\pi\Gamma(2k-3/2)}{\Gamma(2k+1/2)}+\\
+\frac{\pi^{3/2}2^{-2}}{\Gamma(k+5/4)\Gamma(9/4-k)}
\GenHyG{3}{2}{1/2,1,1}{k+5/4,9/4-k}{1}.
\end{multline}
Finally, applying \cite{PBMv3}, we obtain
\begin{multline}\label{res Hf1 eq3}
\GenHyG{3}{2}{1/2,1,1}{k+5/4,9/4-k}{1}=\frac{\Gamma(k+5/4)\Gamma(9/4-k)}{\Gamma(1/2)}\GenHyG{3}{2}{k+3/4,7/4-k,1}{2,2}{1}=\\=
\frac{\Gamma(k+5/4)\Gamma(9/4-k)}{\Gamma(1/2)(k-1/4)(3/4-k)}\Biggl(
\frac{\Gamma(1/2)}{\Gamma(5/4-k)\Gamma(k+1/4)}-1
\Biggr).
\end{multline}
Substituting \eqref{res Hf1 eq3} into \eqref{res Hf1 eq2} and applying \cite{HMF}, we obtain
\begin{multline}\label{res Hf1 eq4}
\Res_{z=1}\Hf_1(k,z)=
\frac{\pi^{3/2}/4}{(k-1/4)(3/4-k)\Gamma(5/4-k)\Gamma(k+1/4)}-\\-
\frac{\pi}{(2k-1/2)(2k-3/2)}-
\frac{\pi/4}{(k-1/4)(3/4-k)}=
\frac{\pi^{1/2}\sin(5\pi/4-\pi k)\Gamma(k-1/4)}{(2k-1/2)(3/2-2k)\Gamma(k+1/4)}=\\=
\frac{(-1)^k\pi^{1/2}\Gamma(k-1/4)}{2^{1/2}(2k-1/2)(2k-3/2)\Gamma(k+1/4)}.
\end{multline}
It follows from \eqref{SZE+SIN rK MT+ET4a>1}, \eqref{resHf to resHf1}, and \eqref{res Hf1 eq4} that the contribution of the residue at $z=1$ to $\SZE(r,K) + \SIN(r,K)$ is bounded by
\begin{multline}\label{SZE+SIN res1 eq1}
\sum_{e|r^2}\frac{e_2}{\sqrt{re}}
\sum_{d|\frac{r}{e_2}}\sum_{m|\frac{r}{de_2}}\frac{\mu(m)}{dm}
\G(m,d,1,0)\left(\frac{me}{r}\right)^{-1}
\sum_{k}h\left(\frac{k}{K}\right)\\\times
\frac{(-1)^k\Gamma(k-1/4)}{(2k-1/2)(2k-3/2)\Gamma(k+1/4)}\frac{L_{\infty}(\sym^2f,3/2)}{L_{\infty}(\sym^2f,1/2)}\ll\\\ll
\sum_{e|r^2}\frac{e_2\sqrt{r}}{e^{3/2}}
\sum_{d|\frac{r}{e_2}}\sum_{m|\frac{r}{de_2}}\frac{\mu(m)}{dm^2}\G(m,d,1,0)
\sum_{k}h\left(\frac{k}{K}\right)
\frac{(-1)^k\Gamma(k-1/4)}{(2k-3/2)\Gamma(k+1/4)},
\end{multline}
where we have used \eqref{L.infinity} to derive the last inequality.

Applying the Poisson summation formula and integrating by parts sufficiently many times, we obtain
\begin{multline}\label{SZE+SIN res1 eq2}
\sum_{k}h\left(\frac{k}{K}\right)
\frac{e(k/2)\Gamma(k-1/4)}{(2k-3/2)\Gamma(k+1/4)}=\\=
K\sum_{m}\int_{-\infty}^{\infty}h(y)
\frac{e(Ky(m+1/2))\Gamma(Ky-1/4)}{(2Ky-3/2)\Gamma(Ky+1/4)}dy\ll K^{-A}.
\end{multline}
Using \eqref{G(m,d,z,u)def} and estimating $\G(m,d,1,0)$ trivially, we find that
\begin{equation}\label{G(m,d,1,0) est}
\G(m,d,1,0) \ll \left(\frac{d_1}{(m,d_1)}\right)^{1/2}(m,d_1)^2d_2 m^{\epsilon}d^{\epsilon}.
\end{equation}
Substituting \eqref{SZE+SIN res1 eq2} and \eqref{G(m,d,1,0) est} into \eqref{SZE+SIN res1 eq1}, we obtain that the contribution of the residue is bounded by
\begin{equation}\label{SZE+SIN res1 eq3}
K^{-A}\sum_{e_1e_2|r}\frac{e_2\sqrt{r}}{e_1^{3/2}e_2^3} \sum_{d|\frac{r}{e_2}}\sum_{m|\frac{r}{de_2}}\frac{(m,d_1)^{3/2}}{d^{1/2}m^2}m^{\epsilon}d^{\epsilon} \ll \frac{r^{1/2+\epsilon}}{K^A},
\end{equation}
which completes the proof of \eqref{SZE+SIN rK MT+ET4}.

\end{proof}
%%%%%%%%%%%%%%%%%%%%%%%%%%%%%%%%%%%%%%%%%%%%%%%%%
On the new line of integration, we are able to obtain asymptotic formulas for both $I_{<,0}(k,z)$ and $I_{>,0}(k,z)$.

%%%%%%%%%%%%%%%%%%%%%%%%%%%%%%%%%%%%%%%%%%%%%%%%%

\begin{lem}\label{lem:I<0+I>0 asympt}
For $0<\Re{z}<2k-1/2$, we have
\begin{equation}\label{I>0z asympt}
I_{>,0}(k,z)=\frac{\Gamma^2(z)}{(2k)^{2z}}
\left(1+O\left(\frac{|z|}{k}\right)\right).
\end{equation}
For $0<\Re{z}<3/4$, $|\Im{z}|\ll K^{\epsilon}$, we have
\begin{multline}\label{I<0z asympt}
\sum_{k}h\left(\frac{k}{K}\right)\frac{L_{\infty}(\sym^2f,1/2+z)}{L_{\infty}(\sym^2f,1/2)}I_{<,0}(k,z)=\\=
\sum_{k}h\left(\frac{k}{K}\right)\frac{L_{\infty}(\sym^2f,1/2+z)}{L_{\infty}(\sym^2f,1/2)}\frac{\Gamma^2(z)}{(2k)^{2z}}
\left(\cos(\pi z)-\sin(\pi z)+O\left(\frac{|z|}{K}\right)\right).
\end{multline}
\end{lem}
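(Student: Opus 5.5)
The plan is to read both asymptotics off the exact formulas already proved for $I_{>,0}$ and $I_{<,0}$, and to remove the one non-asymptotic piece by averaging over $k$.

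\textbf{The formula for $I_{>,0}$.} By Lemma \ref{lem:I>z}, $I_{>,0}(k,z)=\Gamma^2(z)\Gamma(2k-1/2-z)/\Gamma(2k-1/2+z)$. I apply Stirling's formula to the ratio of Gamma functions with $N=2k-1/2$:
\begin{equation*}
\frac{\Gamma(N-z)}{\Gamma(N+z)}=N^{-2z}\left(1+O\left(\frac{1+|z|^2}{N}\right)\right).
\end{equation*}
Then I replace $N^{-2z}$ by $(2k)^{-2z}$, which costs a factor $1+O(|z|/k)$. In the stated range the error is really $O((1+|z|^2)/k)$; for $|\Im z|\ll K^{\epsilon}$ this is harmless, and I would state it in that form.

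\textbf{The formula for $I_{<,0}$.} I start from \eqref{I<z eq03}. Its first term is $\cos(\pi z)$ times the expression just treated, so it contributes $\cos(\pi z)\Gamma^2(z)(2k)^{-2z}(1+O(|z|/k))$. The second term is
\begin{equation*}
-\frac{\pi^{3/2}2^{-1-z}(1-z)}{\Gamma(k+5/4)\Gamma(9/4-k)}\GenHyG{3}{2}{1-z/2,3/2-z/2,1}{k+5/4,9/4-k}{1}.
\end{equation*}
The reflection formula gives $1/\Gamma(9/4-k)=\Gamma(k-5/4)\sin(\pi(9/4-k))/\pi$, and $\sin(\pi(9/4-k))$ is $\pm\sin(\pi/4)$ with sign $(-1)^k$. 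So the prefactor is $(-1)^k$ times a non-oscillating quantity of size $k^{-5/2}$. The ${}_3F_2$ converges at $1$ for $\Re z>-1/2$, but it is not small, since the terms $(9/4-k)_j$ change sign. I would not expand it.

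Instead I go back to the residue expansion \eqref{I<z eq6}--\eqref{I<z eq8}. There the second sum is $\sin(\pi z)$ times an explicit series whose terms, after reflection, carry the factor $(-1)^k$ times smooth functions of $k$. The $j$-sums should therefore be regrouped into a non-oscillating part and a part with a uniform factor $(-1)^k$.

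\textbf{Averaging.} Every piece carrying a uniform factor $(-1)^k$ is negligible after summation against $h(k/K)\,L_\infty(\sym^2f,1/2+z)/L_\infty(\sym^2f,1/2)$. This follows exactly as in \eqref{2mom MTn est1}--\eqref{2mom MTn est3} and \eqref{SZE+SIN res1 eq2}: apply Poisson summation, then integrate by parts, using that derivatives in $u=k/K$ of $\Gamma(2uK+z-1/2)/\Gamma(2uK-1/2)$ and of the Gamma ratios are $\ll K^{\epsilon}$, as in Lemma \ref{lem:V j-deriv}. The non-oscillating remainder of the $\sin(\pi z)$ part must then equal $-\sin(\pi z)\Gamma^2(z)(2k)^{-2z}(1+O(|z|/k))$.

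To find it, I would rewrite that series as a Mellin--Barnes integral, or equivalently compute the $j$-sum via Stirling term by term, which is valid because $\Re z<3/4$ makes the $j$-series converge absolutely after regrouping. The leading term should produce $\Gamma(z)2^{-z}\Gamma(k-1/4-z/2)\Gamma(k+1/4-z/2)/(\Gamma(k-1/4+z/2)\Gamma(k+1/4+z/2))$, which is the same Gamma ratio as in Lemma \ref{lem:I>z}. A consistency check: at $z\to0$ the combination $\cos(\pi z)-\sin(\pi z)$ together with $I_{>,0}$ reproduces the pole structure seen in \eqref{I<z+I>z eq1}.

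\textbf{Main obstacle.} The hardest step is this last one: cleanly splitting the ${}_3F_2$ at $1$ with denominator parameter $9/4-k$ into a $(-1)^k$-oscillating part and a smooth part whose leading term is $-\sin(\pi z)\Gamma^2(z)(2k)^{-2z}$, with error $O(|z|/K)$ uniform for $0<\Re z<3/4$. The first thing I would try is a transformation of ${}_3F_2(1)$ (Thomae) that turns $9/4-k$ into a positive large parameter, leaving a reflection-formula factor that contains $\sin(\pi z)$ and $(-1)^k$ explicitly.
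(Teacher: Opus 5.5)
Your treatment of $I_{>,0}$ (Lemma~\ref{lem:I>z} plus Stirling) is essentially the paper's and is fine.

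The second assertion, however, is not proved. Its entire content is that the ${}_3F_2$-term in \eqref{I<z eq03} contributes $-\sin(\pi z)\Gamma^2(z)(2k)^{-2z}$ on average. You obtain this only by declaring that the non-oscillating remainder ``must then equal'' it, i.e.\ by reading the answer off the statement. The actual computation is deferred as the ``main obstacle'' with an untested suggestion (Thomae), and the consistency check at $z\to0$ cannot replace it.

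The hints you give for this step would also not work as stated. Write the series in \eqref{I<z eq9} as $S(k,z)=\sum_{m\ge0}\Gamma(1-z/2+m)\Gamma(3/2-z/2+m)/\bigl(\Gamma(k+5/4+m)\Gamma(9/4-k+m)\bigr)$. Then the second part of $I_{<,0}$ is $-2^{-2z}\Gamma(z)\sin(\pi z)S(k,z)$. For $m\le k-3$, reflection turns the terms into $(-1)^{k-m}$ times quantities of size about $k^{-5/2-2m}$. So Stirling applied to individual terms of fixed index only reproduces the $(-1)^k$-part. The non-oscillating main term is a collective contribution of the positive terms $m\ge k-2$, coming from the range $m\asymp k^2$. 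Also, $S(k,z)$ converges exactly for $\Re z>0$ (the parametric excess of the ${}_3F_2$ is $z$), not for $\Re z>-1/2$, and the condition $\Re z<3/4$ plays no role in its convergence.

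The paper does not use the ${}_3F_2$ at all here. It proceeds as follows:
\begin{itemize}
\item it starts from the integral representation \eqref{I<z to Phi} and uses the average over $k$ to discard $\xi\gg K^{\epsilon-2}$;
\item it inserts the Bessel approximation \eqref{Phik LG1} and extends the integral to $(0,\infty)$, which is where $\Re z<3/4$ is needed;
\item it evaluates the Mellin transforms of $J_0$ and $Y_0$, which produce $\Gamma(z)\Gamma(1-z)^{-1}(1-\cot\pi z)$ and hence $\cos(\pi z)-\sin(\pi z)$ in one stroke.
\end{itemize}

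Your route can be completed, but only by supplying the evaluation you skipped. For $m\ge k-2$, Stirling shows that the terms of $S(k,z)$ behave like $m^{-1-z}e^{-(k^2-k)/m}$; the range $k-2\le m\le k^{2-\delta}$ contributes only an exponentially small amount. So this part equals $\Gamma(z)(k^2-k)^{-z}(1+O(1/k))=\Gamma(z)k^{-2z}\bigl(1+O((1+|z|)/k)\bigr)$. The alternating part $m\le k-3$ is $(-1)^k$ times a smooth function of $k$ of size $O(k^{-5/2})$, up to $O(k^{-A})$, and your Poisson argument removes it. Multiplying by $-2^{-2z}\Gamma(z)\sin(\pi z)$ then gives the required $-\sin(\pi z)\Gamma^2(z)(2k)^{-2z}$. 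As written, your proposal establishes only the $\cos(\pi z)$ part of \eqref{I<0z asympt}.
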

\begin{proof}

It follows from \eqref{PhikPsik toIk} and \eqref{I<0zdef} that
\begin{multline}\label{I<z to Phi}
I_{<,0}(k,z)=
\int_{1}^{\infty}\Phi_k\left(\left(1-\frac{2}{x+1}\right)^2\right)\frac{dx}{(x+1)^{1+z}}=\\=
\int_{0}^{\pi^2/4}\Phi_k\left(\cos^2\sqrt{\xi}\right)\sin^{2z-1}(\sqrt{\xi}/2)\cos(\sqrt{\xi}/2)\frac{d\xi}{2\sqrt{\xi}}.
\end{multline}
We employ the average over $k$ to demonstrate that the contribution from the region $\xi \gg K^{\epsilon-2}$ is negligible.
This can be established in a manner analogous to the analysis at the beginning of Section~\ref{sec: SZE}.
The single difference now is that instead of the factor $V_k(x)$, the ratio $\frac{L_{\infty}(\sym^2f,1/2+z)}{L_{\infty}(\sym^2f,1/2)}$ appears in \eqref{SZE trunc1}; thus, it suffices to show that
\begin{equation}\label{I< trunc1}
\sum_{k}h\left(\frac{k}{K}\right)\frac{L_{\infty}(\sym^2f,1/2+z)}{L_{\infty}(\sym^2f,1/2)}e^{\pm2ki\sqrt{\xi}}\ll K^{-A}.
\end{equation}
Taking additional terms in the expansion \eqref{Linf(z)/Linfe q0}, this reduces to proving that
\begin{equation}\label{I<trunc2}
\sum_{k}h\left(\frac{k}{K}\right)k^{z}e^{\pm2ki\sqrt{\xi}}\ll K^{-A}.
\end{equation}
Applying the Poisson summation formula and writing $z=\sigma+it$, we obtain
\begin{equation}\label{I<trunc3}
\sum_{k}h\left(\frac{k}{K}\right)k^ze^{\pm2ki\sqrt{\xi}}=
K^{1+z}\sum_{m}\int_{-\infty}^{\infty}h(x)x^{\sigma}e^{2iKx(\pm\sqrt{\xi}+\pi m)+it\log x}\,dx.
\end{equation}
Since $|t|\ll K^{\epsilon}\ll K\sqrt{\xi}$, the phase satisfies
\begin{equation}\label{I<trunc4}
2Kx(\pm\sqrt{\xi}+\pi m)+t\log x\gg \min(K|m|, \pm 2Kx\sqrt{\xi}+t\log x)\gg\min(K|m|, K^{\epsilon}).
\end{equation}
Consequently, integrating by parts in \eqref{I<trunc3} yields \eqref{I<trunc2}.

After performing this truncation  we can apply the asymptotic formula (see \eqref{Phik LG})
\begin{equation}\label{Phik LG1}
\Phi_k(\cos^2\sqrt{\xi})=\frac{-\pi\xi^{1/4}}{(\sin\sqrt{\xi})^{1/2}}\left(J_0((2k-1)\sqrt{\xi})+Y_0((2k-1)\sqrt{\xi})\right)+O(k^{-1+\epsilon}).
\end{equation}
Therefore, on average we can replace $I_{<,0}(k,z)$ (see \eqref{I<z to Phi}) by
\begin{align}\label{I<z to Phi2}
&\frac{-\pi}{2^{3/2}} \int_{0}^{K^{-2+\epsilon}}\left(J_0((2k-1)\sqrt{\xi})+Y_0((2k-1)\sqrt{\xi})\right) \sin^{2z-3/2}(\sqrt{\xi}/2)\cos^{1/2}(\sqrt{\xi}/2)\frac{d\xi}{\xi^{1/4}} \notag \\
&+O(K^{-1-2\Re{z}+\epsilon}).
\end{align}
Next, we make the change of variables $\xi=4x^2$ and replace $\sin x$ by $x$ and $\cos x$ by 1, obtaining
\begin{equation}\label{I<z to Phi3}
-2\pi \int_{0}^{K^{-1+\epsilon}}\left(J_0((4k-2)x)+Y_0((4k-2)x)\right) x^{2z-1}dx+O(K^{-1-2\Re{z}+\epsilon}).
\end{equation}

We now wish to extend the range of integration to $(0,\infty)$.
Again, we can show that, on average over $k$, the integral over $(K^{-1+\epsilon},1/2)$ is negligible.
However, we cannot use these arguments to extend the integral up to infinity.
This is because after applying the Poisson summation formula, we encounter the phase $2\pi m\pm4x$, which can become very small near the points $x_m=\pi m/2$.
Consequently, we must show that the integral
\begin{equation}\label{I<z to Phi4}
\int_{0}^{\infty}\left(J_0((4k-2)x)+Y_0((4k-2)x)\right) x^{2z-1}\chi(x)dx
\end{equation}
is itself negligible. Here, $\chi(x)$ is a smooth function satisfying $\chi(x)=0$ for $x<1/4$ and $\chi(x)=1$ for $x>1/2$.

 First, we choose a large parameter $X=K^{A}$ and perform an additional smooth partition of unity on \eqref{I<z to Phi4} into two integrals over $(0,X)$ and $(X,\infty)$, respectively.
To estimate the integral over $(X,\infty)$, we replace the Bessel functions with their asymptotic expansions \cite{GR} and apply the first derivative test, yielding
\begin{align}\label{I<z Xinf est}
\int_{X}^{\infty}\left(J_0((4k-2)x)+Y_0((4k-2)x)\right) x^{2z-1}\chi_{1}(x)dx &\ll \int_{X}^{\infty}e^{\pm i(4k-2)x+2it\log x} x^{2\sigma-3/2}\chi_{1}(x)dx \notag \\
&\ll \max_{x>X}\frac{x^{2\sigma-3/2}\chi_{1}(x)}{k\pm t/x} \ll X^{2\sigma-3/2}\ll K^{-A},
\end{align}
provided that $\sigma<3/4$. To estimate the remaining integral over $(0,X)$, we apply another partition of unity
\begin{equation}\label{I<z Um partition}
\sum_{-1\le m\ll\log X}\int_{0}^{\infty}\left(J_0((4k-2)x)+Y_0((4k-2)x)\right) x^{2z-1}U\left(\frac{x}{2^m}\right)dx,
\end{equation}
where $U(x)$ is a smooth function supported on $[1/2,2]$.
Performing the change of variables $x=2^m\sqrt{y}$ and applying \eqref{Harcos est}, we obtain -- where $B_0$ denotes either $J_0$ or $Y_0$ --
\begin{align}\label{I<z Um partition2}
\int_{0}^{\infty}B_0((4k-2)x)x^{2z-1}U\left(\frac{x}{2^m}\right)dx &\ll 2^{2m\Re(z)}\int_{0}^{\infty}B_0((4k-2)2^m\sqrt{y})y^{z-1}U\left(\sqrt{y}\right)dy \notag \\
&\ll \frac{2^{2m\Re(z)}}{(2k-1)^j2^{mj}}\int_{0}^{\infty}\frac{d^j}{dy^j} \left(y^{z-1}U\left(\sqrt{y}\right)\right)y^{j/2}B_j((4k-2)2^m\sqrt{y})dy \notag \\
&\ll \frac{2^{2m\Re(z)}(1+|z|)^j}{(2k-1)^j2^{mj}}.
\end{align}
Taking $j$ sufficiently large, we obtain that
\begin{equation}\label{I<z Um partition3}
\sum_{-1\le m\ll\log X}\int_{0}^{\infty}\left(J_0((4k-2)x)+Y_0((4k-2)x)\right) x^{2z-1}U\left(\frac{x}{2^m}\right)dx\ll K^{-A}.
\end{equation}
Therefore, on average we can replace $I_{<,0}(k,z)$ by
\begin{align}\label{I<z to Phi5}
&-2\pi\int_{0}^{\infty}\left(J_0((4k-2)x)+Y_0((4k-2)x)\right) x^{2z-1}dx+O(K^{-1-2\Re{z}+\epsilon}) \notag \\
&=\frac{-\pi}{(2k-1)^{2z}}\frac{\Gamma(z)}{\Gamma(1-z)}\left(1-\cot(\pi z)\right)+O(K^{-1-2\Re{z}+\epsilon}) \notag \\
&=\frac{\cos(\pi z)-\sin(\pi z)}{(2k-1)^{2z}}\Gamma^2(z)+O(K^{-1-2\Re{z}+\epsilon}),
\end{align}
where we have used \cite[6.561.14]{GR} and \cite[6.561.15]{GR} to evaluate the Mellin transform of Bessel functions.

The analysis for $I_{>,0}(k,z)$ is more direct, as it follows from \eqref{I>z eq0} and \cite{HMF} that
\begin{equation}\label{I>z eq0 simple}
I_{>,0}(k,z)=\frac{\Gamma^2(z)\Gamma(2k-1/2-z)}{\Gamma(2k-1/2+z)}=\frac{\Gamma^2(z)}{(2k)^{2z}}\left(1+O((1+|z|)/k)\right).
\end{equation}
\end{proof}
%%%%%%%%%%%%%%%%%%%%%%%%%%%%%%%%%%%%%%%%%%%%%%%%%%%%%%%%%%%%%%%

We now wish to simplify the integrand in \eqref{SZE+SIN rK MT+ET4}.
First, we replace $\frac{L_{\infty}(\sym^2f,1/2+z)}{L_{\infty}(\sym^2f,1/2)}$ by \eqref{Linf(z)/Linfe q0}.
In fact, this approximation can be employed from the beginning of this section, since the contribution of the error term $O(k^{-1})$ from \eqref{Linf(z)/Linfe q0} is bounded by $O(r^{1/2}K^{\epsilon-1})$ (see \eqref{SZE+SIN rK MT+ET triv}).
This error bound is strictly smaller than the main term $r^{-1/2}K^{1+\epsilon}$ for $r < K^{2-\epsilon}$.
Additionally, we have the functional equation
\begin{equation}\label{log zeta FE}
\frac{\zeta'(z)}{\zeta(z)}=\log\pi-\frac{1}{2}\psi\left(\frac{z}{2}\right)-\frac{1}{2}\psi\left(\frac{1-z}{2}\right)-
\frac{\zeta'(1-z)}{\zeta(1-z)}.
\end{equation}
It follows from \eqref{I>0z asympt} and \eqref{I<0z asympt} that, on average, $\frac{\partial}{\partial z}\left(I_{<,0}(k,z)+I_{>,0}(k,z)\right)$ can be replaced by
\begin{multline}\label{dz I<0+I>0}
\frac{\Gamma^2(z)}{(2k)^{2z}}\Biggl(\left(1-\sin(\pi z)+\cos(\pi z)\right)(2\psi(z)-2\log(2k))
-\pi\sin(\pi z)-\pi\cos(\pi z)
\Biggr)\\\times
\left(1+O\left(\frac{|z|}{k}\right)\right).
\end{multline}
Applying \eqref{I>0z asympt}, \eqref{I<0z asympt}, \eqref{dz I<0+I>0}, and \eqref{log zeta FE}, we find that \eqref{Hf def} can be rewritten as
\begin{equation}\label{Hf to Hf2}
\Hf(k,z)=
\frac{\Gamma^2(z)}{(2k)^{2z}}\left(1-\sin(\pi z)+\cos(\pi z)\right)\Hf_2(k,z)\left(1+O\left(\frac{|z|}{k}\right)\right),
\end{equation}
where
\begin{multline}\label{Hf2 def}
\Hf_2(k,z)=
\frac{\pi}{2}\frac{\sin(\pi z)+3\cos(\pi z)-1}{1-\sin(\pi z)+\cos(\pi z)}+
2\frac{d}{du}\log\G(m,d,z,u)\Biggl|_{u=0}-\log(8\pi)+\\+3\gamma-2\log d+2\frac{\zeta'(1-z)}{\zeta(1-z)}-2\log(\pi)+
\psi\left(\frac{z}{2}\right)+\psi\left(\frac{1-z}{2}\right)-2\psi\left(z\right)+2\log(2k).
\end{multline}
Using \cite{HMF}, we obtain the identity
\begin{align}\label{Hf2 psi}
\psi\left(\frac{z}{2}\right)+\psi\left(\frac{1-z}{2}\right)-2\psi\left(z\right) &=\psi\left(\frac{1-z}{2}\right)-\psi\left(\frac{1+z}{2}\right)-2\log2 \notag \\
&= -2\log2-\pi\frac{\sin(\pi z/2)}{\cos(\pi z/2)}.
\end{align}
Substituting \eqref{Hf2 psi} into \eqref{Hf2 def} yields
\begin{align}\label{Hf2 eq1}
\Hf_2(k,z)&=
\frac{\pi}{2}\frac{\sin(\pi z)+3\cos(\pi z)-1}{1-\sin(\pi z)+\cos(\pi z)}-\pi\frac{\sin(\pi z/2)}{\cos(\pi z/2)}+
2\frac{d}{du}\log\G(m,d,z,u)\Biggl|_{u=0}+3\gamma \notag \\
&\quad +2\frac{\zeta'(1-z)}{\zeta(1-z)}+\log\frac{k^2}{8\pi^3d^2}.
\end{align}

A straightforward calculation leads to
\begin{equation}\label{Hf2 eq2}
\Hf_2(k,z)=
\log\frac{k^2}{8\pi^3d^2}+2\frac{d}{du}\log\G(m,d,z,u)\Biggl|_{u=0}+3\gamma+2\frac{\zeta'(1-z)}{\zeta(1-z)}+\frac{\pi}{2}.
\end{equation}
Substituting \eqref{Hf to Hf2} and \eqref{Linf(z)/Linfe q0} into \eqref{SZE+SIN rK MT+ET4}, we obtain
\begin{multline}\label{SZE+SIN rK MT+ET5}
\SZE(r,K)+\SIN(r,K)=
\sum_{k}h\left(\frac{k}{K}\right)\sum_{e|r^2}\frac{e_2}{\sqrt{re}}
\sum_{d|\frac{r}{e_2}}\sum_{m|\frac{r}{de_2}}\frac{\mu(m)}{dm}
\frac{1}{2\pi i}\int_{(a)}
\pi^{-3z/2}\zeta(z)\zeta(2z)\\\times\frac{\Gamma\left(3/4+z/2\right)}{\Gamma\left(3/4\right)}
\Gamma^2(z)\left(1-\sin(\pi z)+\cos(\pi z)\right)
\G(m,d,z,0)\left(\frac{kme}{r}\right)^{-z}\Hf_2(k,z)\\\times\left(1+O\left(\frac{|z|}{k}\right)\right)
\frac{dz}{z}+O\left(\frac{r^{3/2}}{K^{3/2-\epsilon}}\right).
\end{multline}
Note that the integrand is holomorphic at $z=1/2$, allowing us to shift the line of integration to any $a>0$.
Using the functional equation for the Riemann zeta function \cite{HMF} and subsequently applying \cite{HMF}, we find
\begin{multline}\label{zetazeta eq1}
\zeta(z)\zeta(2z)\Gamma^2(z)\Gamma\left(3/4+z/2\right)=\\=
2^{3z}\pi^{3z-2}\zeta(1-z)\zeta(1-2z)\sin(\pi z/2)\sin(\pi z)\Gamma(1-z)\Gamma^2(z)\Gamma(1-2z)
\Gamma\left(3/4+z/2\right)=\\=
2^{3z}\pi^{3z-1}\zeta(1-z)\zeta(1-2z)\sin(\pi z/2)\Gamma(z)\Gamma(1-2z)
\Gamma\left(3/4+z/2\right)=\\=
2^{z}\pi^{3z-3/2}\zeta(1-z)\zeta(1-2z)\sin(\pi z/2)\Gamma(z)\Gamma\left(3/4+z/2\right)\Gamma(1/2-z)\Gamma(1-z)=\\=
2^{z}\pi^{3z-1/2}\zeta(1-z)\zeta(1-2z)\frac{\sin(\pi z/2)}{\sin(\pi z)}\Gamma\left(3/4+z/2\right)\Gamma(1/2-z)=\\=
2^{-1/2}\pi^{3z-1}\zeta(1-z)\zeta(1-2z)\frac{\sin(\pi z/2)}{\sin(\pi z)}\Gamma\left(3/4+z/2\right)\Gamma\left(1/4-z/2\right)
\Gamma\left(3/4-z/2\right)=\\=
2^{-1/2}\pi^{3z}\zeta(1-z)\zeta(1-2z)\frac{\sin(\pi z/2)\Gamma\left(3/4-z/2\right)}{\sin(\pi z)\sin\left(\pi/4-\pi z/2\right)}
=\\=
2^{-1/2}\pi^{3z}\zeta(1-z)\zeta(1-2z)\frac{\Gamma\left(3/4-z/2\right)}{\sin(\pi/4)+\sin\left(\pi/4-\pi z\right)}=\\
=\pi^{3z}\zeta(1-z)\zeta(1-2z)\frac{\Gamma\left(3/4-z/2\right)}{1+\cos(\pi z)-\sin(\pi z)}.
\end{multline}

%%%%%%%%%%%%%%%%%%%%%%%%%%%%%%%%%%%%%%%%%%%%%%%%%%%%%%%%%%%%%%%%%%%%%%%

To prove Theorem \ref{thm:2mom average}, we need to evaluate the sum of \eqref{2mom MT2 eq5} and \eqref{SZE+SIN rK MT+ET5}. For this purpose, we must first evaluate the sums in \eqref{SZE+SIN rK MT+ET5}. It follows from \eqref{C1 def} and \eqref{Hf2 eq2} that
\begin{equation}\label{Hf2 eq4}
\Hf_2(k,z)=\Cc_1(-z)-2\log d+2\frac{d}{du}\log\G(m,d,z,u)\Biggl|_{u=0}.
\end{equation}
Let
\begin{equation}\label{SZE+SIN md sum def}
\SG(R,u,v,z):=
\sum_{d|R}\sum_{m|\frac{R}{d}}\frac{\mu(m)}{d^{1+v}m^{1+z}}\G(m,d,z,u)
\end{equation}
and
\begin{multline}\label{SZE+SIN sum e def}
\ES(r,k,z):=
\sum_{e_1e_2|r}\frac{|\mu(e_1)|}{\sqrt{e_1}}\left(\frac{r}{ke_1e_2^2}\right)^{z}\Biggl(\Cc_1(-z)
\SG\left(\frac{r}{e_2},0,0,z\right) + \\
+ 2\frac{d}{du}\SG\left(\frac{r}{e_2},u,0,z\right)\Biggl|_{u=0} +
2\frac{d}{dv}\SG\left(\frac{r}{e_2},0,v,z\right)\Biggl|_{v=0}
\Biggr).
\end{multline}
Since the $O$-term in the integrand of \eqref{SZE+SIN rK MT+ET5} does not depend on $m$, relations \eqref{Hf2 eq4}, \eqref{SZE+SIN md sum def}, and \eqref{SZE+SIN sum e def} imply that \eqref{SZE+SIN rK MT+ET5} can be rewritten as
\begin{multline}\label{SZE+SIN rK MT+ET8}
\SZE(r,K)+\SIN(r,K)=
\frac{1}{\sqrt{r}}\sum_{k}h\left(\frac{k}{K}\right)
\frac{1}{2\pi i}\int_{(a)}
\pi^{3z/2}\frac{\Gamma\left(3/4-z/2\right)}{\Gamma\left(3/4\right)}
\\\times
\zeta(1-z)\zeta(1-2z)\ES(r,k,z)\left(1+O\left(\frac{|z|}{k}\right)\right)
\frac{dz}{z}+O\left(\frac{r^{3/2}}{K^{3/2-\epsilon}}\right).
\end{multline}

%%%%%%%%%%%%%%%%%%%%%%%%%%%%%%%%%%%%%%%%%%%%%%%%%%%

To evaluate \eqref{SZE+SIN rK MT+ET8}, we must first analyze \eqref{SZE+SIN md sum def}.
It follows from \eqref{G(m,d,z,u)def} that for $(m_1a,m_2b)=1$, we have
\begin{equation}\label{G multiplicative}
\G(m_1m_2,ab,z,u)=\G(m_1,a,z,u)\G(m_2,b,z,u).
\end{equation}
By virtue of \eqref{G multiplicative} and \eqref{SZE+SIN md sum def}, the function $\SG(R,u,v,z)$ is multiplicative in $R$.
Since we assume that $r=r_1r_2^2$ with $r_1,r_2$ being coprime square-free numbers, it suffices to evaluate only $\SG(p,u,v,z)$ and $\SG(p^2,u,v,z)$.
We have
\begin{equation}\label{SG(p) eq1}
\SG(p,u,v,z)=\G(1,1,z,u)-\frac{\G(p,1,z,u)}{p^{1+z}}+\frac{\G(1,p,z,u)}{p^{1+v}},
\end{equation}
\begin{equation}\label{SG(p2) eq1}
\SG(p^2,u,v,z)=\G(1,1,z,u)-\frac{\G(p,1,z,u)}{p^{1+z}}+\frac{\G(1,p,z,u)}{p^{1+v}}-
\frac{\G(p,p,z,u)}{p^{2+v+z}}+\frac{\G(1,p^2,z,u)}{p^{2+2v}}.
\end{equation}
Furthermore, it follows from \eqref{G(m,d,z,u)def} that
\begin{equation}\label{SG(p) G11 1p}
\G(1,1,z,u)=1,\quad
\G(p,1,z,u)=\G_{1,0}(z)=p^{1-z}\left(1-\frac{1}{p}\right)\left(1-\frac{1}{p^{1+2z}}\right)^{-1},
\end{equation}
and
\begin{multline}\label{SG(p) Gp1}
\G(1,p,z,u)=\G_{0,1}(z,u)=\left(1-\frac{1}{p^{1+2z}}\right)^{-1}\\\times\left(
p^{1-z}\left(1-\frac{1}{p}\right)+\sqrt{p}\left(1-\frac{1}{p^{z-u}}\right)\left(1-\frac{1}{p^{2z}}\right)\right),
\end{multline}
\begin{equation}\label{SG(p2) Gpp}
\G(p,p,z,u)=\G_{1,1}(z)=p^{2}\left(1-\frac{1}{p}\right)\left(1-\frac{1}{p^{1+2z}}\right)^{-1},
\end{equation}
\begin{multline}\label{SG(p2) G1p2}
\G(1,p^2,z,u)=\G_{0,2}(z,u)=\left(1-\frac{1}{p^{1+2z}}\right)^{-1}
p^{2}\left(1-\frac{1}{p}\right)\left(\frac{1}{p^{z-u}}+\frac{1}{p^{2z}}-\frac{1}{p^{3z-u}}\right)+\\
+p^{3/2-z}\left(1-\frac{1}{p^{1+2z}}\right)^{-1}\left(1-\frac{1}{p^{z-u}}\right)\left(1-\frac{1}{p^{2z}}\right).
\end{multline}

Substituting \eqref{SG(p) G11 1p} and \eqref{SG(p) Gp1} into \eqref{SG(p) eq1}, we obtain
\begin{multline}\label{SG(p) eq2}
\SG(p,u,v,z)=\left(1-\frac{1}{p^{1+2z}}\right)^{-1}\Biggl(
1-\frac{1}{p^{2z}}+\frac{1}{p^{z+v}}-\frac{1}{p^{1+z+v}}+\\+
\frac{1}{p^{1/2+v}}
\left(1-\frac{1}{p^{z-u}}\right)\left(1-\frac{1}{p^{2z}}\right)
\Biggr):=\left(1-\frac{1}{p^{1+2z}}\right)^{-1}\SG_1(p,u,v,z).
\end{multline}
Similarly, substituting \eqref{SG(p) G11 1p}, \eqref{SG(p) Gp1}, \eqref{SG(p2) Gpp}, and \eqref{SG(p2) G1p2} into \eqref{SG(p2) eq1} yields
\begin{multline}\label{SG(p2) eq2}
\SG(p^2,u,v,z)=\left(1-\frac{1}{p^{1+2z}}\right)^{-1}\Biggl(
1-\frac{1}{p^{2z}}+
\frac{1}{p^{1/2+v}}\left(1-\frac{1}{p^{z-u}}\right)\left(1-\frac{1}{p^{2z}}\right)+\\+
\left(\frac{1}{p^{2v}}-\frac{1}{p^{1+2v}}\right)\left(\frac{1}{p^{z-u}}+\frac{1}{p^{2z}}-\frac{1}{p^{3z-u}}\right)+\\+
\frac{1}{p^{1/2+z+2v}}\left(1-\frac{1}{p^{z-u}}\right)\left(1-\frac{1}{p^{2z}}\right)
\Biggr):=\left(1-\frac{1}{p^{1+2z}}\right)^{-1}\SG_2(p,u,v,z).
\end{multline}

%%%%%%%%%%%%%%%%%%%

Now, to evaluate \eqref{SZE+SIN sum e def}, we consider
\begin{multline}\label{SZE+SIN sum e eq1}
\ES_0(r,k,u,v,z):=\sum_{e_1e_2|r}\frac{|\mu(e_1)|}{\sqrt{e_1}}\left(\frac{r}{ke_1e_2^2}\right)^{z}
\SG\left(\frac{r}{e_2},u,v,z\right)=\\=(rk)^{-z}\sum_{e_3|r}e_3^{2z}\SG\left(e_3,u,v,z\right)
\sum_{e_1|e_3}\frac{|\mu(e_1)|}{e_1^{1/2+z}}.
\end{multline}
Since $\SG(R,u,v,z)$ is multiplicative in $R$, we obtain
\begin{multline}\label{SZE+SIN sum e eq2}
\ES_0(r,k,u,v,z)=
(rk)^{-z}\prod_{p|r_1}\left(1+\left(1+\frac{1}{p^{1/2+z}}\right)p^{2z}\SG\left(p,u,v,z\right)\right)\times\\\times
\prod_{p|r_2}\left(1+\left(1+\frac{1}{p^{1/2+z}}\right)p^{2z}\SG\left(p,u,v,z\right)+\left(1+\frac{1}{p^{1/2+z}}\right)p^{4z}\SG\left(p^2,u,v,z\right)\right)=\\=
\left(\frac{r}{k}\right)^{z}
\prod_{p|r_1}\left(\frac{1}{p^{2z}}+\left(1+\frac{1}{p^{1/2+z}}\right)\SG\left(p,u,v,z\right)\right)\times\\\times
\prod_{p|r_2}\left(\frac{1}{p^{4z}}+\left(1+\frac{1}{p^{1/2+z}}\right)p^{-2z}\SG\left(p,u,v,z\right)+\left(1+\frac{1}{p^{1/2+z}}\right)
\SG\left(p^2,u,v,z\right)\right).
\end{multline}

Using \eqref{SG(p) eq2} and \eqref{SG(p2) eq2}, we have
\begin{multline}\label{SZE+SIN sum e eq3}
\ES_0(r,k,u,v,z)=\left(\frac{r}{k}\right)^{z}
\prod_{p|r_1}\left(\left(1-\frac{1}{p^{1/2+z}}\right)^{-1}\ES_1(p,u,v,z)\right)\\\times
\prod_{p|r_2}\left(1-\frac{1}{p^{1/2+z}}\right)^{-1}\ES_2(p,u,v,z),
\end{multline}
where
\begin{equation}\label{ES1 def}
\ES_1(p,u,v,z):=
\frac{1}{p^{2z}}-\frac{1}{p^{1/2+3z}}+\SG_1\left(p,u,v,z\right),
\end{equation}
and
\begin{multline}\label{ES2 def}
\ES_2(p,u,v,z):=
\frac{1}{p^{4z}}-\frac{1}{p^{1/2+5z}}+\frac{1}{p^{2z}}\SG_1\left(p,u,v,z\right)+
\SG_2\left(p,u,v,z\right)=\\=\frac{\ES_1(p,u,v,z)}{p^{2z}}+\SG_2\left(p,u,v,z\right).
\end{multline}
Applying \eqref{SG(p) eq2}, we obtain
\begin{equation}\label{ES1 eq1}
\ES_1(p,u,v,z)=
1-\frac{1}{p^{1/2+3z}}+\frac{1}{p^{z+v}}-\frac{1}{p^{1+z+v}}+
\frac{1}{p^{1/2+v}}
\left(1-\frac{1}{p^{z-u}}\right)\left(1-\frac{1}{p^{2z}}\right),
\end{equation}
\begin{equation}\label{ES1 eq2}
\ES_1(p,0,0,z)=
1+\frac{1}{p^{z}}+\frac{1}{p^{1/2}}-\frac{1}{p^{1/2+z}}-\frac{1}{p^{1/2+2z}}-\frac{1}{p^{1+z}}.
\end{equation}
Differentiating \eqref{ES1 eq1} then yields
\begin{equation}\label{ES1 u deriv}
\frac{d}{du}\ES_1(p,u,0,z)\Bigl|_{u=0}=
\frac{-\log p}{p^{1/2+z}}\left(1-\frac{1}{p^{2z}}\right),
\end{equation}
\begin{equation}\label{ES1 v deriv}
\frac{d}{dv}\ES_1(p,0,v,z)\Bigl|_{v=0}=
\frac{-\log p}{p^{z}}+\frac{\log p}{p^{1+z}}-
\frac{\log p}{p^{1/2}}
\left(1-\frac{1}{p^{z}}\right)\left(1-\frac{1}{p^{2z}}\right).
\end{equation}
Differentiating \eqref{ES2 def} by virtue of \eqref{SG(p2) eq2}, \eqref{ES1 u deriv}, and \eqref{ES1 v deriv}, we find
\begin{multline}\label{ES2 u deriv}
\frac{d}{du}\ES_2(p,u,0,z)\Bigl|_{u=0}=\\=-
\left(1-\frac{1}{p^{2z}}\right)\left(
\frac{1}{p^{1/2+3z}}+
\frac{1}{p^{1/2+z}}-
\left(1-\frac{1}{p}\right)\frac{1}{p^z}+\frac{1}{p^{1/2+2z}}
\right)\log p=\\=-
\left(-\frac{1}{p^{z}}+\frac{1}{p^{3z}}+\frac{1}{p^{1/2+z}}+\frac{1}{p^{1/2+2z}}-\frac{1}{p^{1/2+4z}}-\frac{1}{p^{1/2+5z}}+
\frac{1}{p^{1+z}}-\frac{1}{p^{1+3z}}
\right)\log p,
\end{multline}
\begin{multline}\label{ES2 v deriv}
\frac{d}{dv}\ES_2(p,0,v,z)\Bigl|_{v=0}=
-\frac{\log p}{p^{3z}}\left(1-\frac{1}{p}\right)-
\frac{\log p}{p^{1/2+2z}}
\left(1-\frac{1}{p^{z}}\right)\left(1-\frac{1}{p^{2z}}\right)-\\-
\frac{\log p}{p^{1/2}}\left(1-\frac{1}{p^{z}}\right)\left(1-\frac{1}{p^{2z}}\right)-2\log p\left(1-\frac{1}{p}\right)
\left(\frac{1}{p^{z}}+\frac{1}{p^{2z}}-\frac{1}{p^{3z}}\right)-\\-\frac{2\log p}{p^{1/2+z}}
\left(1-\frac{1}{p^{z}}\right)\left(1-\frac{1}{p^{2z}}\right)=
-\left(1-\frac{1}{p}\right)\left(\frac{2}{p^{z}}+\frac{2}{p^{2z}}-\frac{1}{p^{3z}}\right)\log p-\\-
\left(\frac{1}{p^{1/2}}+\frac{2}{p^{1/2+z}}+\frac{1}{p^{1/2+2z}}\right)
\left(1-\frac{1}{p^{z}}\right)\left(1-\frac{1}{p^{2z}}\right)\log p=\\=
-\Bigl(\frac{2}{p^{z}}+\frac{2}{p^{2z}}-\frac{1}{p^{3z}}+
\frac{1}{p^{1/2}}+\frac{1}{p^{1/2+z}}-\frac{2}{p^{1/2+2z}}-\frac{2}{p^{1/2+3z}}+\frac{1}{p^{1/2+4z}}+\frac{1}{p^{1/2+5z}}-\\
-\frac{2}{p^{1+z}}-\frac{2}{p^{1+2z}}+\frac{1}{p^{1+3z}}
  \Bigr)\log p.
\end{multline}
It follows from \eqref{SG(p2) eq2} that
\begin{equation}\label{SG2(p,0,0)}
\SG_2(p,0,0,z)=
1+\frac{1}{p^{z}}-\frac{1}{p^{3z}}+\frac{1}{p^{1/2}}-\frac{2}{p^{1/2+2z}}+\frac{1}{p^{1/2+4z}}-\frac{1}{p^{1+z}}-\frac{1}{p^{1+2z}}+
\frac{1}{p^{1+3z}}.
\end{equation}
Therefore, using \eqref{ES1 eq2} and \eqref{SG2(p,0,0)}, we rewrite \eqref{ES2 def} as
\begin{multline}\label{ES2 eq2}
\ES_2(p,0,0,z)=
\frac{1}{p^{2z}}+\frac{1}{p^{3z}}+\frac{1}{p^{1/2+2z}}-\frac{1}{p^{1/2+3z}}-\frac{1}{p^{1/2+4z}}-\frac{1}{p^{1+3z}}
+\SG_2\left(p,0,0,z\right)=\\=
1+\frac{1}{p^{z}}+\frac{1}{p^{2z}}+\frac{1}{p^{1/2}}-\frac{1}{p^{1/2+2z}}-\frac{1}{p^{1/2+3z}}-\frac{1}{p^{1+z}}-\frac{1}{p^{1+2z}}.
\end{multline}

%%%%%%%%%%%%5
Differentiating \eqref{SZE+SIN rK MT+ET3}, we find
\begin{multline}\label{SZE+SIN sum e u deriv}
\frac{d}{du}\ES_0(r,k,u,0,z)\Biggl|_{u=0}=\left(\frac{r}{k}\right)^{z}
\prod_{p|r_1}\left(\left(1-\frac{1}{p^{1/2+z}}\right)^{-1}\ES_1(p,0,0,z)\right)\times\\\times
\prod_{p|r_2}\left(1-\frac{1}{p^{1/2+z}}\right)^{-1}\ES_2(p,0,0,z)
\Biggl(\sum_{p|r_1}\frac{\frac{d}{du}\ES_1(p,u,0,z)\Bigl|_{u=0}}{\ES_1(p,0,0,z)}+
\sum_{p|r_2}\frac{\frac{d}{du}\ES_2(p,u,0,z)\Bigl|_{u=0}}{\ES_2(p,0,0,z)}
\Biggr),
\end{multline}
and
\begin{multline}\label{SZE+SIN sum e v deriv}
\frac{d}{dv}\ES_0(r,k,0,v,z)\Biggl|_{v=0}=\left(\frac{r}{k}\right)^{z}
\prod_{p|r_1}\left(\left(1-\frac{1}{p^{1/2+z}}\right)^{-1}\ES_1(p,0,0,z)\right)\times\\\times
\prod_{p|r_2}\left(1-\frac{1}{p^{1/2+z}}\right)^{-1}\ES_2(p,0,0,z)
\Biggl(\sum_{p|r_1}\frac{\frac{d}{dv}\ES_1(p,0,v,z)\Bigl|_{v=0}}{\ES_1(p,0,0,z)}+
\sum_{p|r_2}\frac{\frac{d}{dv}\ES_2(p,0,v,z)\Bigl|_{v=0}}{\ES_2(p,0,0,z)}
\Biggr).
\end{multline}
Using \eqref{SZE+SIN sum e eq1}, \eqref{SZE+SIN sum e eq3}, \eqref{SZE+SIN sum e u deriv}, and \eqref{SZE+SIN sum e v deriv}, we rewrite \eqref{SZE+SIN sum e def}  in the form
\begin{multline}\label{SZE+SIN sum e final1}
\ES(r,k,z)=
\left(\frac{r}{k}\right)^{z}
\prod_{p|r_1}\left(\left(1-\frac{1}{p^{1/2+z}}\right)^{-1}\ES_1(p,0,0,z)\right)
\prod_{p|r_2}\left(1-\frac{1}{p^{1/2+z}}\right)^{-1}\\\times
\ES_2(p,0,0,z)\Biggl(\Cc_1(-z)+
2\sum_{p|r_1}\frac{\frac{d}{du}\ES_1(p,u,0,z)\Bigl|_{u=0}}{\ES_1(p,0,0,z)}+
2\sum_{p|r_2}\frac{\frac{d}{du}\ES_2(p,u,0,z)\Bigl|_{u=0}}{\ES_2(p,0,0,z)}+\\
+2\sum_{p|r_1}\frac{\frac{d}{dv}\ES_1(p,0,v,z)\Bigl|_{v=0}}{\ES_1(p,0,0,z)}+
2\sum_{p|r_2}\frac{\frac{d}{dv}\ES_2(p,0,v,z)\Bigl|_{v=0}}{\ES_2(p,0,0,z)}
\Biggr).
\end{multline}
We are now ready to establish the identity
\begin{equation}\label{ES to MAP}
\ES(r,k,z)=\left(\frac{k}{r}\right)^{-z}\MAP(-z,r)
\end{equation}
defined in \eqref{MT sum e eq3}, namely
\begin{multline}\label{MT sum e eq4}
\MAP(-z,r)=
\prod_{p|r_1}\left(1+\frac{1}{p^{z}}+\frac{1}{p^{1/2}}\right)
\prod_{p|r_2}\left(1+\frac{1}{p^{z}}+\frac{1}{p^{2z}}+\frac{1}{p^{1/2}}+\frac{1}{p^{1/2+z}}\right)
\Biggl(\Cc_1(-z)-\\-
2\sum_{p|r_1}\frac{p^{-z}+p^{-1/2}}{1+p^{-z}+p^{-1/2}}\log p
-2\sum_{p|r_2}\frac{p^{-z}+p^{-1/2}+2p^{-2z}+2p^{-1/2-z}}{1+p^{-z}+p^{-1/2}+p^{-2z}+p^{-1/2-z}}\log p
\Biggr).
\end{multline}
First, it follows from \eqref{ES1 eq2} and \eqref{ES2 eq2} that
\begin{equation}\label{ES1 eq3}
\ES_1(p,0,0,z)=
\left(1-\frac{1}{p^{1/2+z}}\right)\left(1+\frac{1}{p^{z}}+\frac{1}{p^{1/2}}\right),
\end{equation}
\begin{equation}\label{ES2 eq3}
\ES_2(p,0,0,z)
=\left(1-\frac{1}{p^{1/2+z}}\right)\left(1+\frac{1}{p^{z}}+\frac{1}{p^{2z}}+\frac{1}{p^{1/2}}+\frac{1}{p^{1/2+z}}\right).
\end{equation}
Substituting \eqref{ES1 eq3} and \eqref{ES2 eq3} into \eqref{SZE+SIN sum e final1}, we show that
\begin{multline}\label{SZE+SIN sum e final2}
\ES(r,k,z)=
\left(\frac{r}{k}\right)^{z}
\prod_{p|r_1}\left(1+\frac{1}{p^{z}}+\frac{1}{p^{1/2}}\right)
\prod_{p|r_2}\left(1+\frac{1}{p^{z}}+\frac{1}{p^{2z}}+\frac{1}{p^{1/2}}+\frac{1}{p^{1/2+z}}\right)\\\times
\Biggl(\Cc_1(-z)+
2\sum_{p|r_1}\frac{\frac{d}{du}\ES_1(p,u,0,z)\Bigl|_{u=0}+
\frac{d}{dv}\ES_1(p,0,v,z)\Bigl|_{v=0}}{(1+p^{-z}+p^{-1/2})\left(1-p^{-1/2-z}\right)}+\\+
2\sum_{p|r_2}\frac{\frac{d}{du}\ES_2(p,u,0,z)\Bigl|_{u=0}+\frac{d}{dv}\ES_2(p,0,v,z)\Bigl|_{v=0}}{(1+p^{-z}+p^{-1/2}+p^{-2z}+p^{-1/2-z})\left(1-p^{-1/2-z}\right)}
\Biggr).
\end{multline}
Using \eqref{ES1 u deriv} and \eqref{ES1 v deriv}, we obtain
\begin{multline}\label{ES1 u+v deriv}
\frac{d}{du}\ES_1(p,u,0,z)\Bigl|_{u=0}+
\frac{d}{dv}\ES_1(p,0,v,z)\Bigl|_{v=0}=
\frac{-\log p}{p^{1/2+z}}\left(1-\frac{1}{p^{2z}}\right)-\frac{\log p}{p^{z}}+\frac{\log p}{p^{1+z}}-\\-
\frac{\log p}{p^{1/2}}\left(1-\frac{1}{p^{z}}\right)\left(1-\frac{1}{p^{2z}}\right)=
-\left(1-\frac{1}{p^{1/2+z}}\right)\left(\frac{1}{p^{z}}+\frac{1}{p^{1/2}}\right)\log p,
\end{multline}
Similarly, it follows from \eqref{ES2 u deriv} and \eqref{ES2 v deriv} that
\begin{multline}\label{ES2 u+v deriv}
\frac{d}{du}\ES_2(p,u,0,z)\Bigl|_{u=0}+
\frac{d}{dv}\ES_2(p,0,v,z)\Bigl|_{v=0}=
-\Bigl(
\frac{1}{p^{z}}+\frac{2}{p^{2z}}+\frac{1}{p^{1/2}}+\frac{2}{p^{1/2+z}}-\frac{1}{p^{1/2+2z}}-\\-\frac{2}{p^{1/2+3z}}
-\frac{1}{p^{1+z}}-\frac{2}{p^{1+2z}}\Bigr)\log p=
-\left(1-\frac{1}{p^{1/2+z}}\right)\left(
\frac{1}{p^{z}}+\frac{2}{p^{2z}}+\frac{1}{p^{1/2}}+\frac{2}{p^{1/2+z}}\right)\log p.
\end{multline}
Substituting \eqref{ES1 u+v deriv} and \eqref{ES2 u+v deriv} into \eqref{SZE+SIN sum e final2} and taking into account \eqref{MT sum e eq4}, we establish \eqref{ES to MAP}.
Therefore, we rewrite \eqref{SZE+SIN rK MT+ET8} in the form
\begin{multline}\label{SZE+SIN rK MT+ET9}
\SZE(r,K)+\SIN(r,K)=
\frac{1}{\sqrt{r}}\sum_{k}h\left(\frac{k}{K}\right)
\frac{1}{2\pi i}\int_{(a)}
\pi^{3z/2}\frac{\Gamma\left(3/4-z/2\right)}{\Gamma\left(3/4\right)}
\\\times
\zeta(1-z)\zeta(1-2z)\left(\frac{k}{r}\right)^{-z}\MAP(-z,r)\left(1+O\left(\frac{|z|}{k}\right)\right)
\frac{dz}{z}+O\left(\frac{r^{3/2}}{K^{3/2-\epsilon}}\right).
\end{multline}
%%%%%%%%%%%%%%%%%%%%%%%%%%%%%%%%%%%%%%%%%%%%%%%%

\section{Proof of Theorem \ref{thm:2mom average}: the special case}\label{sec:Proof of Theorem 2mom}

It follows from \eqref{SZE+SIN rK MT+ET9} and \eqref{2mom MT2 eq5} that
\begin{multline}\label{MT2+SZE+SIN eq0}
\MT_2(r,K)+\SZE(r,K)+\SIN(r,K)=\frac{1}{\sqrt{r}}\sum_{k}h\left(\frac{k}{K}\right)
\frac{1}{2\pi i}\int_{(a)}\left(H(z)+H(-z)\right)\frac{dz}{z}+\\+O\left(\frac{r^{3/2}}{K^{3/2-\epsilon}}+\frac{(rK)^{\epsilon}}{\sqrt{r}}\right),
\end{multline}
where
\begin{equation}\label{MT2+SZE+SIN Hdef}
H(z)=\pi^{-3z/2}\frac{\Gamma\left(3/4+z/2\right)}{\Gamma\left(3/4\right)} \zeta(1+2z)\zeta(1+z)\MAP(z,r)\left(\frac{k}{r}\right)^{z}.
\end{equation}
To evaluate the integral in \eqref{MT2+SZE+SIN eq0}, we apply the residue theorem, obtaining
\begin{align}\label{MT2+SZE+SIN eq2}
\frac{1}{2\pi i}\int_{(\epsilon)}\left(H(z)+H(-z)\right)\frac{dz}{z} &= \res_{z=0}\frac{H(z)+H(-z)}{z}+\frac{1}{2\pi i}\int_{(-\epsilon)}\left(H(z)+H(-z)\right)\frac{dz}{z} \notag \\
&= \res_{z=0}\frac{H(z)+H(-z)}{z}-\frac{1}{2\pi i}\int_{(\epsilon)}\left(H(z)+H(-z)\right)\frac{dz}{z}.
\end{align}
Therefore,
\begin{multline}\label{MT2+SZE+SIN eq1}
\MT_2(r,K)+\SZE(r,K)+\SIN(r,K)=\frac{1}{\sqrt{r}}\sum_{k}h\left(\frac{k}{K}\right)\frac{1}{2}\res_{z=0}\frac{H(z)+H(-z)}{z}+\\+O\left(\frac{r^{3/2}}{K^{3/2-\epsilon}}+\frac{(rK)^{\epsilon}}{\sqrt{r}}\right)=
\frac{1}{\sqrt{r}}\sum_{k}h\left(\frac{k}{K}\right)\res_{z=0}\frac{H(z)}{z}+O\left(\frac{r^{3/2}}{K^{3/2-\epsilon}}+\frac{(rK)^{\epsilon}}{\sqrt{r}}\right).
\end{multline}
Substituting \eqref{MT2+SZE+SIN eq1} and \eqref{2mom MTn est3} into \eqref{2 mom averaged to Zag}, we prove the following asymptotic formula for the twisted second moment \eqref{2 mom averaged}:
\begin{equation}\label{2 mom averaged final AF}
M^{a}_2(r,K)=\frac{1}{\sqrt{r}}\sum_{k}h\left(\frac{k}{K}\right)\res_{z=0}\frac{H(z)}{z}+O\left(\frac{r^{3/2}}{K^{3/2-\epsilon}}+\frac{(rK)^{\epsilon}}{\sqrt{r}}\right).
\end{equation}
In order to apply the result of Khan \cite[Theorem 1.1]{Khan2010} concerning the dependence of the non-vanishing proportion on the length of the mollifier, we must demonstrate that \eqref{2 mom averaged final AF} matches the main term in \cite[Theorem 3.1]{Khan2010}.
To this end, we apply \eqref{C1 def}, \eqref{2mom MT2 eq4}, \eqref{2mom MT2 eq5}, and \eqref{MT2+SZE+SIN Hdef}, obtaining
\begin{multline}\label{2 mom averaged final AF2}
M^{a}_2(r,K)=O\left(\frac{r^{3/2}}{K^{3/2-\epsilon}}+\frac{(rK)^{\epsilon}\sqrt{K}}{\sqrt{r}}\right)+\\+\sum_{k}h\left(\frac{k}{K}\right)\sum_{e|r^2}
\frac{1}{\sqrt{re_1}}\res_{z=0}\frac{F(z)}{z}\Biggl(
\zeta(1+z)\zeta(1+2z)\Cc_2+2\zeta'(1+z)\zeta(1+2z)
\Biggr),
\end{multline}
where
\begin{equation}\label{F1C2 def}
F(z)=\pi^{-3z/2}\frac{\Gamma\left(3/4+z/2\right)k^{z}}{\Gamma\left(3/4\right)(e_1e_2)^{z}},\quad
\Cc_2=2\log\frac{ke_2}{r}-3\log(2\pi)+3\gamma+\frac{\pi}{2}.
\end{equation}
Note that
\begin{equation}
\frac{\zeta(1+z)\zeta(1+2z)}{z}=\frac{1}{2z^3}+\frac{3\gamma}{2z^2}+\frac{\gamma^2-5\gamma_1/2}{z}+O(1),
\end{equation}
\begin{equation}
\frac{\zeta'(1+z)\zeta(1+2z)}{z}=
-\frac{1}{2z^4}-\frac{\gamma}{z^3}+\frac{3\gamma_1}{2z^2}-\frac{\gamma\gamma_1+3\gamma_2/2}{z}+O(1),
\end{equation}
which yields
\begin{multline}\label{2 mom averaged final AF3}
\res_{z=0}\frac{F(z)}{z}\Bigl(\zeta(1+z)\zeta(1+2z)\Cc_2+2\zeta'(1+z)\zeta(1+2z)\Bigr)=\\=
\Cc_2\left(\frac{F''(0)}{4}+\frac{3\gamma}{2}F'(0)+\left(\gamma^2-5\gamma_1/2\right)F(0)\right)-
\frac{F'''(0)}{6}-\gamma F''(0)+3\gamma_1F'(0)-\\-(2\gamma\gamma_1+3\gamma_2)F(0)=
\frac{1}{2}\log\frac{ke_2}{r}\log^2\frac{k}{e_1e_2}-\frac{1}{6}\log^3\frac{k}{e_1e_2}+
\log\frac{ke_2}{r}P_1(\log\frac{k}{e_1e_2})+P_2(\log\frac{k}{e_1e_2}),
\end{multline}
where $P_j(x)$ are some polynomials of degree $j.$ Substituting \eqref{2 mom averaged final AF3} into \eqref{2 mom averaged final AF2} and evaluating the sum over $k$ by means of the Poisson summation formula, we obtain \eqref{2momAF0}, which completes the proof of Theorem \ref{thm:2mom average}.

%%%%%%%%%%%%%%%%%%%%%%%%%%%%%%%%%%%%%%%%%%%%%%%%%%%%%%%%%%%%%%%%%%%%%%%%%%%%%%%%%%%%%%%%%%%%%%%%%%%%%%%%%%%%%%%%%%%%%%%%%%
\section{Proof of Theorem \ref{thm:2mom average}: the general case}\label{sec:Proof of Theorem 2mom General}
In this section, we will prove that \eqref{ES to MAP} holds for an arbitrary $r.$ To this end, we will employ the same Euler product strategy as in the verification of \eqref{ES to MAP} for specific $r$. The proof will proceed by induction, where the computations in Section~\ref{sec:Voronoi MT} establish the base case.

It follows from \eqref{MT sum e def} and \eqref{SZE+SIN sum e def} that, to establish \eqref{ES to MAP}, it suffices to show that
\begin{equation}\label{E0 to sum e}
\left(\frac{k}{r}\right)^{z}\ES_0(r,k,0,0,z)=
\sum_{e_3|r}e_3^{-z}\sum_{e_1|e_3}\frac{|\mu(e_1)|}{e_1^{1/2-z}},
\end{equation}
\begin{multline}\label{sum G deriv to sum e deriv}
\sum_{e_1e_2|r}\frac{|\mu(e_1)|}{\sqrt{e_1}}\left(\frac{1}{e_1e_2^2}\right)^{z}\Biggl(
\frac{d}{du}\SG\left(\frac{r}{e_2},u,0,z\right)\Biggl|_{u=0}+
\frac{d}{dv}\SG\left(\frac{r}{e_2},0,v,z\right)\Biggl|_{v=0}
\Biggr)=\\=
\sum_{e_3|r}
\left(-\frac{d}{du}\right)e_3^{-z+u}\Bigl|_{u=0}
\sum_{e_1|e_3}\frac{|\mu(e_1)|}{e_1^{1/2+z}},
\end{multline}
where  $\ES_0(r,k,u,v,z)$ is given by \eqref{SZE+SIN sum e eq1}.
Let us first consider \eqref{E0 to sum e}.
To simplify the notation, let $\nu_p(r)$ denote the $p$-adic valuation of $r$.
We then have
\begin{multline}\label{E0 to sum e eq1}
\sum_{e_3|r}e_3^{-z}\sum_{e_1|e_3}\frac{|\mu(e_1)|}{e_1^{1/2-z}}=\prod_{p|r}\left(1+\sum_{j=1}^{\nu_p(r)}\frac{1}{p^{jz}}\left(1+\frac{1}{p^{1/2-z}}\right)\right)=\\=
\prod_{p|r}\left(1+\frac{1}{p^{z}}\left(1+\frac{1}{p^{1/2-z}}\right)
\frac{1-p^{-\nu_p(r)z}}{1-p^{-z}}
\right).
\end{multline}
Arguing as in the proof of \eqref{SZE+SIN sum e eq2}, we find that the left-hand side of \eqref{E0 to sum e} is equal to
\begin{multline}\label{E0 to sum e eq2}
\left(\frac{k}{r}\right)^{z}\ES_0(r,k,u,v,z)=
\sum_{e_3|r}\frac{e_3^{2z}}{r^{2z}}\SG\left(e_3,u,v,z\right)
\sum_{e_1|e_3}\frac{|\mu(e_1)|}{e_1^{1/2+z}}=\\=
\prod_{p|r}\frac{1}{p^{2\nu_p(r)z}}\left(1+
\left(1+\frac{1}{p^{1/2+z}}\right)\sum_{j=1}^{\nu_p(r)}p^{2jz}
\SG\left(p^{j},u,v,z\right)\right):=\prod_{p|r}\ES_p(\nu_p(r),u,v,z).
\end{multline}
Substituting \eqref{E0 to sum e eq1} and \eqref{E0 to sum e eq2} into \eqref{E0 to sum e}, we find that it suffices to prove the following result.
%%%%%%%%%%%%%%%%%%%%%%%%%%%%%%%%%%%%%%%%%%%%%%%%%%%%%%%%%%%%%%%%%%%%%%%%%%%%%%%%%%%%%%%%
\begin{lem}\label{lem:sum SG pj}
The following identity holds:
\begin{equation}\label{sum SG pj}
1+\left(1+\frac{1}{p^{1/2+z}}\right)\sum_{j=1}^{\nu}p^{2jz}
\SG\left(p^{j},0,0,z\right)=
p^{2\nu z}\left(1+\frac{1}{p^{z}}\left(1+\frac{1}{p^{1/2-z}}\right)
\frac{1-p^{-\nu z}}{1-p^{-z}}
\right).
\end{equation}
\end{lem}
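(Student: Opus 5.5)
We argue by induction on $\nu\ge1$, with the computations of Section~\ref{sec:Voronoi MT} as base cases. Denote the left-hand side of \eqref{sum SG pj} by $A_\nu$ and the right-hand side by $B_\nu$. For $\nu=1$ and $\nu=2$ the identity is exactly the content of \eqref{ES1 eq3} and \eqref{ES2 eq3}. Via \eqref{SZE+SIN sum e eq2}, \eqref{ES1 def} and \eqref{ES2 def}, these state that $p^{-2\nu z}A_\nu$ equals $1+p^{-z}+p^{-1/2}$ for $\nu=1$, and $1+p^{-z}+p^{-2z}+p^{-1/2}+p^{-1/2-z}$ for $\nu=2$. These agree with $p^{-2\nu z}B_\nu$ by direct expansion of the geometric sum $\frac{1-p^{-\nu z}}{1-p^{-z}}$.

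For the inductive step, we compare consecutive differences:
\[
A_{\nu}-A_{\nu-1}=\Bigl(1+\frac{1}{p^{1/2+z}}\Bigr)p^{2\nu z}\SG(p^\nu,0,0,z),
\]
and
\[
B_\nu-B_{\nu-1}=p^{2\nu z}\Bigl(1+\frac{1}{p^z}+\frac{1}{p^{1/2}}\Bigr)\frac{1-p^{-\nu z}}{1-p^{-z}}\cdot(1-p^{-2z})+\text{(explicit terms)}.
\]
More cleanly, $B_\nu=p^{2\nu z}+(p^{(2\nu-1)z}+p^{(2\nu-1)z-1/2+z})\sum_{j=0}^{\nu-1}p^{-jz}$, so $B_\nu-B_{\nu-1}$ is an explicit finite Laurent polynomial in $p^{-z}$ and $p^{-1/2}$, with at most three or four monomials times $p^{2\nu z}$.

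It therefore suffices to compute $\SG(p^\nu,0,0,z)$ for $\nu\ge3$ and check that it has the required form. From \eqref{SZE+SIN md sum def}, with $m$ square-free, $m\mid p^\nu/d$, and $d=p^i$, we get
\[
\SG(p^\nu,0,0,z)=\sum_{i=0}^{\nu}\frac{\G(1,p^i,z,0)}{p^{i}}-\sum_{i=0}^{\nu-1}\frac{\G(p,p^i,z,0)}{p^{i+1+z}}.
\]
The key step is to extend Lemma~\ref{lem:Sound series} to $d$ with $\nu_p(d)=i\ge3$ arbitrary. We would compute the local factor $S_p(\nu_p(m),i)$ from \eqref{Sound series 2} using \eqref{G(n, p gamma)} and \eqref{G(n, 2 gamma)}, splitting into the generic range $q+i\le 2n+2q$, where $G=\phi(p^{q+i})$ for $q+i$ even and vanishes otherwise, plus finitely many boundary terms with $q+i=2n+2q+1$ that contribute $-p^{q+i-1}$ or the Legendre term $p^{\alpha+1/2}$. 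The result should be $\G(1,p^i)$ and $\G(p,p^i)$ of the shape $(1-p^{-1-2z})^{-1}$ times a short expression, for instance $p^{i}(1-p^{-1})p^{-\lceil\cdot\rceil z}$-type geometric pieces together with a single $p^{i-1/2}(\dots)$ boundary term. The prime $p=2$ must be checked separately, as in the paper, but should match the odd formula.

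Once these are in hand, $\SG(p^\nu,0,0,z)$ telescopes. Most of the $\G(1,p^i)/p^i$ and $\G(p,p^{i-1})/p^{i+z}$ terms cancel pairwise, just as in the passage from \eqref{SG(p2) eq1} to \eqref{SG(p2) eq2}. This should give $\SG(p^\nu,0,0,z)=\SG(p^{\nu-2},0,0,z)\cdot p^{-2z}$ plus a fixed correction, or an equivalent two-step recurrence. Feeding this into the difference identity and using the induction hypothesis for $\nu-1$ and $\nu-2$ then closes the argument; this is why two base cases are needed.

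The main obstacle is the bookkeeping of the boundary terms in $S_p(0,i)$ and $S_p(1,i)$ for general $i$, including the parity of $i$ and the case $p=2$. I would handle this by writing $S_p(\varepsilon,i)$ as the generic geometric series minus the finitely many $(n,q)$ with $2n+2q+2\varepsilon<q+i$, which gives a closed form valid uniformly in $i$. I would then verify the recurrence symbolically in $x=p^{-z}$ and $y=p^{-1/2}$.
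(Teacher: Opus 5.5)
Your plan would work. It reaches the key formula for $\SG(p^\nu,0,0,z)$ by brute force, whereas the paper uses a shortcut.

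Both arguments compare the increments $A_\nu-A_{\nu-1}$ and $B_\nu-B_{\nu-1}$. Both therefore need $\SG(p^\nu,0,0,z)$, which equals $1$ plus the sum over $1\le i\le\nu$ of the pairs $p^{-i}\G(1,p^i,z,0)-p^{-i-z}\G(p,p^{i-1},z,0)$.

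You propose to evaluate $\G(1,p^i,z,0)$ and $\G(p,p^i,z,0)$ separately for every $i$. That means extending Lemma~\ref{lem:Sound series} to $\nu_p(d)\ge 3$ and to $\nu_p(m)=1,\ \nu_p(d)\ge 2$, splitting by the parity of $i$, and summing $O(i)$ boundary terms.

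The paper never does this here. At $u=0$, the local sums \eqref{Sound series 1} for $\G(1,p^{\nu},z,0)$ and $p^{-z}\G(p,p^{\nu-1},z,0)$ have the same modulus $p^{q+\nu}$ and differ only by the shift $n\mapsto n+1$. Their difference is therefore the single column $\sum_q G(p^{2q},p^{q+\nu})p^{-q(1+z)}$, which is one boundary term plus a geometric tail (see \eqref{G(1,pnu+1)-G(p,pnu) eq2}). This gives, with no parity cases,
\[
\Bigl(1+\frac{1}{p^{1/2+z}}\Bigr)\Bigl(\SG(p^{\nu},0,0,z)-\SG(p^{\nu-1},0,0,z)\Bigr)=\frac{1}{p^{\nu z}}\Bigl(1-\frac{1}{p^{z}}\Bigr)\Bigl(1+\frac{1}{p^{1/2-z}}\Bigr).
\]
So the pairs do not cancel, as you say; they combine into this single term. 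Summing the resulting geometric series proves the lemma.

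Your guessed two-step recurrence is in fact true: $\SG(p^\nu)-p^{-2z}\SG(p^{\nu-2})$ equals the constant $(1+p^{-1/2-z})^{-1}(1+p^{-z})(1+p^{-1/2})(1-p^{-2z})$. It is not needed, though. Once you have closed forms for every $\G$, you do not need an induction at all.

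What your route buys is explicit individual values of $\G(1,p^i,z,u)$ and $\G(p,p^i,z,u)$. The paper computes exactly these later, in Lemma~\ref{lem:deriv sum SG pj}, because for $u\neq 0$ the weights $p^{-n(z-u)}$ and the factor $1-p^{-z+u}$ break the exact shift.

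Finally, $p=2$ needs no separate check. Every first argument is an even power of $p$, and on those arguments \eqref{G(n, 2 gamma)} reduces to \eqref{G(n, p gamma)}.
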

\begin{proof}
We will prove \eqref{sum SG pj} by induction on $\nu$. The case $\nu=1$ follows from \eqref{SG(p) eq2}, \eqref{ES1 def}, and \eqref{ES1 eq2}. Suppose that \eqref{sum SG pj} holds for $\nu$; we shall show that it also holds for $\nu+1$. For this purpose, it suffices to show that
\begin{multline}\label{SG p nu+1 eq1}
\left(1+\frac{1}{p^{1/2+z}}\right)
\SG\left(p^{\nu+1},0,0,z\right)=
1+\frac{1}{p^{z}}\left(1+\frac{1}{p^{1/2-z}}\right)\frac{1-p^{-(\nu+1)z}}{1-p^{-z}} - \\
- \frac{1}{p^{2z}}\left(1+\frac{1}{p^{z}}\left(1+\frac{1}{p^{1/2-z}}\right)
\frac{1-p^{-\nu z}}{1-p^{-z}}\right).
\end{multline}
Straightforward calculations allow us to rewrite \eqref{SG p nu+1 eq1} as
\begin{multline}\label{SG p nu+1 eq2}
\left(1+\frac{1}{p^{1/2+z}}\right)
\SG\left(p^{\nu+1},0,0,z\right)= \\
= 1-\frac{1}{p^{2z}}+\frac{1}{p^{z}}\left(1+\frac{1}{p^{1/2-z}}\right)\left(1+\frac{1}{p^{z}}\right)-\frac{1}{p^{(\nu+2)z}}\left(1+\frac{1}{p^{1/2-z}}\right).
\end{multline}
To prove \eqref{SG p nu+1 eq2}, we again employ the induction principle. For $\nu=0$ and $\nu=1$, equation \eqref{SG p nu+1 eq2} follows from \eqref{SG(p) eq2} and \eqref{SG(p2) eq2}. Using \eqref{SZE+SIN md sum def}, we deduce that
\begin{equation}\label{SG p nu+1 to p nu0}
\SG(p^{\nu},u,v,z)=\frac{\G(1,p^{\nu},z,u)}{p^{\nu(1+v)}}+
\sum_{j=0}^{\nu-1}\sum_{m|p}\frac{\mu(m)\G(m,p^{j},z,u)}{m^{1+z}p^{j(1+v)}},
\end{equation}
\begin{multline}\label{SG p nu+1 to p nu}
\SG(p^{\nu+1},u,v,z)=\frac{\G(1,p^{\nu+1},z,u)}{p^{(\nu+1)(1+v)}}+
\sum_{m|p}\frac{\mu(m)\G(m,p^{\nu},z,u)}{m^{1+z}p^{\nu(1+v)}}+ \\
+ \sum_{j=0}^{\nu-1}\sum_{m|p}\frac{\mu(m)\G(m,p^{j},z,u)}{m^{1+z}p^{j(1+v)}}=
\frac{\G(1,p^{\nu+1},z,u)}{p^{(\nu+1)(1+v)}}-
\frac{\G(p,p^{\nu},z,u)}{p^{\nu(1+v)}p^{1+z}}+\SG(p^{\nu},u,v,z).
\end{multline}
Therefore, assuming that \eqref{SG p nu+1 eq2} holds with $\nu$ replaced by $\nu-1$, we deduce from \eqref{SG p nu+1 to p nu} that \eqref{SG p nu+1 eq2} is equivalent to the identity
\begin{equation}\label{G(1,pnu+1)-G(p,pnu) eq1}
\left(1+\frac{1}{p^{1/2+z}}\right)\left(
\frac{\G(1,p^{\nu+1},z,0)}{p^{\nu+1}}-\frac{\G(p,p^{\nu},z,0)}{p^{1+z+\nu}}\right)
=\frac{1}{p^{(\nu+1)z}}\left(1+\frac{1}{p^{1/2-z}}\right)
\left(1-\frac{1}{p^{z}}\right).
\end{equation}
It follows from \eqref{Sound series 0}, \eqref{Sound series 1}, and \eqref{Sp(0,0)} that
\begin{multline}\label{G(1,pnu+1)-G(p,pnu) eq2}
\frac{\G(1,p^{\nu+1},z,0)}{p^{\nu+1}}-\frac{\G(p,p^{\nu},z,0)}{p^{1+z+\nu}}=
\frac{(1-p^{-z})(1-p^{-2z})}{(1-p^{-1-2z})p^{\nu+1}} \\
\times \sum_{q=0}^{\infty}\frac{1}{p^{q(1+z)}}\left(
\sum_{n=0}^{\infty}\frac{G(p^{2n+2q},p^{q+\nu+1})}{p^{nz}}-
\sum_{n=0}^{\infty}\frac{G(p^{2(n+1)+2q},p^{q+\nu+1})}{p^{(n+1)z}}\right)= \\
= \frac{(1-p^{-z})(1-p^{-2z})}{(1-p^{-1-2z})p^{\nu+1}}
\sum_{q=0}^{\infty}\frac{G(p^{2q},p^{q+\nu+1})}{p^{q(1+z)}}.
\end{multline}
It follows from \eqref{G(n, p gamma)} and \eqref{G(n, 2 gamma)} that
\begin{multline}\label{G(1,pnu+1)-G(p,pnu) eq3}
\sum_{q=0}^{\infty}\frac{G(p^{2q},p^{q+\nu+1})}{p^{q(1+z)}}=
\frac{p^{2\nu+1/2}}{p^{\nu(1+z)}}+
\sum_{\substack{q=\nu+1\\q+\nu+1\equiv0\Mod{2}}}^{\infty}\frac{\phi(p^{\nu+1+q})}{p^{q(1+z)}}= \\
= \frac{p^{2\nu+1/2}}{p^{\nu(1+z)}}+
\frac{(1-p^{-1})p^{2\nu+2}}{(1-p^{-2z})p^{(\nu+1)(1+z)}}.
\end{multline}
Using \eqref{G(1,pnu+1)-G(p,pnu) eq2} and \eqref{G(1,pnu+1)-G(p,pnu) eq3}, we obtain
\begin{multline}\label{G(1,pnu+1)-G(p,pnu) eq4}
\left(1+\frac{1}{p^{1/2+z}}\right)\left(
\frac{\G(1,p^{\nu+1},z,0)}{p^{\nu+1}}-\frac{\G(p,p^{\nu},z,0)}{p^{1+z+\nu}}\right)= \\
= \frac{(1-p^{-z})(1-p^{-2z})}{(1-p^{-1/2-z})}
\left(\frac{p^{\nu-1/2}}{p^{\nu(1+z)}}+
\frac{(1-p^{-1})p^{\nu+1}}{(1-p^{-2z})p^{(\nu+1)(1+z)}}\right)= \\
= \frac{(1-p^{-z})}{(1-p^{-1/2-z})p^{(\nu+1)z}}
\left(p^{z-1/2}(1-p^{-2z})+1-p^{-1}\right)=\frac{1}{p^{(\nu+1)z}}\left(1+\frac{1}{p^{1/2-z}}\right)\left(1-\frac{1}{p^{z}}\right).
\end{multline}
Therefore, \eqref{G(1,pnu+1)-G(p,pnu) eq1} is proved, which finalizes the proof of the lemma.

\end{proof}
Since we have completed the proof of \eqref{E0 to sum e}, it remains to prove \eqref{sum G deriv to sum e deriv}. Let us first evaluate the right-hand side of \eqref{sum G deriv to sum e deriv}. Similarly to \eqref{E0 to sum e eq1}, we obtain
\begin{multline}\label{sum e deriv}
\sum_{e_3|r}\left(\frac{d}{du}\right)e_3^{-z+u}\Bigl|_{u=0}
\sum_{e_1|e_3}\frac{|\mu(e_1)|}{e_1^{1/2+z}}=\\=
\frac{d}{du}\Biggl(
\prod_{p|r}\left(1+\frac{1}{p^{z-u}}\left(1+\frac{1}{p^{1/2-z}}\right)
\frac{1-p^{-\nu_p(r)(z-u)}}{1-p^{-(z-u)}}\right)\Biggr)\Bigl|_{u=0}=\\=
\prod_{p|r}\left(1+\frac{1}{p^{z}}\left(1+\frac{1}{p^{1/2-z}}\right)
\frac{1-p^{-\nu_p(r)z}}{1-p^{-z}}\right)\sum_{p|r}\frac{(1+p^{-1/2+z})\log p}{p^z\left(1+\frac{1}{p^{z}}\left(1+\frac{1}{p^{1/2-z}}\right)
\frac{1-p^{-\nu_p(r)z}}{1-p^{-z}}\right)}\\\times\left(\frac{1-p^{-\nu_p(r)z}}{1-p^{-z}}-
\frac{\nu p^{-\nu_p(r)z}}{1-p^{-z}}+\frac{1-p^{-\nu_p(r)z}}{(1-p^{-z})^2p^z}
\right)=\prod_{p|r}\Bigl(1+\frac{1}{p^{z}}\left(1+\frac{1}{p^{1/2-z}}\right)
\frac{1-p^{-\nu_p(r)z}}{1-p^{-z}}\Bigr)\\\times\sum_{p|r}\frac{(1+p^{-1/2+z})(1-(\nu+1)p^{-\nu_p(r)z}+\nu p^{-(\nu_p(r)+1)z})\log p}{p^z\Bigl(1+\frac{1}{p^{z}}\left(1+\frac{1}{p^{1/2-z}}\right)
\frac{1-p^{-\nu_p(r)z}}{1-p^{-z}}\Bigr)(1-p^{-z})^2}.
\end{multline}

It follows from \eqref{SZE+SIN sum e eq1} that the left-hand side of \eqref{sum G deriv to sum e deriv} is equal to
\begin{equation}
\Biggl(\frac{d}{du}+\frac{d}{dv}\Biggr)
\left(\frac{k}{r}\right)^{z}\ES_0(r,k,u,v,z)\Biggl|_{u=v=0}.
\end{equation}
Evaluating the derivative of \eqref{E0 to sum e eq2}, we find that the left-hand side of \eqref{sum G deriv to sum e deriv} equals
\begin{multline}\label{sum G deriv}
\prod_{p|r} \ES_p(\nu_p(r),0,0,z)\sum_{p|r}\frac{1}{ \ES_p(\nu_p(r),0,0,z)} \\
\times \frac{1+p^{-1/2-z}}{p^{2\nu_p(r)z}}
\Bigl(\frac{d}{du}+\frac{d}{dv}\Bigr)\sum_{j=1}^{\nu_p(r)}p^{2jz}
\SG\left(p^{j},u,v,z\right)\Biggl|_{u=v=0}.
\end{multline}
By combining \eqref{sum e deriv}, \eqref{sum G deriv}, and Lemma \ref{lem:sum SG pj}, we conclude that \eqref{sum G deriv to sum e deriv} holds provided that the following identity is satisfied.

%%%%%%%%%%%%%%%%%%%%%%%%%%%%%%%%%%%%%%%%%%%%%%%%%%%%%%%%%%%%%%%%%%%%%%%%%%%%%%%%%%%%%%
\begin{lem}\label{lem:deriv sum SG pj}
The following identity holds:
\begin{multline}\label{deriv sum SG pj}
\left(1+\frac{1}{p^{1/2+z}}\right)\Bigl(\frac{d}{du}+\frac{d}{dv}\Bigr)\sum_{j=1}^{\nu}p^{2jz}
\SG\left(p^{j},u,v,z\right)\Biggl|_{u=v=0}=\\=-
\frac{p^{2\nu z}(1+p^{-1/2+z})\log p}{p^z(1-p^{-z})^2}
\left(1-\frac{\nu+1}{p^{\nu z}}+\frac{\nu}{p^{(\nu+1)z}}\right).
\end{multline}
\end{lem}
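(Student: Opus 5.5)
We argue by induction on $\nu$, mirroring the proof of Lemma \ref{lem:sum SG pj}. Write $D:=\frac{d}{du}+\frac{d}{dv}$ evaluated at $u=v=0$, and denote the right-hand side of \eqref{deriv sum SG pj} by $R(\nu)$.

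\textbf{Base case.} For $\nu=1$ the left-hand side is $(1+p^{-1/2-z})p^{2z}D\SG(p,u,v,z)$. By \eqref{SG(p) eq2} we have $\SG(p,u,v,z)=(1-p^{-1-2z})^{-1}\SG_1(p,u,v,z)$, and by \eqref{ES1 def} the derivatives of $\SG_1$ coincide with those of $\ES_1$. Hence \eqref{ES1 u+v deriv} gives
\[
D\SG(p)=-(1-p^{-1-2z})^{-1}(1-p^{-1/2-z})(p^{-z}+p^{-1/2})\log p .
\]
Using $(1+p^{-1/2-z})(1-p^{-1/2-z})=1-p^{-1-2z}$, the left-hand side becomes $-p^{2z}(p^{-z}+p^{-1/2})\log p$. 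This matches $R(1)$, because $1-2p^{-z}+p^{-2z}=(1-p^{-z})^2$ and $p^{z}(p^{-z}+p^{-1/2})=1+p^{-1/2+z}$.

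\textbf{Inductive step.} It suffices to show
\[
(1+p^{-1/2-z})p^{2(\nu+1)z}D\SG(p^{\nu+1})=R(\nu+1)-R(\nu).
\]
First compute the difference. A short calculation gives
\[
R(\nu+1)-R(\nu)=-\frac{(1+p^{-1/2+z})\log p}{p^{z}(1-p^{-z})^2}\Bigl(p^{2(\nu+1)z}-p^{2\nu z}-(\nu+2)p^{(\nu+1)z}+2(\nu+1)p^{\nu z}-\nu p^{(\nu-1)z}\Bigr).
\]
Next, use the recursion \eqref{SG p nu+1 to p nu},
\[
\SG(p^{\nu+1})=\SG(p^{\nu})+\frac{\G(1,p^{\nu+1},z,u)}{p^{(\nu+1)(1+v)}}-\frac{\G(p,p^{\nu},z,u)}{p^{\nu(1+v)+1+z}} .
\]
This reduces the claim to an analogue of \eqref{G(1,pnu+1)-G(p,pnu) eq1} for derivatives. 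Applying $D$ splits it into two parts.

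The $v$-derivative only brings down $-(\nu+1)\log p$ and $-\nu\log p$ respectively. Since the undifferentiated combination is known from \eqref{G(1,pnu+1)-G(p,pnu) eq4}, this part reduces to $-(\nu+1)\log p$ times that combination plus $\log p\cdot\G(p,p^\nu,z,0)p^{-1-z-\nu}$. For the latter we need $\G(p,p^\nu,z,0)$ itself. We get it from the Euler factor \eqref{Sound series 1}: the $q$-sum of $G(p^{2n+2q+2},p^{q+\nu+1})$ is a geometric series in $n$ by \eqref{G(n, p gamma)}.

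The $u$-derivative enters only through the factor $\zeta(z-u)$ removed in \eqref{Sound series 0}. So we differentiate $(1-p^{-z+u})S_p$ in $u$, with $S_p$ the local sum. Repeating the telescoping in \eqref{G(1,pnu+1)-G(p,pnu) eq2}, with weights $p^{-n(z-u)}$, gives
\[
\frac{\partial}{\partial u}\Bigl[(1-p^{-z+u})\sum_{q}\sum_n(\cdots)\Bigr]_{u=0},
\]
which is again an explicit finite expression involving $G(p^{2q},p^{q+\nu+1})$ and terms with $n\geq1$.

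To avoid an inner induction on the $\SG(p^\nu)$ derivative, an alternative is to compute $D\SG(p^\nu)$ in closed form directly from \eqref{SG p nu+1 to p nu0}. For this we need general closed forms of $\G(1,p^j,z,u)$ and $\G(p,p^j,z,u)$ for all $j$, obtained from \eqref{Sound series 1} and \eqref{G(n, p gamma)}. The case $p=2$ is handled by \eqref{G(n, 2 gamma)}, expecting the same formula as in the earlier computations.

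\textbf{Main obstacle.} The main obstacle is the bookkeeping of the $u$-derivative of $\G(1,p^{\nu+1})$ and $\G(p,p^{\nu})$, which involves the terms $\phi(p^{\gamma})$ for even $\gamma\leq 2n+2q$ together with the boundary term $\gamma=\alpha+1$. The difficulty is checking that, after multiplying by $(1+p^{-1/2-z})$ and dividing out $(1-p^{-1-2z})$, everything collapses to the polynomial in $p^{-z}$ displayed above. I would first verify the identity symbolically for $\nu=1,2$ against \eqref{ES2 u+v deriv} as a consistency check, and only then do the general algebra.
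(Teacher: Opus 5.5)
\textbf{The gap.} Your outline follows the paper's own route: induction on $\nu$, the base case from \eqref{ES1 u+v deriv}, the recursion \eqref{SG p nu+1 to p nu}, and a reduction to a $u$/$v$-derivative identity for $\G(1,p^{\nu+1},z,u)$ and $\G(p,p^{\nu},z,u)$ evaluated through the local factors \eqref{G1 pnu+1 eq1} and \eqref{G1 p pnu eq1}. Your base case and your formula for $R(\nu+1)-R(\nu)$ are correct. However, you stop exactly where the content of the lemma lies. The identity you reduce to is equivalent to the paper's \eqref{deriv G(1,pnu+1)-G(p,pnu) eq2}, and you never verify it; you only call it the ``main obstacle''. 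The paper settles it by brute force: it computes $\G(1,p^{\nu+1},z,u)$ and $\G(p,p^{\nu},z,u)$ in closed form, separately for even and odd $\nu$, and then differentiates.

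Your bookkeeping also has a sign error, and it hides a much shorter finish. Set $B:=\G(1,p^{\nu+1},z,u)p^{-(\nu+1)(1+v)}-\G(p,p^{\nu},z,u)p^{-\nu(1+v)-1-z}$ and $A(u):=B|_{v=0}$. Then $\partial_vB|_{0}=-(\nu+1)\log p\,A(0)-\log p\,\G(p,p^\nu,z,0)p^{-\nu-1-z}$, with a minus, not a plus. Also, $u$ does not enter only through $\zeta(z-u)$; it also sits in the weights $p^{-n(z-u)}$.

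\textbf{How to close it.} Once the sign is corrected, the missing step takes a few lines. Put $F(u)=(1-p^{-z+u})(1-p^{-2z})/(1-p^{-1-2z})$. Your telescoping, with $u$ kept, gives
\[
A(u)=\frac{F(u)}{p^{\nu+1}}\Bigl(T_0+(1-p^{-u})T_1(u)\Bigr),\qquad T_0=\sum_{q\ge0}\frac{G(p^{2q},p^{q+\nu+1})}{p^{q(1+z)}},\quad T_1(u)=\sum_{q\ge0}\sum_{n\ge1}\frac{G(p^{2n+2q},p^{q+\nu+1})}{p^{n(z-u)+q(1+z)}}.
\]
By \eqref{G1 p pnu eq1}, $p^{-z}\G(p,p^\nu,z,0)=F(0)T_1(0)$. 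So the contribution $\log p\,F(0)T_1(0)p^{-\nu-1}$ to $A'(0)$ cancels the second $v$-term exactly, leaving
\[
DB=A(0)\Bigl(\frac{F'(0)}{F(0)}-(\nu+1)\log p\Bigr)=-A(0)\log p\,\frac{\nu+1-\nu p^{-z}}{1-p^{-z}}.
\]
Inserting \eqref{G(1,pnu+1)-G(p,pnu) eq4} gives $(1+p^{-1/2-z})DB=-p^{-(\nu+1)z}(1+p^{-1/2+z})(\nu+1-\nu p^{-z})\log p$.

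You need neither an inner induction nor closed forms for $D\SG(p^j)$. The hypothesis at $\nu$ and $\nu-1$ (with $R(0)=0$) already gives $(1+p^{-1/2-z})p^{2\nu z}D\SG(p^{\nu})=R(\nu)-R(\nu-1)$. One then checks that $R(\nu+1)-R(\nu)-p^{2z}\bigl(R(\nu)-R(\nu-1)\bigr)=-(1+p^{-1/2+z})p^{(\nu+1)z}(\nu+1-\nu p^{-z})\log p$, which is exactly $(1+p^{-1/2-z})p^{2(\nu+1)z}DB$.

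This completes the induction uniformly in $p$ and bypasses the paper's parity-split computation. As written, though, your proposal does not yet prove the lemma.
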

\begin{proof}
We will prove \eqref{deriv sum SG pj} by induction. The case $\nu=1$ follows from \eqref{SG(p) eq2}, \eqref{ES1 def}, and \eqref{ES1 u+v deriv}. Suppose that \eqref{deriv sum SG pj} holds for $\nu$; then it holds for $\nu+1$ provided that
\begin{multline}\label{deriv SG pj eq1}
\left(1+\frac{1}{p^{1/2+z}}\right)\Bigl(\frac{d}{du}+\frac{d}{dv}\Bigr)p^{2(\nu+1)z}
\SG\left(p^{\nu+1},u,v,z\right)\Biggl|_{u=v=0} = \\
= \frac{p^{2(\nu+1) z}(1+p^{-1/2+z})\log p}{p^z(1-p^{-z})^2}
\left(-1+\frac{1}{p^{2z}}+\frac{\nu+2}{p^{(\nu+1)z}}-\frac{2\nu+2}{p^{(\nu+2)z}}+\frac{\nu}{p^{(\nu+3)z}}\right).
\end{multline}
Factoring out the factor $(1-p^{-z})$ from the last bracket, we find that it suffices to prove
\begin{multline}\label{deriv SG pj eq2}
\left(1+\frac{1}{p^{1/2+z}}\right)\Bigl(\frac{d}{du}+\frac{d}{dv}\Bigr)
\SG\left(p^{\nu+1},u,v,z\right)\Biggl|_{u=v=0} = \\
= \frac{(1+p^{-1/2+z})\log p}{p^z(1-p^{-z})}
\left(-1-\frac{1}{p^{z}}+\frac{\nu}{p^{(\nu+1)z}}\left(1-\frac{1}{p^z}\right)+\frac{2}{p^{(\nu+1)z}}\right).
\end{multline}
Next, we prove \eqref{deriv SG pj eq2} by induction. For $\nu=0$, equation \eqref{deriv SG pj eq2} follows from \eqref{SG(p) eq2}. Assuming that \eqref{deriv SG pj eq2} holds with $\nu$ replaced by $\nu-1$, we deduce from \eqref{SG p nu+1 to p nu} that \eqref{deriv SG pj eq2} is equivalent to the identity
\begin{multline}\label{deriv G(1,pnu+1)-G(p,pnu) eq1}
\left(1+\frac{1}{p^{1/2+z}}\right)\Bigl(\frac{d}{du}+\frac{d}{dv}\Bigr)
\left(\frac{\G(1,p^{\nu+1},z,u)}{p^{(\nu+1)(1+v)}}-
\frac{\G(p,p^{\nu},z,u)}{p^{\nu(1+v)}p^{1+z}}\right)\Biggl|_{u=v=0} = \\
= \frac{(1+p^{-1/2+z})\log p}{p^z(1-p^{-z})}
\left(\frac{\nu}{p^{(\nu+1)z}}\left(1-\frac{1}{p^z}\right)+\frac{2}{p^{(\nu+1)z}}
-\frac{\nu-1}{p^{\nu z}}\left(1-\frac{1}{p^z}\right)-\frac{2}{p^{\nu z}}
\right).
\end{multline}
Simplifying the right-hand side of \eqref{deriv G(1,pnu+1)-G(p,pnu) eq1}, we find that it equals
\begin{equation*}
\frac{-\log p}{p^{(\nu+1)z}}
\left(1+p^{-1/2+z}\right)\left(1+\nu-\frac{\nu}{p^z}\right).
\end{equation*}
Rewriting the left-hand side of \eqref{deriv G(1,pnu+1)-G(p,pnu) eq1}, we see that it remains to prove
\begin{multline}\label{deriv G(1,pnu+1)-G(p,pnu) eq2}
\Delta\G(\nu,p,z):=\frac{1}{p^{\nu+1}}\left(\frac{d}{du}\G(1,p^{\nu+1},z,u)\Bigl|_{u=0}
-\frac{1}{p^z}\frac{d}{du}\G(p,p^{\nu},z,u)\Bigl|_{u=0}\right) - \\
- \frac{\log p}{p^{\nu+1}}\Bigl((\nu+1)\G(1,p^{\nu+1},z,0)-\frac{\nu}{p^z}\G(p,p^{\nu},z,0)\Bigr)=
\frac{-\log p}{p^{(\nu+1)z}}
\frac{(1+p^{-1/2+z})}{(1+p^{-1/2-z})}\left(1+\nu-\frac{\nu}{p^z}\right).
\end{multline}
Using \eqref{SG(p2) Gpp} and \eqref{SG(p2) G1p2}, one can verify that \eqref{deriv G(1,pnu+1)-G(p,pnu) eq2} holds for $\nu=1$. In order to prove \eqref{deriv G(1,pnu+1)-G(p,pnu) eq2} for a general $\nu$, we must evaluate $\G(1,p^{\nu+1},z,u)$ and $\G(p,p^{\nu},z,u)$ by arguing as in Lemma \ref{lem:Sound series}. It follows from \eqref{Sound series 0}, \eqref{Sound series 2}, \eqref{Sp(0,0)}, and \eqref{S2(0,0)} that
\begin{multline}\label{G1 pnu+1 eq1}
\G(1,p^{\nu+1},z,u)=\frac{\zeta(1+2z)}{\zeta(z-u)\zeta(2z)}
\sum_{n,q=1}^{\infty}\frac{G(n^2q^2,p^{\nu+1}q)}{n^{z-u}q^{1+z}} = \\
= \frac{(1-p^{-z+u})(1-p^{-2z})}{1-p^{-1-2z}}
\sum_{n,q=0}^{\infty}\frac{G(p^{2n+2q},p^{q+\nu+1})}{p^{n(z-u)+q(1+z)}}.
\end{multline}
Consider first the case where $\nu$ is even, say $\nu=2\nu_2$. Using \eqref{G(n, p gamma)} if $p>2$ (and \eqref{G(n, 2 gamma)} if $p=2$), we obtain
\begin{multline}\label{G1 pnu+1 eq2}
\sum_{n,q=0}^{\infty}\frac{G(p^{2n+2q},p^{q+\nu+1})}{p^{n(z-u)+q(1+z)}} =
\sum_{n=0}^{\nu_2}\frac{1}{p^{n(z-u)}}\left(\frac{p^{\nu+1/2}}{p^{(\nu-2n)z}} +
\sum_{\substack{q=\nu-2n+1\\q\equiv1\Mod{2}}}^{\infty}\frac{\phi(p^{q+\nu+1})}{p^{q(1+z)}}
\right) + \\
+ \sum_{n=\nu_2+1}^{\infty}\frac{1}{p^{n(z-u)}}
\sum_{\substack{q=0\\q\equiv1\Mod{2}}}^{\infty}\frac{\phi(p^{q+\nu+1})}{p^{q(1+z)}} =
\sum_{n=0}^{\nu_2}\frac{p^{\nu+1/2}}{p^{n(z-u)}p^{(\nu-2n)z}} + \\
+ \frac{p^{\nu+1}(1-p^{-1})}{(1-p^{-2z})p^z}\sum_{n=0}^{\nu_2}\frac{p^{\nu+1/2}}{p^{n(z-u)}p^{(\nu-2n)z}} + \frac{p^{\nu+1}(1-p^{-1})}{p^zp^{(\nu_2+1)(z-u)}(1-p^{-2z})(1-p^{-z+u})}.
\end{multline}
Performing the change of variables $n=\nu_2-m$, we obtain
\begin{multline}\label{G1 pnu+1 eq3}
\sum_{n,q=0}^{\infty}\frac{G(p^{2n+2q},p^{q+\nu+1})}{p^{n(z-u)+q(1+z)}} =
\left(\frac{p^{\nu+1/2}}{p^{\nu_2(z-u)}}+\frac{p^{\nu+1}(1-p^{-1})}{(1-p^{-2z})p^{z+\nu_2(z-u)}}\right)\sum_{m=0}^{\nu_2}\frac{1}{p^{m(z+u)}} + \\
+ \frac{p^{\nu+1}(1-p^{-1})}{p^zp^{(\nu_2+1)(z-u)}(1-p^{-2z})(1-p^{-z+u})}.
\end{multline}
Substituting \eqref{G1 pnu+1 eq3} into \eqref{G1 pnu+1 eq1}, we have
\begin{multline}\label{G1 pnu+1 eq4}
\G(1,p^{\nu+1},z,u) =
\frac{(1-p^{-z+u})(1-p^{-2z})}{(1-p^{-1-2z})p^{\nu_2(z-u)}}
\left(p^{\nu+1/2}+\frac{p^{\nu+1}(1-p^{-1})}{(1-p^{-2z})p^{z}}\right)\sum_{m=0}^{\nu_2}\frac{1}{p^{m(z+u)}} + \\
+ \frac{p^{\nu+1}(1-p^{-1})}{p^zp^{(\nu_2+1)(z-u)}(1-p^{-1-2z})}.
\end{multline}
Taking the derivative, we show that for $\nu=2\nu_2$,
\begin{multline}\label{G1 pnu+1 eq5}
\frac{d}{du}\G(1,p^{\nu+1},z,u)\Bigl|_{u=0} =
\frac{-(1-p^{-2z})\log p}{(1-p^{-1-2z})p^{(\nu_2+1)z}}\left(p^{\nu+1/2}+\frac{p^{\nu+1}(1-p^{-1})}{(1-p^{-2z})p^{z}}\right)\sum_{m=0}^{\nu_2}\frac{1}{p^{mz}} + \\
+ \frac{(1-p^{-z})(1-p^{-2z})}{(1-p^{-1-2z})p^{\nu_2z}}
\left(p^{\nu+1/2}+\frac{p^{\nu+1}(1-p^{-1})}{(1-p^{-2z})p^{z}}\right)\sum_{m=0}^{\nu_2}\frac{(\nu_2-m)\log p}{p^{mz}} + \\
+ \frac{p^{\nu+1}(1-p^{-1})(\nu_2+1)\log p}{p^{(\nu_2+2)z}(1-p^{-1-2z})}.
\end{multline}
Now consider the case where $\nu=2\nu_1+1$ is odd. Using \eqref{G(n, p gamma)} if $p>2$ (and \eqref{G(n, 2 gamma)} if $p=2$) and making the change of variables $n=\nu_1-m$ again, we obtain
\begin{multline}\label{G1 pnu+1 eq6}
\sum_{n,q=0}^{\infty}\frac{G(p^{2n+2q},p^{q+\nu+1})}{p^{n(z-u)+q(1+z)}} =
\sum_{n=0}^{\nu_1}\frac{1}{p^{n(z-u)}}\left(\frac{p^{\nu+1/2}}{p^{(\nu-2n)z}} +
\sum_{\substack{q=\nu-2n+1\\q\equiv0\Mod{2}}}^{\infty}\frac{\phi(p^{q+\nu+1})}{p^{q(1+z)}}
\right) + \\
+ \sum_{n=\nu_1+1}^{\infty}\frac{1}{p^{n(z-u)}}
\sum_{\substack{q=0\\q\equiv0\Mod{2}}}^{\infty}\frac{\phi(p^{q+\nu+1})}{p^{q(1+z)}} =
\left(\frac{p^{\nu+1/2}}{p^{\nu_1(z-u)+z}}+\frac{p^{\nu+1}(1-p^{-1})}{(1-p^{-2z})p^{2z+\nu_1(z-u)}}\right)\sum_{m=0}^{\nu_1}\frac{1}{p^{m(z+u)}} + \\
+ \frac{p^{\nu+1}(1-p^{-1})}{p^{(\nu_1+1)(z-u)}(1-p^{-2z})(1-p^{-z+u})}.
\end{multline}
Substituting \eqref{G1 pnu+1 eq6} into \eqref{G1 pnu+1 eq1}, we have
\begin{multline}\label{G1 pnu+1 eq7}
\G(1,p^{\nu+1},z,u) =
\frac{(1-p^{-z+u})(1-p^{-2z})}{(1-p^{-1-2z})p^{\nu_1(z-u)+z}}
\left(p^{\nu+1/2}+\frac{p^{\nu+1}(1-p^{-1})}{(1-p^{-2z})p^{z}}\right)\sum_{m=0}^{\nu_1}\frac{1}{p^{m(z+u)}} + \\
+ \frac{p^{\nu+1}(1-p^{-1})}{p^{(\nu_1+1)(z-u)}(1-p^{-1-2z})}.
\end{multline}
Taking the derivative, we find that for $\nu=2\nu_1+1$,
\begin{multline}\label{G1 pnu+1 eq8}
\frac{d}{du}\G(1,p^{\nu+1},z,u)\Bigl|_{u=0} =
\frac{-(1-p^{-2z})\log p}{(1-p^{-1-2z})p^{(\nu_1+2)z}}\left(p^{\nu+1/2}+\frac{p^{\nu+1}(1-p^{-1})}{(1-p^{-2z})p^{z}}\right)\sum_{m=0}^{\nu_1}\frac{1}{p^{mz}} + \\
+ \frac{(1-p^{-z})(1-p^{-2z})}{(1-p^{-1-2z})p^{(\nu_1+1)z}}
\left(p^{\nu+1/2}+\frac{p^{\nu+1}(1-p^{-1})}{(1-p^{-2z})p^{z}}\right)\sum_{m=0}^{\nu_1}\frac{(\nu_1-m)\log p}{p^{mz}} + \\
+ \frac{p^{\nu+1}(1-p^{-1})(\nu_1+1)\log p}{p^{(\nu_1+1)z}(1-p^{-1-2z})}.
\end{multline}
It follows from \eqref{Sound series 0}, \eqref{Sound series 2}, \eqref{Sp(0,0)}, and \eqref{S2(0,0)} that
\begin{multline}\label{G1 p pnu eq1}
\G(p,p^{\nu},z,u)=\frac{\zeta(1+2z)}{\zeta(z-u)\zeta(2z)}
\sum_{n,q=1}^{\infty}\frac{G(p^2n^2q^2,p^{\nu+1}q)}{n^{z-u}q^{1+z}} = \\
= \frac{(1-p^{-z+u})(1-p^{-2z})}{1-p^{-1-2z}}
\sum_{n,q=0}^{\infty}\frac{G(p^{2n+2q+2},p^{q+\nu+1})}{p^{n(z-u)+q(1+z)}}.
\end{multline}
Let $\nu=2\nu_2$. Using \eqref{G(n, p gamma)} if $p>2$ (and \eqref{G(n, 2 gamma)} if $p=2$) and performing the change of variables $n=\nu_2-1-m$, we obtain
\begin{multline}\label{G1 p pnu eq2}
\sum_{n,q=0}^{\infty}\frac{G(p^{2n+2q+2},p^{q+\nu+1})}{p^{n(z-u)+q(1+z)}} =
\sum_{n=0}^{\nu_2-1}\frac{1}{p^{n(z-u)}}\left(\frac{p^{\nu+1/2}}{p^{(\nu-2-2n)z}} +
\sum_{\substack{q=\nu-2n-1\\q\equiv1\Mod{2}}}^{\infty}\frac{\phi(p^{q+\nu+1})}{p^{q(1+z)}}
\right) + \\
+ \sum_{n=\nu_2}^{\infty}\frac{1}{p^{n(z-u)}}
\sum_{\substack{q=0\\q\equiv1\Mod{2}}}^{\infty}\frac{\phi(p^{q+\nu+1})}{p^{q(1+z)}} =
\left(\frac{p^{\nu+1/2}}{p^{(\nu_2-1)(z-u)}}+\frac{p^{\nu+1}(1-p^{-1})}{(1-p^{-2z})p^{z+(\nu_2-1)(z-u)}}\right)\sum_{m=0}^{\nu_2-1}\frac{1}{p^{m(z+u)}} + \\
+ \frac{p^{\nu+1}(1-p^{-1})}{p^zp^{\nu_2(z-u)}(1-p^{-2z})(1-p^{-z+u})}.
\end{multline}
Substituting \eqref{G1 p pnu eq2} into \eqref{G1 p pnu eq1}, we have
\begin{multline}\label{G1 p pnu eq3}
\G(p,p^{\nu},z,u) =
\frac{(1-p^{-z+u})(1-p^{-2z})}{(1-p^{-1-2z})p^{(\nu_2-1)(z-u)}}
\left(p^{\nu+1/2}+\frac{p^{\nu+1}(1-p^{-1})}{(1-p^{-2z})p^{z}}\right)\sum_{m=0}^{\nu_2-1}\frac{1}{p^{m(z+u)}} + \\
+ \frac{p^{\nu+1}(1-p^{-1})}{p^zp^{\nu_2(z-u)}(1-p^{-1-2z})}.
\end{multline}
Taking the derivative, we show that for $\nu=2\nu_2$,
\begin{multline}\label{G1 p pnu eq4}
\frac{d}{du}\G(p,p^{\nu},z,u)\Bigl|_{u=0} =
\frac{-(1-p^{-2z})\log p}{(1-p^{-1-2z})p^{\nu_2z}}\left(p^{\nu+1/2}+\frac{p^{\nu+1}(1-p^{-1})}{(1-p^{-2z})p^{z}}\right)\sum_{m=0}^{\nu_2-1}\frac{1}{p^{mz}} + \\
+ \frac{(1-p^{-z})(1-p^{-2z})}{(1-p^{-1-2z})p^{(\nu_2-1)z}}
\left(p^{\nu+1/2}+\frac{p^{\nu+1}(1-p^{-1})}{(1-p^{-2z})p^{z}}\right)\sum_{m=0}^{\nu_2-1}\frac{(\nu_2-1-m)\log p}{p^{mz}} + \\
+ \frac{p^{\nu+1}(1-p^{-1})\nu_2\log p}{p^{(\nu_2+1)z}(1-p^{-1-2z})}.
\end{multline}
Let $\nu=2\nu_1+1$. Using \eqref{G(n, p gamma)} if $p>2$ (and \eqref{G(n, 2 gamma)} if $p=2$) and performing the change of variables $n=\nu_1-1-m$, we obtain
\begin{multline}\label{G1 p pnu eq5}
\sum_{n,q=0}^{\infty}\frac{G(p^{2n+2q+2},p^{q+\nu+1})}{p^{n(z-u)+q(1+z)}} =
\sum_{n=0}^{\nu_1-1}\frac{1}{p^{n(z-u)}}\left(\frac{p^{\nu+1/2}}{p^{(\nu-2-2n)z}} +
\sum_{\substack{q=\nu-2n-1\\q\equiv0\Mod{2}}}^{\infty}\frac{\phi(p^{q+\nu+1})}{p^{q(1+z)}}
\right) + \\
+ \sum_{n=\nu_1}^{\infty}\frac{1}{p^{n(z-u)}}
\sum_{\substack{q=0\\q\equiv0\Mod{2}}}^{\infty}\frac{\phi(p^{q+\nu+1})}{p^{q(1+z)}} =
\left(\frac{p^{\nu+1/2}}{p^{(\nu_1-1)(z-u)+z}}+\frac{p^{\nu+1}(1-p^{-1})}{(1-p^{-2z})p^{2z+(\nu_1-1)(z-u)}}\right)\sum_{m=0}^{\nu_1-1}\frac{1}{p^{m(z+u)}} + \\
+ \frac{p^{\nu+1}(1-p^{-1})}{p^{\nu_1(z-u)}(1-p^{-2z})(1-p^{-z+u})}.
\end{multline}
Substituting \eqref{G1 p pnu eq5} into \eqref{G1 p pnu eq1}, we have
\begin{multline}\label{G1 p pnu eq6}
\G(p,p^{\nu},z,u) =
\frac{(1-p^{-z+u})(1-p^{-2z})}{(1-p^{-1-2z})p^{(\nu_1-1)(z-u)+z}}
\left(p^{\nu+1/2}+\frac{p^{\nu+1}(1-p^{-1})}{(1-p^{-2z})p^{z}}\right)\sum_{m=0}^{\nu_2-1}\frac{1}{p^{m(z+u)}} + \\
+ \frac{p^{\nu+1}(1-p^{-1})}{p^{\nu_1(z-u)}(1-p^{-1-2z})}.
\end{multline}
Taking the derivative yields, for $\nu=2\nu_1+1$,
\begin{multline}\label{G1 p pnu eq7}
\frac{d}{du}\G(p,p^{\nu},z,u)\Bigl|_{u=0} =
\frac{-(1-p^{-2z})\log p}{(1-p^{-1-2z})p^{(\nu_1+1)z}}\left(p^{\nu+1/2}+\frac{p^{\nu+1}(1-p^{-1})}{(1-p^{-2z})p^{z}}\right)\sum_{m=0}^{\nu_1-1}\frac{1}{p^{mz}} + \\
+ \frac{(1-p^{-z})(1-p^{-2z})}{(1-p^{-1-2z})p^{\nu_1z}}
\left(p^{\nu+1/2}+\frac{p^{\nu+1}(1-p^{-1})}{(1-p^{-2z})p^{z}}\right)\sum_{m=0}^{\nu_1-1}\frac{(\nu_1-1-m)\log p}{p^{mz}} + \\
+ \frac{p^{\nu+1}(1-p^{-1})\nu_1\log p}{p^{\nu_1z}(1-p^{-1-2z})}.
\end{multline}
For $\nu=2\nu_2$, it follows from \eqref{G1 pnu+1 eq4} and \eqref{G1 p pnu eq3} that
\begin{multline}\label{G1pnu+1-Gppnu eq1}
(\nu+1)\G(1,p^{\nu+1},z,0)-\frac{\nu}{p^z}\G(p,p^{\nu},z,0) =
\frac{p^{\nu+1}(1-p^{-1})}{p^{(\nu_2+2)z}(1-p^{-1-2z})} + \\
+ \frac{(1-p^{-z})(1-p^{-2z})}{(1-p^{-1-2z})p^{\nu_2z}}
\left(p^{\nu+1/2}+\frac{p^{\nu+1}(1-p^{-1})}{(1-p^{-2z})p^{z}}\right)\left(\sum_{m=0}^{\nu_2-1}\frac{1}{p^{mz}}+\frac{\nu+1}{p^{\nu_2z}}
\right).
\end{multline}
For $\nu=2\nu_1+1$, it follows from \eqref{G1 pnu+1 eq7} and \eqref{G1 p pnu eq6} that
\begin{multline}\label{G1pnu+1-Gppnu eq2}
(\nu+1)\G(1,p^{\nu+1},z,0)-\frac{\nu}{p^z}\G(p,p^{\nu},z,0) =
\frac{p^{\nu+1}(1-p^{-1})}{p^{(\nu_1+1)z}(1-p^{-1-2z})} + \\
+ \frac{(1-p^{-z})(1-p^{-2z})}{(1-p^{-1-2z})p^{(\nu_1+1)z}}
\left(p^{\nu+1/2}+\frac{p^{\nu+1}(1-p^{-1})}{(1-p^{-2z})p^{z}}\right)\left(\sum_{m=0}^{\nu_1-1}\frac{1}{p^{mz}}+\frac{\nu+1}{p^{\nu_1z}}
\right).
\end{multline}
For $\nu=2\nu_2$, it follows from \eqref{G1 pnu+1 eq5} and \eqref{G1 p pnu eq4} that
\begin{multline}\label{G1pnu+1-Gppnu eq3}
\frac{d}{du}\G(1,p^{\nu+1},z,u)\Bigl|_{u=0}-\frac{1}{p^z}\frac{d}{du}\G(p,p^{\nu},z,u)\Bigl|_{u=0} = \frac{p^{\nu+1}(1-p^{-1})\log p}{p^{(\nu_2+2)z}(1-p^{-1-2z})} - \\
- \frac{(1-p^{-2z})\log p}{(1-p^{-1-2z})p^{(\nu_2+1)z}}\left(p^{\nu+1/2}+\frac{p^{\nu+1}(1-p^{-1})}{(1-p^{-2z})p^{z}}\right)\frac{1}{p^{\nu_2z}} + \\
+ \frac{(1-p^{-z})(1-p^{-2z})}{(1-p^{-1-2z})p^{\nu_2z}}
\left(p^{\nu+1/2}+\frac{p^{\nu+1}(1-p^{-1})}{(1-p^{-2z})p^{z}}\right)\sum_{m=0}^{\nu_2-1}\frac{\log p}{p^{mz}}.
\end{multline}
For $\nu=2\nu_1+1$, it follows from \eqref{G1 pnu+1 eq8} and \eqref{G1 p pnu eq7} that
\begin{multline}\label{G1pnu+1-Gppnu eq4}
\frac{d}{du}\G(1,p^{\nu+1},z,u)\Bigl|_{u=0}-\frac{1}{p^z}\frac{d}{du}\G(p,p^{\nu},z,u)\Bigl|_{u=0} = \frac{p^{\nu+1}(1-p^{-1})\log p}{p^{(\nu_1+1)z}(1-p^{-1-2z})} - \\
- \frac{(1-p^{-2z})\log p}{(1-p^{-1-2z})p^{(\nu_1+2)z}}\left(p^{\nu+1/2}+\frac{p^{\nu+1}(1-p^{-1})}{(1-p^{-2z})p^{z}}\right)\frac{1}{p^{\nu_1z}} + \\
+ \frac{(1-p^{-z})(1-p^{-2z})}{(1-p^{-1-2z})p^{(\nu_1+1)z}}
\left(p^{\nu+1/2}+\frac{p^{\nu+1}(1-p^{-1})}{(1-p^{-2z})p^{z}}\right)\sum_{m=0}^{\nu_1-1}\frac{\log p}{p^{mz}}.
\end{multline}
Recall that to prove the lemma, we must verify \eqref{deriv G(1,pnu+1)-G(p,pnu) eq2}.
For $\nu=2\nu_2$, it follows from \eqref{G1pnu+1-Gppnu eq1} and \eqref{G1pnu+1-Gppnu eq3} that
\begin{multline}\label{DeltaG nu ev eq1}
\Delta\G(\nu,p,z) = -
\left(\frac{1}{p^{1/2}}+\frac{(1-p^{-1})}{(1-p^{-2z})p^{z}}\right)
\frac{(1-p^{-2z})}{(1-p^{-1-2z})}\left(\frac{1}{p^{(2\nu_2+1)z}}+\left(1-\frac{1}{p^z}\right)\frac{\nu+1}{p^{\nu z}}\right)\log p = \\
= \frac{-\log{p}}{(1-p^{-1-2z})p^{\nu z}}\left(\frac{1}{p^{1/2}}\left(1-\frac{1}{p^{2z}}\right)+\frac{1}{p^{z}}\left(1-\frac{1}{p}\right)\right)
\left(1+\nu-\frac{\nu}{p^z}\right) = \\
= \frac{-\log p}{p^{(\nu+1)z}}
\frac{(1+p^{-1/2+z})}{(1+p^{-1/2-z})}\left(1+\nu-\frac{\nu}{p^z}\right),
\end{multline}
thereby establishing \eqref{deriv G(1,pnu+1)-G(p,pnu) eq2}.
Similarly, for $\nu=2\nu_1+1$, we obtain \eqref{deriv G(1,pnu+1)-G(p,pnu) eq2} from \eqref{G1pnu+1-Gppnu eq2} and \eqref{G1pnu+1-Gppnu eq4}. This completes the proof of the lemma.

\end{proof}
%%%%%%%%%%%%%%%%%%%%%%%%%%%%%%%%%%%%%%%%%%%%%%%%%%%%%%%%%%%%%%%%%%%%%%%%%%%%%%%%%%%%%%%%%%%
%%%%%%%%%%%%%%%%%%%%

\end{document}